%% file: main.tex
\documentclass[11pt]{amsart} 
\input{preamble}

\title[Invariants of Legendrian knots in thickened convex surfaces]{Invariants of Legendrian knots in thickened convex surfaces}

\author[Nancy Mae Eagles]{Nancy Mae Eagles}
\address{Department of Mathematics\\University of California at Berkeley\\Berkeley, CA\\94703\\USA}
\email{nm.eagles@berkeley.edu}

\author[Zijian Rong]{Zijian Rong}
\address{Department of Mathematics\\University of Southern California\\Los Angeles, CA\\90007\\USA}
\email{zijianro@usc.edu}
\date{\today}

\begin{document}

\begin{abstract}
    We define a differential graded algebra associated to Legendrian knots in thickened convex surfaces $\Sigma\times \R$. The algebra is defined in the same spirit as the Chekanov-Eliashberg DGA for Legendrians in $\R^3$, but makes use of the data of the dividing set $\Gamma$ of $\Sigma$. The algebra is generated by countably many Reeb chords of the Legendrian $\Lambda$, and its differential counts certain immersed polygons in the projection $\pi:\Sigma\times \R\to \Sigma\times \{0\}$ with boundary on $\pi(\Lambda)\cup \Gamma$. We show that the differential squares to zero and that the stable tame isomorphism type of the DGA is invariant under Legendrian isotopy. Finally, we compute several examples and use the invariant to distinguish Legendrian knots in thickened convex surfaces that cannot be distinguished by the classical invariants.
\end{abstract}

\maketitle

\tableofcontents

\section{Introduction}

Let $(M,\xi)$ be a contact $3$-manifold with contact form $\lambda$. An embedding $\Lambda:S^1\hookrightarrow M$ is a \textit{Legendrian knot} if $T_p\Lambda\subset \xi_p$ for all $p\in \Lambda(S^1)$. That is, $\Lambda$ is everywhere tangent to the contact structure. A smooth map $\varphi:S^1\times \R\to M$ is a \textit{Legendrian isotopy} if for each $t\in I$, $\varphi(\cdot, t):S^1\to M$ is a Legendrian knot. A powerful tool to study Legendrian knots in certain contact manifolds is Legendrian contact homology, developed by Chekanov in \cite{chekanov2002dga} and independently by Eliashberg, Givental, and Hofer in \cite{egh2000SFT}. The homology theory falls under the ever-expanding framework of symplectic field theory, which comprises invariants built out of pseudoholomorphic curves in certain non-compact symplectic manifolds like the symplectization $\R\times M$. As initially proposed in \cite{egh2000SFT}, under suitable circumstances one can associate to a knot $\Lambda\subset M$ a differential graded algebra $A(\Lambda)$ generated by \textit{Reeb chords} of the Legendrian knot. Its differential counts certain J-holomorphic boundary punctured disks in $\R\times M$ with asymptotic ends at Reeb chords. Some analytical care must be taken to assure that one can actually \textit{count} such disks. Following the standard setup of Floer theory, one hopes to describe the collection of $J$-holomorphic disks as a $0$-dimensional compact manifold, seen as the solution space of some appropriate Cauchy-Riemann equation with Lagrangian boundary conditions. In the event that the homology theory is well-defined, though the DGA itself is not a Legendrian isotopy invariant, its \textit{stable tame isomorphism class} is. Consequently, the \textit{Legendrian contact homology} $H_*(A(\Lambda),\partial)$ is a Legendrian isotopy invariant up to isomorphism. 

Well-definedness and computability of Legendrian contact homology have been achieved in several settings, the first and simplest of which is in the standard contact $\R^3$. Legendrian contact homology has also been defined in circle bundles over Riemann surfaces with negative curvature \cite{sabloff2003circle}, in $J^1(S^1)$ \cite{NgTraynor2004}, in $\R^{2n+1}$ \cite{ekholm2005R2n+1}, in $P\times \R$ where $P$ is an exact symplectic manifold \cite{ekholm2007PxR} \cite{Bjorklund2016}, in lens spaces \cite{licata2011lensspaces}, in Seifert fibered spaces with a transverse contact structure \cite{licata2013seifert}, in connected sums of $S^1 \times S^2$ viewed as boundaries of subcritical Weinstein $4$-manifolds \cite{ekholmNg2015weinstein}, and in open book decompositions with once-punctured torus pages \cite{Brand2023}. As by its original definition in \cite{chekanov2002dga}, one can avoid certain analytic technicalities in $\R^{3}$ entirely and describe the algebra combinatorially using the knot’s image under the \textit{Lagrangian projection} $\pi:\R^3\to \R^2; (x,y,z)\mapsto (x,y)$. The key observation is that a combinatorial model arises whenever the Reeb flow admits a projection to a fibration over a suitable two–dimensional complex base. Under such a projection, one may identify Reeb chords with double points of the knot, and holomorphic disks with certain rigid polygons. This is proved in \cite{ENS2002}.

\subsection{Main results}
Let $(\Sigma \times \R, \xi_\Gamma)$ be a thickened closed convex surface, i.e. $\Sigma \times \R_t$ is a contact manifold such that $\frac{\partial}{\partial t}$ is a contact vector field and $\Gamma = \{p \in \Sigma : (\frac{\partial}{\partial t})_p \in (\xi_\Gamma)_p\}$ is a union of circles dividing $\Sigma$ into two surfaces with boundary $\Sigma_\pm$. See \Cref{sec:background} for a review of convex surface theory. Assume $(\Sigma\times \R,\xi_\Gamma)$ is hypertight and $c_1(\xi_\Gamma) = \chi(\Sigma_+) - \chi(\Sigma_-) = 0$. Let $\Lambda \subset \Sigma \times \R$ be null-homologous Legendrian knot.  
In this paper, we prove the following theorem:

\begin{theorem}\label{thm: main 1} \it
    To $\Lambda \subset (\Sigma \times \R, \xi_\Gamma)$ one can associate a combinatorially defined differential graded algebra $(\widehat{\mathcal{A}},|\cdot|,\widehat{\partial})$, whose stable tame isomorphism type is invariant under Legendrian isotopy.
\end{theorem}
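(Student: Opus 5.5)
The plan follows Chekanov's original combinatorial argument, adapted to the projection $\pi:\Sigma\times\R\to\Sigma$ and the extra boundary data coming from $\Gamma$.

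\textbf{Step 1: the algebra and its grading.} After a Legendrian isotopy we put $\Lambda$ in general position, so that $\pi(\Lambda)$ has transverse double points and meets $\Gamma$ transversally. Away from $\Gamma$ the contact structure looks like $dt+\beta_\pm$ on $\Sigma_\pm\times\R$. Near $\Gamma$ it is rotated by the $t$-invariant form $u\,dt+\beta$. The Reeb chords of $\Lambda$ then correspond to two kinds of data:
\begin{itemize}
\item double points of $\pi(\Lambda)$, where the fiber is hit twice;
\item chords whose projection winds across $\Gamma$; these are indexed by lifts, which is why there are countably many generators.
\end{itemize}
We let $\widehat{\mathcal A}$ be the free unital algebra over $\Z/2[H_1(\Lambda)]$ or $\Z/2$ on these chords. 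Since $c_1(\xi_\Gamma)=0$ and $\Lambda$ is null-homologous, a capping surface gives a well-defined $\Z$-valued rotation-number grading, computed from a trivialization of $\xi_\Gamma$ over $\Sigma$. The trivialization exists because $\chi(\Sigma_+)=\chi(\Sigma_-)$. We define $\widehat\partial$ on a generator $a$ as a sum over immersed polygons in $\Sigma$ with boundary on $\pi(\Lambda)\cup\Gamma$, with one positive corner at $a$ and negative corners at chords $b_1,\dots,b_k$, each weighted by the word $b_1\cdots b_k$. We will need three finiteness facts:
\begin{itemize}
\item An action/area argument shows each sum is finite. The action difference equals $\int \beta$ over the polygon, and $d\beta_\pm$ is an area form on $\Sigma_\pm$ of sign compatible with $\Gamma$.
\item Hypertightness excludes contractible Reeb orbits.
\item Hypertightness also excludes disks with boundary only on $\Gamma$, which would otherwise obstruct compactness.
\end{itemize}
A Maslov-index computation for immersed polygons shows that $\widehat\partial$ has degree $-1$.

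\textbf{Step 2: $\widehat\partial^2=0$.} We follow Chekanov: the terms of $\widehat\partial^2 a$ are pairs of polygons glued at a negative corner of the first. Each such glued pair is one end of a one-parameter family of immersed polygons with an obtuse (non-convex) corner. Cutting along $\pi(\Lambda)$ from that corner produces the other end, which is another glued pair, so terms cancel in pairs. The new feature is the arcs of $\Gamma$, which give extra degenerations:
\begin{itemize}
\item cuts can run into $\Gamma$;
\item a boundary segment on $\Gamma$ can slide past an intersection point of $\pi(\Lambda)\cap\Gamma$.
\end{itemize}
We must check that these degenerations also come in canceling pairs, or produce no boundary at all. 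This uses the fact that $\Gamma$ consists of closed curves disjoint from themselves.

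\textbf{Step 3: invariance.} By a genericity argument, any Legendrian isotopy decomposes into planar isotopies of $\pi(\Lambda)$ together with finitely many elementary moves:
\begin{itemize}
\item the Reidemeister II and III moves in the front-free Lagrangian projection;
\item the new moves where $\pi(\Lambda)$ passes through $\Gamma$: a tangency of $\pi(\Lambda)$ with $\Gamma$, and a double point of $\pi(\Lambda)$ crossing $\Gamma$.
\end{itemize}
Isotopies avoiding these moves give isomorphic DGAs. Because $t$-translation is contactomorphic, we may assume the $t$-coordinates are generic, so chord lengths never coincide. For Reidemeister III we write down an explicit tame isomorphism. For Reidemeister II, and for the tangency move that creates a pair of chords $a,b$ with $\widehat\partial a=b+\dots$, we construct a stable tame isomorphism by Chekanov's inductive filtration-by-action argument. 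This uses that the new chord $a$ has the smallest action and that $\widehat\partial a=b+v$ with $v$ involving only lower-action generators.

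The main obstacle is the moves involving $\Gamma$. Crossing $\Gamma$ changes the local model of the Reeb field, and can create or destroy an infinite family of winding chords at once. The stabilization must therefore be done with countably many destabilizations, compatible with the action filtration. I would try first to show that each such family pairs up as $\{a_k,b_k\}$ with $\widehat\partial a_k=b_k+(\text{lower action})$. Then I would define the tame isomorphism as a limit of the Chekanov-type maps, which is well defined because it stabilizes on each action window.
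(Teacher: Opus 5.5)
\textbf{Gap 1: the differential on chords along $\Gamma$.} As you define it, $\widehat\partial$ is a single count of immersed polygons in $\Sigma$ with boundary on $\pi(\Lambda)\cup\Gamma$, and this is not a workable differential. It admits polygons whose interior crosses $\Gamma$. For those the height/area estimate is unavailable: $d\beta$ is positive on $\Sigma_+$ and negative on $\Sigma_-$, and the estimate also uses $u\equiv 1$ on the polygon. It also admits polygons with several negative ends on $\Gamma$.

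The missing idea is the two-sided structure $\widehat{\mathcal A}=\mathcal A_+*\mathcal A_\Gamma*\mathcal A_-$:
\begin{enumerate}
\item Double points get $\widehat\partial c_\pm=\partial_\pm c_\pm$, counting only polygons inside $\Sigma_\pm$.
\item Chords along $\Gamma$ carry the deconcatenation differential $\partial_\Gamma$, together with transfer maps $\Phi_\pm:\mathcal A_\Gamma\to\mathcal A_\pm$ that count polygons in $\Sigma_\pm$ whose positive end is a single $\Gamma$-side.
\item The degree-shifted generators get $\widehat\partial\widehat c_\Gamma=\Omega(\partial_\Gamma c_\Gamma)+\Phi_+(c_\Gamma)+\Phi_-(c_\Gamma)$, where $\Omega(c^1\cdots c^n)=\sum_k\Phi_+(c^1\cdots c^{k-1})\,\widehat{c}^{k}\,\Phi_-(c^{k+1}\cdots c^n)$.
\end{enumerate}
Terms such as $\widehat\partial\bigl(\widehat{c*d*c}\bigr)=\widehat{c*d}+\widehat{d*c}$ for the unknot in $\T^2\times\R$ come from this convention, not from an honest polygon count.

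With this structure, $\widehat\partial^2=0$ reduces to two facts. First, $\Phi_\pm$ are chain maps; this is a broken-heart argument inside $\Sigma_\pm$, including obtuse corners with a strand ending on $\Gamma$. Second, an algebraic induction on word length gives $\widehat\partial\circ\Omega+\Phi_++\Phi_-=\Omega\circ\partial_\Gamma$.

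Hypertightness enters exactly in the chain-map step, not in finiteness; finiteness comes from the area bound inside $\Sigma_\pm$. When $w$ is a closed orbit along $\Gamma$, the two disks counted in $\Phi_+(w')\Phi_+(w'')$ can share both boundary arcs, leaving an unpaired degeneration. On $S^2$ this gives $\partial_+\Phi_+(a*b)=0\neq 1=\Phi_+\partial_\Gamma(a*b)$. So your reason that the $\Gamma$-degenerations cancel because $\Gamma$ consists of embedded closed curves points the wrong way: the closedness of $\Gamma$ is what creates the bad end.

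\textbf{Gap 2: the move where a double point passes through $\Gamma$.} You list this move, but it neither creates nor destroys chords, so there is nothing to destabilize and your pairing scheme does not reach it. What it needs is:
\begin{enumerate}
\item an explicit bijection between the infinitely many $\Gamma$-chords before and after, e.g.\ $c\leftrightarrow a*c*a$ and $a*c\leftrightarrow c*a$;
\item in case IV(a), after normalizing to $\widehat\partial a=b$, a height-ordered infinite composition of Chekanov-type elementary automorphisms;
\item in case IV(b), where $a$ and $b$ are not a cancelling pair, hand-built automorphisms such as $x\mapsto x+(x*b)\,a$ on the affected chords.
\end{enumerate}
Your countable-destabilization plan is what the paper does for the kink created across $\Gamma$ (its Reidemeister I), with cancelling families $\{c_i,c_i'\}$, $\{d_i,d_i'\}$ and $\{a,b\}$.

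\textbf{Smaller errors.} The grading is only defined modulo $2\rot(\Lambda)$, not in $\Z$, since capping paths differ by multiples of $\pi(\Lambda)$. Translating in $t$ does not change chord heights, so it cannot make them generic.
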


Our invariant does not comprise the full algebra of \cite{egh2000SFT} but can be thought of as some variant of a quotient of the full SFT algebra. When the Legendrian knot has a representative whose projection onto the convex surface is disjoint from $\Gamma$ (i.e. it is entirely contained in $\Sigma_+$ or $\Sigma_-$), our definition agrees with that of \cite{ekholm2007PxR}. Our definition is motivated in part by \cite{ekholm2016nonloose}, particularly Lemma 2.3. In \cite{avdek2023algebraicgiroux} the sutured contact homology (as defined in \cite{CGHH2011suturedcontacthomology}) of thickened convex surfaces $\Sigma \times \R$ is computed as the bilinearized contact homology of its dividing set $\Gamma$ viewed as a contact manifold with Liouville fillings $\Sigma_+$ and $\Sigma_-$. Avdek remarks that it is the closed-string analogue of \cite{ekholm2016nonloose} which (in the $1$-dimensional case) can essentially be viewed as computing the Legendrian contact homology of a Legendrian knot $\Lambda$ in $\Sigma \times \R$ built from a $0$-dimensional Legendrian link $\pi(\Lambda) \cap \Gamma$ in $\Gamma$ and two embedded exact Lagrangian fillings $\pi(\Lambda) \cap \Sigma_+$ and $\pi(\Lambda) \cap \Sigma_-$ as a bilinearized Legendrian contact homology. This paper aims to extend this computation to all Legendrian knots in $\Sigma \times \R$, which means that the exact Lagrangian fillings $\pi(\Lambda) \cap \Sigma_\pm$ are allowed to be immersed, i.e. $\pi(\Lambda) \cap \Sigma_\pm$ can have double-points. Although \cite{ekholm2016nonloose} actually proves that the bilinearized LCH indeed computes the LCH of $\Lambda$ using Morse flow trees, we will only use his computation as a definition. Our definition also appears to be similar to that of \cite{sivek2011bordered} \cite{pan2021functorial}. 

Let $\pi$ be the \textit{convex surface projection} $\Sigma \times \R \rightarrow \Sigma$ sending $(p, t) \mapsto p$. Up to Legendrian perturbation, any Legendrian knot $\Lambda$ in a thickened convex surface has a \textit{good projection} onto $\Sigma$, i.e. it has only transverse double-points and transverse intersections with the dividing set $\Gamma$. The Legendrian DGA of $\Lambda$ is generated by the double-points and (possibly self-overlapping) arcs along $\Gamma$ joining two points in $\pi(\Lambda) \cap \Gamma$. In particular, there are infinitely many generators along $\Gamma$. If near $\Gamma$ we have that $\Lambda$ intersects $\Gamma$ orthogonally and is contained in $\Sigma \times \{0\}$, then the generators correspond exactly to the Reeb chords. An appropriate Maslov grading can be assigned to each generator in essentially the same way as \cite{licata2013seifert}. The differential is defined by counting certain immersed polygons with boundary along $\pi(\Lambda) \cup \Gamma$. A more detailed definition of the construction is given in \Cref{sec:algebra}.

The well-definedness (\Cref{sec:d2 is 0}) and invariance (\Cref{sec:invariance}) of our invariant will be proved completely combinatorially, in a similar style as \cite{chekanov2002dga}. Namely, we use a broken-heart type argument for $\partial^2 = 0$ and an invariance argument by discretizing a Legendrian isotopy into a sequence of Reidemeister moves for the convex surface projection. The Reidemeister theorem we use is covered in \Cref{sec:Leg knots}. As an example, we prove in \Cref{sec:linearization}:

\begin{theorem}\label{thm: main 2} \it
    There exist two nullhomologous but not nullhomotopic Legendrian knots $\Lambda_1$ and $\Lambda_2$ in $(\Sigma_2\times \R, \xi_\Gamma)$ with identical classical invariants that are not Legendrian isotopic, where $(\Sigma_2, \Gamma)$ is a convex genus-$2$ surface with dividing curve $\Gamma$ as in \Cref{fig:combined chekanov}.
\end{theorem}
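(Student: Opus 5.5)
The plan follows Chekanov's original argument for the Chekanov pair $m(5_2)$, adapted to $\Sigma_2\times\R$. I will construct $\Lambda_1$ and $\Lambda_2$ explicitly as in \Cref{fig:combined chekanov}. Each is obtained by taking a front or Lagrangian diagram of the two Chekanov knots, placed in a disk region of $\Sigma_+$, and band-summing or isotoping it so that $\pi(\Lambda_i)$ crosses $\Gamma$ and runs around a handle of $\Sigma_2$. The crossing is arranged so that each $\Lambda_i$ is nullhomologous but not nullhomotopic; I would check this with a commutator-type loop in $\pi_1(\Sigma_2)$.

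The first step is to verify that the classical invariants agree. The two diagrams differ only inside a disk, by the same local modification that relates the Chekanov pair, while their behaviour along $\Gamma$ is identical. So the same smooth knot type and homotopy class can be confirmed by an explicit smooth isotopy supported in that disk. The rotation number can be computed from the winding of $\pi(\Lambda_i)$, using the grading conventions of \Cref{sec:algebra}. The Thurston–Bennequin number can be computed via a Seifert surface, or equivalently via the writhe-type count adapted to the convex projection. In both cases the local contributions coincide.

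The second step distinguishes the knots using \Cref{thm: main 1}. I will list the generators of $\widehat{\mathcal{A}}(\Lambda_i)$: the finitely many double points, together with the infinitely many $\Gamma$-arc chords. I will then compute their gradings. Since stable tame isomorphism preserves the set of linearized homologies (via augmentations), it suffices to compute every graded augmentation $\epsilon$ over $\Z/2$ and the Poincaré polynomials of the linearized complexes. An augmentation can only be nonzero on degree-$0$ generators, so only finitely many generators matter once the grading is shown to grow along the $\Gamma$-chords. Those chords wind repeatedly around a component of $\Gamma$, so their degree should increase linearly with the number of windings when $c_1=0$, provided the rotation along $\Gamma$ is nonzero. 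If the rotation along $\Gamma$ is zero, I would instead use a filtration by action or length.

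The main obstacle is enumerating the immersed polygons with boundary on $\pi(\Lambda_i)\cup\Gamma$ that contribute to the differential, especially those with corners on $\Gamma$-arcs. This is needed both to find all augmentations and to obtain the linearized differentials. I would try to localize the computation. Chekanov's local picture lives in a disk in $\Sigma_+$, so I would show that polygons either stay in that disk, where they reproduce Chekanov's computation, or pass through the $\Gamma$-region in the same way for both knots. This gives linearized Poincaré polynomials of the form $t+2$-type versus $t^{-1}+t+t$-type plus a common contribution from $\Gamma$. The remaining issue is to check that the infinite $\Gamma$ part splits off as an isomorphic direct summand in both cases. Given that splitting, the two sets of linearized homologies differ, and \Cref{thm: main 1} shows that $\Lambda_1$ and $\Lambda_2$ are not Legendrian isotopic.
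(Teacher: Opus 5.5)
Your outline follows the paper's strategy: put the Chekanov pair in $\Sigma_+$, route the knot across $\Gamma$ through the waist of $\Sigma_2$, and compare linearized homologies. But the localization step you flag as the main obstacle actually fails, and the failure removes the linearized comparison altogether.

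For the knots of \Cref{fig:combined chekanov}, which you say you will build, the Chekanov diagram is opened at $a_3$. That crossing ends up as a lone kink in $\Sigma_-$: the arc $\pi(\Lambda_i)\cap\Sigma_-$ has no other double point. The only immersed polygon in $\Sigma_-$ with boundary on $\pi(\Lambda_i)$ is the teardrop of that kink, since every other complementary region in $\Sigma_-$ meets $\Gamma$. Hence $\widehat{\partial}a_3=\partial_-a_3=1$ exactly. The disks that cancel the constant term of Chekanov's $\partial a_3$ in $\R^2$ would now have to cross $\Gamma$, and $\partial_-$ does not count them. The paper records $\widehat\partial a_3=1$ as well.

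Consequently no augmentation exists, since $\epsilon(\widehat\partial a_3)=\epsilon(1)=1$. In fact $H_*(\widehat{\mathcal A},\widehat\partial)=0$, because every cycle $x$ satisfies $x=\widehat\partial(a_3x)$. This holds for both $\Lambda_1$ and $\Lambda_2$. So both sets of linearized homologies are empty, and no answer of the form ``Chekanov's polynomials plus a common $\Gamma$-contribution'' can come out. Following the paper does not rescue this. After noting $\widehat\partial a_3=1$, it writes $\widehat\partial^\epsilon a_3=1$ and treats $1$ as an independent vector of the linearized complex, which is incompatible with $\epsilon$ being an augmentation.

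What is missing is a placement in which the generators whose disks are altered by the passage to $\Gamma$ still admit an augmentation, together with an explicit check of this. Since the theorem fixes only $(\Sigma_2,\Gamma)$, you are free to choose different knots, but this does not happen automatically. Wherever the Chekanov diagram is joined to the arcs running out to $\Gamma$ (by a band, or by opening a crossing), the bounded region of the diagram next to that spot merges with a region of $\Sigma_\pm$ that meets $\Gamma$ or contains a hole. Every polygon through that region is then lost. A generator whose only disk is a lone teardrop is what one typically sees for stabilized knots, so be suspicious of any placement that produces one. Choose the placement, verify the augmentation by hand, and recompute the classical invariants for it. Only then do the remaining parts of your outline become relevant: finiteness of the degree-$0$ generators along $\Gamma$, and splitting off the $\Gamma$-part as a common summand.
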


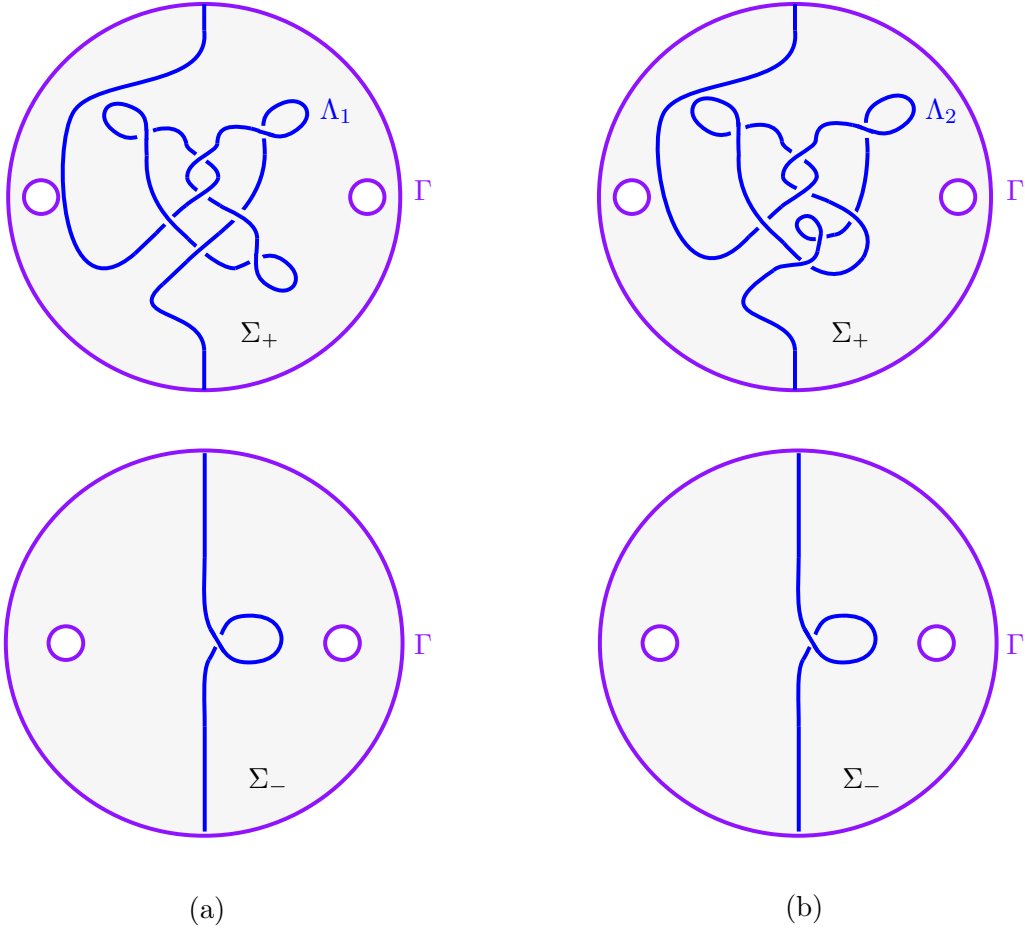
\begin{figure}
    \centering
    \input{images/Chekanov_example_combined}
    \caption{Two Legendrian knots in $\Sigma_2$ that are not Legendrian isotopic, but share the same classical invariants. The surface $\Sigma_2$ is constructed by identifying the pair of pants $\Sigma_+$ and $\Sigma_-$ along the dividing set $\Gamma$ in the obvious way. }
    \label{fig:combined chekanov}
\end{figure}


It may be possible to extend the results of this paper to define contact homology type invariants for arbitrary Legendrian knots in open book decompositions.


\subsection{Relation to the Eliashberg-Givental-Hofer invariants}
In this subsection we motivate how our combinatorial definition can be thought of as some variant of a simplification to the geometric model from \cite{egh2000SFT}.

First we claim that we may quotient out the Reeb orbits. In \cite{egh2000SFT} the algebra $\mathcal{A}(Y, \Lambda)$ is given by $S(C(Y, \alpha)) \otimes T(C(\Lambda))$, where $S(C(Y, \alpha))$ is the symmetric algebra on the vector space generated by the Reeb orbits $\gamma_i$ and $T(C(\Lambda))$ is the tensor algebra on the vector space generated by the Reeb chords $c_i$. The differential $\widehat{\partial}$ on $\mathcal{A}(Y, \Lambda)$ is defined by \[\widehat{\partial}(\gamma) = \sum \#(\mathcal{M}(\gamma; \{\gamma_1, \dots , \gamma_k\})/\mathbb{R}) \gamma_1\dots \gamma_k\] and \[\widehat{\partial}(c) = \sum \#(\mathcal{M}(c; \{\gamma_1, \dots , \gamma_k\}, \{c_1, \dots , c_l\})/\mathbb{R}) \gamma_1\dots \gamma_k c_1\dots c_l,\] where $\mathcal{M}(\gamma; \{\gamma_1, \dots , \gamma_k\})$ is the $1$-dimensional moduli space of $J$-holomorphic curves with a positive end asymptotic to $\gamma$ and $k$ negative ends asymptotic to $\gamma_1, \dots , \gamma_k$, and \\ $\mathcal{M}(c; \{\gamma_1, \dots , \gamma_k\}, \{c_1, \dots , c_l\})$ is the $1$-dimensional moduli space of $J$-holomorphic curves with a positive end asymptotic to $c$, $l$ negative ends asymptotic to $c_1, \dots , c_l$, and $k$ interior punctures asymptotic to $\gamma_1, \dots , \gamma_k$.  Let $\mathcal{A}(Y, \Lambda)^+$ be the ideal in $\mathcal{A}(Y, \Lambda)$ generated by words of the form $\gamma_1\dots \gamma_k c_1\dots c_l$ such that $k \geq 1$. Our hypertightness assumption implies that $\mathcal{A}(Y, \Lambda)^+$ is a differential graded ideal, i.e. $\widehat{\partial}(\mathcal{A}(Y, \Lambda)^+) \subset \mathcal{A}(Y, \Lambda)^+$. Indeed, the condition fails only if $\widehat{\partial}$ counts a $J$-holomorphic disk bounding $\gamma_i$, which would imply that $\gamma_i$ is a contractible Reeb orbit. See \Cref{fig:J curve}. Therefore, our definition agrees with the quotient $\mathcal{A}_{\text{red}}(Y, \Lambda)$ of $\mathcal{A}(Y, \Lambda)$ by $\mathcal{A}(Y, \Lambda)^+$ on the level of generators.

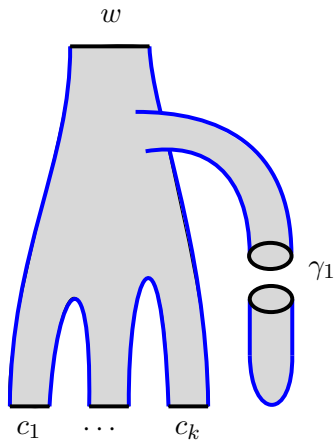
\begin{figure}[h]
    \centering
    \input{images/curve_justification}
    \caption{Contractible Reeb orbits are forbidden.}
    \label{fig:J curve}
\end{figure}

Second, we claim that the differential on $c_\pm$ should count rigid polygons contained wholly in $\Sigma_\pm$, or correspondingly $J$-holomorphic punctured disks in the symplectization of $\Sigma_\pm \times \R\subset \Sigma\times \R$.  This can be arranged by scaling the contact form near $\Gamma$ to be very large. After such a scaling, the height property obstructs polygons with a positive end in $\Sigma_\pm$ and negative ends on $\Gamma$ or $\Sigma_\mp$ from lifting to $J$-holomorphic curves. Hence, our differential agrees with the differential in \cite{chekanov2002dga} when acting on generators in $\Sigma_\pm$. 

Third, for the differential on $c_\Gamma$ we use the formula in \cite[Lemma 2.3]{ekholm2016nonloose}, which counts certain $J$-holomorphic curves with a positive end asymptotic to a Reeb chord along $\Gamma$ and several negative ends asymptotic to Reeb chords along $\Gamma$ or double-points in $\Sigma_\pm$. The formula does not involve any $J$-holomorphic curves with more than one negative ends along $\Gamma$ and we currently do not have any good conceptual reasons to exclude these curves so our differential on $c_\Gamma$ may be different from the actual SFT differential. However, we have the following conjecture: \begin{conjecture}
    Let $\Lambda \subset (\Sigma \times \R, \xi_\Gamma)$ be a Legendrian knot. Let $Y(\Sigma, \Gamma) \subset \Sigma \times \R$ be the convex sutured contact manifold associated to $(\Sigma, \Gamma)$ such that $\Lambda \subset Y(\Sigma, \Gamma)$. Then the sutured Legendrian contact homology dg-algebra $\mathcal{A}_{\text{red}}(Y, \Lambda)$ is quasi-isomorphic to the combinatorially-defined DGA $\widehat{\mathcal{A}}$ in this paper.
\end{conjecture} 

\subsection{Organization}
This paper is organized as follows. \Cref{sec:background} recalls some necessary background from contact geometry and introduces the contact manifold in which we will work. It also includes a review of some algebra background. \Cref{sec:Leg knots} develops a combinatorial framework for studying Legendrian knots in thickened convex surfaces. Specifically, we make use of a projection map to define Reidemeister-type moves for Legendrians and give combinatorial formulas for the classical invariants. \Cref{sec:algebra} defines the Chekanov-Eliashberg DGA for Legendrian knots, and \Cref{sec:linearization} computes the linearized homology for some first examples. Finally, \Cref{sec:d2 is 0} and \Cref{sec:invariance} establish the well-definedness of the DGA, proving that $\partial^2=0$ and that the DGA is stable-tame invariant under Legendrian isotopy respectively.

\subsection*{Acknowledgments}
    This paper originated as the thesis project of the second author proposed by his advisor Julian Chaidez. We thank him for formulating the problem, outlining the proof strategy, and other helpful discussions. We also thank Michael Hutchings, the first author's advisor, along with John Etnyre, Wenyuan Li, and Joshua Sabloff for many helpful discussions too. 
\section{Background}\label{sec:background}

\subsection{Convex surface theory}

We assume familiarity with the standard definitions of contact geometry. We recall some of them here for convenience, but refer the reader to \cite{etnyre2003convex} for a gentler introduction. In this subsection we give a brief review of convex surface theory (see also \cite[Sec. 4.6, 4.8]{geiges2006contact} and \cite{etnyre2025convex}). In later sections we will also assume basic facts about Legendrian knots (see \cite[Sec. 3.1, 3.2, 3.5]{geiges2006contact}) and the basic construction of Legendrian contact homology for the standard contact $\R^3$ (see \cite{Ghiggini2012} and \cite{EtnyreNg2022}).

Let $\Sigma$ be a closed, orientable surface. Let $(M,\xi)$ be a contact $3$-manifold, and suppose there exists an embedding $\Sigma\hookrightarrow M$. The surface $\Sigma$ is said to be \textit{convex} in $M$ if there exists a \textit{contact vector field} $v$ that is transverse to $\Sigma$. That is, $v$ is a vector field whose flow preserves the contact structure.  The \textit{dividing set}  of a convex surface $\Sigma \subset M$ is the locus where the contact form evaluates to zero along $v$:
    \[
    \Gamma = \{ p \in \Sigma \ | \ \alpha_p(v_p) = 0 \}.
    \]
    Equivalently, it consists of points where the vector field $v$ lies in the contact plane $\xi$. 

    Foundational work of Giroux \cite{giroux1991convex} says that a surface $\Sigma$ is convex if and only if its \textit{characteristic foliation} $\Sigma_\xi := \xi\cap T\Sigma$ is divided by a collection of embedded circles into $\Sigma_+$ and $\Sigma_-$, in the sense that there exists a volume form $\omega$ on $\Sigma$ and a vector field $X$ on $\Sigma$ directing $\Sigma_\xi$ pointing from $\Sigma_+$ to $\Sigma_-$ such that $\pm \mathcal{L}_X \omega > 0$ on $\Sigma_\pm$. This collection of embedded circles is the dividing set $\Gamma$ of $\Sigma$ with respect to some contact vector field $v$. 
    
    The dividing set $\Gamma$ determines a germ of the contact structure on a neighborhood of $\Sigma$ in $M$. On a neighborhood $\Sigma\times \R_t$ we can write $\alpha = \beta+u\de t$, where $\beta$ is some one form on $\Sigma$ and $u:\Sigma\to \R$ is some function, and $\Gamma = u^{-1}(0)$. The contact condition $\alpha \wedge \de \alpha > 0$ translates to \[\beta \wedge \de u + u \de \beta > 0.\] We can find a contact form $\alpha_0$ such that $u\equiv \pm 1$ away from a neighborhood of $\Gamma$. This is (a version of) the \textit{standard $\R$-invariant contact structure} with respect to $\Gamma$ on $\Sigma\times \R$, which we denote by $\xi_\Gamma$. Notice that $\Sigma\times \{0\}$ (and more generally $\Sigma \times \{t\}$) is convex in $\Sigma\times \R$. 
    We restrict from now on to the contact $3$-manifold $\Sigma\times \R$ equipped with the standard $\R$-invariant contact structure, along with some chosen dividing set $\Gamma$ so that $(\Sigma\times\{0\},\Gamma)$ is convex in $(\Sigma\times \R,\xi_\Gamma)$. We will often refer to $\Sigma\times \{0\}$ as simply $\Sigma$ and similarly $\Gamma\times \{0\}$ as $\Gamma$.

Define the \textit{Reeb vector field} $R$ to be the unique vector field on $\Sigma\times \R$ satisfying the conditions
\[\iota_R\de \alpha \equiv 0, \qquad \iota_R\alpha\equiv1.\]
For the standard contact form $\alpha_0$, it is easy to check that on $\Sigma_\pm$ we can identify $R$ with $\pm\partial_t$, and on $\Gamma$ we have that $R\in T\Sigma$. By definition, this means that on $\Gamma$, $R$ is tangent to $\Gamma$, see \Cref{fig:orientationsSigmapm}. We orient $\Sigma_\pm$ by a right hand rule relative to the Reeb vector field. Namely, loops that run counterclockwise in $\Sigma_+$ are considered positively oriented, and loops that run counterclockwise in $\Sigma_-$ are negatively oriented. 

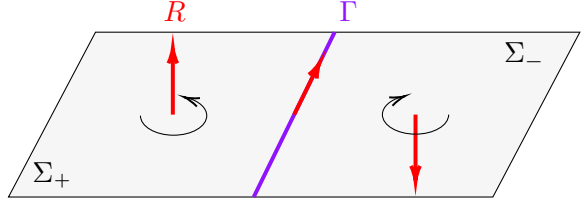
\begin{figure}[h]
    \centering
    \input{images/orientations_on_Sigmapm}
    \caption{Relative orientations on $\Sigma_\pm$.}
    \label{fig:orientationsSigmapm}
\end{figure}

\subsection{Stable tame automorphisms for countably generated DGAs}

In this subsection we review the notions of infinite composition of elementary automorphisms and infinite stabilization introduced by \cite[Sec. 2.6]{ekholmNg2015weinstein} extending their finite analogs in \cite[Sec. 2]{chekanov2002dga}.

\begin{definition}
    Let $\mathcal{A}$ be a semi-free DGA over $\Z_2$ generated by $c_1, c_2 \dots$. An ordering $c_{\sigma(1)} < c_{\sigma(2)} < \dots$ of the generators of $\mathcal{A}$ (where $\sigma$ is a permutation) induces a filtration \[\Z_2 = \mathcal{F}^0 \mathcal{A} \subset \mathcal{F}^1 \mathcal{A} \subset \mathcal{F}^2 \mathcal{A} \subset \dots \] by $\mathcal{F}_i = \Z_2\langle c_{\sigma(1)}, \dots, c_{\sigma(i)} \rangle$. An \textit{elementary} automorphism of $\mathcal{A}$ is a DGA automorphism from $\mathcal{A} \rightarrow \mathcal{A}$ such that there exists a generator $c_{\sigma(i)}$ with \begin{align*}
        c_{\sigma(i)} &\mapsto c_{\sigma(i)} + v \\
        c &\mapsto c \text{ if } c \neq c_{\sigma(i)},
    \end{align*} where $v \in \mathcal{F}^{i-1} \mathcal{A}$. An ``infinite composition of elementary automorphisms'' is a DGA automorphism such that there exist generators $c_{\sigma(i_1)}, \dots, c_{\sigma(i_n)}$ such that for all $1 \leq k \leq n$, \begin{align*}
        c_{\sigma(i_k)} &\mapsto c_{\sigma(i_k)} + v_k \\
        c &\mapsto c \text{ if } c \neq c_{\sigma(i_k)},
    \end{align*} where $v_{i_k} \in \mathcal{F}^{i_k-1} \mathcal{A}$. (Note that the elementary automorphism only depends on the existence of such a filtration and the particular choice of filtration is irrelevant.) A \textit{tame} automorphism of $\mathcal{A}$ is a finite composition of ``infinite compositions of elementary automorphisms''. Note that the orderings for the infinite compositions of elementary automorphisms need not be the same.
\end{definition}

\begin{remark}
    An elementary automorphism in the sense of \cite{ekholmNg2015weinstein} is a possibly infinite composition of elementary automorphisms in the sense of \cite{chekanov2002dga}, where a finite composition of elementary automorphisms is called a tame automorphism. A tame automorphism in the sense of \cite{ekholmNg2015weinstein} is a finite composition of elementary automorphisms in the sense of \cite{ekholmNg2015weinstein}. In this paper we call an elementary automorphism in the sense of \cite{chekanov2002dga} elementary and an elementary or tame automorphism in the sense of \cite{ekholmNg2015weinstein} tame.
\end{remark}

Finally the notion of countably many stabilization operations extends naturally from the finite case.

\begin{definition}
    Let $E = T(\{a_i, b_i\}_{i \in I})$, where $I = \{1, 2, \dots, n\}$ or $\N$, $|b_i| = |a_i| - 1$, and $\partial_E a_i = b_i$, $\partial_E b_i = 0$. Then a \textit{stabilization} of $\mathcal{A}$ is the free product $\mathcal{A} \sqcup (E, \partial_E)$.
\end{definition}

\section{Legendrian knots in thickened convex surfaces}\label{sec:Leg knots}

In this section we study Legendrian knots in $\Sigma \times \R$ and Legendrian Reidemeister moves for the convex surface projection $\pi: \Sigma \times \R \rightarrow \Sigma\times \{0\}$.

Let $(\Sigma, \Gamma)$ be a convex surface with dividing curve $\Gamma$. We equip $\Sigma\times \R$ with the contact structure $ \xi_\Gamma:=\ker(\alpha_0)$ as in the previous section. Let $\Lambda$ be a Legendrian knot in $(\Sigma \times \R, \xi_\Gamma)$. Later in this paper we assume the following conditions: (1)  $c_1(\xi_\Gamma) = 0$, (2) $\Lambda$ is null-homologous, and (3) $(\Sigma\times \R,\ker(\alpha_0))$ is hypertight, although none of these assumptions are needed for this section.

First we investigate the question of lifting an immersed loop $K$ in $\Sigma$ to a Legendrian arc in $\Sigma \times \R$. A necessary condition is that $TK|_\Gamma \subset \ker \beta$. The following lemma shows that it is also sufficient.

\begin{lemma}\label{lem:legendrian lift}
    Let $K:S^1\hookrightarrow \Sigma$ be an immersion. Then $K$ lifts to a Legendrian arc $\Lambda \subset \Sigma \times \R$ if and only if $TK|_\Gamma \subset \ker \beta$, i.e. $K$ is tangent to the characteristic foliation along $\Gamma$.
\end{lemma}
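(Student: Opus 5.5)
The plan is to write the lift as $t\mapsto (K(s), h(s))$ and translate the Legendrian condition into an equation for the height function $h$. With $\alpha_0 = \beta + u\,\mathrm{d}t$, the lift $\tilde K(s) = (K(s),h(s))$ is Legendrian exactly when
\[
\beta(K'(s)) + u(K(s))\,h'(s) = 0 \quad \text{for all } s.
\]
\textbf{Necessity.} Suppose $K(s)\in\Gamma$. Then $u(K(s))=0$, so the equation forces $\beta(K'(s))=0$, which is the condition $TK|_\Gamma\subset\ker\beta$. Since $\Gamma$ is exactly where $\partial_t\in\xi$, this says $K$ is tangent to the characteristic foliation at those points.

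\textbf{Sufficiency.} Away from $K^{-1}(\Gamma)$ the equation has the explicit solution
\[
h'(s) = -\frac{\beta(K'(s))}{u(K(s))},
\]
so $h$ is determined up to an additive constant on each arc of $K$ off $\Gamma$. I would assume, as good projection arranges, that $K$ meets $\Gamma$ transversally in finitely many points $s_0$. The task is then to show the right-hand side extends smoothly across each $s_0$, and that the resulting $h$ closes up (or the lift is only claimed as an arc).

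For smoothness, near $s_0$ both $f(s)=\beta(K'(s))$ and $g(s)=u(K(s))$ vanish at $s_0$. Transversality of $K$ to $\Gamma=u^{-1}(0)$, with $0$ a regular value of $u$, gives $g'(s_0)=\mathrm{d}u(K'(s_0))\neq 0$. By Hadamard's lemma, $f=(s-s_0)f_1$ and $g=(s-s_0)g_1$ with $g_1(s_0)\neq0$, so $f/g=f_1/g_1$ is smooth near $s_0$. Integrating then gives a smooth $h$ on the whole parameter interval, and this yields a Legendrian lift.

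The main obstacle is the case where $K$ is tangent to $\Gamma$, so that $g$ vanishes to higher order. I would handle it in one of two ways:
\begin{itemize}
\item restrict to transverse intersections, which is the good-projection setting used later; or
\item use the contact condition $\beta\wedge\mathrm{d}u+u\,\mathrm{d}\beta>0$. Along $\Gamma$ this reads $\beta\wedge\mathrm{d}u>0$, so $\ker\beta$ is transverse to $\Gamma$. A curve tangent to both $\ker\beta$ and $\Gamma$ at a point would therefore have $K'=0$, which contradicts $K$ being an immersion. Hence the hypothesis $TK|_\Gamma\subset\ker\beta$ itself forces transversality.
\end{itemize}
The second option is the one I would use, since it closes the gap without any extra assumption.

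Finally, for the lift to be a closed loop rather than an arc, one needs $\int_{S^1} h'\,\mathrm{d}s = 0$. The statement asks only for a Legendrian arc, so I would note this period as the obstruction to closing up and not address it further. Distinct parameter values never map to the same point in the lift, since $K$ is embedded, so the lift is embedded.
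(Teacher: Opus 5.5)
Your proposal is correct and follows the paper's proof. Both use the equation $\beta(K')+u\,h'=0$, get necessity from $u=0$ on $\Gamma$, and get transversality $\mathrm{d}u(K')\neq 0$ from $\beta\wedge\mathrm{d}u\neq 0$ along $\Gamma$ (your second option is exactly the paper's argument). The differences are cosmetic: you use Hadamard's lemma where the paper uses L'H\^opital's rule, which gives you a smooth rather than merely continuous $h'$, and you integrate once globally instead of lifting each arc of $K\setminus\Gamma$ separately and matching constants at all but one junction.
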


\begin{remark}
    We emphasize that the lift of $K$ in the above lemma is a Legendrian \textit{arc} which may not be closed. There should be a condition for closedness in terms of some area constraint similar to the $\R^3$ case (and also a combinatorial criterion similar to that of \cite[Sec. 11]{chekanov2002dga}). Instead of formulating these we will give a procedure in the next subsection that checks closedness in some special cases which will suffice for all but one examples in this paper.
\end{remark}

\begin{proof}
    Let $f:[0,1]\to \Sigma$ be a parametrization of $K$. We divide $K$ into sub-arcs $\{K_j\}_{j\in J}$ by cutting along $\Gamma$, and denote the induced parametrization of $f$ on each $K_j$ by $f_j:[0,1]\to \Sigma$. Note that $u(f_j(i))=0$ for $i=0,1$. First, suppose that $K$ lifts to a Legendrian. Then
    \[TK\subset \ker(\alpha)\implies (\beta+u\de t)_{f_j(i)}(f_j'(i))=0 \implies \beta_{f_j(i)}(f_j'(i))=0.\]
    Now suppose that we only know $TK|_\Gamma\subset \ker(\beta)$, i.e. that $\beta_{f_j(i)}(f_j'(i))=0$. We'd like to build a lift of each arc $f_j:[0,1]\to \R$ in $\Sigma$ to some $\tilde{f_j}:[0,1]\to \Sigma\times \R$ such that 
    \begin{enumerate}
        \item each $\tilde{f_j}$ is a parametrization of a Legendrian arc, and 
        \item $\tilde{f}_j(1)=\tilde{f}_{j+1}(0)$ for all except possibly one $j\in J$, so that their concatenation $\tilde{f} := \cup_{j\in J} \tilde{f}_j$ is a connected Legendrian lift of $f$.
    \end{enumerate}

    To accomplish (1), fix some $j\in J$. Since $\alpha = \beta+u\de t$ is contact, 
\[0 \neq \iota_{f_j'(i)}(\beta \wedge \de u + u\de \beta) = \iota_{f_j'(i)} (\beta \wedge \de u) = -\de u(f_j'(i))\beta,\] 
    which implies $(u \circ f_j)'(i) \neq 0$. By L'Hôpital's rule, the limit \[\lim_{s \rightarrow i} \frac{-\beta_{f_j(s)}(f_j'(s))}{u(f_j(s))} = \lim_{s \rightarrow i} \frac{-\frac{\de}{\de s} \beta_{f_j(s)}(f_j'(s))}{(u \circ f_j)'(s)} =: L\] exists. 
    (Since $\beta_{f_j(i)}(f_j'(i))=0$, by continuity $\lim_{s\to i}\beta_{f_j(s)}(f_j'(s))=0$ too).
    Define $g: [0, 1] \rightarrow \R$ by \[g(s) = \begin{cases}
        \frac{-\beta_{f_j(s)}(f_j'(s))}{u(f_j(s))} \text{ if } s \in (0, 1) \\
        L \text{ if } s = 0, 1
    \end{cases}.\]
    Fix any $s_0 \in (0, 1)$ and $C \in \R$. Set $t_j(s_0) := C$ and \[t_j(s) := t_j(s_0) + \int_{s_0}^s g(v) \de v.\] Then $(f_j(s), t_j(s))$ is a Legendrian lift of $f_j(s)$. By picking suitable $t_j(s_0)$ one can arrange that the Legendrian lifts of $f_j$ match up at all $f_j(i)$, $i = 0, 1$ except possibly at one $f_j(1) = f_{j+1}(0)$.
\end{proof}

\begin{remark}
    It is clear from the proof above that the Legendrian lift of a loop that lifts to a closed Legendrian arc (i.e. a Legendrian knot) is unique up to vertical translation.
\end{remark}

\subsection{Examples}\label{subsec:examplesOfLegendrians} \text{ }

In this subsection we describe a procedure for producing examples of Legendrian knots in $\Sigma \times \R$ when $\Sigma_+$ and $ \Sigma_-$ are homeomorphic.

\begin{construction}\vphantom{}\label{LegConstr}
    \begin{enumerate}[leftmargin=3.5em]
        \item[\textit{Step 1:}] Start with a Legendrian knot in $\R^3$. \cite[Corollary 11.3]{chekanov2002dga} gives a combinatorial criterion for a diagram in $\R^2$ to admit a Legendrian lift. Drawing this diagram in the interior of $\Sigma_+$ already gives a Legendrian knot in $\Sigma \times \R$. 
        \item[\textit{Step 2:}] Isotope the Legendrian knot so that it has an even number of cusps along $\Gamma$. See \Cref{addCusp}. Note that the isotopy is Legendrian since the area constraint is always satisfied.
        \item[\textit{Step 3:}] Using the symmetry of the contact form $\alpha_0$ with respect to $\Gamma$ we can ``flip'' half of the diagram from $\Sigma_+$ to $\Sigma_-$. This gives a Legendrian knot in $\Sigma \times \R$ whose projection intersects $\Gamma$.
    \end{enumerate}
\end{construction}

\begin{figure}[h]
    \begin{center}
    	\input{images/addCusp}
        \caption{An isotopy of a Legendrian arc which adds a cusp point along $\Gamma$.}
    	\label{addCusp}
    \end{center}
\end{figure}

See  \Cref{stdUnknotConstruction} and \Cref{unknotConstruction} for some examples.

\begin{figure}[h]
    \begin{center}
    	\input{images/stdUnknotConstruction}
        \caption{A Legendrian unknot in $\T^2 \times \R$.}
    	\label{stdUnknotConstruction}
    \end{center}
\end{figure}
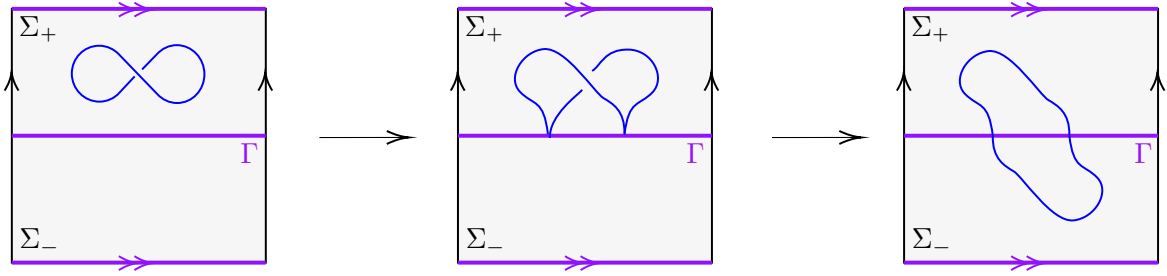

\begin{figure}[h]
    \begin{center}
    	\input{images/unknotConstruction}
        \caption{Another Legendrian knot in $\T^2 \times \R$.}
    	\label{unknotConstruction}
    \end{center}
\end{figure}
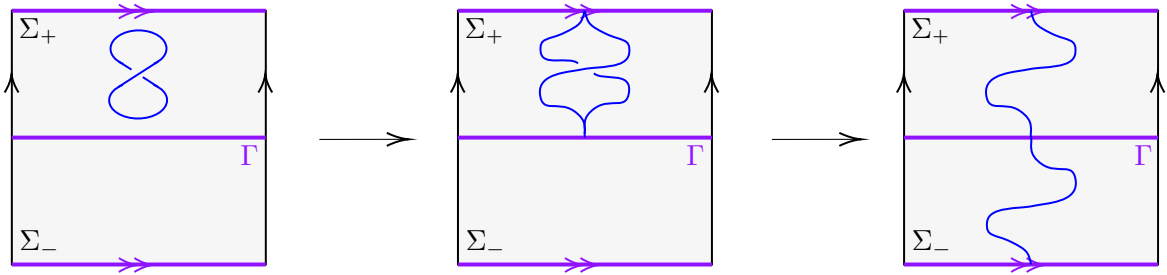


\subsection{Legendrian Reidemeister theorem for the convex surface projection}

In this subsection we state the Legendrian Reidemeister theorem which will be used later to prove the invariance of our invariant under Legedrian isotopy. The proof will be postponed to a future paper \cite{Rong2026}.

First we define the notions of good projections for Legedrian knots and isotopies.

\begin{definition}\label{dfn: reidemeister}
	Let $(\Sigma \times \R, \xi_\Gamma)$ be a thickened convex surface. Let $\Lambda \subset \Sigma \times \R$ be a Legendrian knot. Then $\Lambda$ has a \textit{good} projection if $\pi(\Lambda)$ is an immersed submanifold of $\Sigma$ whose only self-intersections are transverse double-points off $\Gamma$ and $\pi(\Lambda)$ intersects $\Gamma$ transversely.

    Let $\Lambda_t$ be a Legendrian isotopy in $(\Sigma \times \R, \xi_\Gamma)$ between two Legendrian knots $\Lambda_0$ and $\Lambda_1$ having good projections. Then $\Lambda_t$ has a \textit{good} projection if it can be discretized into a sequence of Reidemeister-like moves:
	\begin{enumerate}[label=(\alph*)]
	    \item  Reidemeister I move with bifurcation point on $\Gamma$,
        \item  Reidemeister II move with bifurcation point off $\Gamma$,
        \item  Reidemeister IIIa and IIIb move with bifurcation point off $\Gamma$,
        \item  Reidemeister III-like move with two strands of $\Lambda$ and $\Gamma$, which we call the Reidemeister IV move.
	\end{enumerate}
	These moves are represented locally in \Cref{Reidemeister}.
\end{definition}

\begin{figure}[h]
    \begin{center}
    	\input{images/Reidemeister_moves}
    	\caption{Reidemeister moves for the convex surface projection.}
    	\label{Reidemeister}
    \end{center}
\end{figure}
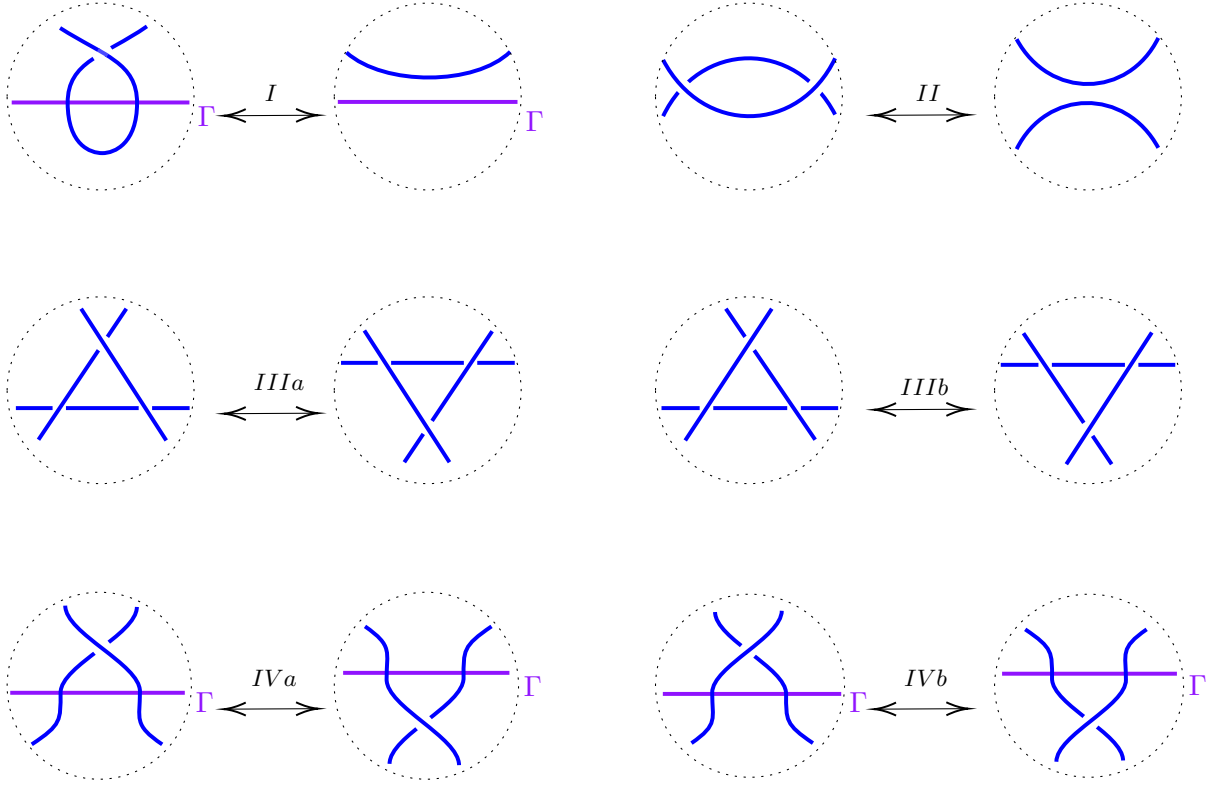

\begin{theorem}[Legendrian Reidemeister theorem]\vphantom{} \it
Let $(\Sigma \times \R, \xi_\Gamma)$ be a thickened convex surface. Let $\Lambda$ be a Legendrian knot and $\Lambda_t$ be a Legendrian isotopy in $\Sigma \times \R$. Then we have:
\begin{enumerate}[label=(\alph*)]
    \item There exists an arbitrarily small Legendrian perturbation $\Lambda'$ of $\Lambda$ such that $\Lambda'$ has a good projection.
    \item There exists an arbitrarily small Legendrian perturbation $\Lambda'_t$ of $\Lambda_t$ such that $\Lambda'_0 = \Lambda_0$, $\Lambda'_1 = \Lambda_1$, and $\Lambda'_t$ has a good projection.
\end{enumerate}
\end{theorem}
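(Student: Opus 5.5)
The plan is a transversality argument in the space of Legendrian embeddings. The key tool is a front-type parametrization: locally near any point, a Legendrian is determined by a function, and small $C^\infty$ perturbations of that function give small Legendrian isotopies. Away from $\Gamma$, on $\Sigma_\pm$ we have $\alpha_0=\beta\pm\de t$, so the convex surface projection behaves exactly like the Lagrangian projection in $\R^3$. A Legendrian there is locally the lift of an immersed curve, and the only constraint is the closedness (area/action) condition. Near $\Gamma$ we use \Cref{lem:legendrian lift}: the projected curve must be tangent to the characteristic foliation $\ker\beta$ exactly at its points on $\Gamma$. Since $\de u\neq 0$ along $\ker\beta$ on $\Gamma$, such curves automatically cross $\Gamma$ transversely.

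For part (a), I would first use the Legendrian neighbourhood theorem. It identifies a neighbourhood of $\Lambda$ with a neighbourhood of the zero section in $J^1(S^1)$, so Legendrian perturbations correspond to $C^\infty$-small functions $h:S^1\to\R$ via $j^1h$. I would then treat the conditions of a good projection as transversality conditions on a finite-dimensional family of perturbations $h_1,\dots,h_N$ and apply parametric transversality (Sard) to the evaluation maps. The conditions to be achieved are:
\begin{itemize}
\item $\pi|_\Lambda$ is an immersion;
\item double points of $\pi(\Lambda)$ are transverse, with no triple points;
\item no double point lies on $\Gamma$;
\item $\pi(\Lambda)\pitchfork\Gamma$.
\end{itemize}
Two of these are automatic. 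Immersion holds because $\ker\alpha_0\cap\ker \de\pi=0$ wherever $u\neq0$. Along $\Gamma$, the vector $\partial_t$ lies in $\xi$, so $\pi|_\Lambda$ fails to be an immersion only where $T\Lambda=\langle\partial_t\rangle$; this is a codimension-two condition on the tangent line of a curve in a $3$-manifold and is removed generically. Transversality to $\Gamma$ follows from the lemma's computation. For double points and triple points, I would consider the map $(s_1,s_2)\mapsto\pi\Lambda(s_1)-\pi\Lambda(s_2)$ off the diagonal and check that the family $h_i$ spans enough directions; this is the standard argument from $\R^3$ \cite{chekanov2002dga}. A double point on $\Gamma$ has codimension three in $(s_1,s_2)$-space, so it is avoided generically.

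For part (b), I would run the same argument one parameter higher, for the family $\Lambda_t$ with $t\in[0,1]$ fixed at the endpoints. Generic one-parameter families have finitely many codimension-one degenerations, which must be listed. Off $\Gamma$ these are the usual ones: a tangency of two strands (II) and a triple point (IIIa/IIIb). The cases involving $\Gamma$ are:
\begin{itemize}
\item A double point crossing $\Gamma$. This gives the IV move, since two strands and $\Gamma$ meet in a triple-point configuration.
\item A failure of the immersion condition on $\Gamma$, where $T\Lambda$ momentarily becomes vertical. This gives move I with bifurcation on $\Gamma$, matching the cusp pictures in \Cref{addCusp}.
\end{itemize}
Then I would check that $\pi(\Lambda)$ can never be tangent to $\Gamma$ (again by the lemma), so no further $\Gamma$-moves arise. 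Finally, I would verify via local normal forms that each degeneration is crossed in the model way of \Cref{Reidemeister}.

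The main obstacle is this local analysis at $\Gamma$. Concretely, it requires showing that a vertical tangency of a Legendrian on $\Gamma$ is codimension one in families and unfolds exactly as move I. I would first try writing $\alpha_0$ near $\Gamma$ in the normal form $\alpha_0=y\,\de t+\de x$-type coordinates (with $u=y$) and parametrizing Legendrians as graphs $x=x(t)$, $y=-x'(t)$-style. In these coordinates the projection to $(x,y)$ is explicit, and the birth of a loop can be computed directly. The other delicate point is making the perturbations \emph{Legendrian} while keeping the knot closed: the global holonomy/area constraint must be preserved. This is automatic here, because we perturb through $j^1h$ rather than through projected curves.
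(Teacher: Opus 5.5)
The paper defers the proof of this theorem to a forthcoming paper, so your argument can only be judged on its own. Part (a) is sound. So is your overall transversality framework (perturbing through $j^1h$ so closedness is automatic, and the codimension counts). Your identification of the vertical-tangency event with move I is also correct: in the model $\xi=\ker(\de x+y\,\de t)$ the unfolding $x_\tau(t)=t^3-\tau t$, $y=\tau-3t^2$, is exactly a loop born across $\Gamma$. The gap is in part (b), where you identify the event ``a double point reaches $\Gamma$'' with move IV.

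By \Cref{lem:legendrian lift}, every strand of $\pi(\Lambda)$ through a point $p\in\Gamma$ is tangent to $\ker\beta_p$. So at that moment the two strands are tangent to each other, not in a transverse triple-point configuration with $\Gamma$. In your local model, after rescaling by a positive function on $\Sigma$, take $\xi=\ker(\de x+y\,\de t)$ and $\Gamma=\{y=0\}$. A strand crossing $\Gamma$ at $x=a_i$ then projects to $x=a_i+c_iy^2+O(y^3)$, where $c_i=-\tfrac12\,\de t/\de y$ at the crossing. For two strands at different heights, generically $c_1\neq c_2$ when $a_1=a_2$. The contact is therefore quadratic, and the nearby crossings solve $(c_1-c_2)y^2+O(y^3)=a_2-a_1$. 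As $a_2-a_1$ changes sign, a pair of crossings, one in $\Sigma_+$ and one in $\Sigma_-$, is born or dies at a tangency on $\Gamma$.

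A single crossing passing from $\Sigma_+$ to $\Sigma_-$, as drawn for IV, needs odd-order contact, i.e.\ additionally $c_1=c_2$. That is codimension two in one-parameter families. So genericity produces an ``RII move with bifurcation point on $\Gamma$'', which is not in the list of \Cref{dfn: reidemeister}, and your planned normal-form check fails exactly here.

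To get the statement as formulated you need an extra step. Combinatorially, this event equals an RII move off $\Gamma$ followed by a IV move. Move IV is realized by an explicit non-generic Legendrian isotopy: take lifts of $x=a_i+cy^2+d_iy^3$ with $d_1\neq d_2$, offset in height, and translate $a_2$. You must then show the trade can be made by an arbitrarily small perturbation of $\Lambda_t$. It cannot be $C^2$-small, since $c_1-c_2$ is a second-order quantity bounded away from $0$, so you would have to work in a weaker topology. Otherwise the list of moves has to be enlarged. Transversality alone will not produce move IV.
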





\subsection{Classical invariants}

In this subsection we compute the classical invariants of $\Lambda$ in terms of its convex surface projection assuming that $\Lambda$ is nullhomologous.

\begin{lemma} \it
	The Thurston-Bennequin number of $\Lambda$ is given by \[\tb(\Lambda) = -\frac{1}{2}|\pi(\Lambda) \cap \Gamma| + \writhe(\pi(\Lambda)).\]
\end{lemma}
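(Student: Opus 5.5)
We compute $\tb(\Lambda)$ as the linking number of $\Lambda$ with a push-off $\Lambda'$ along a vector field transverse to $\xi_\Gamma$ along $\Lambda$; this is well defined because $\Lambda$ is null-homologous. The natural candidate is the contact vector field $\partial_t$. It is transverse to $\xi_\Gamma$ exactly where $u\neq 0$, so it fails to be transverse precisely at the points of $\pi(\Lambda)\cap\Gamma$. This suggests splitting the proof into two pieces: the \emph{vertical push-off} $\Lambda^t=\Lambda+\epsilon\partial_t$, and a correction at each point of $\pi(\Lambda)\cap\Gamma$.

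\textbf{Step 1: the vertical push-off.} We compute $\mathrm{lk}(\Lambda,\Lambda^t)$ from the projection, exactly as in $\R^3$ with the Lagrangian projection. Since $\Lambda$ is null-homologous, choose a Seifert surface $S$. After perturbation, $\pi(\Lambda)$ has a good projection, so $S$ can be taken to be a ``checkerboard-type'' surface built from the diagram in $\Sigma\times\R$. We count the signed intersections of $\Lambda^t$ with $S$. These localize at the double points of $\pi(\Lambda)$: at each crossing the vertically pushed upper (or lower) strand meets the sheet of $S$ near the other strand. Each crossing contributes its sign, using the orientation convention on $\Sigma_\pm$ fixed in \Cref{fig:orientationsSigmapm}. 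Away from crossings, $\Lambda^t$ stays disjoint from $S$. This gives $\mathrm{lk}(\Lambda,\Lambda^t)=\writhe(\pi(\Lambda))$.

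The subtle point is that crossing signs in $\Sigma_-$ must be read with the reversed orientation. This is consistent because $R=-\partial_t$ there, so "over" and "under" are measured against the Reeb direction. We define $\writhe$ with exactly this convention, so that it matches.

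\textbf{Step 2: correction at $\Gamma$.} The contact framing differs from the framing determined by $\partial_t$ by the number of half-twists that $\xi$ makes relative to $\partial_t$ along $\Lambda$. Equivalently, it is the signed count of zeros of the projection of $\partial_t$ to the normal bundle of $\Lambda$ modulo $\xi$. These zeros are exactly the points of $\pi(\Lambda)\cap\Gamma$, and they are transverse by goodness of the projection.

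We do the local model computation near $\Gamma$. There $\alpha=\beta+u\,\de t$, with $\Lambda$ tangent to $\ker\beta$ and $u$ changing sign, as in the proof of \Cref{lem:legendrian lift}. As $\Lambda$ passes through such a point, the component of $\partial_t$ transverse to $\xi$ changes sign. The contact planes rotate past $\partial_t$ by a half-turn, always in the same sense, because $\alpha\wedge\de\alpha>0$ fixes the handedness; this is the computation $\iota(\beta\wedge \de u)\neq0$ from the proof of \Cref{lem:legendrian lift}. So each intersection point contributes $-\tfrac12$ to the relative twisting, with a sign that is independent of the direction of crossing. Since $\Lambda$ is a closed curve crossing $\Gamma$ transversely, $|\pi(\Lambda)\cap\Gamma|$ is even. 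Hence the total correction $-\tfrac12|\pi(\Lambda)\cap\Gamma|$ is an integer, and adding it to Step 1 yields the formula.

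\textbf{Main obstacle.} The hard step is verifying the uniform sign $-\tfrac12$ in Step 2. The most reliable check is the model from \Cref{addCusp}: the added cusp pair introduces two $\Gamma$-crossings and should behave like a stabilization-free Legendrian isotopy, hence leave $\tb$ unchanged. Together with a Reidemeister I move on $\Gamma$, which adds a crossing of writhe $\pm1$ and changes the count of $\Gamma$-crossings by $2$, this gives a consistency check. We can also calibrate with the standard unknot of \Cref{stdUnknotConstruction}, which should have $\tb=-1$.
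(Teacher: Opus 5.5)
Your argument is correct, but it is organized differently from the paper's. The paper never uses the vertical push-off. It pushes $\Lambda$ off along a section of $\xi$, so the push-off $\Lambda'$ already realizes the contact framing. By analogy with the Lagrangian-projection count in $\R^3$, it writes $\tb(\Lambda)$ as the signed number of crossings of $\Lambda'$ under $\Lambda$ in $\pi$, and reads these off the diagram. Each double point of $\pi(\Lambda)$ gives one such crossing with its own sign. Near $\Gamma$ the push-off switches sides of $\pi(\Lambda)$; since $\xi$ projects isomorphically onto $T\Sigma_\pm$ away from $\Gamma$, these switches alternate, giving a net contribution of $-1$ per pair of consecutive points of $\pi(\Lambda)\cap\Gamma$.

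You instead separate two statements. One is purely topological: $\mathrm{lk}(\Lambda,\Lambda+\epsilon\partial_t)=\writhe(\pi(\Lambda))$ for any knot with good projection. The other is local and contact-geometric: $u\circ\Lambda$ vanishes transversely exactly at $\pi(\Lambda)\cap\Gamma$, and at each zero $\partial_t$ passes through $\xi$ in a sense fixed by $\beta\wedge\de u>0$. Your route shows why the $\Gamma$-correction is local, uniform in sign, and independent of the direction of crossing; the paper gets the pairing from side-switching but takes the sign from a figure. The paper's route stays inside the familiar diagrammatic framework and needs no comparison of framings.

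A few things to tighten.
\begin{itemize}
\item Your Seifert-surface justification of Step 1 does not work as stated when $\pi(\Lambda)$ is not null-homotopic, as for the knots of \Cref{thm: main 2}. There the oriented resolution of $\pi(\Lambda)$ is null-homologous only as a whole, so there is no evident surface assembled from the diagram. Argue instead that $\mathrm{lk}(\Lambda,\Lambda+T\partial_t)=0$ for $T$ large, since a Seifert surface fits in a bounded slab. Lowering $T$ to $\epsilon$ then moves the push-off through $\Lambda$ exactly once at each double point, changing the linking number by that crossing's sign.
\item Your ``subtle point'' about $\Sigma_-$ is vacuous. Reversing both the orientation of $\Sigma$ and the up-direction leaves every crossing sign unchanged, so $\writhe$ is just the ordinary writhe in the oriented manifold $\Sigma\times\R$. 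Reversing only one of them would make Step 1 false.
\item The sign need not be calibrated, and calibrating against $\tb=-1$ for the standard unknot is circular unless that value is known independently. Compute it directly in the local model $\alpha=\de x+y\,\de t$, with $\Sigma$ oriented by $\de x\wedge\de y$ so that $\beta\wedge\de u>0$ and $\Sigma_+=\{y>0\}$, and take $\Lambda$ to be the $y$-axis. Then $\xi$ meets the normal plane $\langle\partial_x,\partial_t\rangle$ in the line spanned by $\partial_t-y\,\partial_x$. Orient that plane so that $T\Lambda$ followed by it is positive; then $\partial_t$ turns positively through this line for either direction of traversal. Hence $\mathrm{lk}(\Lambda,\Lambda+\epsilon\partial_t)=\tb(\Lambda)+\frac{1}{2}|\pi(\Lambda)\cap\Gamma|$, which together with Step 1 gives the formula.
\end{itemize}
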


\begin{proof}
    The lemma follows by combining the computation of $\tb(\Lambda)$ in the Lagrangian projection (see, e.g., \cite[Prop. 3.5.11]{geiges2006contact}) and \cite[Thm. 3.5.1]{etnyre2025convex}. Let $\Lambda'$ be the push-off of $\Lambda$ in the direction of a section $v$ of $\xi$ such that $v$ is transverse to $T\Lambda$ in $\xi$. By an argument analogous to \cite[Prop. 3.4.14]{geiges2006contact}, $\tb(\Lambda)$ equals the signed number of times $\Lambda'$ crosses under $\Lambda$, where the sign is determined as in \Cref{tb-sign}. Note that the local push-off diagrams near $\pi(\Lambda) \cap \Gamma$ in \Cref{tb-invariant}(a) must come in consecutive pairs as we trace along $\Lambda$, since for each segment $\Lambda_i$ contained in $\Sigma_\pm \times \R$ and its push-off $\Lambda_i'$ either $\Lambda_i$ is always to the left of $\Lambda_i'$ or $\Lambda_i$ is always to the right of $\Lambda_i'$ (the projection of $\xi|_{\Sigma_\pm}$ onto $T\Sigma_\pm$ is an isomorphism). Hence crossings of $\Lambda'$ under $\Lambda$ along $\Gamma$ contribute $-\frac{1}{2}|\pi(\Lambda) \cap \Gamma|$ to $\tb(\Lambda)$. For each self-intersection of $\pi(\Lambda)$ in $\Sigma_\pm$, there is a crossing of $\Lambda'$ under $\Lambda$, whose sign agrees with that of the self-intersection, as shown in \Cref{tb-invariant}(b). So the contribution of the double-points sum up to the writhe of $\pi(\Lambda)$.
\end{proof}

\begin{figure}[h]
    \begin{center}
    	\input{images/sign_of_a_crossing}
    	\caption{The sign of a crossing of $\Lambda$ over a Legendrian push-off $\Lambda'$.}
    	\label{tb-sign}
    \end{center}
\end{figure}
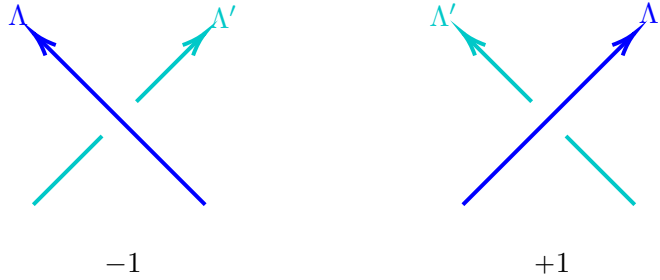

\begin{figure}[h]
    \begin{center}
    	\input{images/tb_invariant}
    	\caption{The contact framing of $\Lambda$, given by taking a Legendrian push-off $\Lambda'$.}
    	\label{tb-invariant}
    \end{center}
\end{figure}

\begin{remark}
    One can easily check that the formula for $\tb(\Lambda)$ is invariant under Legendrian Reidemeister moves for the convex surface projection.
\end{remark}

The rotation number $\rot(\Lambda)$ can be computed in a similar way as the grading of generators. See \Cref{sec: grading} for details.

\begin{example}
    Let $\Lambda$ be the standard Legendrian unknot in $\mathbb{T}^2 \times \R$ whose projection is shown in \Cref{stdUnknotConstruction}. Then $\tb(\Lambda) = -1$ and $\rot(\Lambda) = 0$.
\end{example}

\section{The algebra}\label{sec:algebra}

In this section we define the Chekanov-Eliashberg DGA in the convex surface projection. As already mentioned in the introduction, our definition is motivated by \cite[Lemma 2.3]{ekholm2016nonloose}. More precisely, our definition agrees with Ekholm's formula when $\pi(\Lambda)$ has no double-points,so that $\Gamma$ divides $\pi(\Lambda)$ into $1$-dimensional Lagrangian fillings of $0$-dimensional Legendrian links in $\Gamma$.

\subsection{Moduli space of rigid polygons} \text{ }

As a first step to defining the DGA, in this section we identify the compact $0$-dimensional moduli spaces of certain immersed polygons in $\Sigma_\pm$ with boundary in $\pi(\Lambda) \cup \Gamma$. These polygons will be used later to define the differential.

Assuming the projection $\pi$ is good, $\pi(\Lambda)\cup \Gamma$ divides $\Sigma\times \{0\}$ into \textit{immersed polygons}, whose corners can locally be perturbed to be multiples of $\pi/2$. For the remainder of this section, we consider any boundary arc of a polygon along $\Gamma$ as an acute corner of this type. Under this convention, all corners of immersed polygons are identified with Reeb chords of $\Lambda$. As in classical knot projections, we may remember overstrands and understrands of $\Lambda$ after projection to $\Sigma\times \{0\}$ by indicating a break in the understrand. We can define orientations for these disks by assigning decorations to each double point/crossing according to the rule in \Cref{fig:corner orientations}. A polygon has a \textit{positive end} at a Reeb chord $a$ if it has an acute corner at $a$ with a positive decoration or if it has a side along $\Gamma$ at $a$; it has a \textit{negative end} at $c$ if that corner carries a negative decoration.

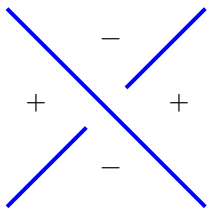
\begin{figure}[ht]
    \centering
    \input{images/corner_orientations}
    \caption{Orientations of corners in $\Sigma_\pm$.}
    \label{fig:corner orientations}
\end{figure}

\begin{definition}[Rigid polygon]
     A polygon is \textit{rigid} if it has exactly one positive end, and arbitrarily many negative ends. For a Reeb chord $c$ along $\Gamma$, denote by $\mathcal{M}_\pm(c,d_1,\dots,d_n)$ the moduli space of rigid polygons in $\Sigma_\pm$ with a positive end $c$ in $\Sigma_\pm$ or on $\Gamma$ and negative ends at Reeb chords $d_1,\dots,d_n$ in $\Sigma_\pm$.
\end{definition}

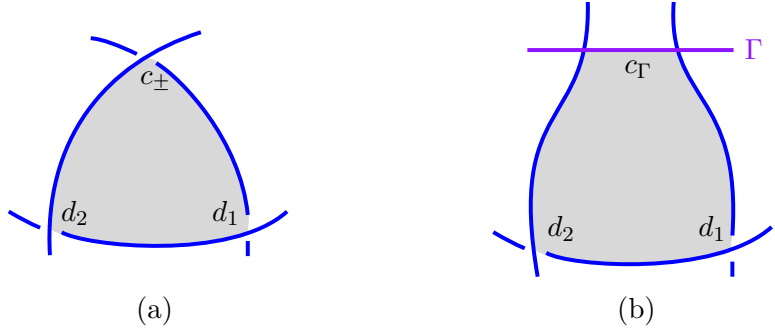
\begin{figure}
    \centering
    \input{images/immersed_polygons}
    \caption{(a) A rigid polygon with positive end $c_\pm$ in $\Sigma_\pm$ and (b) a rigid polygon with positive end $c_\Gamma$ on $\Gamma$.}
    \label{fig:placeholder}
\end{figure}

As in the standard construction of \cite[Sec. 6]{chekanov2002dga} for Legendrians in $(\R^3,\xi_{\text{std}})$, the presence of a rigid disk with positive end and negative end imposes constraints on the relative lengths of these chords: whenever two Reeb chords are connected by such a disk, they satisfy a corresponding \textit{height inequality}.

\begin{definition}[Height of a Reeb chord]
	Fix $\Sigma_+' \subset \Sigma_+$ such that all double-points of $\pi(\Lambda)$ in $\Sigma_+$ are contained in $\Sigma_+'$. Then fix a contact form $\alpha_0 = \beta + u\de t$ on $\Sigma\times \R$ for $\xi_\Gamma$ such that $u \equiv 1$ on $\Sigma_+'$. Let $c$ be either a double-point of $\pi(\Lambda)$ or a (possibly self-overlapping) arc along $\Gamma$ connecting two points in $\pi(\Lambda) \cap \Gamma$. In the former case, let $\gamma_c$ be the arc in $\Lambda$ from the lower strand to the upper strand of $c$; in the latter case, let $\gamma_c = \gamma_c^\parallel \cup \gamma_c^\perp$, where $\gamma_{c}^\parallel$ is a positively oriented (as determined by $\alpha_0|_{\Gamma}$) arc in $\Sigma_+'$ parallel to that along $\Gamma$ between the two points in $\pi(\Lambda) \cap \Gamma$, and $\gamma_c^\perp$ is the union of the vertical line segments joining the two ends of $\gamma_c^\parallel$ and $\Lambda$. See \Cref{height}. Define the \textit{height of $c$} as follows: if $c$ is a self-intersection of $\pi(\Lambda)$, then \[H(c) := t(\gamma_c(1)) - t(\gamma_c(0))\] where $t$ is the coordinate of $\R$. If $c$ is an arc along $\Gamma$, then \[H(c) := t(\gamma_c^\perp(1)) - t(\gamma_c^\perp(0)) + \int_{\gamma_c^\parallel} \beta.\] Denote by $R(\gamma_c^\parallel)$ the region in $\Sigma_+$ bounded by $\pi(\Lambda)$, $\Gamma$ and $\gamma_c^\parallel$. Define the \text{area} of an immersed polygon $\mathcal{P}$ in $\Sigma_+$ to be \[A(\mathcal{P}) := \int_{\mathcal{P} - R(\gamma_c^\parallel)} d\beta.\]
\end{definition}

\begin{figure}
    \begin{center}
    	\includegraphics[width=0.7\textwidth]{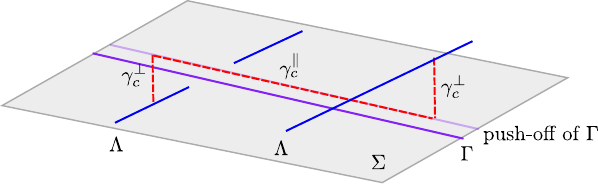}
    	\caption{Height of a Reeb chord along $\Gamma$.}
    	\label{height}
    \end{center}
\end{figure}

\begin{remark}
    Both the height of $c$ along $\Gamma$ and the area of a polygon with sides along $\Gamma$ depend on the choice of $\gamma_c^\parallel$. Specifically, the height and area increase as  $\gamma_c^\parallel$ approaches $\Gamma$.
\end{remark}

\begin{lemma}\label{heightProperty}
    Let $\mathcal{P}$ be an immersed polygon in $\Sigma_+$ with boundary in $\Gamma \cup \pi(\Lambda)$. Denote its set of positive/negative ends by $Q_\pm$. Then \[\sum_{c \in Q_+} H(c) - \sum_{c \in Q_-} H(c) > 0.\]
\end{lemma}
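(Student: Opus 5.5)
We prove the inequality by a Stokes-type computation, as in the height property of \cite[Sec. 6]{chekanov2002dga}: we identify the alternating sum of heights with the area $A(\mathcal{P})$ defined above, and then show that this area is positive.

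\emph{Step 1: the boundary integral.} Since $\alpha_0 = \beta + u\,\de t$ vanishes on $\Lambda$, along any arc of $\Lambda$ we have $\de t = -\beta/u$. On $\Sigma_+'$, where $u\equiv 1$, this reads $\de t = -\beta$. Let $\mathcal{P}' := \mathcal{P} - R(\gamma_c^\parallel)$. That is, we truncate $\mathcal{P}$ near $\Gamma$ by replacing each side along $\Gamma$ (a positive end $c$) with the parallel arc $\gamma_c^\parallel$ in $\Sigma_+'$. We may assume all sides of $\mathcal{P}'$ lie where $u\equiv 1$. This is arranged by choosing $\Sigma_+'$ to also contain the relevant pieces of $\pi(\Lambda)$ near the parallel arcs, or alternatively by integrating $\de t$ along the lifted arcs directly rather than via $\beta$.

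Traverse $\partial\mathcal{P}'$ counterclockwise and lift the sides along $\pi(\Lambda)$ to $\Lambda$. At each corner the lift jumps vertically by the height of that chord. With the decoration convention of \Cref{fig:corner orientations}, the jump is $+H(c)$ at positive corners and $-H(c)$ at negative corners, possibly with an overall sign fixed once. For a side along $\gamma_c^\parallel$, the vertical segments $\gamma_c^\perp$ contribute $t(\gamma_c^\perp(1))-t(\gamma_c^\perp(0))$. Since the total change in $t$ around a closed loop is zero, we obtain
\[
\sum_{c\in Q_+}H(c) - \sum_{c\in Q_-}H(c) \;=\; \int_{\partial \mathcal{P}'} \beta \;=\; \int_{\mathcal{P}'} \de\beta \;=\; A(\mathcal{P}).
\]
Here the $\Lambda$-sides contribute $\int\beta = -\int \de t$. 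The $\gamma_c^\parallel$-sides contribute $\int_{\gamma_c^\parallel}\beta$, which is exactly the extra term in the definition of $H(c)$ for chords along $\Gamma$; this requires that the orientation of $\gamma_c^\parallel$ induced from $\partial\mathcal{P}'$ agrees with the positive orientation fixed by $\alpha_0|_\Gamma$. The main bookkeeping task is checking these signs consistently against \Cref{fig:corner orientations} and \Cref{fig:orientationsSigmapm}, in particular that sides along $\Gamma$ really enter as positive ends.

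\emph{Step 2: positivity of the area.} On $\Sigma_+'$ we have $u\equiv1$, so the contact condition $\beta\wedge \de u + u\,\de\beta>0$ gives $\de\beta>0$ there, with the orientation of $\Sigma_+$ fixed earlier. The immersed polygon is orientation-preserving with respect to this orientation, so $A(\mathcal{P}) = \int_{\mathcal{P}'}\de\beta>0$ as long as $\mathcal{P}'$ is nonempty.

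\emph{Main obstacle: the region between $\Sigma_+'$ and $\Gamma$.} Here $u$ is not constant and $\de\beta$ need not be positive, so the argument only works if $\mathcal{P}'$ avoids that collar. Truncating along the $\gamma_c^\parallel$ handles the parts of the polygon near its sides on $\Gamma$. Near the other intersections of $\pi(\Lambda)$ with $\Gamma$, where there are no corners, we would instead argue directly with $\de t = -\beta/u$. Since $\Lambda$ is transverse to $\Gamma$ and tangent to $\ker\beta$ there, the relevant thin strips contribute arbitrarily little as the collar shrinks. Meanwhile the positive area from the interior, which contains at least a neighborhood of some corner, stays bounded below. This is consistent with the Remark that heights and areas increase as $\gamma_c^\parallel$ approaches $\Gamma$.
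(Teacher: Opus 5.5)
Your proposal is correct and is essentially the paper's argument. The paper lifts the truncated polygon $\mathcal{P}-R(\gamma_c^\parallel)$ to $\Sigma_+'\times\R$ and applies Stokes to $\alpha$, using $\alpha|_\Lambda=0$ and $u\equiv 1$. Your computation of $\int\beta$ around the projected boundary, using that the total change in $t$ vanishes, is equivalent, and positivity again comes from $\de\beta>0$ where $u\equiv 1$. Your final paragraph is unnecessary, and its ``thin strips'' estimate is not needed. A polygon in $\Sigma_+$ can meet $\Gamma$ only along its $\Gamma$-sides, since an arc of $\pi(\Lambda)$ reaching $\Gamma$ must turn onto $\Gamma$ there. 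So once $\Sigma_+'$ is chosen with the collar $\Sigma_+\setminus\Sigma_+'$ meeting $\pi(\Lambda)$ only in short arcs crossing $\Gamma$, removing $R(\gamma_c^\parallel)$ leaves the whole truncated polygon inside $\Sigma_+'$. This is exactly what the paper implicitly uses.
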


\begin{proof}
    Throughout the proof we use $\mathcal{P}$ to denote $\mathcal{P} - R(\gamma_c^\parallel)$. Lift $\mathcal{P}$ to a polygon $P$ in $\Sigma_+' \times \R$ and write $\iota(P) = \mathcal{P}$. The sides of $P$ consist of arcs $\gamma_1, \dots , \gamma_n$ of $\Lambda$ and $\gamma_{c_1}, \dots , \gamma_{c_n}$ for some double-points $c_1, \dots, c_n$ corresponding to the ends of $\mathcal{P}$. Then \begin{align*}
		\int_{\{\gamma_i\}_{i = 1}^n \cup \{\gamma_{c_j}\}_{j = 1}^n} \alpha &= \sum_{j = 1}^n \int_{\gamma_{c_j}} \alpha \text{ since } \alpha|_{\gamma_i} = 0 \\
		&= \sum_{c \in Q_+} H(c) - \sum_{c \in Q_-} H(c).
	\end{align*} At the same time, we have \begin{align*}
		\int_{\{\gamma_i\}_{i = 1}^n \cup \{\gamma_{c_j}\}_{j = 1}^n} \alpha &= \int_{\partial P} \alpha \\
		&= \int_P \de \alpha \text{ by Stokes' Theorem}\\
		&= \int_{\mathcal{P}} \de \beta + \int_{P} \de u \wedge \de t \\
		&= \int_{\mathcal{P}} \de \beta \text{ since } u \equiv 1 \text{ on } \mathcal{P}.
	\end{align*} Thus we have \[\sum_{c \in Q_+} H(c) - \sum_{c \in Q_-} H(c) = \int_{\mathcal{P}} \de \beta = \int_{\mathcal{P}} \beta \wedge \de u + u\de \beta > 0.\]
\end{proof}

\begin{lemma}\label{finitePolygon}
    Assume $c$ is a Reeb chord in $\Sigma_+$ or along $\Gamma$ and $d_1, \dots , d_n$ are Reeb chords in $\Sigma_+$. Then \[|\mathcal{M_+}(c, d_1, \dots , d_n)| < \infty.\] 
\end{lemma}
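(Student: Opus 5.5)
The plan is to show that the rigid polygons in $\mathcal{M}_+(c,d_1,\dots,d_n)$ are determined by finite combinatorial data taken from a finite list of possibilities. The key tool is the area identity in the proof of \Cref{heightProperty}. For any $\mathcal{P}\in\mathcal{M}_+(c,d_1,\dots,d_n)$,
\[
A(\mathcal{P}) = \int_{\mathcal{P}} \de\beta = H(c) - \sum_{i=1}^n H(d_i),
\]
and the right-hand side depends only on the ends, not on $\mathcal{P}$. Hence every polygon in the moduli space has the same area $A_0$, which is finite.

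Next I would pass from the area bound to a bound on multiplicities. The curves $\pi(\Lambda)\cap\Sigma_+$, the arc $\gamma_c^\parallel$ and $\Gamma$ cut $\Sigma_+$ into finitely many faces. The faces touching $\Gamma$ are truncated by $R(\gamma_c^\parallel)$. An immersed polygon $\mathcal{P}$ is determined, as a 2-chain, by nonnegative integer multiplicities $m_F$ on these faces. On each face (after removing $R(\gamma_c^\parallel)$) the integrand $\de\beta$ is positive: on $\Sigma_+'$ we have $u\equiv 1$, so $\de\beta = \beta\wedge \de u + u\,\de\beta>0$, and likewise on the rest of $\Sigma_+$ away from $\Gamma$. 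So each face $F$ has area $a_F>0$, and the minimum $a=\min_F a_F$ is positive. It follows that $\sum_F m_F a_F = A_0$ forces $m_F\le A_0/a$. This leaves only finitely many candidate 2-chains.

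Finally, I would show that a given 2-chain, together with the prescribed corners $c,d_1,\dots,d_n$, supports only finitely many immersed polygons up to reparametrization. The boundary of $\mathcal{P}$ is a closed path in the finite graph $\pi(\Lambda)\cup\Gamma$, with corners at the prescribed chords in cyclic order. Its length, measured in edges, is bounded by the total multiplicity times the maximum number of edges of a face. So there are finitely many such boundary words. By the standard fact, as in Chekanov's setting, an immersed disk is determined up to equivalence by its boundary immersion. This gives $|\mathcal{M}_+(c,d_1,\dots,d_n)|<\infty$.

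The main obstacle is the behaviour near $\Gamma$. There $u\to 0$, faces adjacent to $\Gamma$ are non-compact in the relevant sense, and a positive end along $\Gamma$ may be a self-overlapping arc winding around a component of $\Gamma$ many times. My first approach is to exploit the fact that the chord $c$ along $\Gamma$ is fixed, including its winding. The region near $\Gamma$ is then cut off by $\gamma_c^\parallel$, so every face becomes a compact region of $\Sigma_+'$ plus bounded collar pieces on which $\de\beta$ is still positive. This keeps $a>0$. If that truncation fails to give a uniform lower bound, the fallback is to bound the multiplicity on collar faces directly: each sheet of $\mathcal{P}$ over the collar must terminate on the single $\Gamma$-side of $\partial\mathcal{P}$, whose multiplicity is controlled by the winding of $c$.
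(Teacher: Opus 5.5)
Your proposal is correct and is the paper's argument: the identity from the proof of \Cref{heightProperty} bounds $A(\mathcal{P})$, which bounds the multiplicities with which $\mathcal{P}$ covers the complementary regions and so leaves finitely many polygons. The paper uses only $A(\mathcal{P})\le H(c)$ where you use the equality.

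Two points need tightening, though neither affects the conclusion.
\begin{itemize}
\item Where $u$ is not constant, the contact condition gives $\beta\wedge\de u+u\,\de\beta>0$, not $\de\beta>0$, so your claimed positivity on the collar pieces is unjustified. Your fallback is what actually does the work. A face meeting $\Gamma$ has multiplicity equal to the number of times $c$ covers the adjacent edge of $\Gamma$, hence zero away from $c$. So for a thin collar, $\mathcal{P}-R(\gamma_c^\parallel)$ lies in $\Sigma_+'$, where $\de\beta>0$.
\item An immersed disk need not be determined by its boundary (Milnor's classical example). However, only finitely many immersed disks share a given boundary, and that is all finiteness requires.
\end{itemize}
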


\begin{proof}
     This is the analogue of \cite[Lem. 3.9]{sabloff2003circle}. By the proof of \Cref{heightProperty}, $A(\mathcal{P}) \leq H(c)$. Thus, $\mathcal{P}$ cannot cover arbitrarily many subregions of $\Sigma_+-\pi(\Lambda)$ counting multiplicity.
\end{proof}

\subsection{The algebras $\mathcal{A}_\pm$ and $\mathcal{A}_\Gamma$} \text{ }

We begin to construct the DGA of \Cref{thm: main 1} by first defining three intermediate DGAs associated to $\Lambda$: $(\mathcal{A}_+,\partial_+)$, $(\mathcal{A}_-,\partial_-)$ and $(\mathcal{A}_\Gamma,\partial_\Gamma)$, along with DGA maps $\Phi_\pm:\mathcal{A}_\Gamma\to \mathcal{A}_\pm$.

\subsubsection{Generators}

Let $\mathcal{A}_\pm$ be the free algebra generated over $\Z_2$ by Reeb chords of $\Lambda$ in $\Sigma_\pm\times \R$. If $\Lambda$ is in good position, then we may identify the generators of $\mathcal{A}_\pm$ with double points in $\pi(\Lambda)\cap \Sigma_\pm$. In such a case, $\mathcal{A}_\pm$ is finitely generated, and the elements of this algebra are finite formal sums of words in Reeb chords. Next, let $\mathcal{A}_\Gamma$ be the algebra freely generated over $\Z_2$ by ``Reeb chords'' of $\Lambda$ in $\Gamma\times \R$, i.e. any arc in $\Gamma$ with ends on $\pi(\Lambda\times \R)\cap \Gamma\times \{0\}$, including those with overlaps. If a chord can be written as a concatenation of two nontrivial chords then it is said to be \textit{decomposable}, and otherwise \textit{indecomposable}. Any Legendrian knot $\Lambda\subset \Sigma\times \R$ admits finitely many indecomposable Reeb chords in $\Gamma$, but infinitely many decomposable Reeb chords, and so $\mathcal{A}_\Gamma$ is infinitely generated.

\subsubsection{Grading}

 We will postpone the grading of these algebras in \Cref{subsec:Ahat} but remark that a Reeb chord $c\in \mathcal{A}_\pm$ admits a relative grading, defined as in \cite[Sec.~7.2]{chekanov2002dga} or \cite{ekholm2007PxR}, provided that $c$ admits a \textit{capping path} entirely contained in $\Sigma_\pm$. 

\subsubsection{Differential}
For a Reeb chord $c_+\in \mathcal{A}_+$ and any word $w\in \mathcal{A}_+$, we set $\<\partial_+ c_+,w\>$ to be the $\Z_2$-count of all immersed polygons in $\Sigma_+$ with sides being arcs of $\pi(\Lambda)$ in $\Sigma_+$ such that $c$ is a positive corner and all other corners are negative at $w$. Define $(\mathcal{A}_-, \partial_-)$ similarly for $\Sigma_-$. For a Reeb chord $c_\Gamma\in \mathcal{A}_\Gamma$, we set
 \[\partial_\Gamma c_\Gamma = \sum_{c_\Gamma^1 *c_\Gamma^2 = c_\Gamma} c_\Gamma^1 c_\Gamma^2,\] where $c * d$ denotes concatenation of Reeb chords and $cd$ denotes the product operation of the underlying ring structure. Both $\partial_+$ and $\partial_\Gamma$ extend to each whole algebra by demanding linearity and a Leibniz rule. 
 
 \subsubsection{Transfer morphisms} The algebras $\mathcal{A}_\Gamma$ and $\mathcal{A}_\pm$ are related by maps $\Phi_\pm$ which we now define. Our definition is similar to \cite[Sec. 2.2]{sivek2011bordered} and \cite[Dfn. 6.7]{pan2021functorial}.
 
\begin{definition}\label{dfn:phipm}
    On a generator $c_\Gamma\in \mathcal{A}_\Gamma$, define $\<\Phi_\pm(c_\Gamma), w\>$ to be the $\Z_2$-count of all polygons with positive end at $c_\Gamma$ and negative ends at the ordered word in Reeb chords $w$ in $\Sigma_\pm$. We extend the map to the entire algebra by demanding that \[\Phi_\pm(c_\Gamma+c_\Gamma') = \Phi_\pm(c_\Gamma)+\Phi_\pm(c_\Gamma')\qquad \text{and}\qquad \Phi_\pm (c_\Gamma c_\Gamma') = \Phi_\pm(c_\Gamma)\Phi_\pm(c_\Gamma').\] 
    We will see later in \Cref{cor:phipmgrading} and \Cref{prop:phipmDGA} that the maps $\Phi_\pm$ are not just algebra morphisms, but DGA morphisms. 
\end{definition}

\begin{lemma}\label{lem:dphiwelldefined}
    The maps $\partial_\pm c_\pm$ and $\Phi_\pm(c_\Gamma)$ are well-defined for any generators $c_\pm$ of $\mathcal{A}_\pm$ and $c_\Gamma$ of $\mathcal{A}_\Gamma$.
\end{lemma}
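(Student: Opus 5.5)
Well-definedness here means that each of $\partial_\pm c_\pm$ and $\Phi_\pm(c_\Gamma)$ is a \emph{finite} sum of words: only finitely many words $w$ have $\langle \partial_\pm c_\pm, w\rangle \neq 0$ or $\langle \Phi_\pm(c_\Gamma), w\rangle\neq 0$, and each coefficient is a finite count. The plan is to reduce both claims to \Cref{finitePolygon} together with an a priori bound on word length coming from the height inequality of \Cref{heightProperty}. By the symmetry of $\alpha_0$ under interchanging $\Sigma_+$ and $\Sigma_-$ (reversing $t$), it suffices to treat the $+$ case. The $-$ case follows by the same argument with $u\equiv -1$ on a compact $\Sigma_-'$ containing the double points of $\Sigma_-$.

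First, the coefficients. For a fixed positive end $c$ (a double point in $\Sigma_+$ or a chord along $\Gamma$) and a fixed word $w=d_1\cdots d_n$ of double points in $\Sigma_+$, the coefficient is the mod $2$ count of $\mathcal{M}_+(c,d_1,\dots,d_n)$. By \Cref{finitePolygon} this set is finite, so the coefficient is well defined.

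Second, finiteness in the word length. Since $\Lambda$ has a good projection, there are finitely many double points in $\Sigma_+$. Each has positive height: the double point is a corner of the immersed polygon region in $\Sigma_+'$, and $H(d)=\int_{\gamma_d}\alpha>0$ follows from the same Stokes argument as in \Cref{heightProperty}, or simply from the orientation convention that the lower strand is the start of a positive Reeb chord. Let $h=\min_d H(d)>0$. By \Cref{heightProperty}, any polygon contributing to $\langle\partial_+c,w\rangle$ or $\langle\Phi_+(c),w\rangle$ satisfies
\[\sum_{i=1}^n H(d_i) < H(c),\]
so $n < H(c)/h$. Thus only finitely many words, built from a finite alphabet and of bounded length, can occur. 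Combined with the first step, $\partial_+c_+$ and $\Phi_+(c_\Gamma)$ are finite sums.

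The main subtlety is the case where the positive end is a chord $c_\Gamma$ along $\Gamma$. Here $H(c_\Gamma)$ and the area both depend on the choice of the parallel arc $\gamma_c^\parallel$ (see the remark after the definition of height). The height inequality is also only derived for the truncated polygon $\mathcal{P}-R(\gamma_c^\parallel)$. I will fix $\gamma_c^\parallel$ once for each $c_\Gamma$, inside $\Sigma_+'$ and close to $\Gamma$. This gives a finite number $H(c_\Gamma)$, and every polygon with a side along $\Gamma$ at $c_\Gamma$ is then cut into a polygon in $\Sigma_+'$ to which \Cref{heightProperty} applies. Since only one chord is treated at a time, no uniformity over the infinitely many $c_\Gamma$ is needed. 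One point needs checking: every polygon with positive end $c_\Gamma$ does contain $R(\gamma_c^\parallel)$ near its $\Gamma$-side, so that the truncation makes sense. This holds because near $\Gamma$ the polygon is bounded by the two strands of $\pi(\Lambda)$ through the endpoints of $c_\Gamma$, which cross $\Gamma$ transversely.
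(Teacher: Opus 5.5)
Your proof is correct and follows the paper's own argument: the height inequality of \Cref{heightProperty} leaves only finitely many candidate words, and \Cref{finitePolygon} makes each coefficient a finite count. You make explicit two points the paper leaves implicit, namely the length bound $n<H(c)/h$ and the fixed choice of $\gamma_c^\parallel$ for each chord. One minor correction: $H(d)>0$ for a double point follows directly from the definition as the upper-minus-lower $t$-value, which is nonzero because $\Lambda$ is embedded. Your alternative appeal to a Stokes argument is therefore unnecessary, and it would not work as stated without a polygon whose only end is $d$.
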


\begin{proof}
    Note that any monomial $d_1d_2\dots d_n$ appearing in $\partial_\pm(c_\pm)$ or $\Phi_\pm(c_\Gamma)$ must satisfy \[\sum_{i = 1}^n H(d_i) < H(c_{+/-/\Gamma}).\] 
    So, finitely many monomials may appear in either $\partial_\pm(c_\pm)$ or $\Phi_\pm(c_\Gamma)$. It then follows from  \Cref{finitePolygon} that the coefficients of all candidates are finite.
\end{proof}

\subsection{The algebra $\widehat{\mathcal{A}}$}\label{subsec:Ahat} \text{ }

Finally, we combine our previous constructions to define the DGA of \Cref{thm: main 1}.
\subsubsection{Generators} Let $\widehat{\mathcal{A}}$ be the free algebra generated over $\Z_2$ by all Reeb chords in $\Sigma_\pm$ and along $\Gamma$. The algebra splits as the free product of the three previously-defined algebras: \[\widehat{\mathcal{A}} = \mathcal{A}_+* \mathcal{A}_\Gamma* \mathcal{A}_-.\] 

\subsubsection{Grading}\label{sec: grading}
We define a $\Z_{2\rot(\Lambda)}$-grading on the generators of $\widehat{\mathcal{A}}$, where $2\rot(\Lambda)$ is the Maslov number of $\Lambda$. This is possible since we assume that $c_1(\xi_\Gamma) = 0$. Our definition is adapted from that of \cite{sabloff2003circle} and \cite{licata2013seifert} .

Fix once and for all a collection of generators $\textbf{X}$ of $H_1(\Sigma; \Z)$. Given a generator $c\in \widehat{\mathcal{A}}$, choose an oriented loop $\gamma_c$ in $\pi(\Lambda)$ based at $c$ which runs from the overstrand to the understrand. For generators $c\in \mathcal{A}_\Gamma$ where this is ambiguous, we follow the direction of the Reeb vector field so that the loop runs negatively in time between the endpoints of $c$. We call such a path a \textit{capping path} of $c$. Then we apply a variant of Seifert's algorithm to construct a \textit{formal capping surface} of $\gamma_c$ via \Cref{con:Seifert's algorithm}. Note that the corners associated to the crossing $c$ are preserved, although this is not required for defining $|c|$. Indeed, for any capping surface of $\gamma_c$, the contributions coming from these corners cancel. Thus, one could equivalently simplify the computation by applying the standard Seifert resolution at $c$, replacing the crossing with two disjoint, smoothed arcs. We retain the corners in order to keep our definition closer in spirit to that of \cite{licata2013seifert}.

\begin{figure}[h]
    \centering
    \input{images/Seifert_resolution_1}
    \caption{The rule for resolution of double points along a capping path $\gamma_c$.}
    \label{fig:seifert alg 1}
\end{figure}
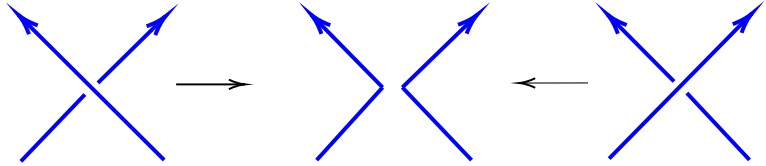
\begin{construction}\label{con:Seifert's algorithm}
    First, suppose that $\Sigma$ is simply connected. Given a capping path $\gamma_c$ of $c$, resolve any crossing according to the rule in \Cref{fig:seifert alg 1}. The result is a disjoint union of simple loops with corners $\sqcup_{l=1}^k\gamma_l$. Since $\Sigma$ is simply connected, each of these $\gamma_l$ are separating, bounding an oriented surface in $\Sigma$ which possibly intersects $\Gamma$ such that the orientation of $\gamma_l$ agrees with the boundary orientation induced by $\Sigma$. The choice of surface is not unique, but will ultimately be shown to be only auxiliary in defining the grading. If an arc of a loop intersects $\Gamma$ transversely, then there is a natural cutting along $\Gamma$ of the region it bounds into finitely many subregions, see \Cref{fig:cut along Gamma}. Let $\sqcup_{i=1}^j \tilde{\gamma_i}$ denote the final set of loops obtained after all such cuttings. We define the formal sum 
    \[S_c:=\sum_{i=1}^j c_i u_i,\]
    where $u_i$ are the chosen regions bound by each $\tilde{\gamma_i}$, and $c_i$ are coefficients assigned via the following rule: if the induced orientation of $\tilde{\gamma}_i$ agrees with the orientation given in \Cref{fig:orientationsSigmapm}, then set $c_i=+1$. Similarly, if it disagrees, set $c_i=-1$. 
\end{construction}

\begin{figure}[h]
    \centering
    \input{images/dividing_curves_along_Gamma}
    \caption{For a component of a resolved capping path which crosses $\Gamma$, we may split the region it bounds along the dividing curve into subregions. }
    \label{fig:cut along Gamma}
\end{figure}
If $\Sigma$ is not simply connected, then applying the resolution of \cref{fig:seifert alg 1} may result in loops $\gamma_i$ that are non-separating. In this case, each $\gamma_i$ is homologous to a linear combination of generators in $\mathbf{X}_i$. We amend each $\gamma_i$ by adding this linear combination of generators with reversed orientation. Such a curve is now separating, and we may proceed with \Cref{con:Seifert's algorithm}.

\begin{definition}
	Let $u: S \rightarrow \Sigma_\pm \setminus\pi(\Lambda)$ be a subregion with boundary such that $u$ is orientation-preserving if $u(S) \subset \Sigma_+$ and orientation-reversing if $u(S) \subset \Sigma_-$. If $u$ has $k$ corners, define the \textit{rotation number} of $u$ as \[r(u) = \chi(u) + \sum_{i = 1}^k \frac{1}{4}\left(m_i - 2\right),\] where $m_i$ is the number of quadrants covered by the $i$-th corner. Extend this to define the \textit{rotation number} of a formal capping surface by \[r\left(\sum_{i = 1}^j c_i u_i\right) := \sum_{i = 1}^j c_i r(u_i).\]
\end{definition}

\begin{definition}\label{dfn:grading}
	Let $c$ be a generator of $\widehat{\mathcal{A}}$. Pick a capping path $\gamma_c$ for $c$ and pick a formal capping surface $S_c$ for $\gamma_c$ following \Cref{con:Seifert's algorithm}. Define 
    \begin{align*}
        |c_\pm| =& \ 2r(S_{c_\pm}) - \frac{1}{2} \mod 2\rot(\Lambda)\\
        |c_\Gamma| =&\  2r(S_{c_\Gamma}) - 1 \ \mod 2\rot(\Lambda)
    \end{align*} and $|\widehat{c}_\Gamma| = |c_\Gamma| + 1$ for generators $c_\pm$ of $\mathcal{A}_\pm$ and $c_\Gamma$ of $\mathcal{A}_\Gamma$.
\end{definition}

\Cref{dfn:grading} is morally identical to that in the standard Legendrian contact homology of $\mathbb{R}^3$, see \cite[Sec. 3.1]{chekanov2002dga}, which we comment further on in \Cref{lem:rotation in T2} and \Cref{rem:geometric grading}. \Cref{con:Seifert's algorithm} is heavily inspired by that in \cite[Sec. 4.2]{licata2013seifert}, with the distinction that our grading takes values in $\Z$ rather than $\Q$. The grading in \cite[Sec. 4.2]{licata2013seifert} takes rational values since the base space of their projection map is an orbifold and not necessarily a smooth surface. Consequently, their grading makes use of a \textit{rational} Euler characteristic and so-called \textit{defects}. Our projection has no orbifold points, and so we do not require a rational grading. More directly, our construction yields integer values since for a chord $c_{\pm}$, the resolution of \Cref{con:Seifert's algorithm} produces a capping surface with an odd number of corners, while for $c_{\Gamma}$ it produces a capping surface with an even number of corners. Hence $r(S_{c_\pm})$ is an odd multiple of $1/4$, while $r(S_{c_\Gamma})$ is an even multiple of $1/4$. Here we make use of the fact that we can perturb the diagram locally so that crossings in $\Sigma_\pm$ and intersections of $\Lambda$ and $\Gamma$ are orthogonal.

\begin{lemma}\label{lem:gradingwelldefined}
	The grading $|\cdot |$ is well-defined.
\end{lemma}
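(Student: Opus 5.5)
Well-definedness means that $|c|$ modulo $2\rot(\Lambda)$ depends only on $c$. The construction involves three auxiliary choices: the capping path $\gamma_c$, the region chosen to bound each resolved loop, and the homology correction used when $\Sigma$ is not simply connected. The plan is to show that each choice changes $2r(S_c)$ either by zero or by a multiple of $2\rot(\Lambda)$. The unifying tool is a geometric reinterpretation of $r$. For a formal capping surface $S=\sum c_i u_i$ with oriented boundary $\partial S$, I would show that $r(S)$ equals the total turning of the tangent line of $\pi(\Lambda)\cup\Gamma$ along $\partial S$, measured against a fixed trivialization of $\xi_\Gamma$, divided by $2\pi$. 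Each piece contributes as follows:
\begin{itemize}
\item Gauss--Bonnet gives $\chi(u_i)$ plus the exterior-angle term $\tfrac14(m_i-2)$ at the corners.
\item The signs $c_i=\pm1$ account for the orientation reversal of the projection $\xi|_{\Sigma_\pm}\to T\Sigma_\pm$.
\item Along $\Gamma$ one has $\xi\supset T\Gamma$, so the cuts along $\Gamma$ introduced in \Cref{con:Seifert's algorithm} contribute canceling quarter-turns from the two sides.
\end{itemize}
This is the analogue of the identification in \cite[Sec.~4.2]{licata2013seifert} and in the $\mathbb{R}^3$ case \cite[Sec.~3.1]{chekanov2002dga}. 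We need $c_1(\xi_\Gamma)=0$ so that a global trivialization exists and the turning number is an honest integer-valued invariant of closed loops.

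\textbf{Step 1: independence of the bounding surfaces for a fixed capping path.} Suppose two formal capping surfaces $S_c$ and $S_c'$ are built from the same resolved loops. Their difference is a formal 2-cycle, so it is a multiple of the fundamental class $[\Sigma]$, with regions counted with the sign rule of \Cref{fig:orientationsSigmapm}. Using the additivity of $r$ under gluing along edges (the corner terms on glued edges cancel), I would show that $r$ of the whole surface equals $\chi(\Sigma_+)-\chi(\Sigma_-)=c_1(\xi_\Gamma)=0$, where the contribution of $\Sigma_-$ enters with a sign flip. Hence $r(S_c)=r(S_c')$. The same additivity shows that the homology correction from $\mathbf{X}$ is harmless. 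The correcting loops are fixed once and for all, so any two choices of amended surface again differ by a 2-cycle, and the correcting loops contribute the same (fixed) turning regardless of which capping path is being amended.

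\textbf{Step 2: independence of the capping path, and the main obstacle.} For $c_\pm$ the capping path is essentially determined: it runs along $\pi(\Lambda)$ from the overstrand back to the understrand. The only other option is to wind additional times around the full knot. For $c_\Gamma$ the Reeb-direction convention fixes the path similarly. Each extra full traversal of $\Lambda$ changes the turning number by exactly $\rot(\Lambda)$, so $2r(S_c)$ changes by $2\rot(\Lambda)$; this is where the modulus comes from. The delicate point is showing that $r(\sum c_iu_i)$ really computes this rotation. In particular, the resolution rule of \Cref{fig:seifert alg 1} at crossings other than $c$ must not alter the turning, since smoothing a crossing changes the number of corners but preserves the total tangent turning. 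The half-integer shifts $-\tfrac12$ and $-1$ must also match the parity discussion above. I would verify this locally at each crossing type and at each transverse intersection with $\Gamma$, after perturbing so that all angles are right angles. This local check is the step I expect to be the main obstacle.
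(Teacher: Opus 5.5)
Your outline is the paper's proof. For a fixed capping path, two formal capping surfaces differ by a multiple of $\Sigma=\Sigma_+-\Sigma_-$, and $r(\Sigma)=\chi(\Sigma_+)-\chi(\Sigma_-)=c_1(\xi_\Gamma)=0$. Two capping paths differ by a multiple of $\pi(\Lambda)$, whose Seifert surface $S$ has $r(S)=\rot(\Lambda)$. The paper closes the step you leave open as the main obstacle without any crossing-by-crossing check. It extends the tangent field $\gamma'$ of $\Lambda$ over $S$ to a generic vector field $Y$ and applies Poincar\'e--Hopf: $\rot(\Lambda)=\ind(Y)=\chi(S)=r(S)$. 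The paper's proof does not discuss the $\mathbf{X}$-corrections, so your remark there goes beyond it.

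One supporting claim in your sketch is false, though the conclusion you draw from it survives: $\xi$ does not contain $T\Gamma$.
\begin{itemize}
\item On $\Gamma$ one has $u=0$, so $\xi=\ker\beta$. The Reeb field is tangent to $\Gamma$ with $\beta(R)=\alpha_0(R)=1$, so $\Gamma$ is a Reeb orbit transverse to $\xi$.
\item Along $\Gamma$, $\xi$ is spanned by $\partial_t$ and the characteristic direction $\ker\beta\cap T\Sigma$, which is transverse to $\Gamma$. This is why $\pi(\Lambda)$ must cross $\Gamma$ along $\ker\beta$, cf.\ \Cref{lem:legendrian lift}.
\item Hence $\pi_*:\xi\to T\Sigma$ drops rank on $\Gamma$. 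The ``turning of the tangent line of $\pi(\Lambda)\cup\Gamma$ against a trivialization of $\xi$'' therefore has no direct meaning across $\Gamma$, which is precisely why the construction cuts there.
\end{itemize}

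The cancellation at a cut comes instead from the sign rule. The two subregions meeting along a cut arc both carry the boundary orientation induced from $\Sigma$, but one lies in $\Sigma_+$ and the other in $\Sigma_-$. So they receive opposite coefficients $c_i=\pm1$. Their right-angle corners at the ends of the cut each contribute $-\tfrac14$, and these cancel in $\sum c_i r(u_i)$. The same sign flip is what produces $\chi(\Sigma_+)-\chi(\Sigma_-)$ rather than $\chi(\Sigma)$ in your Step 1.
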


\begin{proof}
	We want to show that for any $c\in \widehat{\mathcal{A}}$, $|c|$ is independent of the choice of capping path $\gamma$ and formal capping surface $S$. We'll begin with the latter, so first fix a choice of $\gamma$. Any two capping surfaces differ by a multiple of $\Sigma$. We claim that $r(\Sigma) = 0$. We can write \[\Sigma = \Sigma_+ - \Sigma_-\] as a formal capping surface, which implies \[r(\Sigma) = r(\Sigma_+) - r(\Sigma_-) = \chi(\Sigma_+) - \chi(\Sigma_-) = c_1(\xi) = 0.\] Hence for a fixed capping path $|c|$ is independent of choice of the capping surface. To prove the former, first note that two capping paths differ by a multiple of $\pi(\Lambda)$. Let $S$ be a Seifert surface of $\pi(\Lambda)$. We claim that $r(S) = \rot(\Lambda)$. By definition $\rot(\Lambda)$ is the degree of $\gamma'$ for a parametrization $\gamma$ of $\Lambda$. Extend $\gamma'$ to be a vector field $Y$ on $S$ and by perturbing $Y$ generically (relative to $\partial S$) we may assume that $Y$ has finitely many isolated zeros. Then by the Poincaré-Hopf theorem we have \[\rot(\Lambda) = \ind(Y) = \chi(S) = r(S).\] Therefore, the two gradings differ by a multiple of $2r(S) = 2\rot(\Lambda)$.
\end{proof}
\begin{figure}[h]
\begin{minipage}{0.48\textwidth}
    
    \begin{center}
    	\input{images/unknot_in_T2xI}
    	\caption{The standard unknot in $\mathbb{T}^2 \times \R$.}
    	\label{stdUnknot}
    \end{center}
\end{minipage}
\hfill
\begin{minipage}{0.48\textwidth}

    \begin{center}
    	\input{images/torus_capping_surface}
    	\caption{Capping surfaces for Reeb chords $c$ and $d$.}
    	\label{fig:torus capping surface}
    \end{center}
    \end{minipage}
\end{figure}

\begin{example}
    Consider the standard unknot in \Cref{stdUnknot}. Here $\Sigma = \mathbb{T}^2$ and $\# \Gamma = 2$ with slope $0$. We pick $\mathbf{X}$ to be ``boundary'' of the square. For the Reeb chord $c$, we pick the capping path $\gamma_c$ to be the upper half of $\pi(\Lambda)$ and the capping surface $S_c$ to be the upper half-disk which is positively oriented, see \Cref{fig:torus capping surface}. Note that \[|c| = 2\left(1 - \frac{2}{4}\right) - 1 = 0,\qquad |\widehat{c}| = |c| + 1 = 1\] For the Reeb chord $d$, we pick the capping path $\gamma_d$ to be $\gamma_c$ with the opposite orientation, and pick the capping surface $S_d = \Sigma_+ - S_c$. We compute \[|d| = 2\left(0-\frac{2}{4}\right) -1= -2, \qquad |\widehat{d}| = |d| + 1 = -1.\] 
    Finally, for the Reeb chord $c*d$, we pick the capping surface to be the cylinder $\Sigma_+$, which is positively oriented. We compute    \[|c*d| = 2(0-0)-1=-1,\qquad |\widehat{c}| = |c|+1 = 0\]
    From this one can either compute similarly or infer from $|c|$ and $|d|$ the grading of all other chords.
\end{example}

\begin{figure}[h]
    \centering
    \input{images/genus_2_octagon}
    \caption{The standard unknot in $\Sigma_2\times \R$.}
    \label{fig:genus 2 octagon}
\end{figure}
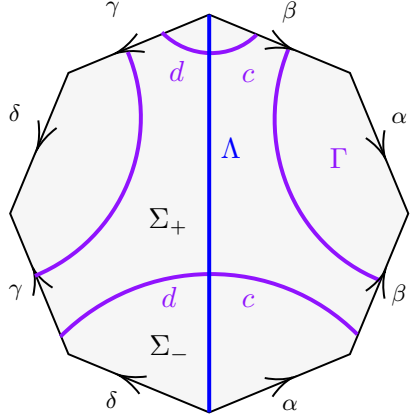

\begin{figure}[h]
    \centering
    \input{images/octagon_capping_surface}
    \caption{Capping surfaces for Reeb chords (a) $c$ and $d$, and (b) $c*d$.}
    \label{fig:octagon capping surface}
\end{figure}
\begin{example}
    Consider the example in \Cref{fig:genus 2 octagon}. Here $\Sigma$ is a genus-$2$ surface and $\Gamma$ divides $\Sigma$ into two pair-of-pants. We set $\mathbf{X} = \{a,b,\beta,\gamma\}$. For the Reeb chord $c$, we pick the capping path $\gamma_c$ to be the arc of $\pi(\Lambda)$ which does not intersect $\mathbf{X}$, and the capping surface $S_c$ to be the cylinder as in \Cref{fig:octagon capping surface} (a). We compute
\[|c| = 2\left(0-\frac{2}{4}\right)-1 = -2,\qquad |\widehat{c}|=|c|+1 = -1.\]
    For the Reeb chord $d$, we pick the capping path $\gamma_d$ to be $\gamma_c$ with reversed orientation, and pick the capping surface $S_d$ to be the cylinder as in \Cref{fig:octagon capping surface} (a). We compute
\[|d| = 2\left(0-\frac{2}{4}\right)-1 = -2,\qquad |\widehat{d}|=|d|+1 = -1.\]
    Finally, for the Reeb chord $c*d$, we choose the empty capping path, and the formal capping surface $S_{c*d}
$ to be the pair-of-pants as in \Cref{fig:octagon capping surface} (b). We compute
    \[
        |c*d| 
        = 2\left(-1+0\right)-1 = -3,\qquad
         |\widehat{c*d}|=|c*d|+1 = -2.
    \]
    From this one can either compute similarly or infer from $|c|$ and $|d|$ the grading of all other chords.
\end{example}
So far our examples have focused on knots whose capping paths require no Seifert resolutions. The following example demonstrates the procedure for a trefoil knot with a chord whose capping path \textit{does} require a resolution. 
\begin{example}
    Consider the torus knot in \Cref{fig:trefoil seifert 1}. We leave it as an exercise that it has a Legendrian lift (see \Cref{LegConstr}). Here, $\Sigma=\T^2$ and $\#\Gamma=2$ with slope $0$. As before, we pick $\mathbf{X}$ to be the ``boundary'' of the square. For the Reeb chord $c$, There are two choices of capping path $\gamma_c$: one that remains entirely in $\Sigma_+$, and one that intersects $\Gamma$ multiple times. Choose the latter path to be $\gamma_c$. We perform a Seifert resolution following \Cref{con:Seifert's algorithm} to obtain \Cref{fig:trefoil seifert 2}(a), so that $c$ admits a capping surface $S_c = -A+B+C$ as in \Cref{fig:trefoil seifert 2}(b). We compute 
\[r(A)= 1-\frac{2}{4} = \frac{1}{2}, \qquad r(B)=1-\frac{5}{4} = -\frac{1}{4},\qquad r(C)=  1+\frac{1}{4} = \frac{5}{4},\]
  so that 
  \[|c| = 2\left(-\frac{1}{2}-\frac{1}{4}+\frac{5}{2}\right)-1=0.\]
It is easy to check that this agrees with the grading given by choosing the other capping path. 
\end{example}

\begin{figure}
    \centering
    \input{images/trefoil_Seifert_resolution_1}
    \caption{A Legendrian trefoil knot in $\T^2\times \R$.}
    \label{fig:trefoil seifert 1}
\end{figure}
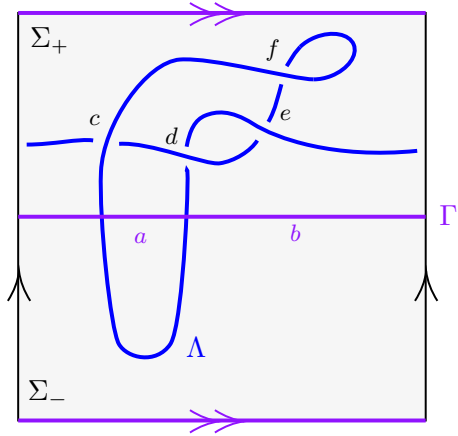

\begin{figure}
    \centering
    \input{images/trefoil_Seifert_resolution_2}
    \caption{A capping path and surface for the Reeb chord $c$.}
    \label{fig:trefoil seifert 2}
\end{figure}

The following example highlights that the grading of generators extends that of Chekanov (see proof of \cite[Lem. 6.3]{chekanov2002dga}), except that there is a $-2 \cdot \frac{1}{4}$ adjustment in $|c_\Gamma|$ accounting for the fact that we are viewing $c_\Gamma$ as a corner (the positive corner) rather than an edge of a polygon:
\begin{example}\label{lem:rotation in T2} Let $\Sigma$ is a torus $\T^2$ and $\#\Gamma =2$ with slope 0. Suppose that $\Lambda\subset \Sigma\times \R$ is a Legendrian knot and that $\pi(\Lambda)$ is null-homotopic. Let $\rot(\gamma)$ denote the rotation number of a curve $\gamma$ in $\R^2$. Then \[|c_\pm| = 2\rot(\gamma) - \frac{1}{2} \mod 2\rot(\Lambda)\] \[|c_\Gamma| = 2\rot(\gamma) - 1 \mod 2\rot(\Lambda)\] for generators $c_\pm$ of $\mathcal{A}_\pm$ and $c_\Gamma$ of $\mathcal{A}_\Gamma$. Note that the rotation number of a counterclockwise circle is $+1$ in $\Sigma_+$ and $-1$ in $\Sigma_-$.
\end{example}

\begin{lemma}\label{lem:diff=1}
	Let $u$ be an immersed polygon with positive corner $c_1$ and negative corners $c_2, \dots , c_k$. Then \[|c_1| - \sum_{i = 2}^k |c_i| = 1.\] (If $c_1$ is along $\Gamma$ we use $|\widehat{c}_1|$ instead.)
\end{lemma}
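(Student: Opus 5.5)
The plan is to use the additivity of the rotation number $r$ under gluing of formal capping surfaces, as in the proof of \cite[Lem.~6.3]{chekanov2002dga} and its analogue in \cite{licata2013seifert}. The idea is to build capping paths for the negative corners from the boundary of $u$ and a capping path for $c_1$, so that the corresponding formal capping surfaces glue with $u$ into a relation among rotation numbers. Relative to Chekanov's setting, the only new ingredients are the $\Gamma$-corners and the fact that $u$ lies in $\Sigma_+$ or $\Sigma_-$ with the sign conventions of \Cref{fig:orientationsSigmapm}.

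\textbf{Step 1: choosing capping paths.} Traverse $\partial u$ starting from the positive corner $c_1$. Between consecutive corners $c_i$ and $c_{i+1}$, the boundary runs along an arc $\eta_i\subset\pi(\Lambda)$ (for a $\Gamma$-corner, the side along $\Gamma$ is the corner itself). Fix capping paths $\gamma_{c_2},\dots,\gamma_{c_k}$, each from overstrand to understrand, with $\Gamma$-chords oriented against the Reeb flow as in \Cref{dfn:grading}. Concatenate the $\eta_i$ with the $\gamma_{c_i}$, $i\ge 2$. At a negative corner the decoration rule of \Cref{fig:corner orientations} makes the boundary arrive on the strand on which $\gamma_{c_i}$ begins, and at the positive corner the orientations are reversed. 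The result is therefore a legitimate capping path $\gamma_{c_1}$ for $c_1$. Since $|\cdot|$ is independent of this choice by \Cref{lem:gradingwelldefined}, we may use it.

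\textbf{Step 2: gluing capping surfaces.} Take the formal capping surface $S_{c_1}=u+\sum_{i\ge2}S_{c_i}$, after cutting $u$ along $\Gamma$ if needed, although $u$ already lies in $\Sigma_\pm$. By the sign rule in \Cref{con:Seifert's algorithm}, $u$ enters with coefficient $+1$. Additivity of $r$ gives $r(S_{c_1})=r(u)+\sum_{i\ge2}r(S_{c_i})$, except that the corner contributions at the gluing points must be compared. Compute $r(u)=1+\sum_i\frac14(m_i-2)$, where $u$ covers one quadrant ($m=1$) at each acute corner. The main bookkeeping is the local comparison at each corner $c_i$, $i\ge2$: the quadrant counts of $S_{c_1}$ there must agree with those of $u$ plus $S_{c_i}$, and any excess of $\pm\frac14$ has to be absorbed. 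This is where the half-integer shifts $-\tfrac12$ in $|c_\pm|$ and $-1$ in $|c_\Gamma|$ enter. Each ordinary corner effectively contributes $-\frac14$ and each $\Gamma$-corner $-\frac12$; the latter reflects that a side along $\Gamma$ is treated as an acute corner, and it matches the extra shift noted before \Cref{lem:rotation in T2}.

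\textbf{Step 3: the arithmetic.} Doubling the relation and substituting \Cref{dfn:grading} gives $2r(S_{c_1})-2\sum_{i\ge 2}r(S_{c_i})=2r(u)+(\text{corner corrections})$. With the conventions above, the $-\tfrac12$ and $-1$ shifts combine with $2r(u)$ to give exactly $|c_1|-\sum|c_i|=1$ when $c_1\in\mathcal{A}_\pm$. When $c_1$ lies along $\Gamma$, the count is off by one, which is exactly the replacement of $|c_1|$ by $|\widehat c_1|=|c_1|+1$. I would check this first on the basic cases: a bigon in $\Sigma_+$, a triangle with one $\Gamma$-side in the example of \Cref{stdUnknot} (where $|\widehat c|=1$ and the disk has no negative ends), and the corresponding polygon in $\Sigma_-$, where the orientation-reversing convention flips the sign of $\chi$ and of the corner terms.

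\textbf{Main obstacle.} The sign and quadrant bookkeeping at corners is the delicate part. In $\Sigma_-$ the orientation reversal must not break the argument, and at a negative $\Gamma$-chord a single corner of $u$ may sit in the middle of a longer concatenated chord $c_i$. I would handle both by the local perturbation making all crossings orthogonal, followed by a direct case check of the four corner types, ($\Sigma_\pm$ double point or $\Gamma$-arc) $\times$ (positive or negative).
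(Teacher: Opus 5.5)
Your strategy is the same as the paper's: regard $u+\sum_{i\ge 2}S_{c_i}$ as a formal capping surface for $c_1$, use additivity of $r$, and convert to gradings. As written, however, the proof stops exactly where the content is.

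\textbf{The gap.} Step 3 contains an undetermined term ``(corner corrections)'', and the final identity is asserted, never derived. Step 2 also says the shifts $-\tfrac12$ and $-1$ of \Cref{dfn:grading} are what absorb $\pm\tfrac14$ discrepancies at the gluing corners. Step 3 then uses the same shifts again against $2r(u)$, which would count them twice.

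\textbf{The missing observation.} There is no correction term. $r$ is defined additively on formal sums, and $u+\sum_{i\ge2}S_{c_i}$ is itself a formal capping surface for $c_1$. So by \Cref{lem:gradingwelldefined}, $r(S_{c_1})\equiv r(u)+\sum_{i\ge 2}r(S_{c_i})$ holds on the nose, up to the ambiguity already built into the $\Z_{2\rot(\Lambda)}$-grading. Geometrically, for a polygon, $\chi+\sum_j\frac14(m_j-2)$ is by Gauss--Bonnet the tangent rotation along the smooth part of its boundary. That quantity is additive under concatenation of arcs of $\pi(\Lambda)$, as a chain. So it also does not matter whether your concatenated path in Step 1 passes straight through each $c_i$ or backtracks there.

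\textbf{Where the shifts enter.} They appear only in the conversion between $r$ and the grading. Negative corners never lie on $\Gamma$, since a side along $\Gamma$ is by definition a positive end. So the case check of negative $\Gamma$-corners in your ``main obstacle'' is unnecessary, and $2r(S_{c_i})=|c_i|+\tfrac12$ for every $i\ge 2$.

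If $c_1\in\Sigma_\pm$, all $k$ corners of $u$ are acute, so $r(u)=1-\frac k4$, and
\[
|c_1|=2r(S_{c_1})-\tfrac12=\Bigl(2-\tfrac k2\Bigr)+\Bigl(\sum_{i\ge2}|c_i|+\tfrac{k-1}{2}\Bigr)-\tfrac12=\sum_{i\ge 2}|c_i|+1 .
\]
If $c_1$ lies along $\Gamma$, the $\Gamma$-side counts as two acute corners, so $r(u)=1-\frac{k+1}{4}$. The same computation with $|c_1|=2r(S_{c_1})-1$ gives $|c_1|=\sum_{i\ge 2}|c_i|$, hence $|\widehat c_1|=\sum_{i\ge2}|c_i|+1$. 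This two-line computation, with zero correction, is the entire argument you need in place of the corner-bookkeeping in your Steps 2 and 3.
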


\begin{proof}
	This is the analogue of \cite[Lem. 6.3]{chekanov2002dga} and \cite[Thm 4.8]{licata2013seifert}. Pick corresponding capping surfaces $S_{c_i}$. Since both $S_{c_1}$ and $S_{c_2} + \dots  + S_{c_k} + u$ are formal capping surfaces of $c_1$, then modulo $2\rot(\Lambda)$ we have \[r(S_{c_1}) = r(S_{c_2}) + \dots  + r(S_{c_k}) + r(u).\] Assume $c_1$ is in $\Sigma_\pm$. Then \[r(u) = \chi(\mathbb{D}) - \frac{k}{4} = 1 - \frac{k}{4}.\] Hence, \begin{align*}
	    |c_1| &= 2r(S_{c_1}) - \frac{1}{2} \\
        &= 2\left(r(S_{c_2}) + \dots  + r(S_{c_k}) + 1 - \frac{k}{4}\right) - \frac{1}{2} \\
        &= |c_2| + \dots  + |c_k| + \frac{k-1}{2} + 2\left(1-\frac{k}{4}\right) - \frac{1}{2} \\
        &= |c_2| + \dots  + |c_k| + 1
	\end{align*} If $c_1$ is along $\Gamma$, then $r(u) = 1 - \frac{k+1}{4}$ and $|c_1| = 2r(S_{c_1}) - 1$, so \[|\widehat{c}_1| = |c_1| + 1 = |c_2| + \dots  + |c_k| + 1.\]
\end{proof}

\begin{corollary}\label{cor:phipmgrading}
    $\Phi_\pm: \mathcal{A}_\Gamma \rightarrow \mathcal{A}_\pm$ preserves the grading.
\end{corollary}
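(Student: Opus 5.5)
Since $\Phi_\pm$ is defined on generators and then extended additively and multiplicatively, it suffices to check the statement on a single generator $c_\Gamma$ of $\mathcal{A}_\Gamma$. The grading on $\mathcal{A}_\Gamma$, $\mathcal{A}_\pm$ and $\widehat{\mathcal{A}}$ is additive on words: $|w_1w_2| = |w_1| + |w_2|$. A product of homogeneous elements therefore maps to a product of homogeneous elements of the summed degree, and sums are handled termwise. So the task reduces to showing that every monomial $w = d_1\cdots d_n$ appearing in $\Phi_\pm(c_\Gamma)$ with nonzero coefficient satisfies $|w| = \sum_i |d_i|$, which should equal the degree that $c_\Gamma$ carries as an element of $\mathcal{A}_\Gamma$.

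By \Cref{dfn:phipm}, such a monomial arises only if there is an immersed polygon $u$ in $\Sigma_\pm$ with its positive end being a side along $\Gamma$ at $c_\Gamma$ and negative corners $d_1, \dots, d_n$ in $\Sigma_\pm$. By \Cref{lem:diff=1}, applied in the case where the positive corner lies along $\Gamma$, we get
\[
|\widehat{c}_\Gamma| - \sum_{i=1}^n |d_i| = 1, \qquad\text{so}\qquad \sum_{i=1}^n |d_i| = |\widehat{c}_\Gamma| - 1 = |c_\Gamma|,
\]
modulo $2\rot(\Lambda)$, using $|\widehat{c}_\Gamma| = |c_\Gamma| + 1$ from \Cref{dfn:grading}. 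Hence every word in $\Phi_\pm(c_\Gamma)$ has degree $|c_\Gamma|$, and $\Phi_\pm$ is degree-preserving on generators, and therefore on all of $\mathcal{A}_\Gamma$.

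The one point that needs care is the convention for which grading $\mathcal{A}_\Gamma$ uses. In $\widehat{\mathcal{A}}$ the $\Gamma$-chords appear with the shifted degree $|\widehat{c}_\Gamma|$, while in $\mathcal{A}_\Gamma$ they carry the grading $|c_\Gamma|$. I would state explicitly that $\mathcal{A}_\Gamma$ is graded by $|c_\Gamma|$. The computation above shows that this is precisely the convention making $\Phi_\pm$ degree zero.

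For consistency, I would also check that $\partial_\Gamma$ has degree $-1$ in this grading, i.e. $|c^1 * c^2| = |c^1| + |c^2| - 1$. This follows by gluing formal capping surfaces: at the concatenation point two corners, each contributing $-\tfrac{1}{4}\cdot 2$, merge, and the resulting difference of $\tfrac{1}{2}$ in $r$ becomes $1$ after doubling. This check is not needed for the corollary itself.
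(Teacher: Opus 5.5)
Your argument is the paper's proof: apply \Cref{lem:diff=1} to a polygon with positive end along $\Gamma$ at $c_\Gamma$, then use $|\widehat{c}_\Gamma| = |c_\Gamma| + 1$ to get $\sum_i |d_i| = |c_\Gamma|$. One aside on your optional consistency check: $\partial_\Gamma$ having degree $-1$ means $|c^1 * c^2| = |c^1| + |c^2| + 1$, not $|c^1| + |c^2| - 1$ (e.g.\ in the torus example $|c*d| = -1$ while $|c| + |d| = -2$). This sign slip plays no role in the corollary.
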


\begin{proof}
    This follows from \Cref{lem:diff=1} and the fact that $|\widehat{c}_\Gamma| = |c_\Gamma| + 1$.
\end{proof}

\begin{remark}\label{rem:geometric grading} One should expect the above to give a grading that is in the spirit of the standard geometric definition, see for example \cite[Sec. 1.6]{egh2000SFT}. Under the identification $H_1(\Sigma\times \R)\simeq H_1(\Sigma)$, we may choose a symplectic trivialization of $\xi$ over the lifts of each generator $X_i\in \mathbf{X}\subset H_1(\Sigma)$. For a Reeb chord $c\in \widehat{\mathcal{A}}$, let $S$ denote the formal capping surface associated to a capping path $\gamma$. The surface $S$ lifts to a surface in $\Sigma\times \R$, and there exists a trivialization of $\xi$ along $\gamma$ that extends over $\xi|_S$ and agrees with the chosen trivializations along the lifts of the $X_i$. With respect to this trivialization, one can associate to $\gamma$ and hence to $c$ a well-defined Conley–Zehnder index.  This is analogous to \cite[Rem. 4.4]{licata2013seifert}.
\end{remark}
\subsubsection{Differential}

Our definition of the differential coincides with \cite[Lem. 2.3]{ekholm2016nonloose} when $\pi(\Lambda) \subset \Sigma$ is embedded (see also \cite[Sec. 3.4, 3.5]{ekholm2016cobordisms}). For generators $c_\pm$ of $\mathcal{A}_\pm$, we use the same grading as in $\mathcal{A}_\pm$ and define \[\widehat{\partial}(c_\pm) = \partial_\pm c_\pm.\] For generators $c_\Gamma$ of $\mathcal{A}_\Gamma$, we increase the grading by $1$ (using the notation $\widehat{c}_\Gamma$) and define \[\widehat{\partial}(\widehat{c}_\Gamma) = \Omega(\partial_\Gamma c_\Gamma) + \Phi_+(c_\Gamma) + \Phi_-(c_\Gamma),\] where \[\Omega(0) = \Omega(1) = 0, \Omega(c_\Gamma) = \widehat{c}_\Gamma,\] and extend $\Omega$ so that it is a chain homotopy between $\Phi_+$ and $\Phi_-$, i.e. \[\Omega(c_\Gamma^1c_\Gamma^2) = \Phi_+(c_\Gamma^1)\widehat{c}_\Gamma^2 + \widehat{c}_\Gamma^1\Phi_-(c_\Gamma^2)\] for generators $c_\Gamma^1, c_\Gamma^2$ of $\mathcal{A}_\Gamma$, and more generally, \[\Omega(c^1_\Gamma \dots c^n_\Gamma) = \Phi_+(c^1_\Gamma \dots c^{n-1}_\Gamma)\widehat{c}^n_\Gamma + \Phi_+(c^1_\Gamma \dots c^{n-2}_\Gamma)\widehat{c}^{n-1}_\Gamma\Phi_-(c^n_\Gamma) + \dots + \widehat{c}^1_\Gamma \Phi_-(c^2_\Gamma \dots c^n_\Gamma)\] for generators $c_\Gamma^1, \dots, c_\Gamma^n$ of $\mathcal{A}_\Gamma$. We remark that in general, arbitrarily long Reeb chords can have nontrivial differential, see \Cref{exm:arbitrary long correct}. In \Cref{prop:homotopy finite} and \Cref{cor:homotopy finite} we give a homotopy condition on $\Lambda$ such that $\widehat{\partial}(c)=0$ for $H(c)$ sufficiently large.

\begin{figure}[h]
    \centering
    \input{images/legendrian_trefoil_surface}
    \caption{A genus-2 convex surface with dividing set $\Gamma$.}
    \label{fig:legendrian trefoil surface}
\end{figure}

\begin{example}\label{exm:arbitrary long correct}
    We construct a Legendrian trefoil knot in $\Sigma_2\times \R$ that is nullhomologous, but not nullhomotopic, whose differential is nontrivial for arbitrarily long Reeb chords. Here, $\Sigma_2$ denotes the genus-2 surface, and $\Gamma$ is given as in \Cref{fig:legendrian trefoil surface}. To construct $\Lambda$, first consider the Legendrian $\Lambda'$ in $\R^3$ given by the Lagrangian projection in \Cref{fig:legendrian trefoil 1}. It is easy to check that this indeed lifts to a \textit{closed} Legendrian using \cite[Sec. 11]{chekanov2002dga}. We perform the inverse closing operation on $\Lambda'$ of \cite[Sec. 12]{chekanov2002dga} to construct a long Legendrian knot at the crossing $c_l$, obtaining \Cref{fig:legendrian trefoil 2}. In particular, we may arrange that the vertical $\R$-coordinates of the asymptotic ends are equal. 
    \begin{figure}[h]
        \begin{minipage}{0.48\textwidth}
            \centering
        \input{images/legendrian_trefoil_1}
        \caption{A Legendrian trefoil knot in $\R^3$.}
        \label{fig:legendrian trefoil 1}
        \end{minipage}
        \hfill
        \begin{minipage}{0.48\textwidth}
            \centering
        \input{images/legendrian_trefoil_2}
        \caption{A long Legendrian trefoil knot in $\R^3$.}
        \label{fig:legendrian trefoil 2}
        \end{minipage}
    \end{figure}

    We embed the long Legendrian knot in a cylindrical neighborhood of the dividing set, such that its image is contained in a neighborhood $D$ as in \Cref{fig:legendrian trefoil 3} that 1) is sufficiently far away from the dividing set to allow us to identify the contact structure with $\ker(\beta+\de t)$, and 2) the distance between the image of the two asymptotic ends of $\Lambda^l$ is small. In $\Sigma_2$, we define $\Lambda$ to be the concatenation of the image of $\Lambda^l$ with a small arc $\delta$ as in \Cref{fig:legendrian trefoil 3}. By \Cref{lem:legendrian lift}, this lifts to a Legendrian arc in $\Sigma_2\times \R$. Since $\delta$ is small, we may introduce a small perturbation of $\Lambda^l$ so that $\Lambda$ actually lifts to a \textit{closed} Legendrian, arranging the $\R$-coordinates of the asymptotic ends of $\Lambda^l$ to account for the height difference from $\delta$. Finally, we perform a flipping operation of \Cref{LegConstr} so that $\Lambda$ intersects $\Gamma$, arriving at a local picture of \Cref{fig:legendrian trefoil 4}.

    \begin{figure}[h]
    \begin{minipage}{0.48\textwidth}
        \centering
        \input{images/legendrian_trefoil_3}
        \caption{We embed the projection of the long Legendrian $\Lambda^l$ into $\Sigma_2^+$, and close it by a small arc $\delta$.}
        \label{fig:legendrian trefoil 3}
        \end{minipage}
        \hfill
        \begin{minipage}{0.48\textwidth}
        \centering
        \input{images/legendrian_trefoil_4}
        \caption{A local picture of the Legendrian trefoil $\Lambda$ intersecting $\Gamma$.}
        \label{fig:legendrian trefoil 4}
    \end{minipage}
    \end{figure}

    There are two indecomposable chords along $\Gamma$, $a$ and $b$. We can readily read off for example that $\<\widehat{\partial}(b*(ab)^{*n}),de^{n+1}c\>=1$. We have thus shown that $\Lambda$ has arbitrarily long Reeb chords whose differential is nontrivial. 
\end{example}




\begin{lemma}\label{prop:homotopy finite}
    Assume that $c_\Gamma$ winds around $\Gamma$ for $n$ times (but not necessarily closed) and $\Phi_+(c_\Gamma) \neq 0$ (or $\Phi_-(c_\Gamma) \neq 0$). Then there exists a (piecewise-smooth) arc $\gamma_1$ in $\pi(\Lambda)$ and a non-self-overlapping arc $\gamma_2$ along $\Gamma$ such that $\gamma_1 \cdot \gamma_2$ is closed and winds around $\Gamma$ for $n$ times, where ``$\cdot$'' denotes concatenation of two arcs.
\end{lemma}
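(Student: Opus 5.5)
Since $\Phi_+(c_\Gamma)\neq 0$, \Cref{dfn:phipm} gives an immersed rigid polygon $u\colon D\to\Sigma_+$ (the case of $\Sigma_-$ is the same) whose positive end is the side along $\Gamma$ given by $c_\Gamma$ and whose negative corners $d_1,\dots,d_k$ are double points of $\pi(\Lambda)$ in $\Sigma_+$. The plan is to read the desired loop off the boundary of this polygon. We have $\partial D = e_\Gamma\cup e_\Lambda$. Here $e_\Gamma$ is the single side that $u$ maps onto the (possibly self-overlapping) chord $c_\Gamma$, which winds $n$ times around its component of $\Gamma$. The complementary arc $e_\Lambda$ is mapped into $\pi(\Lambda)$, with corners at the $d_i$. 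We set $\gamma_1 := u|_{e_\Lambda}$. This is a piecewise-smooth arc in $\pi(\Lambda)$: it is smooth away from the negative corners, where it turns from one strand to the other. Its endpoints are the two endpoints $p,q\in\pi(\Lambda)\cap\Gamma$ of $c_\Gamma$.

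The key point is that $u(\partial D)=\gamma_1\cdot c_\Gamma$ bounds the immersed disk $u$ in $\Sigma_+$, so it is null-homotopic in $\Sigma_+$. We therefore replace $c_\Gamma$ by a non-self-overlapping arc and absorb the full turns into $\gamma_1$'s class. Concretely, write $c_\Gamma \simeq \gamma_2\cdot \Gamma_0^{m}$ rel endpoints. Here $\Gamma_0$ is the component of $\Gamma$ containing $c_\Gamma$, traversed once in the Reeb direction. The arc $\gamma_2$ is the unique non-self-overlapping arc of $\Gamma_0$ from $p$ to $q$, possibly the trivial arc if $p=q$ and $c_\Gamma$ is closed. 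The integer $m$ is the number of complete turns, with $m=n$ or $m=n-1$ depending on how ``winds $n$ times'' is counted for a non-closed chord. The statement then asks that $\gamma_1\cdot\gamma_2$ is closed, which holds because both arcs run between $p$ and $q$. It also asks that $\gamma_1\cdot\gamma_2$ winds around $\Gamma$ $n$ times, meaning its class in $H_1$ of a collar of $\Gamma_0$ (or its intersection number with a fixed arc transverse to $\Gamma_0$) equals the winding of $c_\Gamma$. We get this from the null-homotopy: in $\Sigma_+$, $\gamma_1 \simeq \overline{c_\Gamma}$ rel endpoints. Hence $\gamma_1\cdot\gamma_2$ is homotopic to $\overline{c_\Gamma}\cdot\gamma_2$, which is a closed curve on $\Gamma_0$ whose winding number is $\pm n$ once the conventions are matched. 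Orientations would be fixed by traversing $\gamma_2$ in the direction opposite to the Reeb-oriented $c_\Gamma$.

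To make ``winds around $\Gamma$'' precise, I would push everything into the collar $\Gamma_0\times[0,\epsilon)\subset\Sigma_+$. Alternatively, I would measure winding by the algebraic intersection with a small transverse arc $\tau$ to $\Gamma_0$ chosen away from $p,q$, which is where the polygon's boundary along $\Gamma$ crosses $\tau$ exactly $n$ times. Because $u$ is an immersion of a disk whose boundary is $\gamma_1\cdot c_\Gamma$, the total algebraic count of $\partial u$ across $\tau$, extended into $\Sigma_+$, is governed by a degree argument. Comparing with $\gamma_2$, which crosses $\tau$ zero or one times, then gives the $n$ crossings for $\gamma_1\cdot\gamma_2$.

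I expect the main obstacle to be this bookkeeping step rather than the construction. Winding around $\Gamma$ is not an intrinsic homotopy invariant in $\Sigma_+$, since $\Gamma_0$ may be null-homologous in $\Sigma_+$, for instance as a boundary component of a pair of pants. So I would define it explicitly through the immersed disk: lift $u$ to the cover of $\Sigma_+$ corresponding to the collar, or equivalently count the preimage of $\tau$ under $u$, to transfer the $n$ turns of the $\Gamma$-side onto $\gamma_1$. I would handle this first by treating the simplest case, where $\Sigma_+$ deformation retracts near $\Gamma_0$ to an annulus, and then argue that the general case reduces to it by restricting attention to the collar.
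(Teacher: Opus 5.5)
Your argument is correct and is the paper's: $\gamma_1$ is the part of the boundary of a polygon counted by $\Phi_\pm(c_\Gamma)$ that lies on $\pi(\Lambda)$. Because that polygon is a disk, $\gamma_1$ is homotopic rel endpoints to $c_\Gamma$ (up to orientation), and closing up with a suitable non-self-overlapping arc $\gamma_2$ gives $\gamma_1\cdot\gamma_2\simeq c_\Gamma\cdot\gamma_2$.

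The paper takes ``winds $n$ times'' to mean exactly this homotopy class, so your transverse-arc and covering bookkeeping is unnecessary. The worry behind it is also misplaced: a boundary circle of a pair of pants is not null-homologous there, and in any case distinct powers of $\Gamma_0$ are not freely homotopic in $\Sigma_\pm$ unless $\Sigma_\pm$ is a disk, which hypertightness excludes.
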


\begin{proof}
    Let $\gamma_1$ be the concatenation of all sides of a polygon counted in $\Phi_\pm(c_\Gamma)$ but $c_\Gamma$. Then $\gamma_1$ is homotopic to $c_\Gamma$. Find a non-self-overlapping arc $\gamma_2$ along $\Gamma$ such that $c_\Gamma \cdot \gamma_2$ is closed and winds around $\Gamma$ for $n$ times. Then $\gamma_1 \cdot \gamma_2$ is homotopic to $c_\Gamma \cdot \gamma_2$.
\end{proof}

\begin{corollary}\label{cor:homotopy finite}
    If $\pi(\Lambda)$ is nullhomotopic, then there exists an $l>0$ such that for all generator $c$ of $\mathcal{A}_\Gamma$, $H(c)>l$ implies $\Phi_\pm(c)=0$.
\end{corollary}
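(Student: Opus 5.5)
We argue by contraposition using \Cref{prop:homotopy finite}. Suppose $c$ is a generator of $\mathcal{A}_\Gamma$ with $\Phi_+(c)\neq 0$ (the case $\Phi_-$ is symmetric). Say $c$ lies on a component $\Gamma_0$ of $\Gamma$ and winds $n$ times around it. \Cref{prop:homotopy finite} gives an arc $\gamma_1\subset \pi(\Lambda)$ and a non-self-overlapping arc $\gamma_2\subset \Gamma_0$ such that the loop $\gamma_1\cdot\gamma_2$ winds $n$ times around $\Gamma_0$. The plan is to show that $n$ is bounded above by a constant depending only on the diagram $\pi(\Lambda)\cup\Gamma$. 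Then the lengths of chords with $\Phi_\pm(c)\ne0$ are bounded, and so are their heights. Since there are only finitely many indecomposable chords, there are only finitely many chords of bounded winding, and we take $l$ to be the maximum of their heights.

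The key step is bounding $n$ using null-homotopy of $\pi(\Lambda)$. The arc $\gamma_1$ is a sub-arc of $\pi(\Lambda)$, traversed possibly with overlaps. Its endpoints lie in the finite set $\pi(\Lambda)\cap\Gamma$, and since it bounds a polygon together with $c$, it is a path in the immersed curve, read off as a boundary word of the polygon. I would split $\gamma_1$ as a concatenation of some number $k$ of full traversals of $\pi(\Lambda)$ plus a remainder arc $\rho$ running once along a sub-arc of $\pi(\Lambda)$. Since $\pi(\Lambda)$ is null-homotopic, each full traversal is homotopically trivial, based at a point of $\pi(\Lambda)\cap\Gamma$ after conjugation. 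Hence $\gamma_1\cdot\gamma_2$ is homotopic to $\rho\cdot\gamma_2$. There are finitely many choices of $\rho$ and $\gamma_2$ (endpoints in a finite set, each arc non-self-overlapping), so the resulting free homotopy classes lie in a finite set.

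Next I would translate ``winds $n$ times around $\Gamma_0$'' into the statement that the loop is freely homotopic to the $n$-th power of the class $[\Gamma_0]$. The loop $\gamma_1\cdot\gamma_2$ is homotopic to $c\cdot\gamma_2$, which lies on $\Gamma_0$ and winds $n$ times. If $[\Gamma_0]$ has infinite order in $\pi_1(\Sigma)$, which holds whenever $\Gamma_0$ is essential, then distinct $n$ give distinct free homotopy classes. The finite set of classes above therefore bounds $|n|$.

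If $\Gamma_0$ is inessential, it bounds a disk $D$ in $\Sigma$. The winding number is then invisible to homotopy, so I would use the area bound instead. By hypertightness, a disk component of $\Sigma_\pm$ bounded by $\Gamma_0$ is excluded, as it would give a contractible Reeb orbit $\Gamma_0$. Even so, $D$ could contain other dividing curves, which needs a separate argument. I would handle this by noting that the polygon counted in $\Phi_+(c)$ has area $A(\mathcal{P})$ roughly bounded by $H(c)$ via \Cref{heightProperty}. This alone does not bound $n$, however, since the heights of $\Gamma$-chords grow with $n$.

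The main obstacle is making rigorous the decomposition of $\gamma_1$ into full traversals plus a bounded remainder. Whether the traversals actually cancel in homotopy depends on base points: conjugates of trivial loops are still trivial, so this should be fine. The inessential-$\Gamma_0$ case is the step I expect to be hardest; there I would argue that $\Gamma_0$ is then homotopically trivial in $\Sigma$ only in configurations ruled out by hypertightness.
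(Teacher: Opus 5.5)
The gap is in how you decompose $\gamma_1$. In \Cref{prop:homotopy finite}, $\gamma_1$ is the concatenation of all sides of a polygon counted in $\Phi_\pm(c)$ other than $c$. So at every negative corner $d_i$ it arrives at the double point along one branch of $\pi(\Lambda)$ and leaves along the \emph{other} branch. It is therefore a path in the image graph $\pi(\Lambda)$, not a sub-arc of the parametrized curve, and it cannot be written as $k$ full traversals plus one remainder arc $\rho$.

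Compared with a path that follows the parametrization, each turn at $d_i$ inserts (a conjugate of) the lobe of $\pi(\Lambda)$ between the two preimages of $d_i$, or its inverse. Null-homotopy of the closed curve says nothing about these lobes. For example, a curve that, starting at a double point $d$, runs once around an essential loop $\alpha$ and then once around a loop representing $\alpha^{-1}$ is null-homotopic, yet both lobes at $d$ are essential. Nor is the number of corners bounded independently of $c$: the only a priori bound, $\sum_i H(d_i)<H(c)$ from \Cref{lem:dphiwelldefined}, grows with the winding $n$. So you have not shown that $\gamma_1\cdot\gamma_2$ falls into finitely many classes, and so the bound on $n$ does not follow. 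Base points are not the real obstacle; the corners are.

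The paper's proof uses the hypothesis in the form that the image $\pi(\Lambda)$ lies in a disk $D\subset\Sigma$, so every loop in the image, lobes included, is null-homotopic. Then no decomposition is needed. The path $\gamma_1$, corners and all, runs in $D$ between two of the finitely many points of $\pi(\Lambda)\cap\Gamma$, so it is homotopic rel endpoints to a fixed path in $D$. Hence only finitely many classes of $\gamma_1\cdot\gamma_2$ occur, and your argument for essential $\Gamma_0$ then bounds $n$. With this stronger input your lobe insertions also become harmless, but null-homotopy of the single loop is not enough.

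Finally, the inessential case you expect to be hardest is immediate. $\Gamma_0$ is itself a closed Reeb orbit, so if it were null-homotopic in $\Sigma$ it would be a contractible Reeb orbit in $\Sigma\times\R$. Hypertightness forbids this, regardless of what other dividing curves lie inside the disk it bounds. Hence $[\Gamma_0]$ always has infinite order in the torsion-free group $\pi_1(\Sigma)$, and you can drop the area digression.
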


\begin{proof}
    Equivalently, $\pi(\Lambda)$ is contained in a disk in $\Sigma$. So in the lemma above $\gamma_1 \cdot \gamma_2$ is a trivial loop in $\Gamma$.
\end{proof}



\section{Linearization and examples}\label{sec:linearization}

In this section, we compute several examples of linearized Legendrian contact homology induced by augmentations. This is the counterpart of \cite[Sec. 4,5]{chekanov2002dga}.

\begin{example}(the standard unknot)
	Consider the standard unknot $\Lambda$ (i.e. $\tb = -1$, $\rot = 0$) in \Cref{stdUnknot}. To simplify the notation we suppress all ``widehat'' symbols above the Reeb chords along $\Gamma$. The generators of $\widehat{\mathcal{A}}$ are \[c *(*^n (d * c)), d *(*^n (c * d)), *^{n+1}(c*d), *^{n+1}(d*c),\] where $n \geq 0$. The gradings are given by \[|c *(*^n (d * c))| = 1, |d *(*^n (c * d))| = -1, |*^{n+1}(c*d)| = |*^{n+1}(d*c)| = 0.\] The differential is given by \[\widehat{\partial} (c *(*^n (d * c))) = (c*d)^n + (d*c)^n,\] \[\widehat{\partial} (d *(*^n (c * d))) = 0,\] \[\widehat{\partial}(*^{n+1}(c*d)) = \widehat{\partial}(*^{n+1}(d*c)) = d *(*^n (c * d)).\] There are infinitely many augmentations given by augmenting $*^{n+1}(c*d)$ and $*^{n+1}(d*c)$ in pairs. More precisely, there exists a bijection \[\{I \subset \mathbb{Z}_{\geq 0}\} \rightarrow \{\epsilon:\widehat{\mathcal{A}} \rightarrow \Z_2 : \epsilon \text{ is a DGA map}\}\] \[I \mapsto \epsilon_I,\] where \[\epsilon_I(*^{n+1}(c*d)) = \epsilon_I(*^{n+1}(d*c)) = 1\] for all $n \in I$ and $\epsilon_I(e) = 0$ for all other chords $e$. The linearized differentials satisfy \[\widehat{\partial}_{\epsilon_I} = \widehat{\partial}\] since $(c * d)^n$ and $(d * c)^n$ always appear in pairs in any $\widehat{\partial} e$. The linearized Legendrian contact homologies are given by \[H^{\epsilon_I}_k(\Lambda) = \begin{cases}
		\Z_2 & k = 1 \\
		0 & \text{otherwise}
	\end{cases}\] for all $I$, where $H^{\epsilon_I}_1(\Lambda) = \langle c \rangle$. The set of all Chekanov polynomials is \[I(\widehat{\mathcal{A}}, \widehat{\partial}) = \{t\},\] which agrees with that of the standard unknot computed in the ``standard Lagrangian projection''. Indeed, the two diagrams are related by the sequence of Reidemeister moves shown in \Cref{fig:R moves unknot}.
\end{example}

\begin{figure}[h]
    \centering
    \input{images/R_moves_unknot}
    \caption{A sequence of an RI move followed by an RII move isotopes the max tb unknot from its standard Lagrangian projection in the interior of $\Sigma_+$ to one that intersects $\Gamma$.}
    \label{fig:R moves unknot}
\end{figure}

\begin{example}(the Chekanov $5_2$ knots)
	We construct a pair of Chekanov $5_2$ knots $\Lambda_1$, $\Lambda_2$ that are null-homologous but not null-homotopic. This is motivated by \cite[Sec. 5.2]{licata2013seifert} using long Legendrian knots defined in \cite[Sec. 12]{chekanov2002dga}. We take the Chekanov $5_2$ knots and perform the inverse closing operation at $a_3$ to obtain long Legendrian knots $\Lambda_1^l$, $\Lambda_2^l$, see \Cref{fig:5_2 knot diagrams}. Let $\Sigma$ be a genus-$2$ surface with dividing set $\Gamma$ as shown in  \Cref{chekanov5_2_2}. See \Cref{chekanov5_2} for diagrams of $\Sigma_\pm$. Placing $\Lambda_1^l, \Lambda_2^l$ into the box $K$ we get $\Lambda_1$, $\Lambda_2$ which are Legendrian knots in $\Sigma \times \R$. To justify the existence of Legendrian lifts of $\Lambda_1$, $\Lambda_2$ rigorously, we can apply \Cref{subsec:examplesOfLegendrians} by starting with the actual Chekanov $5_2$ knots in $\R^3$, isotoping it to add two cusps along $\Gamma$ so that half of it contains $a_3$ and the other half is the rest of the knot, and then ``flipping'' the half containing $a_3$ to $\Sigma_-$.
    
    For both $\Lambda_1$ and $\Lambda_2$, this procedure decreases the tb number by $1$ since a positive self-intersection in $\Sigma_+$ is replaced by a positive self-intersection in $\Sigma_-$ plus two intersection points with $\Gamma$. The rotation number remains the same since the rotation number of the standard unknot shown in right of \Cref{chekanov5_2_2} is $0$ by symmetry. Hence, $\Lambda_1$ and $\Lambda_2$ have the same classical invariants $\tb(\Lambda_1) = \tb(\Lambda_2) = 1 - 1 = 0$ and $\rot(\Lambda_1) = \rot(\Lambda_2) = 0$.
    
    Now we compute their Chekanov polynomials $P_{\Lambda_1}(t), P_{\Lambda_2}(t)$. Observe that $\widehat{\partial}$ vanishes on all generators $c, d, c*d, d*c, \dots $ along $\Gamma$ which contributes $2 \sum_{k = 1}^\infty t^k$ to both $P_{\Lambda_1}(t)$ and $P_{\Lambda_2}(t)$. The differential on all other generators remains the same except \[\widehat{\partial} a_3 = 1\] for both $\Lambda_1, \Lambda_2$. Note that $a_5, a_6$ still contribute $t^{-2}, t^2$ and $t^0, t^0$ to $P_{\Lambda_1}(t), P_{\Lambda_2}(t)$, respectively. However, $H_1^\epsilon(\Lambda_1)$ vanishes since by a similar computation as in \cite{chekanov2002dga} we have \begin{align*}
        \widehat{\partial}^\epsilon a_1 &= a_7 \\
        \widehat{\partial}^\epsilon a_2 &= a_9 \\
        \widehat{\partial}^\epsilon a_3 &= 1 \\
        \widehat{\partial}^\epsilon a_4 &= a_8 + a_9
    \end{align*} and \[c_1 a_7 + c_2 a_9 + c_3 + c_4(a_8 + a_9) = 0 \implies c_1 = c_2 = c_3 = c_4 = 0.\] One can check similarly that $H_1^\epsilon(\Lambda_2)$ also vanishes. Therefore \[P_{\Lambda_1}(t) = t^{-2} + t^2 + 2 \sum_{k = 1}^\infty t^k \neq 2t^0 + 2 \sum_{k = 1}^\infty t^k = P_{\Lambda_2}(t).\] Hence, $\Lambda_1$ and $\Lambda_2$ are not Legendrian isotopic. 
\end{example}

\begin{figure}[h]
    \begin{center}
    	\input{images/genus_2_unknot}
    	\caption{A genus-$2$ convex surface with a standard unknot.}
    	\label{chekanov5_2_2}
    \end{center}
\end{figure}
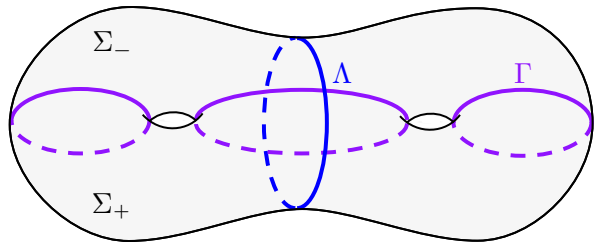

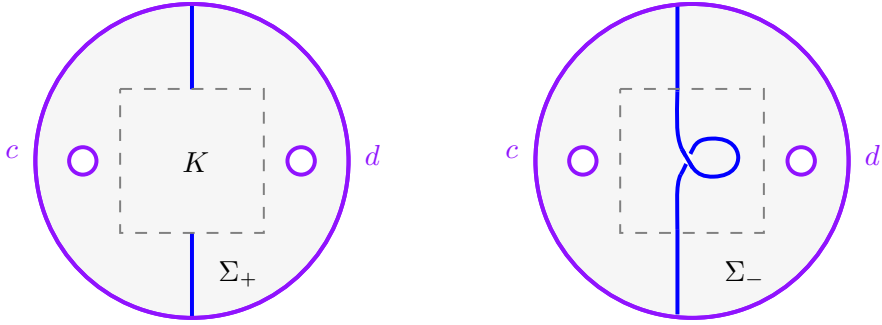
\begin{figure}[h]
    \begin{center}
    	\input{images/five_two_schematic}
    	\caption{A convex surface with the Chekanov $5_2$ knots.}
    	\label{chekanov5_2}
    \end{center}
\end{figure}

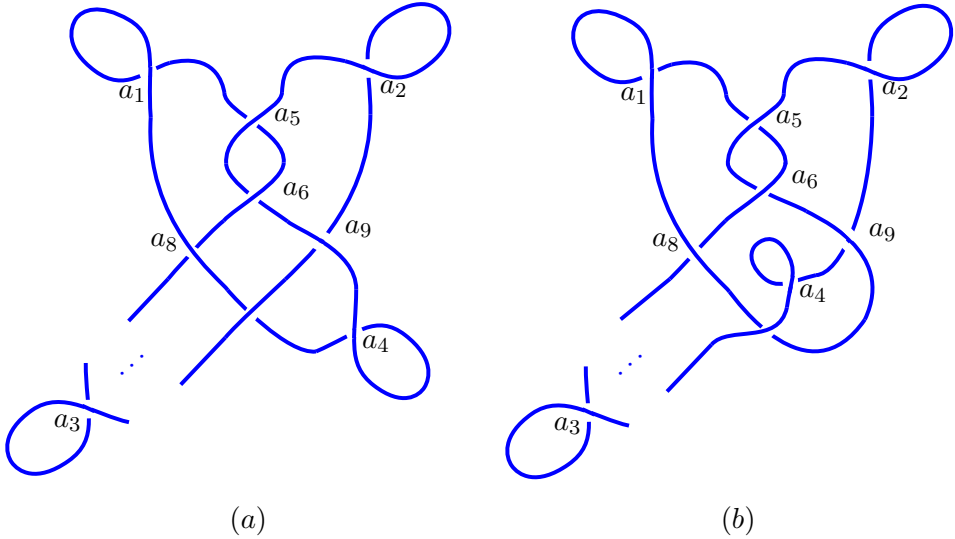
\begin{figure}[h]
    \centering
    \input{images/five_two_knots}
    \caption{A knot diagram and labeling of (a) $\Lambda_1^l$ and (b) $\Lambda_2^l$. }
    \label{fig:5_2 knot diagrams}
\end{figure}
\section{Proof of $\widehat{\partial}^2=0$}\label{sec:d2 is 0}

In this section we prove that $\Phi_{\pm}: \mathcal{A}_\Gamma \rightarrow \mathcal{A}_\pm$ are DGA maps and thus $\widehat{\partial}^2 = 0$. Our proof uses a broken-heart-type argument analogous to \cite[Sec. 7]{chekanov2002dga}. First we prove $\widehat{\partial}^2 = 0$ assuming that $\Phi_\pm$ are DGA maps.

\begin{theorem} \it
	$\widehat{\partial}^2 = 0$.
\end{theorem}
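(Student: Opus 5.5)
Since $\widehat{\partial}$ is a derivation of odd degree over $\Z_2$, $\widehat{\partial}^2$ is again a derivation, so it suffices to check $\widehat{\partial}^2 = 0$ on generators. Following the paper's stated strategy, we assume that $\partial_\pm^2=0$ on $\mathcal{A}_\pm$ and that $\Phi_\pm:\mathcal{A}_\Gamma\to\mathcal{A}_\pm$ are DGA maps, i.e.\ $\partial_\pm\Phi_\pm = \Phi_\pm\partial_\Gamma$ (this is \Cref{prop:phipmDGA}, proved by the broken-heart argument). We also check directly that $\partial_\Gamma^2=0$. The term $c^1 c^2 c^3$ in $\partial_\Gamma^2 c_\Gamma$ arises once from splitting $c^1*(c^2*c^3)$ and once from splitting $(c^1*c^2)*c^3$, so these terms cancel mod $2$.

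For generators $c_\pm$ there is nothing new: $\widehat{\partial}^2 c_\pm = \partial_\pm^2 c_\pm = 0$. The substantive case is $\widehat{c}_\Gamma$. The key step is to establish an identity for $\Omega$. For every word $x$ in $\mathcal{A}_\Gamma$ we aim to show
\begin{equation*}
\widehat{\partial}\,\Omega(x) = \Phi_+(x)+\Phi_-(x)+\Omega(\partial_\Gamma x).
\end{equation*}
This identity says $\Omega$ is a $(\Phi_+,\Phi_-)$-derivation chain homotopy. We prove it by induction on word length.

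For a single generator $x=c_\Gamma$, the identity is exactly the definition of $\widehat{\partial}(\widehat{c}_\Gamma)$. For the inductive step, write $x=yz$ with $y$ a generator. The defining formula gives $\Omega(yz)=\Phi_+(y)\Omega(z)+\Omega(y)\Phi_-(z)$; this is consistent with the displayed general formula for $\Omega$ on words. Apply $\widehat{\partial}$ using the Leibniz rule. On $\mathcal{A}_\pm$ we have $\widehat{\partial}=\partial_\pm$, and $\partial_\pm\Phi_\pm=\Phi_\pm\partial_\Gamma$, so we may substitute the inductive hypothesis for $\widehat{\partial}\Omega(y)$ and $\widehat{\partial}\Omega(z)$. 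Expanding $\Phi_\pm(yz)=\Phi_\pm(y)\Phi_\pm(z)$, the two cross terms $\Phi_+(y)\Phi_-(z)$ appear twice and cancel mod $2$. The remaining terms should reassemble as $\Phi_+(yz)+\Phi_-(yz)+\Omega(\partial_\Gamma(y)z + y\,\partial_\Gamma z)$, once we use the same product rule for $\Omega$ on the words appearing in $\partial_\Gamma(y)z$ and $y\,\partial_\Gamma(z)$. Extending $\Omega$ linearly, with $\Omega(1)=0$, handles sums of words.

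Given this identity, the computation on $\widehat{c}_\Gamma$ closes:
\begin{equation*}
\widehat{\partial}^2\widehat{c}_\Gamma = \widehat{\partial}\Omega(\partial_\Gamma c_\Gamma) + \partial_+\Phi_+(c_\Gamma)+\partial_-\Phi_-(c_\Gamma).
\end{equation*}
By the identity, the first term equals $\Phi_+(\partial_\Gamma c_\Gamma)+\Phi_-(\partial_\Gamma c_\Gamma)+\Omega(\partial_\Gamma^2 c_\Gamma)$. The last summand vanishes because $\partial_\Gamma^2=0$. The other two summands cancel against $\partial_\pm\Phi_\pm(c_\Gamma)$ by the DGA-map property. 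All sums involved are finite by \Cref{lem:dphiwelldefined}, and gradings are compatible by \Cref{cor:phipmgrading}.

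The main obstacle is bookkeeping the inductive step for $\Omega$ on long words. Here I would work directly with the explicit sum formula $\Omega(c^1\cdots c^n)=\sum_i \Phi_+(c^1\cdots c^{i-1})\,\widehat{c}^i\,\Phi_-(c^{i+1}\cdots c^n)$ and check that the terms pair off. This mirrors the standard fact that the free-algebra ``mapping cylinder'' differential squares to zero.
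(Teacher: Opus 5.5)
Your proposal is correct and matches the paper's proof. Both establish $\widehat{\partial}\,\Omega(x)=\Phi_+(x)+\Phi_-(x)+\Omega(\partial_\Gamma x)$ for words $x\in\mathcal{A}_\Gamma$ by induction on word length, using the $(\Phi_+,\Phi_-)$-derivation rule for $\Omega$ and the chain-map property of $\Phi_\pm$ from \Cref{prop:phipmDGA}, and then apply it to $x=\partial_\Gamma c_\Gamma$ together with $\partial_\Gamma^2=0$. The differences are cosmetic: you peel off the first letter where the paper peels off the last, and you spell out the reduction to generators, the $c_\pm$ case, and the check $\partial_\Gamma^2=0$, all of which the paper leaves implicit.
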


\begin{proof}
	Note that \[\widehat{\partial}^2(c_\Gamma) = \widehat{\partial}(\Omega(\partial_\Gamma c_\Gamma) + \Phi_+(c_\Gamma) + \Phi_-(c_\Gamma)) = (\widehat{\partial} \circ \Omega + \Phi_+ + \Phi_-)(\partial_\Gamma c_\Gamma).\] We claim that for all $w \in \mathcal{A}_\Gamma$, \[(\widehat{\partial} \circ \Omega + \Phi_+ + \Phi_-)(w) = \Omega \circ \partial_\Gamma (w).\] Since $\widehat{\partial}, \Gamma, \Phi_\pm$ are additive, it suffices to prove this when $w$ is a monomial. Now we use induction on the length of $w$. For a base case, let $a$ be a generator of $\widehat{\mathcal{A}}$. Note that \begin{align*}
		(\widehat{\partial} \circ \Omega + \Phi_+ + \Phi_-)(a)
		&= \widehat{\partial}(a) + \Phi_+(a) + \Phi_-(a) \\
		&= \Omega(\partial_\Gamma a) + \Phi_+(a) + \Phi_-(a) + \Phi_+(a) + \Phi_-(a) \\
		&= \Omega(\partial_\Gamma a).
	\end{align*} Assume the statement holds for all $w$ of length $n-1$. Then \begin{align*}
		&(\widehat{\partial} \circ \Omega + \Phi_+ + \Phi_-)(a_1\dots a_n) \\
		=& \widehat{\partial}(\Omega(a_1\dots a_{n-1})\Phi_-(a_n) + \Phi_+(a_1\dots a_{n-1})\widehat{a}_n) + \Phi_+(a_1\dots a_n) + \Phi_-(a_1\dots a_n) \\
		=& \widehat{\partial} \circ \Omega(a_1\dots a_{n-1}) \Phi_-(a_n) + \Omega(a_1\dots a_{n-1})\Phi_-(\partial_\Gamma a_n) + \Phi_+(\partial_\Gamma(a_1\dots a_{n-1}))\widehat{a}_n \\
		+& \Phi_+(a_1\dots a_{n-1})(\Omega(\partial_\Gamma a_n) + \Phi_+(a_n) + \Phi_-(a_n)) + \Phi_+(a_1\dots a_n) + \Phi_-(a_1\dots a_n).
	\end{align*} By the induction hypothesis, \[\widehat{\partial} \circ \Omega(a_1\dots a_{n-1}) = \Omega(\partial_\Gamma (a_1\dots a_{n-1})) + \Phi_+(a_1\dots a_{n-1}) + \Phi_-(a_1\dots a_{n-1}).\] Hence, \begin{align*}
		&(\widehat{\partial} \circ \Omega + \Phi_+ + \Phi_-)(a_1\dots a_n) \\
		=& \Omega(\partial_\Gamma (a_1\dots a_{n-1}))\Phi_-(a_n) + \Phi_+(a_1\dots a_{n-1})\Phi_-(a_n) + \Phi_-(a_1\dots a_n) \\
		+& \Omega(a_1\dots a_{n-1})\Phi_-(\partial_\Gamma a_n) + \Phi_+(\partial_\Gamma(a_1\dots a_{n-1}))\widehat{a}_n \\
		+& \Phi_+(a_1\dots a_{n-1})(\Omega(\partial_\Gamma a_n) + \Phi_+(a_n) + \Phi_-(a_n)) + \Phi_+(a_1\dots a_n) + \Phi_-(a_1\dots a_n) \\
		=& \Omega(\partial_\Gamma (a_1\dots a_{n-1}))\Phi_-(a_n) + \Phi_+(\partial_\Gamma(a_1\dots a_{n-1}))\widehat{a}_n \\
		+& \Omega(a_1\dots a_{n-1})\Phi_-(\partial_\Gamma a_n) + \Phi_+(a_1\dots a_{n-1})(\Omega(\partial_\Gamma a_n)) \\
		=& \Omega(\partial_\Gamma (a_1\dots a_{n-1})a_n) + \Omega(a_1\dots a_{n-1}(\partial_\Gamma a_n)) \\
		=& \Omega(\partial_\Gamma(a_1\dots a_n)).
	\end{align*} Therefore, \[(\widehat{\partial} \circ \Omega + \Phi_+ + \Phi_-)(\partial_\Gamma c_\Gamma) = \Omega(\partial_\Gamma^2 c_\Gamma) = \Omega(0) = 0.\]
\end{proof}

\begin{proposition}\label{prop:phipmDGA} 
	$\Phi_{\pm}$ are DGA maps.
\end{proposition}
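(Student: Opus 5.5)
We need to show three things: $\Phi_\pm$ preserves grading (already \Cref{cor:phipmgrading}), $\Phi_\pm$ is an algebra map (true by definition), and $\Phi_\pm\circ\partial_\Gamma = \partial_\pm\circ\Phi_\pm$. Both sides are derivations along $\Phi_\pm$ (the first because $\partial_\Gamma$ satisfies Leibniz and $\Phi_\pm$ is multiplicative, the second because $\partial_\pm$ does), so it suffices to check the identity on a single generator $c_\Gamma$. We treat $\Phi_+$; the case of $\Phi_-$ is symmetric after reversing orientations as in \Cref{fig:orientationsSigmapm}. Over $\Z_2$ the identity reads
\[
\partial_+\Phi_+(c_\Gamma) + \sum_{c^1_\Gamma * c^2_\Gamma = c_\Gamma}\Phi_+(c^1_\Gamma)\Phi_+(c^2_\Gamma) = 0 .
\]

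\textbf{Broken-heart argument.} Following \cite[Sec. 7]{chekanov2002dga}, we interpret each term as a broken configuration in $\Sigma_+$. A monomial in $\partial_+\Phi_+(c_\Gamma)$ corresponds to a pair $(\mathcal{P},\mathcal{Q})$. Here $\mathcal{P}$ is a rigid polygon with positive end along $c_\Gamma$ and a negative corner at some double point $d$, and $\mathcal{Q}$ is a rigid polygon with positive corner at $d$. A monomial in $\Phi_+(c^1_\Gamma)\Phi_+(c^2_\Gamma)$ corresponds to a pair of polygons with $\Gamma$-sides $c^1_\Gamma$ and $c^2_\Gamma$ that share the endpoint $p\in\pi(\Lambda)\cap\Gamma$ where the concatenation happens.

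We will show that these broken configurations are exactly the ends of one-dimensional families of \emph{obtuse} immersed polygons. These are immersed disks with boundary on $\pi(\Lambda)\cup\Gamma$ that have one side along $c_\Gamma$ and all other corners negative, except for exactly one obtuse (three-quadrant) corner, or one point where a boundary edge runs into the interior of the $\Gamma$-side. Each such family is an interval, parametrized by sliding a cut from the special corner along a strand of $\pi(\Lambda)$ into the polygon. The cut stops exactly when it hits either
\begin{itemize}
\item[(i)] a double point, which gives a $\partial_+\circ\Phi_+$ breaking, or
\item[(ii)] the side along $\Gamma$ at a point of $\pi(\Lambda)\cap\Gamma$, which splits the $\Gamma$-side into $c^1_\Gamma * c^2_\Gamma$ and gives a $\Phi_+\circ\partial_\Gamma$ breaking.
\end{itemize}

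Conversely, every broken pair glues to a unique obtuse configuration by smoothing at $d$ or at $p$. For case (ii), the needed local model is the convention that an arc along $\Gamma$ is an acute corner. Two polygons meeting at $p$, one on each side of the strand through $p$, glue to a polygon whose $\Gamma$-side is the concatenation, with a "slit" along the strand emanating from $p$. Since each interval has two ends, the terms cancel in pairs.

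\textbf{Main obstacle and finiteness.} The main difficulty is the careful local analysis at points of $\Gamma$. We must verify that the gluing at $p$ is bijective, using that $\pi(\Lambda)$ meets $\Gamma$ transversely and that the decorations in \Cref{fig:corner orientations} make both pieces rigid with the correct positive end. We must also exclude degenerate ends, namely a cut running along the whole $\Gamma$-side or hitting a corner of $c_\Gamma$ itself.

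We would handle this as Chekanov does. We double the region near $\Gamma$ by viewing the $\Gamma$-side as an edge, then enumerate the finitely many local pictures of where the cut can terminate. Finiteness of all the sums, and hence of the pairing, follows from \Cref{heightProperty} and \Cref{finitePolygon}: every configuration has area at most $H(c_\Gamma)$, so only finitely many obtuse families occur. Consistency of the grading of each family is guaranteed by \Cref{lem:diff=1}.
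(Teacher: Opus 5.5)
Your overall route matches the paper's: reduce to a generator, then run Chekanov's broken-heart argument, where each term of $\partial_+\Phi_+(w)$ or $\Phi_+\partial_\Gamma(w)$ is one of the two degenerations of a disk with a single obtuse corner. There is, however, a genuine gap: your argument never uses the hypertightness hypothesis, and the proposition is false without it. In the paper's example with $\Sigma=S^2$ and $\Gamma$ the equator (\Cref{fig:S2xI}), one has $\Phi_+(a*b)=\Phi_+(a)=\Phi_+(b)=1$, so
\[
\partial_+\Phi_+(a*b)=0\neq 1=\Phi_+\partial_\Gamma(a*b).
\]

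The step that fails is your claim that every broken pair glues to a unique obtuse configuration with two ends. Suppose $w$ starts and ends at the same point and $w=w'*w''$. Then the disks for $w'$ and $w''$ lie on opposite sides of the strands of $\pi(\Lambda)$ at two points: at $p=\text{end}(w')=\text{start}(w'')$, and at $q=\text{start}(w')=\text{end}(w'')$. Suppose the strand leaving $p$ runs all the way to $q$ without either disk turning. Then both pieces have no negative corners, and gluing them along that arc gives an immersed disk whose entire boundary is the closed chord $w$. This disk has no obtuse corner and admits only this one degeneration, so the term $\Phi_+(w')\Phi_+(w'')$ is left unpaired.

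Your list of ``degenerate ends'' to be excluded may be gesturing at this case. But you propose to exclude such ends by enumerating local pictures near $\Gamma$, and no local analysis can do that: the configuration genuinely occurs whenever a component of $\Gamma$ bounds a disk. The missing idea is global. Such a glued disk exhibits a closed Reeb orbit along $\Gamma$ as contractible, which hypertightness forbids. Once that case is ruled out, the remaining closed-chord case goes through with your gluing argument, as in the paper: there the shared arcs at $p$ and $q$ are distinct, so at least one piece has a negative corner in $\Sigma_+$.
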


\begin{proof}
    It suffices to show that $\Phi_+$ is a DGA map, as the proof for $\Phi_-$ is identical. We already saw in \Cref{cor:phipmgrading} that $\Phi_+$ preserves the grading. Let $w\in \mathcal{A}_\Gamma$ be a generator, which we think of as a concatenation of some nonempty compatible set of indecomposable Reeb chords. We'd like to show that 
    \[\partial_+\circ \Phi_+(w) = \Phi_+\circ \partial_\Gamma (w).\tag{\dag}\]

    Suppose that $v\in \mathcal{A}_+$ is some monomial appearing in the right hand side of $(\dag)$. Then the coefficient associated to $v$ counts the number of disks of the form in \Cref{fig:decomp 1}. That is, it is a count of two disks with common boundary $\gamma$ whose concatenation has positive end $w$ and negative end $v$. Without loss of generality, we assume that $a$ is a negatively asymptotic endpoint of the disk with positive end $w'$. We also assume for now that the beginning point of $w'$ is distinct from the end point of $w''$. We will address the case that they coincide in \Cref{rem: why hypertight}.

    \begin{figure}[h]
        \centering
        \input{images/hatdsquare_=0/decomposition_1}
        \caption{A collection of disks corresponding to $v$ in $\Phi_+\circ \partial_\Gamma$.}
        \label{fig:decomp 1}
    \end{figure}
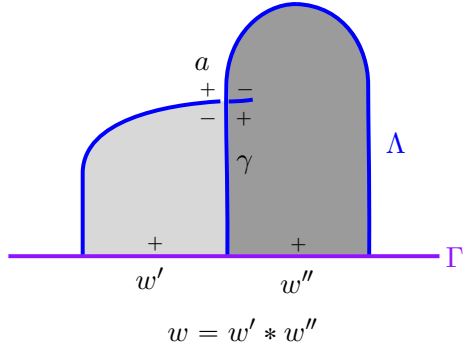

We'd like to show that $v$ appears as a monomial on the right hand side with the same $\Z_2$ coefficient as on the left hand side of $(\dag)$. Notice that concatenating the two disks along their common boundary $\gamma$ produces a disk with a positive end at $w'*w'' = w$, one obtuse corner negatively asymptotic to some $a\in \mathcal{A}_+$, and all other corners acute and negatively asymptotic to chords in $\mathcal{A}_+$. We may then consider the disks in \Cref{fig:decomp 1} as a degeneration of such a disk. Degenerating along the other strand produces the configuration in \Cref{fig:decomp 2}.

    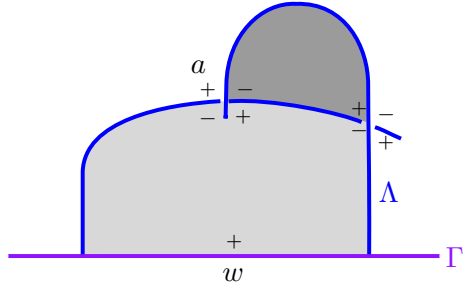
\begin{figure}[h]
        \centering
        \input{images/hatdsquare_=0/decomposition_2}
        \caption{Degeneration of disk along second strand, corresponding to $v$ in $\partial_+\circ \Phi_+$. }
        \label{fig:decomp 2}
    \end{figure}

    These disks contribute $+1$ to the coefficient of $v$ in the left hand side of $(\dag)$.  Note that we assumed a particular position of the Legendrian knot relative to the dividing set $\Gamma$. In particular, we assumed that one strand at $a$ intersects $\Gamma$, and the other intersects $\Lambda$.  In the event that the knot $\Lambda$ is not in this position, then without loss of generality $\Lambda$ must occupy one of the other three configurations in \Cref{fig:decomp 3}. In each of these cases, following a similar degeneration argument along each strand would contribute $2\equiv 0$ (mod $2$) to either the left hand side or right hand side of $(\dag)$. 

    \begin{figure}[h]
        \centering
        \input{images/hatdsquare_=0/decomposition_3}
        \caption{Other strand configurations at obtuse corner $a$. Each row encodes the two possible degenerations of a disk with obtuse corner a, that has a strands with ends on $\Lambda$ or on $\Gamma$.}
        \label{fig:decomp 3}
    \end{figure}

\end{proof}

\begin{remark}\label{rem: why hypertight} Suppose that $\text{start}(w) = \text{end}(w)$. Consider a monomial $v\in \Phi_+\circ \partial_\Gamma(w)$. Then the coefficient of $v$ is a count of rigid polygons of the form in \Cref{fig:hypertight 1}. Note in particular that the two disks share two boundary arcs, $\gamma$ and $\gamma'$, which  may coincide. If they do not coincide, then at least one disk must have a negative asymptotic end in $\Sigma_+$. In this case, the argument proceeds identically to that in the proof of \cref{prop:phipmDGA}. If $\gamma$ and $\gamma'$ do coincide, then $\Lambda$ must have the configuration shown in \cref{fig:hypertight 3}. We may think of this case as a degeneration of a disk along an obtuse corner on $\Gamma$ at $\text{start}(w') = \text{end}(w'')$. For such a disk there is only one possible degeneration, which would imply that $\Phi_+$ is not a chain map. We can circumvent this problem by noting that the concatenation of the two disks is a disk whose boundary is $w$, so that $w$ a contractible Reeb orbit. Restricting to the case that $\xi$ is hypertight eliminates this possibility. 

\begin{figure}[h]
    \begin{minipage}{0.48\textwidth}
        \centering
    \input{images/hypertight_1}
    \caption{Generic configuration of disks counted in $\Phi_+(w')\Phi_+(w'')$ for a closed Reeb orbit $w$, where at least one disk has a nontrivial negative end.}
    \label{fig:hypertight 1}
    \end{minipage}
    \hfill
    \begin{minipage}{0.48\textwidth}
        \centering
    \input{images/hypertight_3}
    \caption{Generic configuration of disks counted in $\Phi_+(w')\Phi_+(w'')$ for a closed Reeb orbit $w$, where neither disk has a nontrivial negative end.}
    \label{fig:hypertight 3}
    \end{minipage}
\end{figure}

\begin{example}
    Consider the standard unknot in \Cref{fig:S2xI}. Here, $\Sigma = S^2$ and $\Gamma$ is the equator. Clearly $\xi$ is not hypertight, as $\Gamma$ itself is a contractible Reeb orbit. We compute that $\Phi_+(a*b)=1$, $\Phi_+(a)=1$ and $\Phi_+(b)=1$, so 
    \[\partial_+\circ\Phi_+(a*b)=0 \neq 1 =\Phi_+\circ\partial_\Gamma(a*b). \] So $\Phi_+$ is not
    a chain map. 
\end{example}

\begin{figure}[h]
    \centering
    \input{images/S2xI}
    \caption{Standard Legendrian unknot in $S^2$.}
    \label{fig:S2xI}
\end{figure}
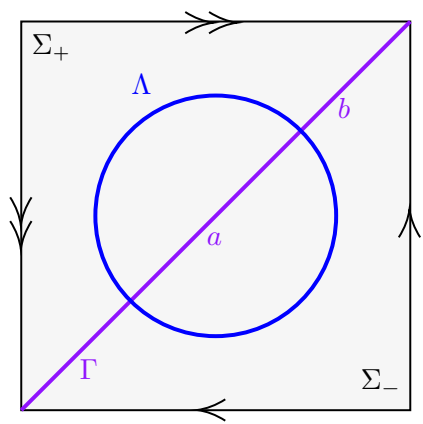
\end{remark}







\section{Proof of invariance under Legendrian isotopy}\label{sec:invariance}

In this section we prove that $(\widehat{\mathcal{A}}, \widehat{\partial})$ is invariant under Legendrian isotopy by showing that it is invariant under the four types of Legendrian Reidemeister moves for the convex surface projection.

\subsection{Reidemeister IV(a) move} \text{ }

\begin{figure}[h]
    \begin{center}
    	\input{images/RIV_images}
    	\caption{The Reidemeister IV moves, with important Reeb chords labeled.}
    	\label{Reidemeister IV}
    \end{center}
\end{figure}
The proof can be divided into two steps: first we make the two DGAs before and after the move agree on all generators in $\Sigma_\pm$ together with $a$ as shown in \Cref{Reidemeister IV}(a). This step proceeds in a similar way as in \cite[Sec. 8.4]{chekanov2002dga}. Second we make the two DGAs agree on the rest of the generators, namely the generators along $\Gamma$ other than $a$. Both steps involve conjugating the differential by a sequence of elementary automorphisms corresponding to the generators.

\subsubsection*{Step 1:} 
Denote by $(\mathcal{A}^-, \partial^-)$ and $(\mathcal{A}', \partial')$ the DGA before and after the move, respectively. We have $\partial^{-}(a) = b + v$ for some $v$. Conjugate $\partial^{-}$ by the elementary automorphism sending $a \mapsto a$, $b \mapsto b + v$ and still denote the differential by $\partial^{-}$. This has the effect that $\partial^{-}(a) = b$ and $\partial^{-}(b) = 0$. Similarly, we have $\partial'(a) = b + w$ for some $w$. Conjugate $\partial'$ by the elementary automorphism sending $a \mapsto a$, $b \mapsto b + w$ and still denote the differential by $\partial'$. This has the effect that $\partial'(a) = b$ and $\partial'(b) = 0$. Still denote by $\partial^{-}$ the conjugate $s \circ \partial^{-} \circ s^{-1}$, where $s : A^{-} \to A'$ is the identification of generators given by \Cref{Reidemeister IV}(a), meaning that generators in $\Sigma_\pm$ and indecomposable Reeb chords along $\Gamma$ are identified as in the figure and decomposable Reeb chords along $\Gamma$ are identified as follows: in $\mathcal{A}^-$ and $\mathcal{A}'$ Reeb chords along $\Gamma$ (excluding $a$) come in pairs $(c_\Gamma^-, c_\Gamma')$ for which there is a natural correspondence between polygons with a positive corner at $c_\Gamma^-$ and $c_\Gamma'$, modulo those with negative corners at $b$. For example, if $\Gamma$ has only two indecomposable chords $a$ and $c$ (i.e. $c$ and $d$ in \Cref{Reidemeister IV}(a) are the same chord), then \[(c_\Gamma^-, c_\Gamma') = (c, a * c * a), (a * c, c * a), (c * a, a * c), (a * c * a, c),\text{ etc.}\] are such pairs, see \Cref{fig:polygon correspondence}. Also note that any Reeb chord along a different component of $\Gamma$ than the one containing $a$ is paired with itself.

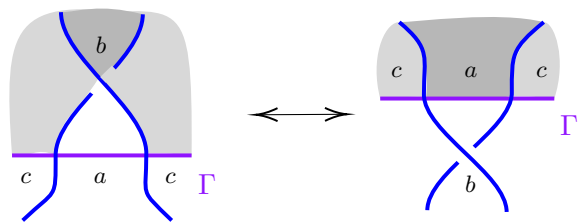
\begin{figure}[h]
    \centering
    \input{images/polygon_correspondence}
    \caption{A correspondence between polygons for $c$ and $a*c*a$.}
    \label{fig:polygon correspondence}
\end{figure}

Now $\partial^{-}$ and $\partial'$ are both differentials on $A'$ that agree on $a, b \in A'$. Order and label the Reeb chords in $\Sigma_\pm$ and $a, b$ by height as \[H(a_l) \geq \dots \geq H(a_1) \geq H(a) > H(b) \geq H(b_1) \geq \dots  \geq H(b_m).\] By the height property of polygons, $\partial^-$ and $\partial'$ also agree on $b_1, \dots, b_m$. Let $\mathcal{A}_{[i]}$, $0 \leq i \leq l$, be the sub-DGA generated by $a_i, \dots, a_1, a, b, b_1, \dots, b_m$. Then $\partial^-$ and $\partial'$ agree on $\mathcal{A}_{[0]}$. This is the analogue of \cite[Lemma 8.2(a)]{chekanov2002dga}.

Define a graded linear map \[G: \mathcal{A}' \rightarrow \mathcal{A}'\] \[ybz \mapsto yaz\] of degree $1$ where $y$ is a word not containing $a$ or $b$ and $G$ sends everything else to $0$. This map is denoted by $h$ in \cite[Lem. 2.2]{chekanov2002dga}. For $1 \leq i \leq l$, define a sequence of elementary automorphisms by \[g_i: \mathcal{A}' \rightarrow \mathcal{A}'\] \[a_i \mapsto a_i + G \circ (\partial' + \partial_{[i-1]})(a_i)\] \[c \mapsto c \text{ for any other generator $c$},\] where $\partial_{[0]} := \partial^-$ and $\partial_{[i]} = g_i \circ \partial_{[i-1]} \circ g_i^{-1}$. We claim that $\partial_{[l]} = \partial'$ on $\mathcal{A}_{[l]}$. By the discussion in \cite{chekanov2002dga} after Lemma 8.2, it suffices to prove an analog of Lemma 8.2(b). Namely, we claim that \[\tau \circ \partial^-(a_i) = \tau \circ \partial'(a_i)\] for all $1 \leq i \leq l$, where the DGA homomorphism \begin{align*}
    \tau: \mathcal{A}' &\rightarrow \mathcal{A}' \\
    a, b &\mapsto 0 \\
    c &\mapsto c \text{ if $c \neq a, b$}.
\end{align*} This follows by observing that $\partial^-(a_i)$ and $\partial'(a_i)$ agree after killing all polygons with negative corners at $b$. Therefore $\partial_{[l]} = \partial'$ on $\mathcal{A}_{[l]}$.

\subsubsection*{Step 2:}  It remains to make $\partial_{[l]}$ agree with $\partial'$ on all Reeb chords along $\Gamma$ except $a$ (on which they already agree). The proof is similar to Step 1, except that there are infinitely many such Reeb chords. Enumerate the $c_\Gamma'$'s by $c'_{\Gamma, i}$, where $i \in \mathbb{N}$, so that \[H(c'_{\Gamma, 1}) \leq H(c'_{\Gamma, 2}) \leq \dots\] Define an infinite sequence of elementary automorphisms by \[h_i: \mathcal{A}' \rightarrow \mathcal{A}'\] \[c'_{\Gamma, i} \mapsto c'_{\Gamma, i} + G \circ (\partial' + \partial_{[l+i-1]})(c'_{\Gamma, i})\] \[c \mapsto c \text{ for any other generator $c$},\] where $\partial_{[l+i]} = h_{i} \circ \partial_{[l+i-1]} \circ h_{i}^{-1}$. We claim that $h := \dots \circ h_2 \circ h_1$ is a tame automorphism. Order the generators of $\mathcal{A}'$ by \[b_m < \dots < b_1 < b < a < a_1 < \dots < a_l < c'_{\Gamma, 1} < c'_{\Gamma, 2} < \dots\] and let $\mathcal{A}_{[l+i]}$ be the sub-DGA generated by $b_m, \dots, b_1, b, a, a_1, \dots, a_l, c'_{\Gamma, 1}, \dots c'_{\Gamma, i}$. It suffices to show $G \circ (\partial' + \partial_{[l+i-1]})(c'_{\Gamma, i}) \in \mathcal{A}_{[l+i-1]}$. Since $G$ simply replaces $b$'s with $a$'s and $\partial'(c'_{\Gamma, i}) \in \mathcal{A}_{[l+i-1]}$ by the definition of height and the ordering, it suffices to show $\partial_{[l+i-1]} c'_{\Gamma, i} \in \mathcal{A}_{[l+i-1]}$. In fact we now prove by induction that $\partial_{[l+k-1]} c'_{\Gamma, i} \in \mathcal{A}_{[l+i-1]}$ for all $1 \leq k \leq i$. Note that \begin{align*}
    \partial_{[l+k-1]}(c'_{\Gamma, i}) &= h_{k-1} \circ \dots \circ h_1 \circ \partial_{[l]}(c'_{\Gamma, i}).
\end{align*} The $i = 1$ case is trivial. Assume the statement holds for $1, 2, \dots, i-1$. Then $\partial_{[l]}(c'_{\Gamma, 1}) \in \mathcal{A}_{[l]}$ implies $h_1 \circ \partial_{[l]}(c'_{\Gamma, i}) \in \mathcal{A}_{[l+i-1]}$, \dots, $\partial_{[l+i-2]}(c'_{\Gamma, i-1}) \in \mathcal{A}_{[l+i-2]}$ implies $h_{i-1} \circ \dots \circ h_1 \circ \partial_{[l]}(c'_{\Gamma, i}) \in \mathcal{A}_{[l+i-1]}$.  Thus, $h$ is a tame automorphism. The rest again follows from an analog of Lemma 8.2(b) and the discussion that follows. More specifically, we claim that \[(\partial_{[l]})^h(c'_{\Gamma, i}) = \partial'(c'_{\Gamma, i})\] for all $i$. Note that \begin{align*}
    (\partial_{[l]})^h(c'_{\Gamma, i}) &= (\dots \circ h_{i+1} \circ h_i) \circ \partial_{[l+i-1]} (c'_{\Gamma, i} + G \circ (\partial' + \partial_{[l+i-1]})(c'_{\Gamma, i})) \\
    &= \partial_{[l+i-1]} (c'_{\Gamma, i}) + \partial' (G \circ (\partial' + \partial_{[l+i-1]})(c'_{\Gamma, i})) \\
    &= \partial_{[l+i-1]} (c'_{\Gamma, i}) + (\Id + \tau + G \circ \partial') (\partial' + \partial_{[l+i-1]})(c'_{\Gamma, i}).
\end{align*} By induction, $\partial'$ and $\partial_{[l+i-1]}$ agree on $\mathcal{A}_{[l+i-1]}$ (since $\partial_{[l+i-1]}|_{\mathcal{A}_{[l+i-1]}} = (\partial_{[l]})^h|_{\mathcal{A}_{[l+i-1]}}$) so $\partial_{[l+i-1]}(c'_{\Gamma, i}) \in \mathcal{A}_{[l+i-1]}$ implies $\partial' \circ \partial_{[l+i-1]}(c'_{\Gamma, i}) = \partial_{[l+i-1]}^2 (c'_{\Gamma, i}) = 0$. Hence, \begin{align*}
    (\partial_{[l]})^h(c'_{\Gamma, i}) &= \partial'(c'_{\Gamma, i}) + \tau \circ \partial'(c'_{\Gamma, i}) + \tau \circ \partial_{[l+i-1]}(c'_{\Gamma, i}) \\
    &= \partial'(c'_{\Gamma, i}) + \tau \circ \partial'(c'_{\Gamma, i}) + \tau \circ h_{i-1} \circ \dots \circ h_1 \circ \partial_{[l]}(c'_{\Gamma, i}) \\
    &= \partial'(c'_{\Gamma, i}) + \tau \circ \partial'(c'_{\Gamma, i}) + \tau \circ \partial_{[l]}(c'_{\Gamma, i}) \text{ since } \tau \circ h_k = \tau \text{ for any } k.
\end{align*}

Assume the following analogue of \cite[Lemma 8.2(b)]{chekanov2002dga} still holds: \begin{lemma}\label{projectionLemma}\vphantom{}
    $\tau \circ \partial_{[l]}(c_\Gamma') = \tau \circ \partial'(c_\Gamma')$.
\end{lemma}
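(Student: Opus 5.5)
The plan is to reduce \Cref{projectionLemma} to a polygon-by-polygon correspondence, in the same way as the analogous claim for the $a_i$ in Step~1. The first step is to unwind what $\tau\circ\partial_{[l]}(c'_\Gamma)$ actually is. Since $\tau$ kills $a$ and $b$, and each $g_i$ only modifies some $a_i$ by a term in the image of $G$ (every monomial of which contains $a$), we have $\tau\circ g_i=\tau\circ g_i^{-1}\circ\ldots=\tau$ on the relevant elements. Together with the initial conjugation $b\mapsto b+v$, which is also invisible after applying $\tau$ up to terms containing $b$, this shows that $\tau\circ\partial_{[l]}(c'_\Gamma)=\tau\circ s\circ\partial^-(c^-_\Gamma)$. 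For the same reason, $\tau\circ\partial'(c'_\Gamma)$ is computed from the original differential after the move. So it suffices to show that the original differentials $\widehat\partial^-(\widehat c^-_\Gamma)$ and $\widehat\partial'(\widehat c'_\Gamma)$ agree after deleting every monomial containing $a$ or $b$, under the identification $s$.

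Next, expand both sides via the definition $\widehat\partial(\widehat c_\Gamma)=\Omega(\partial_\Gamma c_\Gamma)+\Phi_+(c_\Gamma)+\Phi_-(c_\Gamma)$ and treat the terms separately.

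For the $\Phi_\pm$ terms, use the pairing $(c^-_\Gamma,c'_\Gamma)$. By construction it is a natural bijection between polygons with positive corner at $c^-_\Gamma$ and those with positive corner at $c'_\Gamma$, modulo polygons with a negative corner at $b$. Concretely, a polygon before the move is modified locally near the triangle swept out in the Reidemeister IV move: a side along $\Gamma$ is lengthened or shortened by the arc $a$, and the convex corners elsewhere are unchanged. I would check the local pictures of \Cref{Reidemeister IV}(a) case by case. There are two cases according to whether the polygon's $\Gamma$-side covers the affected arc, and within each, whether it lies in $\Sigma_+$ or $\Sigma_-$. The aim is to verify that polygons not using $b$ correspond one-to-one with the same negative word, and that the only polygons without a partner are those with a negative corner at $b$. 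Those are exactly the ones killed by $\tau$.

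For the $\Omega(\partial_\Gamma c_\Gamma)$ terms, factorizations $c^1*c^2=c^-_\Gamma$ correspond to factorizations of $c'_\Gamma$, except for those where a factor is $a$ itself. Any term produced by $\Omega$ from such a factorization contains $\widehat a$ or $\Phi_\pm(a)$. Now $\widehat a$ is killed by $\tau$. Also $\Phi_\pm(a)$ is either $b$-containing or is absorbed by the conjugation that made $\partial(a)=b$, and this must be checked. The remaining terms match by multiplicativity of $\Phi_\pm$, using the $\Phi_\pm$ correspondence above.

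I expect the main obstacle to be this $\Omega$ bookkeeping. The pairing sends $c$ to $a*c*a$ and so on, so a factorization of $c'_\Gamma$ through $a$ need not correspond to any factorization of $c^-_\Gamma$, and cross terms like $\Phi_+(a*c)\widehat{c}$ must be shown to cancel in pairs modulo $2$ or to vanish under $\tau$. I would first handle the case of a single component of $\Gamma$ with two indecomposable chords, as in \Cref{fig:polygon correspondence}, by explicit induction on word length, and then argue that the general case is local near $a$.
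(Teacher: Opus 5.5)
Your skeleton is the same as the paper's. You use $\tau\circ g_i=\tau$, expand $\widehat\partial(\widehat c_\Gamma)=\Omega(\partial_\Gamma c_\Gamma)+\Phi_+(c_\Gamma)+\Phi_-(c_\Gamma)$ on both sides, obtain the $\Phi_\pm$-terms from the pairing (which the paper simply takes as the defining property of a pair), and discard the splittings that have $a$ as a factor. But the two points you leave open are exactly what the paper's case analysis supplies, and one of your proposed ways out does not work.

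\textbf{First gap: the values of $\Phi_\pm(a)$.} You need $\Phi_+(a)=b$ and $\Phi_-(a)=0$ before the move, and $\Phi_-(a)=b$, $\Phi_+(a)=0$ after it. The vanishing comes from the height property, as in the paper's Case 1b: a polygon in $\Sigma_-$ with positive end at $a$ before the move would become, after the move, a polygon with only negative corners. ``Absorbed by the conjugation'' cannot replace this. The splitting $\gamma_l^{n+1}\cdot a$ of $\gamma_l^{n+1}*a$ contributes $\widehat{\gamma_l^{n+1}}\,\Phi_-(a)$. That term contains no $b$ for $b\mapsto b+v$ to act on, and $\tau$ does not kill it unless $\Phi_-(a)$ lies in the ideal generated by $a$ and $b$. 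The same fact, which gives $v=w=0$, is also what justifies your opening reduction. In general $\tau\circ(b\mapsto b+v)$ sends $b$ to $\tau(v)$, not to $0$.

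\textbf{Second gap: the $\Omega$-terms.} These terms neither cancel within one side nor vanish; they match \emph{across} the two sides, because the pairing commutes with concatenation. A pair is obtained by sliding each endpoint that sits at a vertex of $a$ across $a$ to the other vertex, which after the move carries the same strand. This lengthens or shortens the chord by $a$ at that end, e.g.\ $c\mapsto a*c*a$ and $a*c\mapsto c*a$. Hence splittings $x\cdot y$ of $c^-_\Gamma$ with $x,y\neq a$ correspond bijectively to splittings $x'\cdot y'$ of $c'_\Gamma$ with $x',y'\neq a$, where $x',y'$ are the partners. Under this correspondence $\Phi_+(x)\widehat y+\widehat x\,\Phi_-(y)$ agrees with $\Phi_+(x')\widehat{y'}+\widehat{x'}\,\Phi_-(y')$ after $\tau$. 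For instance, $\Phi_+(a*c_1)\,\widehat{c_2*\cdots*d}$ in $\partial_{[l]}(\gamma_l^n)$ matches $\Phi_+(c_1)\,\widehat{c_2*\cdots*d*a}$ in $\partial'(\gamma_r^n)$.

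The only unmatched splittings are $a\cdot(\cdots)$ and $(\cdots)\cdot a$, and these are killed by the first point. The paper verifies all this by listing the pairs explicitly and matching the expansions line by line with an index shift. It treats three groups: chords starting at a vertex of $a$ (the families $(\gamma_l^n,\gamma_r^n)$, $(\gamma_l^{n+1}*a,\gamma_r^n*c*\cdots*d)$, $(\gamma_l^n*a*c,\gamma_r^n*c)$ and their reverses), chords ending at one, and all others. No induction on word length and no reduction to the two-chord case is needed.
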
 Then $(\partial_{[l]})^h(c'_{\Gamma, i}) = \partial'(c'_{\Gamma, i})$ and we are done. \\
	
\noindent \begin{proof}(of \Cref{projectionLemma}) For clarity in this proof we will use the notation $\partial_{[l]}(c_\Gamma^-)$ in place of $\partial_{[l]}(c_\Gamma')$.

\tcbox[size=fbox, colback=gray!30]{Case 1: Pairs of Reeb chords starting at the left or right vertices of $a$.} Let $\gamma_l$ and $\gamma_r$ denote full loops starting at the left and right vertices of $a$, respectively. Denote the Reeb chord to the right and left of $a$ by $c$ and $d$, respectively. See \Cref{Reidemeister IV}. Then the corresponding pairs are \begin{align*}
        (c_\Gamma^-, c_\Gamma') = &(\gamma_l^n, \gamma_r^n), (\gamma_l^{n+1} * a, \gamma_r^n * c * \dots  * d), (\gamma_l^n * a * c, \gamma_r^n * c), \\
        &(\gamma_r^n, \gamma_l^n), (\gamma_r^n * c * \dots  * d, \gamma_l^{n+1} * a), (\gamma_r^n * c, \gamma_l^n * a * c),        
    \end{align*} where $n \geq 0$ and $c, d$ are not necessarily indecomposible. \\
    \tcbox[size=fbox, colback=gray!10]{Case 1a: $(\gamma_l^n, \gamma_r^n)$.}
     Being a pair means $\Phi_i(\gamma_l^n) = \Phi_i(\gamma_r^n)$, $i = \pm$. The remaining terms in $\partial_{[l]}(\gamma_l^n)$ and $\partial'(\gamma_r^n)$ are \begin{align*}
        &\Phi_+(a) c * \dots  * d + a\Phi_-(c * \dots  * d) \\
        +& \Phi_+(a * c_1) c_2 * \dots  * d + a*c_1 \Phi_-(c_2 * \dots  * d) \\
        +& \ldots \\
        +& \Phi_+(a * \dots  * d_{m-1}) d_m + a * \dots  * d_{m-1} \Phi_-(d_m)
    \end{align*} and \begin{align*}
        &\Phi_+(c_1) c_2 * \dots  * d * a + c_1\Phi_-(c_2 * \dots  * d * a) \\
        +& \ldots \\
        +& \Phi_+(c * \dots  * d_{m-1}) d_m * a + c * \dots  * d_{m-1} \Phi_-(d_m * a) \\
        +& \Phi_+(c * \dots  * d)a + c*\dots *d\Phi_-(a),
    \end{align*} where $c_i, d_i$'s are indecomposable. 
    
    First note that the first line in the former expression is killed by $\tau$ since $\Phi_+(a) = b$ in $\partial_{[l]}(\gamma^n_l)$ and $\tau(a) = \tau(b) = 0$. Similarly, the last line in the latter expression is killed by $\tau$ since $\Phi_-(a) = b$ in $\partial'(\gamma^n_r)$. Next the pairing of $(a * c_1, c_1)$ and $(c_2 * \dots  * d, c_2 * \dots  * d * a)$ implies that the second line and first line of the former and latter expressions, respectively, agree after applying $\tau$ to kill all polygons with negative corners at $b$ that may appear in $\Phi_+(a * c_1)$ in $\partial_{[l]}(\gamma_l^n)$ and $\Phi_-(c_2 * \dots  * d * a)$ in $\partial'(\gamma_r^n)$, respectively.
    
    In general there is line-by-line correspondence until the pairs $(a * \dots  * d_{m-1}, c * \dots  * d_{m-1})$ and $(d_m, d_m*a)$ which implies the last line and second-to-last line of the remaining terms in $\partial_{[l]}(\gamma_l^n)$ and $\partial'(\gamma_r^n)$, respectively, agree (again after killing all polygons with negative corners at $b$). This proves the claim for the pair $(\gamma_l^n, \gamma_r^n)$. \\
    \tcbox[size=fbox, colback=gray!10]{Case 1b: $(\gamma_l^{n+1} * a, \gamma_r^n * c * \dots  * d)$.} The corresponding terms are \begin{align*}
        &\Phi_+(a) c * \dots  * d * a + a \Phi_-(c * \dots  * d * a) \\
        +& \Phi_+(a * c_1) c_2 * \dots  * d * a + a * c_1 \Phi_-(c_2 * \dots  * d * a) \\
        +& \ldots \\
        +& \Phi_+(a * c * \dots  * d_{m-1}) d_m * a + a * c \dots  * d_{m-1} \Phi_-(d_m * a) \\
        +& \Phi_+(\gamma_l^{n+1}) a + \gamma_l^{n+1} \Phi_-(a)
    \end{align*} and \begin{align*}
        & \Phi_+(c * \dots  * d_{m-1}) d_m + c * \dots  * d_{m-1} \Phi_-(d_m) \\
        +& \ldots \\
        +& \Phi_+(c_1)c_2 * \dots  * d + c_1 \Phi_-(c_2 * \dots  * d).
    \end{align*} By the height property of Legendrian diagrams we have that $\Phi_-(a) = 0 \text{ in } \partial_{[l]}(\gamma_l^n)$. Indeed, if there were such a polygon, then after the move we have a polygon with all corners being negative, which contradicts the height property. Note that $\tau(\Phi_+(a) c * \dots  * d * a + a \Phi_-(c * \dots  * d * a) + \Phi_+(\gamma_l^{n+1}) a) = 0$ since $\Phi_+(a) = b$. Thus $\tau$ kills the first and last lines of the former expression. The second through second-to-last lines of the former expression agree with the latter in reverse order (again after killing all polygons with negative corners at $b$). \\
\tcbox[size=fbox, colback=gray!10]{Case 1c: $(\gamma_l^n * a * c, \gamma_r^n * c)$.}
 The corresponding terms are \begin{align*}
        &\Phi_+(a) c * \dots  * c + a \Phi_-(c * \dots  * c) \\
        +& \Phi_+(a * c_1) c_2 * \dots  * c + a * c_1 \Phi_-(c_2 * \dots  * c) \\
        +& \ldots \\
        +& \Phi_+(a * c * \dots  * c_{m-1}) c_m + a * \dots  * c_{m-1} \Phi_-(c_m)
    \end{align*} and \begin{align*}
        & \Phi_+(c_1) c_2 * \dots  * c + c_1 \Phi_-(c_2 * \dots  * c) \\
        +& \Phi_+(c_1 * c_2) c_3 * \dots  * c + c_1 * c_2 \Phi_-(c_3 * \dots  * c) \\
        +& \ldots \\
        +& \Phi_+(c * \dots  * c_{m-1}) c_m + c * \dots  * c_{m-1} \Phi_-(c_m).
    \end{align*} Note that the first line of the former expression is killed by $\tau$. The $i$-th line of the former corresponds to the $(i-1)$-th line of the latter, where $i \geq 2$. \\

\tcbox[size=fbox, colback=gray!10]{Case 1d: $(\gamma_r^n, \gamma_l^n)$.} The corresponding terms are the same as Case 1a reversed. Note that $\Phi_-(a) = 0$ in $\partial_{[l]}(\gamma_r^n)$ and $\Phi_+(a) = 0$ in $\partial'(\gamma_l^n)$ by the same height property argument as in Case 1b. Also $\tau(\Phi_+(c * \dots  * d)a) = \tau(a \Phi_-(c * \dots  * d)) = 0$. The line-by-line correspondence is the same as in Case 1a. \\

    \tcbox[size=fbox, colback=gray!10]{Case 1e: $(\gamma_r^n * c * \dots  * d, \gamma_l^{n+1} * a)$.} The corresponding terms are the same as Case 1b reversed. Note that $\Phi_+(a) = 0$ in $\partial'(\gamma_l^{n+1} * a))$ by the same height property argument as in Case 1b. Also $\tau(a \Phi_-(c * \dots  * d * a) + \Phi_+(\gamma_l^{n+1})a + \gamma_l^{n+1} \Phi_-(a)) = 0$. The line-by-line correspondence is the same as in Case 1b. \\

    \tcbox[size=fbox, colback=gray!10]{Case 1f: $(\gamma_r^n * c, \gamma_l^n * a * c)$.} The corresponding terms are the same as Case 1c reversed. Note that $\Phi_+(a) = 0$ in $\partial'(\gamma^n_l * a * c)$ by the same height property argument as in Case 1b. The line-by-line correspondence is the same as in Case 1c. \\
    \tcbox[size=fbox, colback=gray!30]{Case 2: Pairs of Reeb chords ending at the left or right vertices of $a$ excluding those in Case 1.} This case is similar to Case 1. \\
    
    \tcbox[size=fbox, colback=gray!30]{\begin{minipage}{\textwidth}
        Case 3: Pairs of Reeb chords that start and end outside of the local disk or pairs of Reeb chords along a different component of $\Gamma$.
    \end{minipage}} In this case the two sides could differ by some polygons with negative corners at $b$, which will be killed by $\tau$, respectively.
\end{proof}

\subsection{Reidemeister IV(b) move} \text{ }

The proof follows the same steps as that of the Type IV(a) move.

\subsubsection*{Step 1:} Again denote by $(\mathcal{A}^-, \partial^-)$ and $(\mathcal{A}', \partial')$ the DGA before and after the move, respectively. Write $\partial^-(a) = v$ and $\partial^-(b) = w$. Then $\partial'(a) = v$ and $\partial'(b) = w$. Denote by $\partial^-$ the conjugate $s \circ \partial^- \circ s^{-1}$, where $s: \mathcal{A}^- \rightarrow \mathcal{A}'$ is the identification of generators given by \Cref{Reidemeister IV}. The pairing is the almost the same as the IV(a) case except that $a$ is replaced by $b$ for Reeb chords along $\Gamma$ in $\mathcal{A}'$. For example, if $\Gamma$ has only two indecomposable chords $a$ and $c$ (i.e. $c$ and $d$ in \Cref{Reidemeister IV} are the same chord), then \[(c_\Gamma^-, c_\Gamma') = (c, b * c * b), (a * c, c * b), (c * a, b * c), (a * c * a, c),\text{ etc.}\] are such pairs. Now following the same argument as in the IV(a) case we have $\partial_{[l]} = \partial'$ on $\mathcal{A}_{[l]}$.

\subsubsection*{Step 2:}  It remains to make $\partial_{[l]}$ and $\partial'$ agree on all Reeb chords along $\Gamma$. Here the proof is different from that of the Type IV(a) move since $a$ and $b$ do not form a cancelling pair anymore, i.e. they are not of the form ``$\partial a = b, \partial b = 0$''.

\begin{lemma}
    $\partial_{[l]}(c_\Gamma^-) = g \circ \partial'(c_\Gamma') \circ g^{-1}$ for some tame automorphism $g: \mathcal{A}' \rightarrow \mathcal{A}'$.
\end{lemma}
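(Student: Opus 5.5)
The plan is to build $g$ as an infinite composition of elementary automorphisms, each touching one generator along $\Gamma$. I would follow Step 2 of the IV(a) argument, but replace the cancelling-pair map $G$ by an explicit correction that records how $a$ is exchanged for $b$ under the pairing $(c_\Gamma^-,c_\Gamma')$.

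First I would compare $\partial_{[l]}(c_\Gamma^-)$ and $\partial'(c_\Gamma')$ term by term, using the formula $\widehat{\partial}(\widehat{c}_\Gamma)=\Omega(\partial_\Gamma c_\Gamma)+\Phi_+(c_\Gamma)+\Phi_-(c_\Gamma)$ and the explicit expansion of $\Omega$. The pairing is built so that polygons correspond. So the $\Phi_\pm$ parts should agree up to terms in which a polygon's boundary runs across the local disk, where $a$ appears on one side and $b$ on the other. The $\Omega(\partial_\Gamma\,\cdot)$ parts differ only through splittings of $c_\Gamma^-$ at an endpoint of $a$ versus splittings of $c_\Gamma'$ at an endpoint of $b$. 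I would organize this exactly as in Cases 1a--1f of \Cref{projectionLemma}: chords starting or ending at a vertex of $a$ (respectively $b$), and chords avoiding the local disk, for which the two differentials agree on the nose. The output of this step is an explicit discrepancy $D(c_\Gamma')$. I expect it to have the form $\widehat{x}\,(\text{stuff})+(\text{stuff})\,\widehat{y}$, where $x,y$ are strictly shorter chords and the stuff is built from $v$, $w$ and $\Phi_\pm$ of subchords.

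Second, I would enumerate the chords $c'_{\Gamma,1},c'_{\Gamma,2},\dots$ by nondecreasing height, as in IV(a). I would then define $h_i$ by $\widehat{c}'_{\Gamma,i}\mapsto \widehat{c}'_{\Gamma,i}+K(c'_{\Gamma,i})$, where $K$ is a degree-preserving primitive of the discrepancy. A natural candidate is to take $K$ equal to the $\Omega$-type sum obtained by substituting $\widehat{a}\leftrightarrow\widehat{b}$ in the splittings at the moved endpoint. By the height property (\Cref{heightProperty}) and \Cref{lem:dphiwelldefined}, $K(c'_{\Gamma,i})$ involves only generators of height less than $H(c'_{\Gamma,i})$, so it lies in $\mathcal{A}_{[l+i-1]}$. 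Hence $h=\cdots\circ h_2\circ h_1$ is tame, by the same filtration argument used for IV(a).

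Third, I would verify by induction on $i$ that conjugating by $h_1,\dots,h_i$ makes the two differentials agree on $\mathcal{A}_{[l+i]}$. The inductive step reduces to checking that the remaining discrepancy on $c'_{\Gamma,i}$ equals the image of $K(c'_{\Gamma,i})$ under the current differential. For that I would use the identity $(\widehat{\partial}\circ\Omega+\Phi_++\Phi_-)(w)=\Omega\circ\partial_\Gamma(w)$ established in the proof of $\widehat{\partial}^2=0$, together with the fact (\Cref{prop:phipmDGA}) that $\Phi_\pm$ are chain maps.

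The main obstacle is the first step: finding the correct $K$ and checking that the discrepancy is exactly its boundary, since here there is no cancelling pair to absorb the errors. If the naive $\widehat{a}\leftrightarrow\widehat{b}$ substitution fails, I would instead try to define $g$ first on the indecomposable chords near the move and then extend it multiplicatively to concatenations through $\Omega$, so that the chain-map property reduces to the indecomposable case.
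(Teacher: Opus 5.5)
There is a genuine gap, and it is the step you flag as the main obstacle. Finding the correction and checking that its differential equals the discrepancy is the whole content of the lemma, and the proposal leaves it open.

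The paper does this case by case: Cases 1a--1f for chords starting at a vertex of the local chord, Case 2 by symmetry, and Case 3 where the two sides coincide. It writes out $\partial_{[l]}(c_\Gamma^-)$ and $\partial'(c_\Gamma')$ explicitly and conjugates by explicit elementary automorphisms of two kinds (hats suppressed, as in the paper).
\begin{itemize}
\item One kind splits off the new local chord $b$ as a factor. Examples are $b*c_1\mapsto b*c_1+bc_1$ and $\gamma_l^{n+1}*b\mapsto\gamma_l^{n+1}*b+b(\gamma_r^n*c*\dots*d*b)$ (Cases 1d--1f).
\item The other multiplies a \emph{longer} chord by the new crossing: $x\mapsto x+(x*b)a$ for $x=\gamma_r^n*c*\dots*d,\ d_m,\dots,c_2*\dots*d$ (Case 1b).
\end{itemize}

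The second kind is unavoidable, and it contradicts the one structural claim you make. You claim that $K(c'_{\Gamma,i})\in\mathcal{A}_{[l+i-1]}$, so that the height-ordered composition is tame exactly as in IV(a). The argument runs as follows.
\begin{itemize}
\item In IV(b) the chord $a$ on $\Gamma$ becomes the crossing $a\in\Sigma_-$, with $\partial'(a)=v=\Phi_-(a)$. In IV(a) the corresponding $\Phi_-(a)$ vanishes by the height property, but here $v$ need not vanish.
\item The pairing sends $\gamma_l^{n+1}*a$ to $\gamma_r^n*c*\dots*d$, but sends $\gamma_l^{n+1}$ to $\gamma_r^{n+1}=\gamma_r^n*c*\dots*d*b$.
\item Hence $\partial_{[l]}(\gamma_r^n*c*\dots*d)$ contains the term $\gamma_r^{n+1}v$, coming from $\gamma_l^{n+1}\Phi_-(a)$. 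Its $\Gamma$-factor is longer than the chord being differentiated by $H(b)>0$, and nothing else in that differential cancels it.
\item Since $\partial'$ has the height property, conjugating it by any automorphism that is triangular for the $\mathcal{A}'$-height ordering gives another height-filtered differential. So no $h=\cdots\circ h_2\circ h_1$ of your form, for any choice of $K$, can relate $\partial_{[l]}$ and $\partial'$.
\item In particular, your candidate built from splittings at the moved endpoint involves only shorter chords, so it cannot work.
\end{itemize}

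The missing term is supplied by maps like $x\mapsto x+(x*b)a$, whose differential contains $(x*b)v$. Such maps are elementary only for an ordering that puts chords ending at one endpoint of $b$ before those ending at the other. That is allowed, because a tame automorphism may be a finite composition of several infinite families with different orderings. But it means the argument has to be organized family by family, as in the paper, rather than by one height filtration as in IV(a).
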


\begin{proof}
    We divide the proof into three cases depending on whether the pair $(c_\Gamma^-, c_\Gamma')$ consists of chords starting at the left and right vertices of $a$, chords ending at the left and right vertices of $a$, or the other chords. The main idea is to construct explicit elementary automorphisms.
    
    \tcbox[size=fbox, colback=gray!30]{Case 1: Pairs of Reeb chords starting at the left or right vertices of $a$.} Let $\gamma_l$ and $\gamma_r$ denote full loops starting at the left and right vertices of $a$, respectively. The corresponding pairs are \begin{align*}
        (c_\Gamma^-, c_\Gamma') = &(\gamma_l^n, \gamma_r^n), (\gamma_l^{n+1} * a, \gamma_r^n * c * \dots  * d), (\gamma_l^n * a * c, \gamma_r^n * c), \\
        &(\gamma_r^n, \gamma_l^n), (\gamma_r^n * c * \dots  * d, \gamma_l^{n+1} * b), (\gamma_r^n * c, \gamma_l^n * b * c),        
    \end{align*} where $n \geq 0$ and $c, d$ may or may not be indecomposible. \\
    
    \tcbox[size=fbox, colback=gray!10]{Case 1a: $(\gamma_l^n, \gamma_r^n)$.} Since $\Phi_+(a) = 0$ in $\partial_{[l]}(\gamma_l^n)$ and $\Phi_-(b) = 0$ in $\partial'(\gamma_r^n)$, then \begin{align*}
        \partial_{[l]}(\gamma_l^n) &= \Phi_+(\gamma_l^n) + \Phi_-(\gamma_l^n) + a\Phi_-(c * \dots  * d) \\
        &+ \Phi_+(a * c_1) c_2 * \dots  * d + a*c_1 \Phi_-(c_2 * \dots  * d) \\
        &+ \ldots \\
        &+ \Phi_+(a * \dots  * d_{m-1}) d_m + a * \dots  * d_{m-1} \Phi_-(d_m)
    \end{align*} and \begin{align*}
        \partial'(\gamma_r^n) &= \Phi_+(\gamma_r^n) + \Phi_-(\gamma_r^n) \\
        &+ \Phi_+(c_1) c_2 * \dots  * d * b + c_1\Phi_-(c_2 * \dots  * d * b) \\
        &+ \ldots \\
        &+ \Phi_+(c * \dots  * d_{m-1}) d_m * b + c * \dots  * d_{m-1} \Phi_-(d_m * b) \\
        &+ \Phi_+(c * \dots  * d)b
    \end{align*} where $c_i, d_i$'s are indecomposable. Observe that we have the following correspondence:
    \begin{align*}
        \Phi_+(\gamma_l^n) &\leftrightarrow \Phi_+(c * \dots  * d)b \\
        \Phi_-(\gamma_l^n) + a\Phi_-(c * \dots  * d) &\leftrightarrow \Phi_-(\gamma_r^n) \\
        i\text{th line in } \partial_{[l]}(\gamma_l^n) &\leftrightarrow i\text{th line in } \partial'(\gamma_r^n), 2 \leq i \leq m.
    \end{align*} This proves Case 1a. \\

    \tcbox[size=fbox, colback=gray!10]{Case 1b: $(\gamma_l^{n+1} * a, \gamma_r^n * c * \dots * d)$.} Note that \begin{align*}
        \partial_{[l]}(\gamma_l^{n+1} * a) &= \Phi_+(\gamma_l^{n+1} * a) + \Phi_-(\gamma_l^{n+1} * a) + a \Phi_-(c * \dots * d * a) \\
        &+ \Phi_+(a * c_1) c_2 * \dots * d * a + a * c_1 \Phi_-(c_2 * \dots * d * a) \\
        &+ \ldots \\
        &+ \Phi_+(a * c * \dots * d_{m-1}) d_m * a + a * c \dots * d_{m-1} \Phi_-(d_m * a) \\
        &+ \Phi_+(\gamma_l^{n+1}) a + \gamma_l^{n+1} \Phi_-(a)
    \end{align*} and \begin{align*}
        \partial'(\gamma_r^n * c * \dots * d) &= \Phi_+(\gamma_r^n * c * \dots * d) + \Phi_-(\gamma_r^n * c * \dots * d) \\
        &+ \Phi_+(c * \dots * d_{m-1}) d_m + c  \dots * d_{m-1} \Phi_-(d_m) \\
        &+ \ldots \\
        &+ \Phi_+(c_1)c_2 * \dots * d + c_1 \Phi_-(c_2 * \dots * d).
    \end{align*}
    We claim that $\partial_{[l]}(\gamma_l^{n+1} * a)$ and $\partial'(\gamma_r^n * c * \dots * d)$ agree after conjugating $\partial'$ by the following elementary automorphisms: \begin{align*}
        \gamma_r^n * c * \dots * d &\mapsto \gamma_r^n * c * \dots * d + (\gamma_r^n * c * \dots * d * b)a \\
        d_m &\mapsto d_m + (d_m * b) a \\
        &\ldots \\
        c_2 * \dots * d &\mapsto c_2 * \dots * d + (c_2 * \dots * d * b)a.
    \end{align*} Indeed, after the conjugation $\partial'(\gamma_r^n * c * \dots * d)$ is given by \begin{align*}
        &\Phi_+(\gamma_r^n * c * \dots * d) + \Phi_-(\gamma_r^n * c * \dots * d) + \Phi_-(\gamma_r^n * c * \dots * d * b) a \\
        &+ \Phi_+(c * \dots * d_{m-1}) d_m + c  \dots * d_{m-1} \Phi_-(d_m) + c * \dots * d_{m-1} \Phi_-(d_m * b) a \\
        &+ \ldots \\
        &+ \Phi_+(c_1)c_2 * \dots * d + c_1 \Phi_-(c_2 * \dots * d) + c_1 \Phi_-(c_2 * \dots * d * b)a \\
        &+ \Phi_+(\gamma_r^n * c * \dots * d * b)a + \Phi_+(\gamma_r^n * c * \dots * d) ba + (\gamma_r^n * c * \dots * d * b) \Phi_-(a) \\
        &+ (\gamma_r^n * c * \dots * d) \Phi_-(b) a + (\gamma_r^n * c * \dots * d * b) \Phi_+(a).
    \end{align*} (Note that the second through third-to-last lines have two copies of $\Phi_+(c * \dots * d_{m-1}) (d_m * b) a, \dots, \Phi_+(c_1)(c_2 * \dots * d * b)a$, respectively, that are canceled.) The first line corresponds to that of $\partial_{[l]}(\gamma_l^{n+1} * a)$. The second through third-to-last lines correspond to the second through second-to-last lines of $\partial_{[l]}(\gamma_l^{n+1} * a)$. The second-to-last line corresponds to the last line of $\partial_{[l]}(\gamma_l^{n+1} * a)$. The last line evaluates to $0$ since $\Phi_-(b) = \Phi_+(a) = 0$. This proves Case 1b. \\

    \tcbox[size=fbox, colback=gray!10]{Case 1c: $(\gamma_l^n * a * c, \gamma_r^n * c)$.} Since $\Phi_+(a) = 0$, then \begin{align*}
        \partial_{[l]} (\gamma_l^n * a * c) &= \Phi_+(\gamma_l^n * a * c) + \Phi_-(\gamma_l^n * a * c) \\ 
        &+ a \Phi_-(c * \dots  * c) \\
        &+ \Phi_+(a * c_1) c_2 * \dots  * c + a * c_1 \Phi_-(c_2 * \dots  * c) \\
        &+ \ldots \\
        &+ \Phi_+(a * c * \dots  * c_{m-1}) c_m + a * \dots  * c_{m-1} \Phi_-(c_m)
    \end{align*} and \begin{align*}
        \partial' (\gamma_r^n * c) &= \Phi_+(\gamma_r^n * c) + \Phi_-(\gamma_r^n * c) \\
        &+ \Phi_+(c_1) c_2 * \dots  * c + c_1 \Phi_-(c_2 * \dots  * c) \\
        &+ \Phi_+(c_1 * c_2) c_3 * \dots  * c + c_1 * c_2 \Phi_-(c_3 * \dots  * c) \\
        &+ \ldots \\
        &+ \Phi_+(c * \dots  * c_{m-1}) c_m + c * \dots  * c_{m-1} \Phi_-(c_m).
    \end{align*} Observe that we have the following correspondence:
    \begin{align*}
        \Phi_+(\gamma_l^n * a * c) &\leftrightarrow \Phi_+(\gamma_r^n * c) \\
        \Phi_-(\gamma_l^n * a * c) + a \Phi_-(c * \dots  * c) &\leftrightarrow \Phi_-(\gamma_r^n * c) \\
        i\text{th line in } \partial_{[l]}(\gamma_l^n) &\leftrightarrow (i-1)\text{th line in } \partial'(\gamma_r^n), 2 \leq i \leq m.
    \end{align*} This proves Case 1c. \\

    \tcbox[size=fbox, colback=gray!10]{Case 1d: $(\gamma_r^n, \gamma_l^n)$.} We have \begin{align*}
        \partial_{[l]}(\gamma_r^n) &= \Phi_+(\gamma_r^n) + \Phi_-(\gamma_r^n) \\
        &+ \Phi_+(c_1) c_2 * \dots  * d * a + c_1\Phi_-(c_2 * \dots  * d * a) \\
        &+ \ldots \\
        &+ \Phi_+(c * \dots  * d_{m-1}) d_m * a + c * \dots  * d_{m-1} \Phi_-(d_m * a) \\
        &+ \Phi_+(c * \dots  * d)a + c * \dots  * d \Phi_-(a)
    \end{align*} and \begin{align*}
        \partial'(\gamma_l^n) &= \Phi_+(\gamma_l^n) + \Phi_-(\gamma_l^n) \\
        &+ \Phi_+(b) c * \dots  * d + b\Phi_-(c * \dots  * d) \\
        &+ \Phi_+(b * c_1) c_2 * \dots  * d + b*c_1 \Phi_-(c_2 * \dots  * d) \\
        &+ \ldots \\
        &+ \Phi_+(b * \dots  * d_{m-1}) d_m + b * \dots  * d_{m-1} \Phi_-(d_m).
    \end{align*}

    We have the following correspondence: \begin{align*}
        \Phi_+(\gamma_r^n) &\leftrightarrow \Phi_+(\gamma_l^n) + b\Phi_+(c * \dots * d) \\
        \Phi_+(c_1) &\leftrightarrow \Phi_+(b * c_1) + b\Phi_+(c_1) \\
        &\dots \\
        \Phi_+(c * \dots * d_{m-1}) &\leftrightarrow \Phi_+(b * \dots * d_{m-1}) + b\Phi_+(c * \dots * d_{m-1}) \\
        \Phi_+(c * \cdots * d) &\leftrightarrow \Phi_+(b * \dots * d)b + b \Phi_+(c * \dots * d) b
    \end{align*} \begin{align*}
        \Phi_-(\gamma_r^n) + \Phi_-(c * \dots * d) a &\leftrightarrow \Phi_-(\gamma_l^n) \\
        \Phi_-(c_2 * \dots * d * a) + \Phi_-(c_2 * \dots * d)a &\leftrightarrow \Phi_-(c_2 * \dots * d) \\
        &\dots \\
        \Phi_-(d_m * a) + \Phi_-(d_m)a &\leftrightarrow \Phi_-(d_m) \\
        \Phi_-(a) &\leftrightarrow \Phi_-(a).
    \end{align*} Conjugate $\partial_{[l]}$ by the following elementary automorphisms: \begin{align*}
        \gamma_r^n &\mapsto \gamma_r^n + (\gamma_r^{n-1} * c * \dots * d) a \\
        c_2 * \dots * d * a &\mapsto c_2 * \dots * d * a + (c_2 * \dots * d) a \\
        &\dots \\
        d_m * a &\mapsto d_m * a + d_m a.
    \end{align*} Note that the first elementary automorphism introduces the terms \[\Phi_+(c * \dots * d)a + \Phi_-(c * \dots * d)a + c_1 \Phi_-(c_2 * \dots * d)a + \dots + c * \dots * d_{m-1} \Phi_-(d_m)a + c * \dots * d \Phi_-(a)\] while the terms \[\Phi_+(c_1) (c_2 * \dots * d) a + \dots + \Phi_+(c * \dots * d_{m-1})d_m a\] are canceled by the other elementary automorphisms. (Here we also used the fact that $c * \dots * d \Phi_+(a) = (c * \dots * d) \cdot 0 = 0$.) Conjugate $\partial'$ by the following elementary automorphisms: \begin{align*}
        \gamma_l^n &\mapsto \gamma_l^n + b (c * \dots * d * \gamma_l^{n-1}) \\
        b * c_1 &\mapsto b * c_1 + bc_1 \\
        &\dots \\
        b * c * \dots * d_{m-1} &\mapsto b * c * \dots * d_{m-1} + b (c * \dots * d_{m-1}).
    \end{align*} Note that the first elementary automorphism introduces the terms \[\Phi_+(b) c * \dots * d + b \Phi_-(c * \dots * d) + b \Phi_+(c * \dots * d) + b \Phi_+(c_1) c_2 * \dots * d + \dots + b \Phi_+(c * \dots * d_{m-1}) d_m\] while the terms \[b c_1\Phi_-(c_2 * \dots * d) + \dots + b (c * \dots * d_{m-1}) \Phi_-(d_m)\] are canceled by the other elementary automorphisms. (Here we also used the fact that $\Phi_-(b) c * \dots * d = 0 \cdot (c * \dots * d) = 0$.) To finish the proof of Case 1d it remains to observe that both sides with the newly introduced terms represent the same polygons under the correspondence listed above. \\

    \tcbox[size=fbox, colback=gray!10]{Case 1e $(\gamma_r^n * c * \dots  * d, \gamma_l^{n+1} * b)$.} Since $\Phi_-(b) = 0$, we have \begin{align*}
        \partial_{[l]}(\gamma_r^n * c * \dots * d) &= \Phi_+(\gamma_r^n * c * \dots * d) + \Phi_-(\gamma_r^n * c * \dots * d) \\
        &+ \Phi_+(c * \dots  * d_{m-1}) d_m + c * \dots  * d_{m-1} \Phi_-(d_m) \\
        &+ \ldots \\
        &+ \Phi_+(c_1)c_2 * \dots  * d + c_1 \Phi_-(c_2 * \dots  * d)
    \end{align*} and \begin{align*}
        \partial'(\gamma_l^{n+1} * b) &= \Phi_+(\gamma_l^{n+1} * b) + \Phi_-(\gamma_l^{n+1} * b) \\
        &+ \Phi_+(b) c * \dots  * d * b + b \Phi_-(c * \dots  * d * b) \\
        &+ \Phi_+(b * c_1) c_2 * \dots  * d * b + b * c_1 \Phi_-(c_2 * \dots  * d * b) \\
        &+ \ldots \\
        &+ \Phi_+(b * c * \dots  * d_{m-1}) d_m * b + b * c \dots  * d_{m-1} \Phi_-(d_m * b) \\
        &+ \Phi_+(\gamma_l^{n+1}) b.
    \end{align*} We have the following correspondence: \begin{align*}
        \Phi_+(\gamma_r^n * c * \dots * d) &\leftrightarrow \Phi_+(\gamma_l^{n+1} * b) + b \Phi_+(c * \dots * d * b) + \Phi_+(b * c * \dots * d)b + b\Phi_+(c * \dots * d)b \\
        \Phi_+(c * \dots * d_{m-1}) &\leftrightarrow \Phi_+(b * c * \dots * d_{m-1}) + b\Phi_+(c * \dots * d_{m-1}) \\
        &\dots \\
        \Phi_+(c_1) &\leftrightarrow \Phi_+(b * c_1) + b \Phi_+(c_1)
    \end{align*} \begin{align*}
        \Phi_-(\gamma_r^n * c * \dots * d) &\leftrightarrow \Phi_-(\gamma_l^{n+1} * b) \\
        \Phi_-(d_m) &\leftrightarrow \Phi_-(d_m * b) \\
        &\dots \\
        \Phi_-(c_2 * \dots * d) &\leftrightarrow \Phi_-(c_2 * \dots * d * b)
    \end{align*} Conjugate $\partial'$ by the elementary automorphisms \begin{align*}
        \gamma_l^{n+1} * b &\mapsto \gamma_l^{n+1} * b + b (\gamma_r^n * c * \dots * d * b) \\
        b * c_1 &\mapsto b * c_1 + b c_1 \\
        &\dots \\
        b * \gamma_r^n * c * \dots * d &\mapsto b * \gamma_r^n * c * \dots * d + b(\gamma_r^n * c * \dots * d).
    \end{align*} The first elementary automorphism introduces the terms \[\Phi_+(b) (\gamma_l^n * c * \dots * d * b) + b \Phi_+(\gamma_r^n * c * \dots * d * b) + b \Phi_-(\gamma_r^n * c * \dots * d * b) + b \Phi_+(\gamma_r^n * c * \dots * d) b,\] of which $\Phi_+(b) (\gamma_l^n * c * \dots * d * b) + b \Phi_-(\gamma_r^n * c * \dots * d * b)$ cancel with the second line of the original $\partial'(\gamma_l^{n+1} * b)$, the terms \[b\Phi_+(\gamma_r^n * d * \dots * d_{m-1}) d_m * b + \dots + b\Phi_+(c_1)c_2 * \dots * d * b,\] and the terms \[b (\gamma_r^n * c * \dots * d_{m-1}) \Phi_-(d_m * b) + \dots + bc_1\Phi_-(c_2 * \dots * d * b),\] which cancel with the terms introduced by the other elementary automorphisms. (Here we used the fact that $\Phi_-(b) (\gamma_r^n * c * \dots * d * b) = b (\gamma_r^n * c * \dots * d) \Phi_-(b) = 0$ since $\Phi_-(b) = 0$.) Now Case 1e follows from the correspondence listed above. \\

    \tcbox[size=fbox, colback=gray!10]{Case 1f: $(\gamma_r^n * c, \gamma_l^n * b * c)$.} We have \begin{align*}
        \partial_{[l]}(\gamma_r^n * c) &= \Phi_+(\gamma_r^n * c) + \Phi_-(\gamma_r^n * c) \\
        &+ \Phi_+(c_1) c_2 * \dots  * c + c_1 \Phi_-(c_2 * \dots  * c) \\
        &+ \Phi_+(c_1 * c_2) c_3 * \dots  * c + c_1 * c_2 \Phi_-(c_3 * \dots  * c) \\
        &+ \ldots \\
        &+ \Phi_+(c * \dots  * c_{m-1}) c_m + c * \dots  * c_{m-1} \Phi_-(c_m)
    \end{align*} and \begin{align*}
        \partial'(\gamma_l^n * b * c) &= \Phi_+(\gamma_l^n * b * c) + \Phi_-(\gamma_l^n * b * c) \\
        &+ \Phi_+(b) c * \dots  * c + b \Phi_-(c * \dots  * c) \\
        &+ \Phi_+(b * c_1) c_2 * \dots  * c + b * c_1 \Phi_-(c_2 * \dots  * c) \\
        &+ \ldots \\
        &+ \Phi_+(b * c * \dots  * c_{m-1}) c_m + b * \dots  * c_{m-1} \Phi_-(c_m).
    \end{align*} we have the following correspondence: \begin{align*}
        \Phi_+(\gamma_r^n * c) &\leftrightarrow \Phi_+(\gamma_l^n * b * c) + b\Phi_+(\gamma_r^n * c) \\
        \Phi_+(c_1) &\leftrightarrow \Phi_+(b * c_1) + b\Phi_+(c_1) \\
        &\dots \\
        \Phi_+(c * \dots * c_{m-1}) &\leftrightarrow \Phi_+(b * c * \dots * c_{m-1}) + b\Phi_+(c * \dots * c_{m-1})
    \end{align*} \begin{align*}
        \Phi_-(\gamma_r^n * c) &\leftrightarrow \Phi_-(\gamma_l^n * b * c) \\
        \Phi_-(c_2 * \dots * c) &\leftrightarrow \Phi_-(c_2 * \dots * c) \\
        &\dots \\
        \Phi_-(c_m) &\leftrightarrow \Phi_-(c_m).
    \end{align*} Conjugate $\partial'$ by the following elementary automorphisms: \begin{align*}
        \gamma_l^n * b * c &\mapsto \gamma_l^n * b * c + b (\gamma_r^n * c) \\
        b * c_1 &\mapsto b * c_1 + b c_1 \\
        &\dots \\
        b * c * \dots * c_{m-1} &\mapsto b * c * \dots * c_{m-1} + b(c * \dots * c_{m-1}).
    \end{align*} The first elementary automorphism introduces the terms \[\Phi_+(b)(\gamma_r^n * c) + b\Phi_+(\gamma_r^n * c) + b\Phi_-(\gamma_r^n * c),\] of which the first and the third cancel with the second line of $\partial'(\gamma_l^n * b * c)$, the terms \[b\Phi_+(c_1) c_2 * \dots * c + \dots + b \Phi_+(c * \dots * c_{m-1})c_m,\] and the terms \[bc_1\Phi_-(c_2 * \dots * c) + b(c * \dots * c_{m-1}) \Phi_-(c_m),\] which cancel with the terms introduced by the other elementary automorphisms. Now Case 1f follows from the correspondence listed above. This also finishes Case 1. \\

    \tcbox[size=fbox, colback=gray!30]{Case 2: Pairs of Reeb chords ending at the left or right vertices of $a$ excluding those in Case 1.} This case is similar to Case 1. \\
    
    \tcbox[size=fbox, colback=gray!30]{\begin{minipage}{\textwidth}
        Case 3: Pairs of Reeb chords that start and end outside of the local disk or pairs of Reeb chords along a different component of $\Gamma$.
    \end{minipage}} In this case the two sides coincide.
\end{proof}

\subsection{Reidemeister I move} \text{ }
Label the chords in a neighborhood of the Reidemeister I move as shown in \Cref{reidILabels}. Without loss of generality we assume that $b$ is in $\Sigma_+$. Recall that in \cite{chekanov2002dga}, the stabilization of a DGA $\mathcal{A}$ is the free product $\mathcal{A} \sqcup E$ of $\mathcal{A}$ where $E = T(a, b)$ with $\partial_E a = b$ and $\partial_E b = 0$. In our case, the Reidemeister I move introduces infinitely many Reeb chords along the dividing set, so we need infinitely many stabilization operations. Then as in the proof of the Type IV(a) move, we use an analog of \cite[Lemma 8.2]{chekanov2002dga} and the discussion that follows to finish the proof.

    \begin{figure}[h]
        \begin{center}
            \input{images/RI_invariance}
            \caption{The Reidemeister I move, with important Reeb chords labeled.}
            \label{reidILabels}
        \end{center}
    \end{figure}
    
\begin{lemma}\label{invReidI}
	Again denote by $(\mathcal{A}^-, \partial^-)$ and $(\mathcal{A}', \partial')$ the DGA before and after the move, respectively. One can perform infinitely many stabilization operations on $\mathcal{A}'$ so that $\mathcal{A}'$ is tame isomorphic to $\mathcal{A}^-$.
\end{lemma}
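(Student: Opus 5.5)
We would prove \Cref{invReidI} by following the pattern of the Reidemeister IV(a) argument, with the cancelling pair now created by the move itself rather than carried through it. \emph{Before} the move, the relevant arc of $\pi(\Lambda)$ crosses $\Gamma$ twice near the bifurcation point. \emph{After} the move it crosses $\Gamma$ at additional points, and there is a new double point $b\in\Sigma_+$ together with a new short indecomposable chord $a$ along $\Gamma$. The monogon cut out by the new kink gives $\Phi_+(a)=b+(\text{terms})$, and this yields $\widehat{\partial}(\widehat a)=b+v$.

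\textbf{Step 1: the finite cancelling pair.} As in Step 1 of the IV(a) case, we conjugate by the elementary automorphism $b\mapsto b+v$. This arranges $\partial'\widehat a=b$ and $\partial' b=0$. We then check with \Cref{lem:diff=1} that $|\widehat a|=|b|+1$.

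\textbf{Step 2: the remaining new chords.} Every decomposable chord of $\mathcal{A}'$ that passes through the new crossing points either corresponds to a chord $c^-$ of $\mathcal{A}^-$ or is new. We pair up the new chords in the spirit of the pairs $(c_\Gamma^-,c_\Gamma')$ of the IV(a) proof. Concretely, for each chord $w$ of $\mathcal{A}'$ whose concatenation with $a$ is defined, we pair $a*w$ with $w$ when $w$ has no counterpart in $\mathcal{A}^-$. We expect the formula $\widehat\partial(\widehat{a*w})=\Omega(\partial_\Gamma(a*w))+\Phi_\pm(a*w)$ to contain the term $\widehat{w}$ (or $b\,\widehat w$) coming from $\Omega(a\, w)=\Phi_+(a)\widehat w+\cdots$, with coefficient given by the kink monogon. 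Then, ordering these chords by height, we build an infinite sequence of elementary automorphisms, of the same triangular type as the $h_i$ in the IV(a) proof, that turns each such pair into a standard stabilization pair $(e_i,\partial e_i=f_i)$. Each step only uses generators of lower height, so by Lemma~\ref{lem:dphiwelldefined} and \Cref{finitePolygon} the composition is a tame automorphism in the sense of \cite{ekholmNg2015weinstein}.

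\textbf{Step 3: identification with $\mathcal{A}^-$.} The remaining generators, namely the chords in $\Sigma_\pm$ other than $b$ and the chords along $\Gamma$ corresponding to those of $\mathcal{A}^-$, are identified with the generators of $\mathcal{A}^-$. The differentials agree after applying the projection $\tau$ that kills all stabilization generators, because polygons in the new diagram either avoid the kink region or pass through it in a way that produces a factor $a$, $b$ or a new chord. We then run the inductive conjugation $g_i$ with the degree-one map $G$, which replaces $b$ (and each $f_i$) by $\widehat a$ (respectively $e_i$), exactly as after \cite[Lemma 8.2]{chekanov2002dga}. Ordering generators by height makes this a possibly infinite triangular composition. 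The result is that $\mathcal{A}'$ is tame isomorphic to $\mathcal{A}^-\sqcup E$, with $E$ generated by the countably many pairs.

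\textbf{Main obstacle.} The hardest step is the analogue of \cite[Lemma 8.2(b)]{chekanov2002dga}, namely $\tau\circ\partial^-=\tau\circ\partial'$ on the chords along $\Gamma$. This requires a case analysis of chords starting or ending at the new intersection points, similar to Cases 1a--1f of the IV moves, and checking that every polygon through the kink involves a killed generator. We would first carry this out for chords starting at the kink, verifying line by line that the $\Omega$-terms match. The ending case is symmetric, and chords far from the kink are unchanged.
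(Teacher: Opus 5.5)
Your global architecture matches the paper's: cancel $\widehat a$ against $b$, organize the other new chords into countably many cancelling pairs by height-ordered triangular automorphisms, then run Chekanov's Lemma 8.2 argument with $\tau$ and $G$. However, Step 2 has a genuine gap.

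With $b\in\Sigma_+$, $\Gamma$ cuts the kink loop into a triangle in $\Sigma_+$ with a negative corner at $b$ and a monogon in $\Sigma_-$. So $\Phi_+(a)=b$, $\Phi_-(a)=1$ and $\widehat\partial\widehat a=b+1$. Because $\Omega(c^1c^2)=\Phi_+(c^1)\widehat{c^2}+\widehat{c^1}\Phi_-(c^2)$, the monogon gives a unit coefficient only when $a$ is the \emph{last} factor: $\Omega(w\,a)$ contains $\widehat w\,\Phi_-(a)=\widehat w$. The easy pairs are therefore $(w*a,w)$ for $w$ ending at the left endpoint of $a$, and your pairing never mentions this family. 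Such $w$ starting at old points are new chords that you leave unpaired.

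For the pairs you do use, $\Omega(a\,w)=b\,\widehat w+\widehat a\,\Phi_-(w)$, which is exactly your hedged case. After your Step 1 substitution $b\mapsto b+1$ this term becomes $(1+b)\widehat w$. Then $\widehat w\mapsto\widehat\partial(\widehat{a*w})$ is not an elementary automorphism: the correction contains $\widehat w$ itself, and $1+b$ is not a unit in a free algebra. So no triangular change of variables of the kind you describe makes $(\widehat{a*w},\widehat w)$ a stabilization pair. The missing idea is the auxiliary substitution $\widehat{a*w}\mapsto\widehat{a*w}+\widehat a\,\widehat w$. It uses $\widehat\partial\widehat a=b+1$ to trade $b\,\widehat w$ for $\widehat w$, and must be followed by a check that no other $\widehat{w_j}$ survive; this is the paper's Stabilization II, via $h_i^1$ and $h_i^2$. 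You also need a genuine partition of the new chords: chords running between endpoints of $a$ fit both families and must be assigned to exactly one.

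Step 3 also rests on a false heuristic. After $b\mapsto b+1$, $\tau$ sends $b$ to $1$, not $0$. The terms of $\widehat\partial y$ with a negative corner at $b$ are precisely what reproduce polygons of the kink-free diagram, so it is not true that everything passing through the kink involves a killed generator.

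The delicate case of the $\tau$-lemma is also not what you name. Chords starting or ending at the new points of $\pi(\Lambda)\cap\Gamma$ are all stabilization generators, killed by $\tau$. The real case is old chords passing over $a$, namely $y=c'*a*d'$. There $\Omega(\partial_\Gamma y)$ contains $\Phi_+(c'*a)\,\widehat{d'}$ and $\Phi_+(c')\,\widehat{a*d'}$. After the Step 2 automorphisms these yield products $\Phi_+(c'*a)\Phi_+(a*d')$, which account for single polygons with positive end $y$ in the kink-free diagram.

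Two smaller points. Your picture of the move is off: on one side the arc misses $\Gamma$ near the bifurcation point, and on the other it meets $\Gamma$ exactly at the two endpoints of $a$. The statement also stabilizes the kink-free algebra $\mathcal{A}'$, so your before/after labels are reversed, though that part is harmless.
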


\begin{proof}
    Our proof consists of two steps: in Step 1 we group the newly introduced Reeb chords into three groups of canceling pairs $\{c_i, c_i'\}_{i \in \mathbb{N}}, \{d_i, d_i'\}_{i \in \mathbb{N}}, \{a, b\}$ and conjugate $\partial^-$ by infinitely many elementary automorphisms (denoted by $s$), arranging that $\partial^s c_i = c_i', \partial^s d_i = d_i', \partial^s a = b$. This allows us perform infinitely many stabilization operations to $\partial'$, so that it agrees with $\partial^s$ on all Reeb chords affected by the move. In Step 2 we further conjugate $\partial^s$ by elementary automorphisms so that the differentials before and after the move also agree on all remaining Reeb chords. \\
    
    \textit{Step 1}. The following Stabilizations I, II, and III  stabilize respectively the pairs $\{c_i, c_i'\}_{i \in \mathbb{N}}$, $\{d_i, d_i'\}_{i \in \mathbb{N}}$, and $\{a, b\}$ as labeled in \Cref{reidILabels}.

	\tcbox[size=fbox, colback=gray!30]{Step 1-1. Stabilization I}

	Denote the Reeb chords that end at the left and right vertices of $a$ by $\{c_i', c_i\}_{i \in \mathbb{N}}$, respectively, so that they are sorted by increasing numbers of indecomposable chords. We claim that there exists a tame automorphism $g$ such that $\partial^g := g \circ \partial^- \circ g^{-1}$ satisfies \[\partial^g c_i = c'_i \text{ (and thus } \partial^g c'_i = 0)\] for all $i$. Define a sequence of elementary automorphisms $g_i$ by \[g_{i}(c_{i}') = \partial^- c_{i}.\] We want to show that $\partial^- c_{i} =  c_i' + w_i$ for some $w_i$ containing no terms involving $c_j'$. Indeed, note that \[\partial^- c_{i} = \Phi_+(c_{i}) + \Phi_-(c_{i}) + \sum_{e_1 * e_2 = c_{i}} \Phi_+(e_1)e_2 + e_1\Phi_-(e_2),\] where the only term involving $c_j'$ is when $e_2 = a$ and $e_1\Phi_-(e_2) = c_{i}'$. Clearly $\partial^- c_i$ cannot produce any Reeb chord longer than $c_i$. In fact, if we are close enough to the moment of bifurcation $a$ is short enough so that any Reeb chord in $\partial^- c_i$ is strictly shorter than $c_i'$ except $c_i'$ itself. But we already showed that $w_i$ does not involve $c_i'$. As before we pick any ordering such that all generators along $\Gamma$ have higher ranks than all generators from $\Sigma_\pm$ and the generators along $\Gamma$ are ranked by height. Then \[g := \dots  \circ g_2 \circ g_1\] is a tame automorphism and \begin{align*}
		\partial^g(c_i) &= g \circ \partial^- \circ g^{-1} (c_i) \\
		&= g \circ \partial^- c_i \\
		&= g(c_i' + w_i) \\
		&= \partial^-(c_i)+ w_i \text{ since } w_i \text{ contains no } c_j' \text{ terms} \\
		&= c_i'.
	\end{align*}

	\tcbox[size=fbox, colback=gray!30]{Step 1-2. Stabilization II} 
	
	Denote the Reeb chords that start at the left and right vertices of $a$ (but are not already labeled by $\{c_i, c_i'\}_{i\in \mathbb{N}}$) by $\{d_i, d_i'\}_{i \in \mathbb{N}}$. Define a sequence of elementary automorphisms $h_i^1, h_i^2$ inductively by \[h_i^1(d_i) = d_i + a d_i'\] \[h_i^2(d_i') = \partial^{h_{i, \dots , 1}^1} d_i,\] where $h_{i, \dots , 1}^1 := h_i^1 \circ h_{i-1}^1 \circ \dots \circ h_1^1$. We want to show that $\partial^{h_{i, \dots , 1}^1} d_i = d_i' + v_i$ for some $v_i$ containing no terms involving $d_j'$. Indeed, note that \begin{align*}
        \partial^{h_{i, \dots , 1}^1} d_i &= h_{i, \dots , 1}^1 \circ \partial^-(d_i + a d_i') \\
        &= h_{i, \dots , 1}^1(\Phi_+(d_i) + \Phi_-(d_i) + bd_i' + a\Phi_-(d_i') + \sum_j (\Phi_+(d_j)e_j + d_j\Phi_-(e_j)) \\
        &+ (b+1)d_i' + a\partial^- d_i'),
    \end{align*} where $e_j$ are old chords and \begin{align*}
        \partial^- d_i' &= \Phi_+(d_i') + \Phi_-(d_i') + \sum_j(\Phi_+(d_j')e_j + d_j'\Phi_-(e_j)).
    \end{align*} Thus, \[\partial^{h_{i, \dots , 1}^1} d_i = h_{i, \dots , 1}^1(d_i' + \Phi_+(d_i) + a\Phi_+(d_i') + \sum(\Phi_+(d_j) e_j + d_j \Phi_-(e_j)) + a\sum(\Phi_+(d_j') e_j + d_j' \Phi_-(e_j))).\] The above computation of $\partial^{h_{i, \dots , 1}^1} d_i$ is actually a partial lie since some of the $d_j$'s may have already been labeled by $c_i, c_i'$'s. If this does not happen, then applying $h_{i, \dots , 1}^1$ replaces $d_j$ with $d_j + ad_j'$, so we get an extra term $a \sum d_j'\Phi_-(e_j)$, which cancels with the corresponding term in $a\partial d_i'$ and no $d_j'$ terms exist (other than $d_i'$). Otherwise $\Phi_-(e_j) = 0$ by local configuration of the move and $d_j \Phi_-(e_j) = d_j' \Phi_-(e_j) = 0$ so again no $d_j'$ terms exist. Clearly $a$ and $d_i'$ are shorter than $d_i$. Similar to Step 1-1, $\partial^{h_{i, \dots , 1}^1} d_i$ cannot produce any Reeb chord longer than $d_i$, and moreover if we are close enough to the moment of bifurcation $a$ is short enough so that any Reeb chord in $\partial^{h_{i, \dots , 1}^1} d_i$ is strictly shorter than $d_i'$ except $d_i'$ itself. But we already showed that $v_i$ does not involve $d_i'$. Hence \[h := (\dots  \circ h_2^2 \circ h_1^2) \circ (\dots \circ h_2^1 \circ h_1^1)\] is a tame automorphism. Note that \[\partial^{h \circ g} c_i = h \circ \partial^g \circ h^{-1} (c_i) = h \circ \partial^g (c_i) = c_i'\] and \begin{align*}
		\partial^{h \circ g} d_i &= h \circ g \circ \partial^-(d_i + ad_i') \\
		&= h \circ \partial^-(d_i + ad_i') \\
        &\text{ since whenever } \partial^-(d_i + ad_i') \text{ has a } c_i' \text{ term  it is of the form } c_i' \Phi_-(c_\Gamma) = c_i' \cdot 0 = 0 \\
		&= (\dots  \circ h_2^2 \circ h_1^2) \circ \partial^{h_{i, \dots , 1}^1} d_i \\
		&= (\dots  \circ h_{i+1}^2) \circ h_i^2 (d_i' + v_i) \\
		&= \dots  \circ h_{i+1}^2 (d_i') \\
		&= d_i'.
	\end{align*}
	
	\tcbox[size=fbox, colback=gray!30]{Step 1-3. Stabilization III} 
	
	Note that $\partial^- a = b + 1$ and $\partial^- b = 0$. Let $k$ be the elementary automorphism $b \mapsto b + 1$ and let $s := k \circ h \circ g$. Then $\partial^s a = b$. Clearly we still have $\partial^s (c_i) = c_i'$ and $\partial^s (d_i) = d_i'$. This finishes the stabilization part of the proof. \\
	
	\subsubsection*{Step 2.} The final step to show invariance under the RI move is to conjugate $\partial^s$ by a tame automorphism so that it also agrees with $\partial'$ on the remaining chords.
	
	\begin{lemma}\label{keyProjection}
		Let $y$ be a generator of $\mathcal{A}^-$ that is preserved by (i.e. happens away from) the move. Let $\tau: \mathcal{A}^- \rightarrow \mathcal{A}^-$ be the projection map that sends all new chords $a, b, c_j, c_j', d_j, d_j'$ to $0$. Then \[\tau \circ \partial^s(y) = \tau \circ \partial'(y).\]
	\end{lemma}
	
	\begin{proof}
        This is again an analog of \cite[Lem. 8.2(b)]{chekanov2002dga}.

		\tcbox[size=fbox, colback=gray!30]{Case 1: $y$ is of the form $c_i' * a * d_i'$.} To avoid confusion we denote $\Phi_\pm$ in $\partial^s$ by $\Phi^s_\pm$ and $\Phi_\pm$ in $\partial'$ by $\Phi'_{\pm}$. Note that \begin{align*}
			\partial^s (y) &= s \circ \partial (y) \\
			&= s (\Phi_+^s(y) + \Phi_-^s(y) + \Phi_+^s(c_i) d_i' + \Phi_+^s(c_i') d_i + w) \\
			&= k(\Phi_+^s(y) + \Phi_-^s(y) + w) + k \circ h(\Phi_+^s(c_i) d_i' + \Phi_+^s(c_i') d_i),
		\end{align*} where $w$ collects all terms in $\Omega(\partial_\Gamma (y))$ not involving $c_j, c_j', d_j, d_j'$. Note that \begin{align*}
			k \circ h(\Phi_+^s(c_i) d_i' + \Phi_+^s(c_i') d_i) &= k \circ (\dots \circ h_1^2) \circ (\dots \circ h_1^1)(\Phi_+^s(c_i) d_i' + \Phi_+^s(c_i') d_i) \\
			&= k \circ (\dots \circ h_1^2)(\Phi_+^s(c_i) d_i' + \Phi_+^s(c_i') (d_i + a d_i')) \\
			&= k(\Phi_+^s(c_i) (\partial^{h_{i, \dots , 1}^1} d_i) + \Phi_+^s(c_i')(d_i + a(\partial^{h_{i, \dots , 1}^1} d_i))),
		\end{align*} where by Step 1-2 we have \begin{align*}
			\partial^{h_{i, \dots , 1}^1} d_i &= d_i' + \Phi_+^s(d_i) + a\Phi_+^s(d_i') + \sum_j (\Phi_+^s(d_j)e_j + d_j\Phi_-^s(e_j)) + a\sum_j (\Phi_+^s(d_j')e_j).
		\end{align*} Thus, \[\tau \circ k \circ h (\Phi_+^s(c_i) d_i' + \Phi_+^s(c_i') d_i) = \tau \circ k(\Phi_+^s(c_i)\Phi_+^s(d_i) + \Phi_+^s(c_i)\sum_j \Phi_+^s(d_j) e_j).\] At the same time, we have $\partial'(y) = \Phi'_+(y) + \Phi'_-(y) + \Omega(\partial_\Gamma y)$, where \[\Phi'_+(y) = \tau \circ k(\Phi_+^s(y) + \Phi_+^s(c_i)\Phi_+^s(d_i))\] \[\Phi'_-(y) = \Phi_-^s(y)\] \[\Omega(\partial_\Gamma y) = \tau \circ k (w + \Phi_+^s(c_i)\sum_j \Phi_+^s(d_j) e_j).\] See \Cref{reidICorr} for the case when $\Phi^s_+(c_i), \Phi^s_+(d_i), \Phi^s_+(d_j)$ do not have negative corners at $b$. Now both $\tau \circ \partial^s(y)$ and $\tau \circ \partial'(y)$ equal to \[\tau \circ k(\Phi_+^s(y) + \Phi_+^s(c_i)\Phi_+^s(d_i) + w + \Phi_+^s(c_i)\sum_j \Phi_+^s(d_j) e_j) + \Phi_-^s(y).\]
 \begin{figure}[h]
            \begin{center}
                \input{images/RI_invariance_2}
                \caption{Correspondence of polygons before and after a Reidemeister I move.}
                \label{reidICorr}
            \end{center}
        \end{figure}

		\tcbox[size=fbox, colback=gray!30]{Case 2: $y$ is an old chord along $\Gamma$ that does not involve $a$ or an old chord in $\Sigma_\pm$.} If $y$ is in $\Sigma_-$, then $\partial^s(y) = \partial'(y)$ since we assumed $b$ is in $\Sigma_+$. In general, $\partial^s(y)$ replaces all $b$'s in $\partial^-(y)$ by $b + 1$ so that after $\tau$ kills all terms involving $b, c_i, c_i', d_i, d_i'$ we have $\tau \circ \partial^s(y) = \tau \circ \partial'(y)$. See \Cref{reidICorr2} for the case when $y$ is an old chord along $\Gamma$.

         \begin{figure}[h]
            \begin{center}
                \input{images/RI_invariance_3}
                \caption{Correspondence of polygons before and after a Reidemeister I move.}
                \label{reidICorr2}
            \end{center}
        \end{figure}
	\end{proof}
	
	\text{ }\\ As before we finish the proof using a similar argument as in the discussion that follows \cite[Lemma 8.2]{chekanov2002dga}. Relabel the new chords by $\{a_i, x_i\}_{i \in \mathbb{N}}$, where $\partial^s a_i = x_i$, $\partial^s x_i = 0$, $a_1 = a$, $x_1 = b$, $H(x_1) < H(a_1) < H(x_2) < H(a_2) < \ldots$. Indeed, by taking a time sufficiently close to the bifurcation time we may assume (a) $H(b)$ is the smallest and $H(a)$ is the second smallest and (b) $H(a_i) - H(x_i) = H(a)$ ($i \geq 2$) is small enough so that no old chord has height between $H(a_i)$ and $H(x_i)$. Then we have \[H(x_1) < H(a_1) < H(b_1^1) < \ldots < H(b_{n_1}^1) < H(x_2) < H(a_2) < H(b_1^2) < \ldots < H(b_{n_2}^2) < \ldots,\] where $b_i^j$'s denote the old chords. Define an infinite sequence of elementary automorphisms \[t_i^j: b_i^j \mapsto b_i^j + q_i^j,\] where \[q_i^j = G \circ (\partial_{[i-1]}^j + \partial')(b_i^j),\] $\partial_{[0]}^1 = \partial^s$, $\partial_{[i]}^j = t_i^j \circ \partial_{[i-1]}^j \circ (t_i^j)^{-1}$, $\partial_{[0]}^j = \partial^{j-1}_{[n_{j-1}]}$, and \[G(y x_i z) = y a_i z\] for all $i$ where $y$ is a word not containing $a_i$ or $x_i$ and $G$ sends everything else to $0$.
    
    Let $\mathcal{A}_{[i]}^j$ be the subalgebra of $\mathcal{A}^-$ generated by $x_1, a_1, \ldots, b_i^j$. First we claim that $q_i^j \in \mathcal{A}_{[i-1]}^j$, so that the ordering given by the above ranking by height guarantees that the infinite composition of $t^j_i$'s is a tame automorphism. Base case: $q_1^1 = G \circ (\partial^s + \partial')(b_1^1)$. Since $g(c'_i) = \partial^- c_i$, and there is no chord with height in between that of $c_i$ and $c'_i$, then applying $g$ does not introduce any chords with larger heights. Similar statements hold for $h$ and $k$. So $\partial^s(b_1^1) = s \circ \partial^- (b_1^1) \in \mathcal{A}_{[0]}^1$, which implies $q_1^1 \in \mathcal{A}_{[0]}^1$ and $t_1^1(\mathcal{A}_{[1]}^1) \subset \mathcal{A}_{[1]}^1$. Then $\partial_{[1]}^1(b_2^1) = t_1^1 \circ s \circ \partial^- (b_2^1) \in \mathcal{A}_{[1]}^1$, which implies $q_2^1 \in \mathcal{A}_{[1]}^1$ and $t_2^1(\mathcal{A}_{[2]}^1) \subset \mathcal{A}_{[2]}^1$. Proceed by induction and get $q_{i}^1 \in \mathcal{A}_{[i-1]}^1$ for all $1 \leq i \leq n_1$. Now we have $\partial_{[0]}^2(b_1^2) = \partial_{[n_1]}^1(b_1^2) \in \mathcal{A}_{[n_1]}^1 = \mathcal{A}_{[0]}^2$ so $q_1^2 \in \mathcal{A}_{[0]}^2$ and $t_1^2(\mathcal{A}_{[1]}^2) \subset \mathcal{A}_{[1]}^2$. Proceed by induction again and get $q_{i}^2 \in \mathcal{A}_{[i-1]}^2$ for all $1 \leq i \leq n_2$. Eventually we get $q_i^j \in \mathcal{A}_{[i-1]}^j$ for all $i, j$.
	
	Next we claim that $\partial'$ and $\partial_{[i]}^j$ agree on $\mathcal{A}_{[i]}^j$. Observe that $\partial_{[i]}^j(b_{i+1}^j) = t_i^j \circ \dots  \circ t_1^1 \circ s \circ \partial^-(b_{i+1}^j) \in \mathcal{A}_{[i]}^j$ by the previous paragraph. Base case: $i = 0$, $j = 1$. $\partial'$ and $\partial^s$ agree on $x_1 = b, a_1 = a$. Indeed, both $\partial'$ and $\partial^s$ send $a \mapsto b, b \mapsto 0$. Assume $\partial'$ and $\partial_{[i]}^1$ agree on $\mathcal{A}_{[i]}^1$. Then \begin{align*}
		\partial^1_{[i+1]}(b_{i+1}^1) &= t_{i+1}^1 \circ \partial^1_{[i]} (b_{i+1}^1 + q_{i+1}^1) \\
		&= t_{i+1}^1 (\partial^1_{[i]}(b_{i+1}^1) + \partial'(q_{i+1}^1)) \text{ by the induction hypothesis}\\
		&= t_{i+1}^1 (\partial^1_{[i]}(b_{i+1}^1) + \partial' \circ G \circ (\partial^1_{[i]} + \partial')(b_{i+1}^1)) \\
		&= t_{i+1}^1 (\partial^1_{[i]}(b_{i+1}^1) + (\Id + \tau + G \circ \partial') (\partial^1_{[i]} + \partial')(b_{i+1}^1)) \\
		&= t_{i+1}^1(\partial'(b^1_{i+1}) + \tau \circ (\partial^1_{[i]} + \partial')(b_{i+1}^1)) \\
		&\text{ since $\partial^1_{[i]}(b_{i+1}^1) \in \mathcal{A}_{[i]}^1$ implies $\partial'(\partial^1_{[i]}(b_{i+1}^1)) = (\partial^1_{[i]})^2(b_{i+1}^1) = 0$} \\
		&= \partial'(b^1_{i+1}) + \tau \circ (\partial^1_{[i]} + \partial')(b_{i+1}^1) \\
		&= \partial'(b^1_{i+1}) \text{ by \Cref{keyProjection} and that } \tau \circ t_i^j = \tau.
	\end{align*}
    
    Hence, $\partial^1_{[i+1]}$ and $\partial'$ agree on $\mathcal{A}^1_{[i+1]}$. Proceed by induction and get $\partial^1_{[n_1]}$ and $\partial'$ agree on $\mathcal{A}^1_{[n_1]}$. Note that $\partial^2_{[0]} = \partial^1_{[n_1]}$ and $\partial'$ also agree on $a_2, x_2$ and thus on $\mathcal{A}^2_{[0]}$. Then proceed by induction again. This finishes the proof of \Cref{invReidI}.
\end{proof}

\subsection{Reidemeister III and II moves} \text{ }

We adapt Chekanov's proof for the Reidemeister III and II moves in Lagrangian projection to our case. All notations used in this subsection come from \cite[Sec 8.2-8.4]{chekanov2002dga} which we recommend that the reader review before moving on. Without loss of generality we assume the moves take place in $\Sigma_+$.

The proof for Move IIIa is the same since all we need is an identification of the double points before and after the move as shown in Figure 14. For Move IIIb, the key is to prove \[\partial_-(s) = g\partial_+(s).\] If $s$ is a generator of $\mathcal{A}_\pm$ then the same argument applies since $\widehat{\partial}(s) = \partial_\pm(s)$ by our definition. Assume $s$ is a Reeb chord along $\Gamma$. Then the same argument gives \[\Phi_+^-(s) = g \Phi_+(s),\] where $\Phi_+^-$ and $\Phi_+$ are the DGA maps before and after the move. More generally, we have $\Phi_+^-(s^1_j) = g\Phi_+(s^1_j)$, where $\partial_\Gamma s = \sum_j s^1_j s^2_j$. Hence $\Phi_+^-(s^1_j)s^2_j = g(\Phi_+(s^1_j)s^2_j)$. On the other hand, we have $s^1_j\Phi_-^-(s^2_j) = g(s^1_j\Phi_-(s^2_j))$ since $\Phi_-^-$ agrees with $\Phi_-$. Putting these together gives $\partial_-(s) = g\partial_+(s)$, i.e. $\partial_-(s) = \partial_+^g(s)$. 

For Move II, in contrast to Chekanov's case we have infinitely many Reeb chords ranked by \[\dots  \geq H(a_2) \geq H(a_1) \geq H(a) > H(b) \geq H(b_1) \geq \dots  \geq H(b_m).\] The key is to prove Lemma 8.2(b), and more specifically that \[\zeta(\partial^-(a_j)) = \partial a_j.\] If $a_j$ is a Reeb chord in $\Sigma_\pm$ then the same argument applies. Assume $a_j$ is a Reeb chord along $\Gamma$. Then the same argument gives \[\zeta(\Phi_+^-(a_j)) = \Phi_+(a_j),\] where again $\Phi_+^-$ and $\Phi_+$ denote the DGA maps before and after the move. More generally, we have $\zeta(\Phi^-_+(a_{j, i}^1)) = \Phi_+(a_{j, i}^1)$, where $\partial_\Gamma a_j = \sum_i a_{j, i}^1 a_{j, i}^2$. Thus, $\zeta(\Phi^-_+(a_{j, i}^1)a_{j, i}^2) = \Phi_+(a_{j, i}^1)a_{j, i}^2$. On the other hand, we have $\zeta(a_{j, i}^1\Phi^-_-(a_{j, i}^2)) = a_{j, i}^1\Phi_-(a_{j, i}^2)$ since $\Phi^-_-$ agrees with $\Phi_-$. Putting these together gives $\zeta(\partial^-(a_j)) = \partial a_j$.

\nocite{*}
{
\printbibliography}
\end{document}

%% file: preamble.tex
\usepackage[utf8]{inputenc}
\usepackage[twoside,includefoot,footskip=25pt, margin=1in]{geometry}
\usepackage{amsmath,amssymb,amsthm,enumerate,enumitem,hyperref,algorithm,algorithmic, tikz, tikz-cd, faktor, hyperref, aliascnt,  mdframed, longtable, mathabx, tcolorbox, etoolbox}
\usepackage[nameinlink]{cleveref}
\usepackage{fancyhdr}
\usepackage{url}
\usepackage[mathscr]{euscript}
\usepackage[bb=boondox]{mathalfa}
\usepackage[skip=0.5cm]{caption}
\usepackage[symbol]{footmisc}
\usepackage[
backend=biber,
style=alphabetic,
]{biblatex}

\let\cref\Cref
\usepackage{multicol}
\usepackage{placeins}
\FloatBarrier

\usepackage{xcolor}
\definecolor{myblue}{RGB}{30,30,210}
\definecolor{myteal}{RGB}{0,120,120}
\hypersetup{
    colorlinks=true,
    linkcolor=myblue,
    citecolor=myteal,
    urlcolor=myteal
}

\newtheoremstyle{mystyle}{}{}{}{}{\bfseries}{.}{ }{}
\newtheoremstyle{cstyle}{}{}{}{}{\bfseries}{.}{ }{\it }
\makeatletter
\renewenvironment{proof}[1][\proofname] {\par\pushQED{\qed}{\normalfont\bfseries\topsep6\p@\@plus6\p@\relax #1\@addpunct{.} }}{\popQED\endtrivlist\@endpefalse}
\makeatother

\theoremstyle{mystyle}

\newtheorem{definition}{Definition}[section]

\newaliascnt{theorem}{definition}
\newtheorem{theorem}[theorem]{Theorem}
\aliascntresetthe{theorem}

\newaliascnt{lemma}{definition}
\newtheorem{lemma}[lemma]{Lemma}
\aliascntresetthe{lemma}

\newaliascnt{conjecture}{definition}
\newtheorem{conjecture}[conjecture]{Conjecture}
\aliascntresetthe{conjecture}

\newaliascnt{proposition}{definition}
\newtheorem{proposition}[proposition]{Proposition}
\aliascntresetthe{proposition}

\newaliascnt{corollary}{definition}
\newtheorem{corollary}[corollary]{Corollary}
\aliascntresetthe{corollary}

\newaliascnt{example}{definition}
\newtheorem{example}[example]{Example}
\aliascntresetthe{example}

\newaliascnt{remark}{definition}
\newtheorem{remark}[remark]{Remark}
\aliascntresetthe{remark}

\newaliascnt{construction}{definition}
\newtheorem{construction}[construction]{Construction}
\aliascntresetthe{construction}

\newcommand{\N}{\mathbb{N}}
\newcommand{\Q}{\mathbb{Q}}
\newcommand{\R}{\mathbb{R}}
\newcommand{\T}{\mathbb{T}}

\newcommand{\Z}{\mathbb{Z}}

\makeatletter
\newcommand{\tpitchfork}{%
  \vbox{
    \baselineskip\z@skip
    \lineskip-.52ex
    \lineskiplimit\maxdimen
    \m@th
    \ialign{##\crcr\hidewidth\smash{$-$}\hidewidth\crcr$\pitchfork$\crcr}
  }%
}
\makeatother

\newcommand{\de}{\text{d}}

\newcommand{\<}{\langle}
\renewcommand{\>}{\rangle}

\DeclareMathOperator{\ind}{\text{ind}}

\DeclareMathOperator{\rot}{\text{rot}}

\DeclareMathOperator{\Id}{Id}

\DeclareMathOperator{\tb}{tb}

\DeclareMathOperator{\writhe}{writhe}

\usepackage{todonotes}

\bibliography{references}

\allowdisplaybreaks

%% file: images/Chekanov_example_combined.tex
\begin{tikzpicture}[x=0.70pt,y=0.70pt,yscale=-1,xscale=1]

\draw  [color={rgb, 255:red, 144; green, 19; blue, 254 }  ,draw opacity=1 ][fill={rgb, 255:red, 246; green, 246; blue, 246 }  ,fill opacity=1 ][line width=1.5]  (363.16,165.06) .. controls (363.16,107.38) and (410.59,60.63) .. (469.09,60.63) .. controls (527.6,60.63) and (575.03,107.38) .. (575.03,165.06) .. controls (575.03,222.74) and (527.6,269.49) .. (469.09,269.49) .. controls (410.59,269.49) and (363.16,222.74) .. (363.16,165.06) -- cycle ;
\draw [color={rgb, 255:red, 0; green, 0; blue, 255 }  ,draw opacity=1 ][line width=1.5]    (462.07,134.97) .. controls (462.14,140.28) and (481.6,146.19) .. (480.85,154.38) ;
\draw  [color={rgb, 255:red, 144; green, 19; blue, 254 }  ,draw opacity=1 ][fill={rgb, 255:red, 246; green, 246; blue, 246 }  ,fill opacity=1 ][line width=1.5]  (42.65,406.27) .. controls (42.65,348.73) and (90.64,302.09) .. (149.84,302.09) .. controls (209.04,302.09) and (257.03,348.73) .. (257.03,406.27) .. controls (257.03,463.81) and (209.04,510.45) .. (149.84,510.45) .. controls (90.64,510.45) and (42.65,463.81) .. (42.65,406.27) -- cycle ;
\draw  [color={rgb, 255:red, 144; green, 19; blue, 254 }  ,draw opacity=1 ][fill={rgb, 255:red, 255; green, 255; blue, 255 }  ,fill opacity=1 ][line width=1.5]  (65.75,406.27) .. controls (65.75,401.24) and (69.94,397.17) .. (75.12,397.17) .. controls (80.29,397.17) and (84.48,401.24) .. (84.48,406.27) .. controls (84.48,411.3) and (80.29,415.37) .. (75.12,415.37) .. controls (69.94,415.37) and (65.75,411.3) .. (65.75,406.27) -- cycle ;
\draw  [color={rgb, 255:red, 144; green, 19; blue, 254 }  ,draw opacity=1 ][fill={rgb, 255:red, 255; green, 255; blue, 255 }  ,fill opacity=1 ][line width=1.5]  (215.21,406.27) .. controls (215.21,401.24) and (219.4,397.17) .. (224.57,397.17) .. controls (229.74,397.17) and (233.94,401.24) .. (233.94,406.27) .. controls (233.94,411.3) and (229.74,415.37) .. (224.57,415.37) .. controls (219.4,415.37) and (215.21,411.3) .. (215.21,406.27) -- cycle ;

\draw [color={rgb, 255:red, 0; green, 0; blue, 255 }  ,draw opacity=1 ][line width=1.5]    (150.13,303.54) -- (150.13,360.18) ;
\draw [color={rgb, 255:red, 0; green, 0; blue, 255 }  ,draw opacity=1 ][line width=1.5]    (150.13,451.53) -- (150.13,508.17) ;
\draw [color={rgb, 255:red, 0; green, 0; blue, 255 }  ,draw opacity=1 ][line width=1.5]    (153.17,398.54) .. controls (159.66,408.72) and (161.01,417.21) .. (175.01,416.7) .. controls (189.02,416.19) and (191.87,407.89) .. (191.6,403.46) .. controls (191.32,399.03) and (187.97,390.95) .. (173.45,391.44) .. controls (158.93,391.93) and (161.75,399.36) .. (152.45,415.31) ;
\draw [color={rgb, 255:red, 246; green, 246; blue, 246 }  ,draw opacity=1 ][line width=3]    (155.27,407.89) -- (158.74,401.48) ;
\draw [color={rgb, 255:red, 0; green, 0; blue, 255 }  ,draw opacity=1 ][line width=1.5]    (160.56,410.93) -- (153.17,398.54) ;
\draw [color={rgb, 255:red, 0; green, 0; blue, 255 }  ,draw opacity=1 ][line width=1.5]    (150.09,360.18) .. controls (149.92,376.74) and (148.65,388.78) .. (153.17,398.54) ;
\draw [color={rgb, 255:red, 0; green, 0; blue, 255 }  ,draw opacity=1 ][line width=1.5]    (150.09,451.53) .. controls (150.28,439.66) and (148.59,421.4) .. (152.45,415.31) ;
\draw  [color={rgb, 255:red, 144; green, 19; blue, 254 }  ,draw opacity=1 ][fill={rgb, 255:red, 255; green, 255; blue, 255 }  ,fill opacity=1 ][line width=1.5]  (371.69,165.06) .. controls (371.69,160.02) and (375.83,155.93) .. (380.95,155.93) .. controls (386.06,155.93) and (390.2,160.02) .. (390.2,165.06) .. controls (390.2,170.1) and (386.06,174.18) .. (380.95,174.18) .. controls (375.83,174.18) and (371.69,170.1) .. (371.69,165.06) -- cycle ;
\draw  [color={rgb, 255:red, 144; green, 19; blue, 254 }  ,draw opacity=1 ][fill={rgb, 255:red, 255; green, 255; blue, 255 }  ,fill opacity=1 ][line width=1.5]  (547.99,165.06) .. controls (547.99,160.02) and (552.13,155.93) .. (557.24,155.93) .. controls (562.35,155.93) and (566.5,160.02) .. (566.5,165.06) .. controls (566.5,170.1) and (562.35,174.18) .. (557.24,174.18) .. controls (552.13,174.18) and (547.99,170.1) .. (547.99,165.06) -- cycle ;
\draw [color={rgb, 255:red, 0; green, 0; blue, 255 }  ,draw opacity=1 ][line width=1.5]    (469.09,60.63) -- (469.09,75.01) ;
\draw [color={rgb, 255:red, 0; green, 0; blue, 255 }  ,draw opacity=1 ][line width=1.5]    (469.09,248.04) -- (469.09,269.49) ;
\draw [color={rgb, 255:red, 246; green, 246; blue, 246 }  ,draw opacity=1 ][line width=3]    (468.73,195) -- (465.51,192.95) ;
\draw [color={rgb, 255:red, 0; green, 0; blue, 255 }  ,draw opacity=1 ][line width=1.5]    (457.45,204.72) .. controls (415.04,233.09) and (468.64,225.51) .. (469.09,248.04) ;
\draw [color={rgb, 255:red, 0; green, 0; blue, 255 }  ,draw opacity=1 ][line width=1.5]    (444.25,186.42) .. controls (404.7,230.27) and (386.45,137.9) .. (398.6,118.82) .. controls (410.75,99.74) and (472.93,106.81) .. (469.09,75.01) ;
\draw [color={rgb, 255:red, 0; green, 0; blue, 255 }  ,draw opacity=1 ][line width=1.5]    (461.8,170.2) .. controls (457.82,173.53) and (446.95,184.15) .. (444.25,186.42) ;
\draw [color={rgb, 255:red, 0; green, 0; blue, 255 }  ,draw opacity=1 ][line width=1.5]    (490.52,185.36) .. controls (482.46,187.04) and (478.13,189.76) .. (473.41,185.68) .. controls (468.68,181.61) and (469.98,178.33) .. (471.3,177.08) .. controls (472.62,175.84) and (477.03,173.11) .. (480.93,178.58) .. controls (484.83,184.06) and (482.65,186.46) .. (481.29,194.41) ;
\draw [color={rgb, 255:red, 0; green, 0; blue, 255 }  ,draw opacity=1 ][line width=1.5]    (462.62,154.15) .. controls (462.68,159.46) and (483.95,165.46) .. (487.49,167.53) ;
\draw [color={rgb, 255:red, 0; green, 0; blue, 255 }  ,draw opacity=1 ][line width=1.5]    (463.1,189.99) .. controls (465.28,192.38) and (488.03,220.65) .. (505.99,197.74) ;
\draw [color={rgb, 255:red, 0; green, 0; blue, 255 }  ,draw opacity=1 ][line width=1.5]    (508.31,141.02) .. controls (507.88,167.19) and (498.04,185.08) .. (490.52,185.36) ;
\draw [color={rgb, 255:red, 0; green, 0; blue, 255 }  ,draw opacity=1 ][line width=1.5]    (500.27,126.08) .. controls (511.01,128.58) and (515.66,133.88) .. (525.46,127.89) .. controls (535.25,121.9) and (533.9,115.85) .. (531.9,113.36) .. controls (529.9,110.86) and (524.21,107.46) .. (514.04,113.65) .. controls (503.86,119.84) and (508.45,127.85) .. (508.31,141.02) ;
\draw [color={rgb, 255:red, 0; green, 0; blue, 255 }  ,draw opacity=1 ][line width=1.5]    (438.74,141.74) .. controls (437.55,128.1) and (441.36,119.47) .. (431.92,114.6) .. controls (422.48,109.73) and (416.94,111.79) .. (415.05,114.28) .. controls (413.15,116.78) and (411.54,122.61) .. (421.12,128.84) .. controls (430.71,135.07) and (437.01,128.58) .. (444.77,126.56) ;
\draw [color={rgb, 255:red, 0; green, 0; blue, 255 }  ,draw opacity=1 ][line width=1.5]    (444.77,126.56) .. controls (458.91,123.68) and (461.5,131.97) .. (462.07,134.97) ;
\draw [color={rgb, 255:red, 0; green, 0; blue, 255 }  ,draw opacity=1 ][line width=1.5]    (500.27,126.08) .. controls (481.75,122.36) and (480.67,129.31) .. (480.31,134.72) ;
\draw [color={rgb, 255:red, 246; green, 246; blue, 246 }  ,draw opacity=1 ][line width=3.75]    (486.18,186.93) -- (480.29,187.41) ;
\draw [color={rgb, 255:red, 246; green, 246; blue, 246 }  ,draw opacity=1 ][line width=3]    (478,164.25) -- (470,160.25) ;
\draw [color={rgb, 255:red, 246; green, 246; blue, 246 }  ,draw opacity=1 ][line width=3]    (454.46,176.36) -- (449.2,181.72) ;
\draw [color={rgb, 255:red, 246; green, 246; blue, 246 }  ,draw opacity=1 ][line width=3]    (477.6,203.43) -- (472.34,198.82) ;
\draw [color={rgb, 255:red, 246; green, 246; blue, 246 }  ,draw opacity=1 ][line width=3]    (507.45,133.08) -- (507.32,124.89) ;
\draw [color={rgb, 255:red, 246; green, 246; blue, 246 }  ,draw opacity=1 ][line width=3]    (442.52,128.15) -- (434.29,130.04) ;
\draw [color={rgb, 255:red, 0; green, 0; blue, 255 }  ,draw opacity=1 ][line width=1.5]    (438.64,124.62) -- (438.53,131.48) ;
\draw [color={rgb, 255:red, 246; green, 246; blue, 246 }  ,draw opacity=1 ][line width=3]    (467.13,140.88) -- (474.57,144.65) ;
\draw [color={rgb, 255:red, 246; green, 246; blue, 246 }  ,draw opacity=1 ][line width=3]    (498.25,177.96) -- (502.25,172.96) ;
\draw [color={rgb, 255:red, 0; green, 0; blue, 255 }  ,draw opacity=1 ][line width=1.5]    (482.11,190.29) -- (483.19,184.5) ;
\draw [color={rgb, 255:red, 0; green, 0; blue, 255 }  ,draw opacity=1 ][line width=1.5]    (511.45,129.96) -- (502.66,126.54) ;
\draw [color={rgb, 255:red, 0; green, 0; blue, 255 }  ,draw opacity=1 ][line width=1.5]    (487.49,167.53) .. controls (490.21,168.97) and (516.33,178.56) .. (505.99,197.74) ;
\draw [color={rgb, 255:red, 0; green, 0; blue, 255 }  ,draw opacity=1 ][line width=1.5]    (481.29,194.41) .. controls (478.64,205.05) and (462.86,199.29) .. (457.45,204.72) ;
\draw [color={rgb, 255:red, 0; green, 0; blue, 255 }  ,draw opacity=1 ][line width=1.5]    (438.74,141.74) .. controls (436.6,171.02) and (460.58,186.9) .. (463.1,189.99) ;
\draw [color={rgb, 255:red, 0; green, 0; blue, 255 }  ,draw opacity=1 ][line width=1.5]    (480.85,153.9) .. controls (480.78,159.21) and (464.89,167.3) .. (461.8,170.2) ;
\draw  [color={rgb, 255:red, 144; green, 19; blue, 254 }  ,draw opacity=1 ][fill={rgb, 255:red, 246; green, 246; blue, 246 }  ,fill opacity=1 ][line width=1.5]  (43.92,165.06) .. controls (43.92,107.38) and (91.34,60.63) .. (149.85,60.63) .. controls (208.36,60.63) and (255.79,107.38) .. (255.79,165.06) .. controls (255.79,222.74) and (208.36,269.49) .. (149.85,269.49) .. controls (91.34,269.49) and (43.92,222.74) .. (43.92,165.06) -- cycle ;
\draw  [color={rgb, 255:red, 144; green, 19; blue, 254 }  ,draw opacity=1 ][fill={rgb, 255:red, 255; green, 255; blue, 255 }  ,fill opacity=1 ][line width=1.5]  (52.45,165.06) .. controls (52.45,160.02) and (56.59,155.93) .. (61.7,155.93) .. controls (66.82,155.93) and (70.96,160.02) .. (70.96,165.06) .. controls (70.96,170.1) and (66.82,174.18) .. (61.7,174.18) .. controls (56.59,174.18) and (52.45,170.1) .. (52.45,165.06) -- cycle ;
\draw  [color={rgb, 255:red, 144; green, 19; blue, 254 }  ,draw opacity=1 ][fill={rgb, 255:red, 255; green, 255; blue, 255 }  ,fill opacity=1 ][line width=1.5]  (228.74,165.06) .. controls (228.74,160.02) and (232.89,155.93) .. (238,155.93) .. controls (243.11,155.93) and (247.25,160.02) .. (247.25,165.06) .. controls (247.25,170.1) and (243.11,174.18) .. (238,174.18) .. controls (232.89,174.18) and (228.74,170.1) .. (228.74,165.06) -- cycle ;
\draw [color={rgb, 255:red, 0; green, 0; blue, 255 }  ,draw opacity=1 ][line width=1.5]    (149.85,60.63) -- (149.85,75.01) ;
\draw [color={rgb, 255:red, 0; green, 0; blue, 255 }  ,draw opacity=1 ][line width=1.5]    (149.85,248.04) -- (149.85,269.49) ;
\draw [color={rgb, 255:red, 0; green, 0; blue, 255 }  ,draw opacity=1 ][line width=1.5]    (182.58,141.93) .. controls (182.19,166.3) and (161.1,181.73) .. (157.92,184.74) ;
\draw [color={rgb, 255:red, 0; green, 0; blue, 255 }  ,draw opacity=1 ][line width=1.5]    (139.96,169.56) .. controls (135.34,172.83) and (124.87,183.55) .. (123.88,184.66) ;
\draw [color={rgb, 255:red, 0; green, 0; blue, 255 }  ,draw opacity=1 ][line width=1.5]    (175.22,128) .. controls (185.06,130.34) and (189.32,135.28) .. (198.29,129.7) .. controls (207.27,124.12) and (206.04,118.48) .. (204.2,116.15) .. controls (202.37,113.83) and (197.15,110.67) .. (187.83,116.43) .. controls (178.51,122.2) and (182.71,129.66) .. (182.58,141.93) ;
\draw [color={rgb, 255:red, 246; green, 246; blue, 246 }  ,draw opacity=1 ][line width=3]    (182.08,133.34) -- (181.95,126.51) ;
\draw [color={rgb, 255:red, 0; green, 0; blue, 255 }  ,draw opacity=1 ][line width=1.5]    (187.51,131.75) -- (179.08,129.34) ;
\draw [color={rgb, 255:red, 0; green, 0; blue, 255 }  ,draw opacity=1 ][line width=1.5]    (118.82,142.6) .. controls (117.74,129.89) and (121.23,121.85) .. (112.57,117.31) .. controls (103.92,112.78) and (98.85,114.69) .. (97.11,117.02) .. controls (95.38,119.34) and (93.9,124.78) .. (102.68,130.58) .. controls (111.46,136.38) and (117.24,130.34) .. (124.35,128.45) ;
\draw [color={rgb, 255:red, 246; green, 246; blue, 246 }  ,draw opacity=1 ][line width=3]    (122.46,129.94) -- (113.66,132.69) ;
\draw [color={rgb, 255:red, 0; green, 0; blue, 255 }  ,draw opacity=1 ][line width=1.5]    (118.73,128.44) -- (118.64,134.11) ;
\draw [color={rgb, 255:red, 0; green, 0; blue, 255 }  ,draw opacity=1 ][line width=1.5]    (178.45,187.35) .. controls (178.37,199.6) and (175.38,206.73) .. (183.58,212.38) .. controls (191.77,218.04) and (196.67,215.15) .. (198.26,212.85) .. controls (199.85,210.55) and (201.12,205.15) .. (192.64,199.27) .. controls (184.16,193.38) and (178.35,199.08) .. (166.78,203.54) ;
\draw [color={rgb, 255:red, 246; green, 246; blue, 246 }  ,draw opacity=1 ][line width=3]    (174.78,200.71) -- (181.29,197.02) ;
\draw [color={rgb, 255:red, 0; green, 0; blue, 255 }  ,draw opacity=1 ][line width=1.5]    (177.81,202.16) -- (177.93,195.57) ;
\draw [color={rgb, 255:red, 0; green, 0; blue, 255 }  ,draw opacity=1 ][line width=1.5]    (140.21,136.29) .. controls (140.27,141.23) and (158.11,146.74) .. (157.42,154.37) ;
\draw [color={rgb, 255:red, 246; green, 246; blue, 246 }  ,draw opacity=1 ][line width=3]    (150.98,145.53) -- (145.76,141.48) ;
\draw [color={rgb, 255:red, 0; green, 0; blue, 255 }  ,draw opacity=1 ][line width=1.5]    (156.92,136.06) .. controls (156.86,141) and (140.15,144.95) .. (140.71,154.6) ;
\draw [color={rgb, 255:red, 0; green, 0; blue, 255 }  ,draw opacity=1 ][line width=1.5]    (140.71,154.6) .. controls (140.77,159.55) and (153.79,166.58) .. (158.5,169.67) ;
\draw [color={rgb, 255:red, 246; green, 246; blue, 246 }  ,draw opacity=1 ][line width=3]    (151.85,166.06) -- (145.63,162) ;
\draw [color={rgb, 255:red, 0; green, 0; blue, 255 }  ,draw opacity=1 ][line width=1.5]    (157.42,154.37) .. controls (157.36,159.31) and (143.32,166.13) .. (139.96,169.56) ;
\draw [color={rgb, 255:red, 246; green, 246; blue, 246 }  ,draw opacity=1 ][line width=3]    (172.43,175.01) -- (168,172.8) ;
\draw [color={rgb, 255:red, 246; green, 246; blue, 246 }  ,draw opacity=1 ][line width=3]    (132.85,175.4) -- (128.86,180.08) ;
\draw [color={rgb, 255:red, 0; green, 0; blue, 255 }  ,draw opacity=1 ][line width=1.5]    (141.15,187.54) .. controls (143.15,189.77) and (158.54,204.41) .. (166.78,203.54) ;
\draw [color={rgb, 255:red, 246; green, 246; blue, 246 }  ,draw opacity=1 ][line width=3]    (150.48,196) -- (145.26,191.95) ;
\draw [color={rgb, 255:red, 0; green, 0; blue, 255 }  ,draw opacity=1 ][line width=1.5]    (157.92,184.74) .. controls (154.12,187.99) and (143.64,196.47) .. (140.46,199.93) ;
\draw [color={rgb, 255:red, 0; green, 0; blue, 255 }  ,draw opacity=1 ][line width=1.5]    (118.82,142.6) .. controls (116.86,169.87) and (138.84,184.66) .. (141.15,187.54) ;
\draw [color={rgb, 255:red, 0; green, 0; blue, 255 }  ,draw opacity=1 ][line width=1.5]    (124.35,128.45) .. controls (137.31,125.77) and (139.69,133.49) .. (140.21,136.29) ;
\draw [color={rgb, 255:red, 0; green, 0; blue, 255 }  ,draw opacity=1 ][line width=1.5]    (175.22,128) .. controls (158.24,124.54) and (157.26,131.01) .. (156.92,136.06) ;
\draw [color={rgb, 255:red, 0; green, 0; blue, 255 }  ,draw opacity=1 ][line width=1.5]    (123.88,184.66) -- (112.4,195.61) ;
\draw [color={rgb, 255:red, 0; green, 0; blue, 255 }  ,draw opacity=1 ][line width=1.5]    (140.46,199.93) -- (127.53,212.16) ;
\draw [color={rgb, 255:red, 246; green, 246; blue, 246 }  ,draw opacity=1 ][line width=3]    (168.43,176.69) -- (165.22,175.08) ;
\draw [color={rgb, 255:red, 0; green, 0; blue, 255 }  ,draw opacity=1 ][line width=1.5]    (127.53,212.16) .. controls (105.8,231.16) and (149.4,225.51) .. (149.85,248.04) ;
\draw [color={rgb, 255:red, 0; green, 0; blue, 255 }  ,draw opacity=1 ][line width=1.5]    (112.4,195.61) .. controls (71.49,231.16) and (67.2,137.9) .. (79.36,118.82) .. controls (91.51,99.74) and (153.69,106.81) .. (149.85,75.01) ;
\draw [color={rgb, 255:red, 0; green, 0; blue, 255 }  ,draw opacity=1 ][line width=1.5]    (158.5,169.67) .. controls (161.27,171.94) and (178.49,177.16) .. (178.45,187.35) ;
\draw  [color={rgb, 255:red, 144; green, 19; blue, 254 }  ,draw opacity=1 ][fill={rgb, 255:red, 246; green, 246; blue, 246 }  ,fill opacity=1 ][line width=1.5]  (363.65,406.27) .. controls (363.65,348.73) and (411.64,302.09) .. (470.84,302.09) .. controls (530.04,302.09) and (578.03,348.73) .. (578.03,406.27) .. controls (578.03,463.81) and (530.04,510.45) .. (470.84,510.45) .. controls (411.64,510.45) and (363.65,463.81) .. (363.65,406.27) -- cycle ;
\draw  [color={rgb, 255:red, 144; green, 19; blue, 254 }  ,draw opacity=1 ][fill={rgb, 255:red, 255; green, 255; blue, 255 }  ,fill opacity=1 ][line width=1.5]  (386.75,406.27) .. controls (386.75,401.24) and (390.94,397.17) .. (396.12,397.17) .. controls (401.29,397.17) and (405.48,401.24) .. (405.48,406.27) .. controls (405.48,411.3) and (401.29,415.37) .. (396.12,415.37) .. controls (390.94,415.37) and (386.75,411.3) .. (386.75,406.27) -- cycle ;
\draw  [color={rgb, 255:red, 144; green, 19; blue, 254 }  ,draw opacity=1 ][fill={rgb, 255:red, 255; green, 255; blue, 255 }  ,fill opacity=1 ][line width=1.5]  (536.21,406.27) .. controls (536.21,401.24) and (540.4,397.17) .. (545.57,397.17) .. controls (550.74,397.17) and (554.94,401.24) .. (554.94,406.27) .. controls (554.94,411.3) and (550.74,415.37) .. (545.57,415.37) .. controls (540.4,415.37) and (536.21,411.3) .. (536.21,406.27) -- cycle ;

\draw [color={rgb, 255:red, 0; green, 0; blue, 255 }  ,draw opacity=1 ][line width=1.5]    (471.13,303.54) -- (471.13,360.18) ;
\draw [color={rgb, 255:red, 0; green, 0; blue, 255 }  ,draw opacity=1 ][line width=1.5]    (471.13,451.53) -- (471.13,508.17) ;
\draw [color={rgb, 255:red, 0; green, 0; blue, 255 }  ,draw opacity=1 ][line width=1.5]    (474.17,398.54) .. controls (480.66,408.72) and (482.01,417.21) .. (496.01,416.7) .. controls (510.02,416.19) and (512.87,407.89) .. (512.6,403.46) .. controls (512.32,399.03) and (508.97,390.95) .. (494.45,391.44) .. controls (479.93,391.93) and (482.75,399.36) .. (473.45,415.31) ;
\draw [color={rgb, 255:red, 246; green, 246; blue, 246 }  ,draw opacity=1 ][line width=3]    (476.27,407.89) -- (479.74,401.48) ;
\draw [color={rgb, 255:red, 0; green, 0; blue, 255 }  ,draw opacity=1 ][line width=1.5]    (481.56,410.93) -- (474.17,398.54) ;
\draw [color={rgb, 255:red, 0; green, 0; blue, 255 }  ,draw opacity=1 ][line width=1.5]    (471.09,360.18) .. controls (470.92,376.74) and (469.65,388.78) .. (474.17,398.54) ;
\draw [color={rgb, 255:red, 0; green, 0; blue, 255 }  ,draw opacity=1 ][line width=1.5]    (471.09,451.53) .. controls (471.28,439.66) and (469.59,421.4) .. (473.45,415.31) ;
\draw [color={rgb, 255:red, 0; green, 0; blue, 255 }  ,draw opacity=1 ][line width=1.5]    (480.31,134.24) .. controls (480.24,139.55) and (462.01,143.79) .. (462.62,154.15) ;

\draw (140.05,542) node [anchor=north west][inner sep=0.75pt]   [align=left] {(a)};
\draw (462.35,541) node [anchor=north west][inner sep=0.75pt]   [align=left] {(b)};
\draw (211,110.58) node [anchor=north west][inner sep=0.75pt]  [color={rgb, 255:red, 0; green, 0; blue, 255 }  ,opacity=1 ]  {$\Lambda _{1}$};
\draw (538,110.58) node [anchor=north west][inner sep=0.75pt]  [color={rgb, 255:red, 0; green, 0; blue, 255 }  ,opacity=1 ]  {$\Lambda _{2}$};
\draw (172.34,472.46) node [anchor=north west][inner sep=0.75pt]    {$\Sigma _{-}$};
\draw (167.97,231.43) node [anchor=north west][inner sep=0.75pt]    {$\Sigma _{+}$};
\draw (487.21,231.43) node [anchor=north west][inner sep=0.75pt]    {$\Sigma _{+}$};
\draw (493.34,472.46) node [anchor=north west][inner sep=0.75pt]    {$\Sigma _{-}$};
\draw (582,400.25) node [anchor=north west][inner sep=0.75pt]  [color={rgb, 255:red, 144; green, 19; blue, 254 }  ,opacity=1 ]  {$\Gamma $};
\draw (582,154.75) node [anchor=north west][inner sep=0.75pt]  [color={rgb, 255:red, 144; green, 19; blue, 254 }  ,opacity=1 ]  {$\Gamma $};
\draw (262,154.75) node [anchor=north west][inner sep=0.75pt]  [color={rgb, 255:red, 144; green, 19; blue, 254 }  ,opacity=1 ]  {$\Gamma $};
\draw (262,400.25) node [anchor=north west][inner sep=0.75pt]  [color={rgb, 255:red, 144; green, 19; blue, 254 }  ,opacity=1 ]  {$\Gamma $};

\end{tikzpicture}

%% file: images/curve_justification.tex
\begin{tikzpicture}[x=0.75pt,y=0.75pt,yscale=-1,xscale=1]

\draw  [draw opacity=0][fill={rgb, 255:red, 216; green, 216; blue, 216 }  ,fill opacity=1 ] (386.6,186.53) .. controls (391.51,186.67) and (399.51,185.67) .. (408.03,186.53) .. controls (407.51,202.67) and (407.51,204.67) .. (407.51,224.67) .. controls (405.51,246.67) and (388.51,242.67) .. (386.51,224.67) .. controls (388.51,201.67) and (386.51,199.67) .. (386.6,186.53) -- cycle ;
\draw  [fill={rgb, 255:red, 216; green, 216; blue, 216 }  ,fill opacity=1 ] (296,59) .. controls (307,59.26) and (321,57.26) .. (336,59) .. controls (337.51,90.09) and (369,194.98) .. (365.6,240.72) .. controls (350,241.98) and (355,239.98) .. (345.6,240.72) .. controls (344.51,148.09) and (324.51,161.09) .. (325.6,240.72) .. controls (315,240.98) and (316,239.98) .. (305.6,240.72) .. controls (306.51,158.09) and (288.51,177.09) .. (285.6,240.72) .. controls (279,241.98) and (273,240.98) .. (265.6,240.72) .. controls (260,203.98) and (300.51,86.09) .. (296,59) -- cycle ;
\draw [color={rgb, 255:red, 0; green, 0; blue, 255 }  ,draw opacity=1 ][line width=1.5]    (296,59) .. controls (296.51,116.68) and (266.11,162.11) .. (265.6,240.72) ;
\draw [color={rgb, 255:red, 0; green, 0; blue, 255 }  ,draw opacity=1 ][line width=1.5]    (336,59) .. controls (337.04,91.93) and (365.11,159.51) .. (365.6,240.72) ;
\draw [line width=1.5]    (296,59) -- (336,59) ;
\draw [color={rgb, 255:red, 0; green, 0; blue, 255 }  ,draw opacity=1 ][line width=1.5]    (285.6,240.72) .. controls (289.01,178.72) and (307.01,158.72) .. (305.6,240.72) ;
\draw [color={rgb, 255:red, 0; green, 0; blue, 255 }  ,draw opacity=1 ][line width=1.5]    (325.6,240.72) .. controls (325.01,172.72) and (342.01,137.72) .. (345.6,240.72) ;
\draw [line width=1.5]    (265.6,240.72) -- (285.6,240.72) ;
\draw [line width=1.5]    (305.6,240.72) -- (325.6,240.72) ;
\draw [line width=1.5]    (345.6,240.72) -- (365.6,240.72) ;
\draw [color={rgb, 255:red, 0; green, 0; blue, 255 }  ,draw opacity=1 ][line width=1.5]    (408.03,214.96) .. controls (408.56,248.29) and (387.13,248.29) .. (386.6,214.96) ;
\draw [color={rgb, 255:red, 0; green, 0; blue, 255 }  ,draw opacity=1 ][line width=1.5]    (386.6,186.53) -- (386.6,214.96) ;
\draw [color={rgb, 255:red, 0; green, 0; blue, 255 }  ,draw opacity=1 ][line width=1.5]    (408.03,186.53) -- (408.03,214.96) ;
\draw  [fill={rgb, 255:red, 216; green, 216; blue, 216 }  ,fill opacity=1 ][line width=1.5]  (386.6,186.53) .. controls (386.6,182.88) and (391.4,179.93) .. (397.32,179.93) .. controls (403.23,179.93) and (408.03,182.88) .. (408.03,186.53) .. controls (408.03,190.18) and (403.23,193.13) .. (397.32,193.13) .. controls (391.4,193.13) and (386.6,190.18) .. (386.6,186.53) -- cycle ;
\draw  [draw opacity=0][fill={rgb, 255:red, 216; green, 216; blue, 216 }  ,fill opacity=1 ] (333.71,112.69) .. controls (344.72,112.95) and (313.65,91.05) .. (328.64,92.79) .. controls (377.14,91.4) and (410.71,119.77) .. (407.31,165.51) .. controls (391.71,166.77) and (395.64,164.04) .. (386.24,164.79) .. controls (386.14,107.4) and (346.14,108.81) .. (333.71,112.69) -- cycle ;
\draw [color={rgb, 255:red, 0; green, 0; blue, 255 }  ,draw opacity=1 ][line width=1.5]    (334.07,111.96) .. controls (364.07,106.96) and (386.23,122.22) .. (386.6,164.06) ;
\draw [color={rgb, 255:red, 0; green, 0; blue, 255 }  ,draw opacity=1 ][line width=1.5]    (329,92.07) .. controls (371.07,91.96) and (407.23,108.28) .. (407.6,163.47) ;
\draw  [fill={rgb, 255:red, 216; green, 216; blue, 216 }  ,fill opacity=1 ][line width=1.5]  (386.24,164.79) .. controls (386.24,161.14) and (391.04,158.18) .. (396.95,158.18) .. controls (402.87,158.18) and (407.67,161.14) .. (407.67,164.79) .. controls (407.67,168.43) and (402.87,171.39) .. (396.95,171.39) .. controls (391.04,171.39) and (386.24,168.43) .. (386.24,164.79) -- cycle ;

\draw (415,166.27) node [anchor=north west][inner sep=0.75pt]    {$\gamma _{1}$};
\draw (267.6,247.05) node [anchor=north west][inner sep=0.75pt]    {$c_{1}$};
\draw (347.6,247.05) node [anchor=north west][inner sep=0.75pt]    {$c_{k}$};
\draw (301,248.05) node [anchor=north west][inner sep=0.75pt]    {$\cdots $};
\draw (310,39.4) node [anchor=north west][inner sep=0.75pt]    {$w$};

\end{tikzpicture}

%% file: images/orientations_on_Sigmapm.tex
\begin{tikzpicture}[x=0.65pt,y=0.65pt,yscale=-1,xscale=1]

\draw   [fill={rgb, 255:red, 246; green, 246; blue, 246 }  ,fill opacity=1 ] (198.13,104) -- (480,104) -- (429.38,200) -- (147.51,200) -- cycle ;
\draw [color={rgb, 255:red, 144; green, 19; blue, 254 }  ,draw opacity=1 ][line width=1.5]    (290.21,200) -- (319.43,140.43) -- (337.29,104) ;
\draw [color={rgb, 255:red, 255; green, 0; blue, 0 }  ,draw opacity=1 ][line width=1.5]    (243.14,152) -- (243.14,116.6) ;
\draw [shift={(243.14,113.6)}, rotate = 90] [color={rgb, 255:red, 255; green, 0; blue, 0 }  ,draw opacity=1 ][line width=1.5]    (8.53,-2.57) .. controls (5.42,-1.09) and (2.58,-0.23) .. (0,0) .. controls (2.58,0.23) and (5.42,1.09) .. (8.53,2.57)   ;
\draw [color={rgb, 255:red, 255; green, 0; blue, 0 }  ,draw opacity=1 ][line width=1.5]    (384.37,187.4) -- (384.37,152) ;
\draw [shift={(384.37,190.4)}, rotate = 270] [color={rgb, 255:red, 255; green, 0; blue, 0 }  ,draw opacity=1 ][line width=1.5]    (8.53,-2.57) .. controls (5.42,-1.09) and (2.58,-0.23) .. (0,0) .. controls (2.58,0.23) and (5.42,1.09) .. (8.53,2.57)   ;
\draw [color={rgb, 255:red, 255; green, 0; blue, 0 }  ,draw opacity=1 ][line width=1.5]    (313.75,152) -- (327.04,124.91) ;
\draw [shift={(328.36,122.22)}, rotate = 116.12] [color={rgb, 255:red, 255; green, 0; blue, 0 }  ,draw opacity=1 ][line width=1.5]    (8.53,-2.57) .. controls (5.42,-1.09) and (2.58,-0.23) .. (0,0) .. controls (2.58,0.23) and (5.42,1.09) .. (8.53,2.57)   ;
\draw  [draw opacity=0] (249.81,141.85) .. controls (257.4,143.37) and (262.87,147.62) .. (262.87,152.62) .. controls (262.87,158.91) and (254.23,164) .. (243.58,164) .. controls (232.94,164) and (224.3,158.91) .. (224.3,152.62) -- (243.58,152.62) -- cycle ; \draw    (251.8,142.32) .. controls (258.34,144.14) and (262.87,148.07) .. (262.87,152.62) .. controls (262.87,158.91) and (254.23,164) .. (243.58,164) .. controls (232.94,164) and (224.3,158.91) .. (224.3,152.62) ;  \draw [shift={(249.81,141.85)}, rotate = 19.97] [color={rgb, 255:red, 0; green, 0; blue, 0 }  ][line width=0.75]    (8.74,-2.63) .. controls (5.56,-1.12) and (2.65,-0.24) .. (0,0) .. controls (2.65,0.24) and (5.56,1.12) .. (8.74,2.63)   ;
\draw  [draw opacity=0] (403.65,152) .. controls (403.65,152) and (403.65,152) .. (403.65,152) .. controls (403.65,152) and (403.65,152) .. (403.65,152) .. controls (403.65,158.28) and (395.02,163.38) .. (384.37,163.38) .. controls (373.72,163.38) and (365.09,158.28) .. (365.09,152) .. controls (365.09,147) and (370.55,142.75) .. (378.15,141.23) -- (384.37,152) -- cycle ; \draw    (403.65,152) .. controls (403.65,152) and (403.65,152) .. (403.65,152) .. controls (403.65,158.28) and (395.02,163.38) .. (384.37,163.38) .. controls (373.72,163.38) and (365.09,158.28) .. (365.09,152) .. controls (365.09,147.43) and (369.66,143.48) .. (376.26,141.67) ; \draw [shift={(378.15,141.23)}, rotate = 158.75] [color={rgb, 255:red, 0; green, 0; blue, 0 }  ][line width=0.75]    (8.74,-2.63) .. controls (5.56,-1.12) and (2.65,-0.24) .. (0,0) .. controls (2.65,0.24) and (5.56,1.12) .. (8.74,2.63)   ; 

\draw (160,178.2) node [anchor=north west][inner sep=0.75pt]    {$\Sigma _{+}$};
\draw (434.57,108.6) node [anchor=north west][inner sep=0.75pt]    {$\Sigma _{-}$};
\draw (338.65,84.4) node [anchor=north west][inner sep=0.75pt]  [color={rgb, 255:red, 144; green, 19; blue, 254 }  ,opacity=1 ]  {$\Gamma $};
\draw (236.34,84.4) node [anchor=north west][inner sep=0.75pt]  [color={rgb, 255:red, 255; green, 0; blue, 0 }  ,opacity=1 ]  {$R$};

\end{tikzpicture}

%% file: images/addCusp.tex
\begin{tikzpicture}[x=0.65pt,y=0.65pt,yscale=-1,xscale=1]

\draw [color={rgb, 255:red, 144; green, 19; blue, 254 }  ,draw opacity=1 ][line width=1.5]    (180,135) -- (47.14,135) ;
\draw [color={rgb, 255:red, 0; green, 0; blue, 255 }  ,draw opacity=1 ][line width=1.5]    (180,45) .. controls (144,35) and (132,70.51) .. (112,70.51) .. controls (92,70.51) and (79,37) .. (45,45) ;
\draw [color={rgb, 255:red, 144; green, 19; blue, 254 }  ,draw opacity=1 ][line width=1.5]    (405,135.78) -- (272.14,135.78) ;
\draw [color={rgb, 255:red, 0; green, 0; blue, 255 }  ,draw opacity=1 ][line width=1.5]  [dash pattern={on 5.63pt off 4.5pt}]  (382,46.54) .. controls (369,48.51) and (341.01,81.75) .. (332,71.51) ;
\draw [color={rgb, 255:red, 0; green, 0; blue, 255 }  ,draw opacity=1 ][line width=1.5]    (405,45.78) .. controls (396,43.28) and (388.5,44.62) .. (382,46.54) .. controls (371,52.51) and (378,21.51) .. (360,29.51) .. controls (346,39.51) and (371,54.51) .. (347,69.51) .. controls (336.59,72.68) and (342.59,106.68) .. (336.59,105.68) .. controls (330.59,106.79) and (332.59,76.68) .. (332,71.51) .. controls (331.59,66.68) and (320,55.79) .. (309,51.51) .. controls (298,47.23) and (278.5,43.78) .. (270,45.78) ;
\draw [color={rgb, 255:red, 144; green, 19; blue, 254 }  ,draw opacity=1 ][line width=1.5]    (630,135) -- (497.14,135) ;
\draw [color={rgb, 255:red, 0; green, 0; blue, 255 }  ,draw opacity=1 ][line width=1.5]  [dash pattern={on 5.63pt off 4.5pt}]  (607,45.77) .. controls (594,47.73) and (589.01,50.75) .. (579.01,61.75) .. controls (569.01,72.75) and (563.01,71.75) .. (557,70.73) ;
\draw [color={rgb, 255:red, 0; green, 0; blue, 255 }  ,draw opacity=1 ][line width=1.5]    (630,45) .. controls (621,42.5) and (613.5,43.84) .. (607,45.77) .. controls (596,51.73) and (603,20.73) .. (585,28.73) .. controls (570.59,43.68) and (599.59,42.68) .. (572,68.73) .. controls (568.59,74.68) and (564,104.51) .. (563.57,135) .. controls (562,107.51) and (560.59,80.68) .. (557,70.73) .. controls (551,61.51) and (545,55.01) .. (534,50.73) .. controls (523,46.45) and (503.5,43) .. (495,45) ;
\draw [line width=0.75]    (247.69,90) -- (204.77,90.05) ;
\draw [shift={(249.69,90)}, rotate = 179.93] [color={rgb, 255:red, 0; green, 0; blue, 0 }  ][line width=0.75]    (10.93,-3.29) .. controls (6.95,-1.4) and (3.31,-0.3) .. (0,0) .. controls (3.31,0.3) and (6.95,1.4) .. (10.93,3.29)   ;
\draw [line width=0.75]    (471.92,90) -- (429.01,90.05) ;
\draw [shift={(473.92,90)}, rotate = 179.93] [color={rgb, 255:red, 0; green, 0; blue, 0 }  ][line width=0.75]    (10.93,-3.29) .. controls (6.95,-1.4) and (3.31,-0.3) .. (0,0) .. controls (3.31,0.3) and (6.95,1.4) .. (10.93,3.29)   ;

\draw (164.09,139.39) node [anchor=north west][inner sep=0.75pt]  [color={rgb, 255:red, 144; green, 19; blue, 254 }  ,opacity=1 ]  {$\Gamma $};
\draw (389.43,138.97) node [anchor=north west][inner sep=0.75pt]  [color={rgb, 255:red, 144; green, 19; blue, 254 }  ,opacity=1 ]  {$\Gamma $};
\draw (615,139.39) node [anchor=north west][inner sep=0.75pt]  [color={rgb, 255:red, 144; green, 19; blue, 254 }  ,opacity=1 ]  {$\Gamma $};

\end{tikzpicture}

%% file: images/stdUnknotConstruction.tex
\begin{tikzpicture}[x=0.75pt,y=0.75pt,yscale=-1,xscale=1]

\draw  [fill={rgb, 255:red, 246; green, 246; blue, 246 }  ,fill opacity=1 ][line width=0.75]  (46.65,48.13) -- (174.74,48.13) -- (174.74,176.22) -- (46.65,176.22) -- cycle ;
\draw [color={rgb, 255:red, 144; green, 19; blue, 254 }  ,draw opacity=1 ][line width=1.5]    (46.65,112.17) -- (174.74,112.17) ;
\draw  [line width=0.75]  (43,89.39) .. controls (44.96,86.29) and (46.13,83.21) .. (46.52,80.12) .. controls (46.92,83.21) and (48.09,86.29) .. (50.04,89.39) ;
\draw  [line width=0.75]  (170.96,89.39) .. controls (172.92,86.29) and (174.09,83.21) .. (174.48,80.12) .. controls (174.88,83.21) and (176.05,86.29) .. (178,89.39) ;

\draw  [color={rgb, 255:red, 144; green, 19; blue, 254 }  ,draw opacity=1 ][line width=0.75]  (100.64,45) .. controls (103.73,46.96) and (106.82,48.13) .. (109.91,48.52) .. controls (106.82,48.91) and (103.73,50.09) .. (100.64,52.04) ;
\draw  [color={rgb, 255:red, 144; green, 19; blue, 254 }  ,draw opacity=1 ][line width=0.75]  (100.64,172.96) .. controls (103.73,174.91) and (106.82,176.09) .. (109.91,176.48) .. controls (106.82,176.87) and (103.73,178.04) .. (100.64,180) ;

\draw  [color={rgb, 255:red, 144; green, 19; blue, 254 }  ,draw opacity=1 ][line width=0.75]  (108.48,45) .. controls (111.57,46.96) and (114.66,48.13) .. (117.74,48.52) .. controls (114.66,48.91) and (111.57,50.09) .. (108.48,52.04) ;
\draw  [color={rgb, 255:red, 144; green, 19; blue, 254 }  ,draw opacity=1 ][line width=0.75]  (108.48,172.96) .. controls (111.57,174.91) and (114.66,176.09) .. (117.74,176.48) .. controls (114.66,176.87) and (111.57,178.04) .. (108.48,180) ;

\draw [color={rgb, 255:red, 144; green, 19; blue, 254 }  ,draw opacity=1 ][line width=1.5]    (46.65,48.13) -- (174.74,48.13) ;
\draw [color={rgb, 255:red, 144; green, 19; blue, 254 }  ,draw opacity=1 ][line width=1.5]    (46.65,176.22) -- (174.74,176.22) ;
\draw  [draw opacity=0][line width=0.75]  (100.66,90.74) .. controls (98.17,93.35) and (94.68,94.97) .. (90.81,94.97) .. controls (83.17,94.97) and (76.94,88.65) .. (76.9,80.86) .. controls (76.85,73.07) and (83.01,66.75) .. (90.64,66.75) .. controls (94.51,66.75) and (98.02,68.37) .. (100.55,70.98) -- (90.73,80.86) -- cycle ; \draw  [color={rgb, 255:red, 0; green, 0; blue, 255 }  ,draw opacity=1 ][line width=0.75]  (100.66,90.74) .. controls (98.17,93.35) and (94.68,94.97) .. (90.81,94.97) .. controls (83.17,94.97) and (76.94,88.65) .. (76.9,80.86) .. controls (76.85,73.07) and (83.01,66.75) .. (90.64,66.75) .. controls (94.51,66.75) and (98.02,68.37) .. (100.55,70.98) ;  
\draw  [draw opacity=0][line width=0.75]  (119.85,71.02) .. controls (122.35,68.21) and (125.91,66.46) .. (129.86,66.46) .. controls (137.5,66.46) and (143.73,73) .. (143.78,81.06) .. controls (143.83,89.12) and (137.67,95.66) .. (130.03,95.66) .. controls (126.08,95.66) and (122.5,93.91) .. (119.97,91.1) -- (129.95,81.06) -- cycle ; \draw  [color={rgb, 255:red, 0; green, 0; blue, 255 }  ,draw opacity=1 ][line width=0.75]  (119.85,71.02) .. controls (122.35,68.21) and (125.91,66.46) .. (129.86,66.46) .. controls (137.5,66.46) and (143.73,73) .. (143.78,81.06) .. controls (143.83,89.12) and (137.67,95.66) .. (130.03,95.66) .. controls (126.08,95.66) and (122.5,93.91) .. (119.97,91.1) ;  
\draw [color={rgb, 255:red, 0; green, 0; blue, 255 }  ,draw opacity=1 ][line width=0.75]    (100.46,70.9) -- (120.05,91.19) ;
\draw [color={rgb, 255:red, 0; green, 0; blue, 255 }  ,draw opacity=1 ][line width=0.75]    (108.6,82.23) -- (100.58,90.82) ;
\draw [color={rgb, 255:red, 0; green, 0; blue, 255 }  ,draw opacity=1 ][line width=0.75]    (119.93,70.93) -- (112.36,78.7) ;

\draw  [fill={rgb, 255:red, 246; green, 246; blue, 246 }  ,fill opacity=1 ][line width=0.75]  (271.65,48.13) -- (399.74,48.13) -- (399.74,176.22) -- (271.65,176.22) -- cycle ;
\draw [color={rgb, 255:red, 144; green, 19; blue, 254 }  ,draw opacity=1 ][line width=1.5]    (271.65,112.17) -- (399.74,112.17) ;
\draw  [line width=0.75]  (268,89.39) .. controls (269.96,86.29) and (271.13,83.21) .. (271.52,80.12) .. controls (271.92,83.21) and (273.09,86.29) .. (275.04,89.39) ;
\draw  [line width=0.75]  (395.96,89.39) .. controls (397.92,86.29) and (399.09,83.21) .. (399.48,80.12) .. controls (399.88,83.21) and (401.05,86.29) .. (403,89.39) ;

\draw  [color={rgb, 255:red, 144; green, 19; blue, 254 }  ,draw opacity=1 ][line width=0.75]  (325.64,45) .. controls (328.73,46.96) and (331.82,48.13) .. (334.91,48.52) .. controls (331.82,48.91) and (328.73,50.09) .. (325.64,52.04) ;
\draw  [color={rgb, 255:red, 144; green, 19; blue, 254 }  ,draw opacity=1 ][line width=0.75]  (325.64,172.96) .. controls (328.73,174.91) and (331.82,176.09) .. (334.91,176.48) .. controls (331.82,176.87) and (328.73,178.04) .. (325.64,180) ;

\draw  [color={rgb, 255:red, 144; green, 19; blue, 254 }  ,draw opacity=1 ][line width=0.75]  (333.48,45) .. controls (336.57,46.96) and (339.66,48.13) .. (342.74,48.52) .. controls (339.66,48.91) and (336.57,50.09) .. (333.48,52.04) ;
\draw  [color={rgb, 255:red, 144; green, 19; blue, 254 }  ,draw opacity=1 ][line width=0.75]  (333.48,172.96) .. controls (336.57,174.91) and (339.66,176.09) .. (342.74,176.48) .. controls (339.66,176.87) and (336.57,178.04) .. (333.48,180) ;

\draw [color={rgb, 255:red, 144; green, 19; blue, 254 }  ,draw opacity=1 ][line width=1.5]    (271.65,48.13) -- (399.74,48.13) ;
\draw [color={rgb, 255:red, 144; green, 19; blue, 254 }  ,draw opacity=1 ][line width=1.5]    (271.65,176.22) -- (399.74,176.22) ;
\draw  [fill={rgb, 255:red, 246; green, 246; blue, 246 }  ,fill opacity=1 ][line width=0.75]  (496.65,48.13) -- (624.74,48.13) -- (624.74,176.22) -- (496.65,176.22) -- cycle ;
\draw [color={rgb, 255:red, 144; green, 19; blue, 254 }  ,draw opacity=1 ][line width=1.5]    (496.65,112.17) -- (624.74,112.17) ;
\draw  [line width=0.75]  (493,89.39) .. controls (494.96,86.29) and (496.13,83.21) .. (496.52,80.12) .. controls (496.92,83.21) and (498.09,86.29) .. (500.04,89.39) ;
\draw  [line width=0.75]  (620.96,89.39) .. controls (622.92,86.29) and (624.09,83.21) .. (624.48,80.12) .. controls (624.88,83.21) and (626.05,86.29) .. (628,89.39) ;

\draw  [color={rgb, 255:red, 144; green, 19; blue, 254 }  ,draw opacity=1 ][line width=0.75]  (550.64,45) .. controls (553.73,46.96) and (556.82,48.13) .. (559.91,48.52) .. controls (556.82,48.91) and (553.73,50.09) .. (550.64,52.04) ;
\draw  [color={rgb, 255:red, 144; green, 19; blue, 254 }  ,draw opacity=1 ][line width=0.75]  (550.64,172.96) .. controls (553.73,174.91) and (556.82,176.09) .. (559.91,176.48) .. controls (556.82,176.87) and (553.73,178.04) .. (550.64,180) ;

\draw  [color={rgb, 255:red, 144; green, 19; blue, 254 }  ,draw opacity=1 ][line width=0.75]  (558.48,45) .. controls (561.57,46.96) and (564.66,48.13) .. (567.74,48.52) .. controls (564.66,48.91) and (561.57,50.09) .. (558.48,52.04) ;
\draw  [color={rgb, 255:red, 144; green, 19; blue, 254 }  ,draw opacity=1 ][line width=0.75]  (558.48,172.96) .. controls (561.57,174.91) and (564.66,176.09) .. (567.74,176.48) .. controls (564.66,176.87) and (561.57,178.04) .. (558.48,180) ;

\draw [color={rgb, 255:red, 144; green, 19; blue, 254 }  ,draw opacity=1 ][line width=1.5]    (496.65,48.13) -- (624.74,48.13) ;
\draw [color={rgb, 255:red, 144; green, 19; blue, 254 }  ,draw opacity=1 ][line width=1.5]    (496.65,176.22) -- (624.74,176.22) ;
\draw    (244.69,113.04) -- (201.77,113.09) ;
\draw [shift={(246.69,113.04)}, rotate = 179.93] [color={rgb, 255:red, 0; green, 0; blue, 0 }  ][line width=0.75]    (10.93,-3.29) .. controls (6.95,-1.4) and (3.31,-0.3) .. (0,0) .. controls (3.31,0.3) and (6.95,1.4) .. (10.93,3.29)   ;
\draw    (472.92,113.04) -- (430.01,113.09) ;
\draw [shift={(474.92,113.04)}, rotate = 179.93] [color={rgb, 255:red, 0; green, 0; blue, 0 }  ][line width=0.75]    (10.93,-3.29) .. controls (6.95,-1.4) and (3.31,-0.3) .. (0,0) .. controls (3.31,0.3) and (6.95,1.4) .. (10.93,3.29)   ;
\draw [color={rgb, 255:red, 0; green, 0; blue, 255 }  ,draw opacity=1 ][line width=0.75]    (317,111.3) .. controls (314.92,93.44) and (309.8,97.14) .. (302.92,90) .. controls (296.04,82.86) and (304.08,69.44) .. (315,68.44) .. controls (325.92,67.44) and (340.62,90.57) .. (345.05,93.19) .. controls (349.48,95.81) and (355.92,99.44) .. (355.59,111.04) ;
\draw [color={rgb, 255:red, 0; green, 0; blue, 255 }  ,draw opacity=1 ][line width=0.75]    (355.65,111.57) .. controls (357.74,93.7) and (362.87,97.41) .. (369.76,90.27) .. controls (376.65,83.12) and (370,68.93) .. (357.66,68.7) .. controls (345.32,68.47) and (343.76,76.85) .. (339.32,79.47) ;
\draw [color={rgb, 255:red, 0; green, 0; blue, 255 }  ,draw opacity=1 ][line width=0.75]    (541.4,112.3) .. controls (539.31,94.44) and (534.19,98.14) .. (527.31,91) .. controls (520.43,83.86) and (528.48,70.44) .. (539.4,69.44) .. controls (550.31,68.44) and (565.01,91.57) .. (569.45,94.19) .. controls (573.88,96.81) and (580.31,100.44) .. (579.99,112.04) ;
\draw [color={rgb, 255:red, 0; green, 0; blue, 255 }  ,draw opacity=1 ][line width=0.75]    (579.99,112.04) .. controls (582.07,129.91) and (587.19,126.2) .. (594.07,133.34) .. controls (600.95,140.48) and (592.91,153.91) .. (581.99,154.91) .. controls (571.07,155.91) and (556.37,132.78) .. (551.94,130.15) .. controls (547.5,127.53) and (541.07,123.91) .. (541.4,112.3) ;
\draw [color={rgb, 255:red, 0; green, 0; blue, 255 }  ,draw opacity=1 ][line width=0.75]    (334.32,89) .. controls (332.32,91.47) and (317.67,101.7) .. (318,113.3) ;

\draw (160.63,114.88) node [anchor=north west][inner sep=0.75pt]  [color={rgb, 255:red, 144; green, 19; blue, 254 }  ,opacity=1 ]  {$\Gamma $};
\draw (48.89,50.48) node [anchor=north west][inner sep=0.75pt]    {$\Sigma _{+}$};
\draw (49.31,157.84) node [anchor=north west][inner sep=0.75pt]    {$\Sigma _{-}$};
\draw (385.63,114.88) node [anchor=north west][inner sep=0.75pt]  [color={rgb, 255:red, 144; green, 19; blue, 254 }  ,opacity=1 ]  {$\Gamma $};
\draw (273.89,50.48) node [anchor=north west][inner sep=0.75pt]    {$\Sigma _{+}$};
\draw (274.31,157.84) node [anchor=north west][inner sep=0.75pt]    {$\Sigma _{-}$};
\draw (611.53,114.88) node [anchor=north west][inner sep=0.75pt]  [color={rgb, 255:red, 144; green, 19; blue, 254 }  ,opacity=1 ]  {$\Gamma $};
\draw (498.89,50.48) node [anchor=north west][inner sep=0.75pt]    {$\Sigma _{+}$};
\draw (499.31,157.84) node [anchor=north west][inner sep=0.75pt]    {$\Sigma _{-}$};

\end{tikzpicture}

%% file: images/unknotConstruction.tex
\begin{tikzpicture}[x=0.75pt,y=0.75pt,yscale=-1,xscale=1]

\draw  [fill={rgb, 255:red, 246; green, 246; blue, 246 }  ,fill opacity=1 ][line width=0.75]  (47.65,47.13) -- (175.74,47.13) -- (175.74,175.22) -- (47.65,175.22) -- cycle ;
\draw [color={rgb, 255:red, 144; green, 19; blue, 254 }  ,draw opacity=1 ][line width=1.5]    (47.65,111.17) -- (175.74,111.17) ;
\draw  [line width=0.75]  (44,88.39) .. controls (45.96,85.29) and (47.13,82.21) .. (47.52,79.12) .. controls (47.92,82.21) and (49.09,85.29) .. (51.04,88.39) ;
\draw  [line width=0.75]  (171.96,88.39) .. controls (173.92,85.29) and (175.09,82.21) .. (175.48,79.12) .. controls (175.88,82.21) and (177.05,85.29) .. (179,88.39) ;

\draw  [color={rgb, 255:red, 144; green, 19; blue, 254 }  ,draw opacity=1 ][line width=0.75]  (101.64,44) .. controls (104.73,45.96) and (107.82,47.13) .. (110.91,47.52) .. controls (107.82,47.91) and (104.73,49.09) .. (101.64,51.04) ;
\draw  [color={rgb, 255:red, 144; green, 19; blue, 254 }  ,draw opacity=1 ][line width=0.75]  (101.64,171.96) .. controls (104.73,173.91) and (107.82,175.09) .. (110.91,175.48) .. controls (107.82,175.87) and (104.73,177.04) .. (101.64,179) ;

\draw  [color={rgb, 255:red, 144; green, 19; blue, 254 }  ,draw opacity=1 ][line width=0.75]  (109.48,44) .. controls (112.57,45.96) and (115.66,47.13) .. (118.74,47.52) .. controls (115.66,47.91) and (112.57,49.09) .. (109.48,51.04) ;
\draw  [color={rgb, 255:red, 144; green, 19; blue, 254 }  ,draw opacity=1 ][line width=0.75]  (109.48,171.96) .. controls (112.57,173.91) and (115.66,175.09) .. (118.74,175.48) .. controls (115.66,175.87) and (112.57,177.04) .. (109.48,179) ;

\draw [color={rgb, 255:red, 144; green, 19; blue, 254 }  ,draw opacity=1 ][line width=1.5]    (47.65,47.13) -- (175.74,47.13) ;
\draw [color={rgb, 255:red, 144; green, 19; blue, 254 }  ,draw opacity=1 ][line width=1.5]    (47.65,175.22) -- (175.74,175.22) ;
\draw  [draw opacity=0][line width=0.75]  (103.84,73.99) .. controls (99.98,72.36) and (97.43,69.53) .. (97.43,66.31) .. controls (97.43,61.23) and (103.75,57.09) .. (111.54,57.05) .. controls (119.33,57.02) and (125.65,61.11) .. (125.65,66.19) .. controls (125.65,69.42) and (123.1,72.27) .. (119.25,73.92) -- (111.54,66.25) -- cycle ; \draw  [color={rgb, 255:red, 0; green, 0; blue, 255 }  ,draw opacity=1 ][line width=0.75]  (103.84,73.99) .. controls (99.98,72.36) and (97.43,69.53) .. (97.43,66.31) .. controls (97.43,61.23) and (103.75,57.09) .. (111.54,57.05) .. controls (119.33,57.02) and (125.65,61.11) .. (125.65,66.19) .. controls (125.65,69.42) and (123.1,72.27) .. (119.25,73.92) ;  
\draw  [draw opacity=0][line width=0.75]  (119.12,84.52) .. controls (123.22,86.13) and (125.94,89) .. (125.94,92.27) .. controls (125.94,97.35) and (119.4,101.5) .. (111.34,101.53) .. controls (103.27,101.56) and (96.74,97.47) .. (96.74,92.39) .. controls (96.74,89.11) and (99.46,86.22) .. (103.56,84.58) -- (111.34,92.33) -- cycle ; \draw  [color={rgb, 255:red, 0; green, 0; blue, 255 }  ,draw opacity=1 ][line width=0.75]  (119.12,84.52) .. controls (123.22,86.13) and (125.94,89) .. (125.94,92.27) .. controls (125.94,97.35) and (119.4,101.5) .. (111.34,101.53) .. controls (103.27,101.56) and (96.74,97.47) .. (96.74,92.39) .. controls (96.74,89.11) and (99.46,86.22) .. (103.56,84.58) ;  
\draw [color={rgb, 255:red, 0; green, 0; blue, 255 }  ,draw opacity=1 ][line width=0.75]    (121.5,72.72) -- (101.21,85.75) ;
\draw [color={rgb, 255:red, 0; green, 0; blue, 255 }  ,draw opacity=1 ][line width=0.75]    (108,76.86) -- (101.58,72.8) ;
\draw [color={rgb, 255:red, 0; green, 0; blue, 255 }  ,draw opacity=1 ][line width=0.75]    (121.47,85.67) -- (113.7,80.63) ;
\draw  [fill={rgb, 255:red, 246; green, 246; blue, 246 }  ,fill opacity=1 ][line width=0.75]  (272.65,47.13) -- (400.74,47.13) -- (400.74,175.22) -- (272.65,175.22) -- cycle ;
\draw [color={rgb, 255:red, 144; green, 19; blue, 254 }  ,draw opacity=1 ][line width=1.5]    (272.65,111.17) -- (400.74,111.17) ;
\draw  [line width=0.75]  (269,88.39) .. controls (270.96,85.29) and (272.13,82.21) .. (272.52,79.12) .. controls (272.92,82.21) and (274.09,85.29) .. (276.04,88.39) ;
\draw  [line width=0.75]  (396.96,88.39) .. controls (398.92,85.29) and (400.09,82.21) .. (400.48,79.12) .. controls (400.88,82.21) and (402.05,85.29) .. (404,88.39) ;

\draw  [color={rgb, 255:red, 144; green, 19; blue, 254 }  ,draw opacity=1 ][line width=0.75]  (326.64,44) .. controls (329.73,45.96) and (332.82,47.13) .. (335.91,47.52) .. controls (332.82,47.91) and (329.73,49.09) .. (326.64,51.04) ;
\draw  [color={rgb, 255:red, 144; green, 19; blue, 254 }  ,draw opacity=1 ][line width=0.75]  (326.64,171.96) .. controls (329.73,173.91) and (332.82,175.09) .. (335.91,175.48) .. controls (332.82,175.87) and (329.73,177.04) .. (326.64,179) ;

\draw  [color={rgb, 255:red, 144; green, 19; blue, 254 }  ,draw opacity=1 ][line width=0.75]  (334.48,44) .. controls (337.57,45.96) and (340.66,47.13) .. (343.74,47.52) .. controls (340.66,47.91) and (337.57,49.09) .. (334.48,51.04) ;
\draw  [color={rgb, 255:red, 144; green, 19; blue, 254 }  ,draw opacity=1 ][line width=0.75]  (334.48,171.96) .. controls (337.57,173.91) and (340.66,175.09) .. (343.74,175.48) .. controls (340.66,175.87) and (337.57,177.04) .. (334.48,179) ;

\draw [color={rgb, 255:red, 144; green, 19; blue, 254 }  ,draw opacity=1 ][line width=1.5]    (272.65,47.13) -- (400.74,47.13) ;
\draw [color={rgb, 255:red, 144; green, 19; blue, 254 }  ,draw opacity=1 ][line width=1.5]    (272.65,175.22) -- (400.74,175.22) ;
\draw  [fill={rgb, 255:red, 246; green, 246; blue, 246 }  ,fill opacity=1 ][line width=0.75]  (497.65,47.13) -- (625.74,47.13) -- (625.74,175.22) -- (497.65,175.22) -- cycle ;
\draw [color={rgb, 255:red, 144; green, 19; blue, 254 }  ,draw opacity=1 ][line width=1.5]    (497.65,111.17) -- (625.74,111.17) ;
\draw  [line width=0.75]  (494,88.39) .. controls (495.96,85.29) and (497.13,82.21) .. (497.52,79.12) .. controls (497.92,82.21) and (499.09,85.29) .. (501.04,88.39) ;
\draw  [line width=0.75]  (621.96,88.39) .. controls (623.92,85.29) and (625.09,82.21) .. (625.48,79.12) .. controls (625.88,82.21) and (627.05,85.29) .. (629,88.39) ;

\draw  [color={rgb, 255:red, 144; green, 19; blue, 254 }  ,draw opacity=1 ][line width=0.75]  (551.64,44) .. controls (554.73,45.96) and (557.82,47.13) .. (560.91,47.52) .. controls (557.82,47.91) and (554.73,49.09) .. (551.64,51.04) ;
\draw  [color={rgb, 255:red, 144; green, 19; blue, 254 }  ,draw opacity=1 ][line width=0.75]  (551.64,171.96) .. controls (554.73,173.91) and (557.82,175.09) .. (560.91,175.48) .. controls (557.82,175.87) and (554.73,177.04) .. (551.64,179) ;

\draw  [color={rgb, 255:red, 144; green, 19; blue, 254 }  ,draw opacity=1 ][line width=0.75]  (559.48,44) .. controls (562.57,45.96) and (565.66,47.13) .. (568.74,47.52) .. controls (565.66,47.91) and (562.57,49.09) .. (559.48,51.04) ;
\draw  [color={rgb, 255:red, 144; green, 19; blue, 254 }  ,draw opacity=1 ][line width=0.75]  (559.48,171.96) .. controls (562.57,173.91) and (565.66,175.09) .. (568.74,175.48) .. controls (565.66,175.87) and (562.57,177.04) .. (559.48,179) ;

\draw [color={rgb, 255:red, 144; green, 19; blue, 254 }  ,draw opacity=1 ][line width=1.5]    (497.65,47.13) -- (625.74,47.13) ;
\draw [color={rgb, 255:red, 144; green, 19; blue, 254 }  ,draw opacity=1 ][line width=1.5]    (497.65,175.22) -- (625.74,175.22) ;
\draw    (245.69,112.04) -- (202.77,112.09) ;
\draw [shift={(247.69,112.04)}, rotate = 179.93] [color={rgb, 255:red, 0; green, 0; blue, 0 }  ][line width=0.75]    (10.93,-3.29) .. controls (6.95,-1.4) and (3.31,-0.3) .. (0,0) .. controls (3.31,0.3) and (6.95,1.4) .. (10.93,3.29)   ;
\draw    (473.92,112.04) -- (431.01,112.09) ;
\draw [shift={(475.92,112.04)}, rotate = 179.93] [color={rgb, 255:red, 0; green, 0; blue, 0 }  ][line width=0.75]    (10.93,-3.29) .. controls (6.95,-1.4) and (3.31,-0.3) .. (0,0) .. controls (3.31,0.3) and (6.95,1.4) .. (10.93,3.29)   ;
\draw [color={rgb, 255:red, 0; green, 0; blue, 255 }  ,draw opacity=1 ][line width=0.75]    (336.69,47.13) .. controls (337.97,53.62) and (338.97,54.62) .. (346.97,58.62) .. controls (354.97,62.62) and (359.97,60.62) .. (359,68.62) .. controls (358.03,76.62) and (342.97,75.12) .. (335.97,76.62) .. controls (328.97,78.12) and (313.03,78.38) .. (314,89) .. controls (314.97,99.62) and (322.97,92.23) .. (330.97,97.62) .. controls (338.97,103) and (335.97,107.62) .. (336.69,111.17) ;
\draw [color={rgb, 255:red, 0; green, 0; blue, 255 }  ,draw opacity=1 ][line width=0.75]    (336.48,47.13) .. controls (335.22,53.41) and (334.23,54.38) .. (326.3,58.26) .. controls (318.38,62.13) and (313.42,60.2) .. (314.38,67.95) .. controls (315.34,75.7) and (328.5,69.74) .. (333.2,73.7) ;
\draw [color={rgb, 255:red, 0; green, 0; blue, 255 }  ,draw opacity=1 ][line width=0.75]    (341.2,78.7) .. controls (345.5,82.74) and (359.92,77.41) .. (358.96,87.69) .. controls (358,97.98) and (350.07,90.82) .. (342.15,96.04) .. controls (334.23,101.26) and (337.2,105.73) .. (336.48,109.17) ;
\draw [color={rgb, 255:red, 0; green, 0; blue, 255 }  ,draw opacity=1 ][line width=0.75]    (561.69,47.13) .. controls (562.97,53.62) and (563.97,54.62) .. (571.97,58.62) .. controls (579.97,62.62) and (584.97,60.62) .. (584,68.62) .. controls (583.03,76.62) and (567.97,75.12) .. (560.97,76.62) .. controls (553.97,78.12) and (538.03,78.38) .. (539,89) .. controls (539.97,99.62) and (547.97,92.23) .. (555.97,97.62) .. controls (563.97,103) and (560.97,107.62) .. (561.69,111.17) ;
\draw [color={rgb, 255:red, 0; green, 0; blue, 255 }  ,draw opacity=1 ][line width=0.75]    (561.69,175.22) .. controls (560.43,168.73) and (559.43,167.73) .. (551.48,163.73) .. controls (543.53,159.73) and (538.56,161.73) .. (539.53,153.73) .. controls (540.49,145.73) and (555.46,147.23) .. (562.41,145.73) .. controls (569.37,144.23) and (585.21,143.96) .. (584.25,133.35) .. controls (583.28,122.73) and (575.33,130.11) .. (567.38,124.73) .. controls (559.43,119.35) and (562.41,114.73) .. (561.69,111.17) ;

\draw (161.63,113.88) node [anchor=north west][inner sep=0.75pt]  [color={rgb, 255:red, 144; green, 19; blue, 254 }  ,opacity=1 ]  {$\Gamma $};
\draw (49.89,49.48) node [anchor=north west][inner sep=0.75pt]    {$\Sigma _{+}$};
\draw (50.31,156.84) node [anchor=north west][inner sep=0.75pt]    {$\Sigma _{-}$};
\draw (386.63,113.88) node [anchor=north west][inner sep=0.75pt]  [color={rgb, 255:red, 144; green, 19; blue, 254 }  ,opacity=1 ]  {$\Gamma $};
\draw (274.89,49.48) node [anchor=north west][inner sep=0.75pt]    {$\Sigma _{+}$};
\draw (275.31,156.84) node [anchor=north west][inner sep=0.75pt]    {$\Sigma _{-}$};
\draw (612.53,113.88) node [anchor=north west][inner sep=0.75pt]  [color={rgb, 255:red, 144; green, 19; blue, 254 }  ,opacity=1 ]  {$\Gamma $};
\draw (499.89,49.48) node [anchor=north west][inner sep=0.75pt]    {$\Sigma _{+}$};
\draw (500.31,156.84) node [anchor=north west][inner sep=0.75pt]    {$\Sigma _{-}$};

\end{tikzpicture}

%% file: images/Reidemeister_moves.tex
\begin{tikzpicture}[x=0.75pt,y=0.75pt,yscale=-1,xscale=1]

\draw [color={rgb, 255:red, 0; green, 0; blue, 255 }  ,draw opacity=1 ][line width=1.5]    (35.79,253.57) -- (123.41,253.57) ;
\draw [color={rgb, 255:red, 255; green, 255; blue, 255 }  ,draw opacity=1 ][line width=3]    (104.49,253.6) -- (97.73,253.6) ;
\draw [color={rgb, 255:red, 255; green, 255; blue, 255 }  ,draw opacity=1 ][line width=3]    (61.05,253.6) -- (54.3,253.6) ;
\draw [color={rgb, 255:red, 0; green, 0; blue, 255 }  ,draw opacity=1 ][line width=1.5]    (47.18,269.53) -- (91.49,203.4) ;
\draw [color={rgb, 255:red, 255; green, 255; blue, 255 }  ,draw opacity=1 ][line width=3]    (82.33,217.87) -- (77.5,224.43) ;
\draw [color={rgb, 255:red, 0; green, 0; blue, 255 }  ,draw opacity=1 ][line width=1.5]    (68.7,203.4) -- (111.64,269.9) ;

\draw  [color={rgb, 255:red, 0; green, 0; blue, 0 }  ,draw opacity=1 ][dash pattern={on 0.84pt off 2.51pt}] (31.38,244.56) .. controls (31.38,218.54) and (52.48,197.44) .. (78.51,197.44) .. controls (104.53,197.44) and (125.63,218.54) .. (125.63,244.56) .. controls (125.63,270.59) and (104.53,291.69) .. (78.51,291.69) .. controls (52.48,291.69) and (31.38,270.59) .. (31.38,244.56) -- cycle ;

\draw [color={rgb, 255:red, 0; green, 0; blue, 255 }  ,draw opacity=1 ][line width=1.5]    (287.4,230.73) -- (199.78,230.73) ;
\draw [color={rgb, 255:red, 255; green, 255; blue, 255 }  ,draw opacity=1 ][line width=3]    (225.04,231.09) -- (218.29,231.09) ;
\draw [color={rgb, 255:red, 255; green, 255; blue, 255 }  ,draw opacity=1 ][line width=3]    (268.48,231.09) -- (261.72,231.09) ;
\draw [color={rgb, 255:red, 0; green, 0; blue, 255 }  ,draw opacity=1 ][line width=1.5]    (276.01,214.77) -- (231.7,280.9) ;
\draw [color={rgb, 255:red, 255; green, 255; blue, 255 }  ,draw opacity=1 ][line width=3]    (245.84,260.12) -- (241.45,266.72) ;
\draw [color={rgb, 255:red, 0; green, 0; blue, 255 }  ,draw opacity=1 ][line width=1.5]    (254.48,280.9) -- (211.54,214.4) ;

\draw  [color={rgb, 255:red, 0; green, 0; blue, 0 }  ,draw opacity=1 ][dash pattern={on 0.84pt off 2.51pt}] (196.38,244.56) .. controls (196.38,218.54) and (217.48,197.44) .. (243.51,197.44) .. controls (269.53,197.44) and (290.63,218.54) .. (290.63,244.56) .. controls (290.63,270.59) and (269.53,291.69) .. (243.51,291.69) .. controls (217.48,291.69) and (196.38,270.59) .. (196.38,244.56) -- cycle ;

\draw    (186.04,256.15) -- (142.81,256.38) ;
\draw [shift={(140.81,256.39)}, rotate = 359.69] [color={rgb, 255:red, 0; green, 0; blue, 0 }  ][line width=0.75]    (10.93,-3.29) .. controls (6.95,-1.4) and (3.31,-0.3) .. (0,0) .. controls (3.31,0.3) and (6.95,1.4) .. (10.93,3.29)   ;
\draw [shift={(188.04,256.14)}, rotate = 179.69] [color={rgb, 255:red, 0; green, 0; blue, 0 }  ][line width=0.75]    (10.93,-3.29) .. controls (6.95,-1.4) and (3.31,-0.3) .. (0,0) .. controls (3.31,0.3) and (6.95,1.4) .. (10.93,3.29)   ;

\draw [color={rgb, 255:red, 0; green, 0; blue, 255 }  ,draw opacity=1 ][line width=1.5]    (450.51,253.57) -- (361.41,253.57) ;
\draw [color={rgb, 255:red, 255; green, 255; blue, 255 }  ,draw opacity=1 ][line width=3]    (380.65,253.6) -- (387.52,253.6) ;
\draw [color={rgb, 255:red, 255; green, 255; blue, 255 }  ,draw opacity=1 ][line width=3]    (424.81,253.6) -- (431.68,253.6) ;
\draw [color={rgb, 255:red, 0; green, 0; blue, 255 }  ,draw opacity=1 ][line width=1.5]    (438.92,269.53) -- (393.87,203.4) ;
\draw [color={rgb, 255:red, 255; green, 255; blue, 255 }  ,draw opacity=1 ][line width=3]    (403.18,217.87) -- (408.09,224.43) ;
\draw [color={rgb, 255:red, 0; green, 0; blue, 255 }  ,draw opacity=1 ][line width=1.5]    (417.03,203.4) -- (373.38,269.9) ;

\draw  [color={rgb, 255:red, 0; green, 0; blue, 0 }  ,draw opacity=1 ][dash pattern={on 0.84pt off 2.51pt}] (358.38,244.56) .. controls (358.38,218.54) and (379.48,197.44) .. (405.51,197.44) .. controls (431.53,197.44) and (452.63,218.54) .. (452.63,244.56) .. controls (452.63,270.59) and (431.53,291.69) .. (405.51,291.69) .. controls (379.48,291.69) and (358.38,270.59) .. (358.38,244.56) -- cycle ;

\draw [color={rgb, 255:red, 0; green, 0; blue, 255 }  ,draw opacity=1 ][line width=1.5]    (532.51,231.73) -- (620.51,231.73) ;
\draw [color={rgb, 255:red, 255; green, 255; blue, 255 }  ,draw opacity=1 ][line width=3]    (595.13,232.09) -- (601.91,232.09) ;
\draw [color={rgb, 255:red, 255; green, 255; blue, 255 }  ,draw opacity=1 ][line width=3]    (551.51,232.09) -- (558.29,232.09) ;
\draw [color={rgb, 255:red, 0; green, 0; blue, 255 }  ,draw opacity=1 ][line width=1.5]    (543.95,215.77) -- (588.45,281.9) ;
\draw [color={rgb, 255:red, 255; green, 255; blue, 255 }  ,draw opacity=1 ][line width=3]    (574.25,261.12) -- (578.65,267.72) ;
\draw [color={rgb, 255:red, 0; green, 0; blue, 255 }  ,draw opacity=1 ][line width=1.5]    (565.57,281.9) -- (608.69,215.4) ;

\draw  [color={rgb, 255:red, 0; green, 0; blue, 0 }  ,draw opacity=1 ][dash pattern={on 0.84pt off 2.51pt}] (529.38,244.56) .. controls (529.38,218.54) and (550.48,197.44) .. (576.51,197.44) .. controls (602.53,197.44) and (623.63,218.54) .. (623.63,244.56) .. controls (623.63,270.59) and (602.53,291.69) .. (576.51,291.69) .. controls (550.48,291.69) and (529.38,270.59) .. (529.38,244.56) -- cycle ;

\draw    (514,254.15) -- (470.77,254.38) ;
\draw [shift={(468.77,254.39)}, rotate = 359.69] [color={rgb, 255:red, 0; green, 0; blue, 0 }  ][line width=0.75]    (10.93,-3.29) .. controls (6.95,-1.4) and (3.31,-0.3) .. (0,0) .. controls (3.31,0.3) and (6.95,1.4) .. (10.93,3.29)   ;
\draw [shift={(516,254.14)}, rotate = 179.69] [color={rgb, 255:red, 0; green, 0; blue, 0 }  ][line width=0.75]    (10.93,-3.29) .. controls (6.95,-1.4) and (3.31,-0.3) .. (0,0) .. controls (3.31,0.3) and (6.95,1.4) .. (10.93,3.29)   ;

\draw [color={rgb, 255:red, 0; green, 0; blue, 255 }  ,draw opacity=1 ][line width=1.5]    (448.97,105.4) .. controls (428.82,67) and (380.62,67.82) .. (362.22,106.22) ;
\draw [color={rgb, 255:red, 255; green, 255; blue, 255 }  ,draw opacity=1 ][line width=3]    (435.78,88.89) -- (442.25,94.2) ;
\draw [color={rgb, 255:red, 255; green, 255; blue, 255 }  ,draw opacity=1 ][line width=3]    (374.98,88.89) -- (369.86,94.2) ;
\draw [color={rgb, 255:red, 0; green, 0; blue, 255 }  ,draw opacity=1 ][line width=1.5]    (362.22,77.83) .. controls (382.38,116.23) and (430.57,115.41) .. (448.97,77.01) ;

\draw  [color={rgb, 255:red, 0; green, 0; blue, 0 }  ,draw opacity=1 ][dash pattern={on 0.84pt off 2.51pt}] (358.38,96.31) .. controls (358.38,70.28) and (379.48,49.19) .. (405.51,49.19) .. controls (431.53,49.19) and (452.63,70.28) .. (452.63,96.31) .. controls (452.63,122.34) and (431.53,143.44) .. (405.51,143.44) .. controls (379.48,143.44) and (358.38,122.34) .. (358.38,96.31) -- cycle ;

\draw [color={rgb, 255:red, 0; green, 0; blue, 255 }  ,draw opacity=1 ][line width=1.5]    (540.51,67.38) .. controls (557.11,97.96) and (596.8,97.31) .. (611.96,66.73) ;
\draw [color={rgb, 255:red, 0; green, 0; blue, 255 }  ,draw opacity=1 ][line width=1.5]    (611.96,122.2) .. controls (595.36,91.62) and (555.66,92.27) .. (540.51,122.85) ;

\draw  [color={rgb, 255:red, 0; green, 0; blue, 0 }  ,draw opacity=1 ][dash pattern={on 0.84pt off 2.51pt}] (529.38,96.31) .. controls (529.38,70.28) and (550.48,49.19) .. (576.51,49.19) .. controls (602.53,49.19) and (623.63,70.28) .. (623.63,96.31) .. controls (623.63,122.34) and (602.53,143.44) .. (576.51,143.44) .. controls (550.48,143.44) and (529.38,122.34) .. (529.38,96.31) -- cycle ;

\draw    (514,105.56) -- (470.77,105.8) ;
\draw [shift={(468.77,105.81)}, rotate = 359.69] [color={rgb, 255:red, 0; green, 0; blue, 0 }  ][line width=0.75]    (10.93,-3.29) .. controls (6.95,-1.4) and (3.31,-0.3) .. (0,0) .. controls (3.31,0.3) and (6.95,1.4) .. (10.93,3.29)   ;
\draw [shift={(516,105.55)}, rotate = 179.69] [color={rgb, 255:red, 0; green, 0; blue, 0 }  ][line width=0.75]    (10.93,-3.29) .. controls (6.95,-1.4) and (3.31,-0.3) .. (0,0) .. controls (3.31,0.3) and (6.95,1.4) .. (10.93,3.29)   ;

\draw [color={rgb, 255:red, 144; green, 19; blue, 254 }  ,draw opacity=1 ][line width=1.5]    (123.32,99.31) -- (33.69,99.31) ;
\draw [color={rgb, 255:red, 0; green, 0; blue, 255 }  ,draw opacity=1 ][line width=1.5]    (58.08,61.7) .. controls (80.31,75.86) and (96.91,79.13) .. (96.91,99.56) .. controls (96.91,119.98) and (85.57,124.88) .. (79.43,124.88) .. controls (73.3,124.88) and (61.96,120.73) .. (61.91,99.56) .. controls (61.86,78.38) and (80.31,75.86) .. (101.89,60.89) ;
\draw [color={rgb, 255:red, 255; green, 255; blue, 255 }  ,draw opacity=1 ][line width=3]    (83.74,71.62) -- (75.05,77.25) ;
\draw [color={rgb, 255:red, 74; green, 74; blue, 255 }  ,draw opacity=1 ][line width=1.5]    (82.02,75.56) -- (77.19,72.75) ;
\draw  [color={rgb, 255:red, 0; green, 0; blue, 0 }  ,draw opacity=1 ][dash pattern={on 0.84pt off 2.51pt}] (31.38,96.31) .. controls (31.38,70.28) and (52.48,49.19) .. (78.51,49.19) .. controls (104.53,49.19) and (125.63,70.28) .. (125.63,96.31) .. controls (125.63,122.34) and (104.53,143.44) .. (78.51,143.44) .. controls (52.48,143.44) and (31.38,122.34) .. (31.38,96.31) -- cycle ;

\draw [color={rgb, 255:red, 144; green, 19; blue, 254 }  ,draw opacity=1 ][line width=1.5]    (288.78,99.03) -- (198.16,99.03) ;
\draw [color={rgb, 255:red, 0; green, 0; blue, 255 }  ,draw opacity=1 ][line width=1.5]    (202.51,74.07) .. controls (221.69,91.16) and (267.57,90.8) .. (285.09,73.71) ;
\draw  [color={rgb, 255:red, 0; green, 0; blue, 0 }  ,draw opacity=1 ][dash pattern={on 0.84pt off 2.51pt}] (196.38,96.31) .. controls (196.38,70.28) and (217.48,49.19) .. (243.51,49.19) .. controls (269.53,49.19) and (290.63,70.28) .. (290.63,96.31) .. controls (290.63,122.34) and (269.53,143.44) .. (243.51,143.44) .. controls (217.48,143.44) and (196.38,122.34) .. (196.38,96.31) -- cycle ;

\draw    (186.04,105.76) -- (142.81,105.99) ;
\draw [shift={(140.81,106)}, rotate = 359.69] [color={rgb, 255:red, 0; green, 0; blue, 0 }  ][line width=0.75]    (10.93,-3.29) .. controls (6.95,-1.4) and (3.31,-0.3) .. (0,0) .. controls (3.31,0.3) and (6.95,1.4) .. (10.93,3.29)   ;
\draw [shift={(188.04,105.75)}, rotate = 179.69] [color={rgb, 255:red, 0; green, 0; blue, 0 }  ][line width=0.75]    (10.93,-3.29) .. controls (6.95,-1.4) and (3.31,-0.3) .. (0,0) .. controls (3.31,0.3) and (6.95,1.4) .. (10.93,3.29)   ;

\draw [color={rgb, 255:red, 144; green, 19; blue, 254 }  ,draw opacity=1 ][line width=1.5]    (362.08,397.83) -- (452.04,397.83) ;
\draw [color={rgb, 255:red, 0; green, 0; blue, 255 }  ,draw opacity=1 ][line width=1.5]    (388.42,356.5) .. controls (388.54,370.05) and (424.25,384.55) .. (424.25,397.82) .. controls (424.25,411.09) and (423.76,413.19) .. (437.78,422.67) ;
\draw [color={rgb, 255:red, 255; green, 255; blue, 255 }  ,draw opacity=1 ][line width=3]    (408.09,379.09) -- (401.61,373.46) ;
\draw [color={rgb, 255:red, 0; green, 0; blue, 255 }  ,draw opacity=1 ][line width=1.5]    (422.06,355.87) .. controls (421.94,369.42) and (386.98,383.91) .. (386.98,397.19) .. controls (386.98,410.46) and (390.26,413.19) .. (376.53,422.67) ;
\draw  [color={rgb, 255:red, 0; green, 0; blue, 0 }  ,draw opacity=1 ][dash pattern={on 0.84pt off 2.51pt}] (358.38,393.7) .. controls (358.38,368.19) and (379.72,347.51) .. (406.05,347.51) .. controls (432.38,347.51) and (453.72,368.19) .. (453.72,393.7) .. controls (453.72,419.21) and (432.38,439.89) .. (406.05,439.89) .. controls (379.72,439.89) and (358.38,419.21) .. (358.38,393.7) -- cycle ;

\draw [color={rgb, 255:red, 144; green, 19; blue, 254 }  ,draw opacity=1 ][line width=1.5]    (533.18,387.63) -- (621.35,387.63) ;
\draw [color={rgb, 255:red, 0; green, 0; blue, 255 }  ,draw opacity=1 ][line width=1.5]    (594.2,431.43) .. controls (594.08,417.88) and (558.37,403.38) .. (558.37,390.11) .. controls (558.37,376.84) and (558.86,374.73) .. (544.84,365.25) ;
\draw [color={rgb, 255:red, 255; green, 255; blue, 255 }  ,draw opacity=1 ][line width=3]    (580.91,414.79) -- (574.43,409.15) ;
\draw [color={rgb, 255:red, 0; green, 0; blue, 255 }  ,draw opacity=1 ][line width=1.5]    (560.56,432.06) .. controls (560.69,418.51) and (595.64,404.01) .. (595.64,390.74) .. controls (595.64,377.47) and (592.36,374.73) .. (606.09,365.25) ;
\draw  [color={rgb, 255:red, 0; green, 0; blue, 0 }  ,draw opacity=1 ][dash pattern={on 0.84pt off 2.51pt}] (529.38,393.7) .. controls (529.38,368.19) and (550.72,347.51) .. (577.05,347.51) .. controls (603.38,347.51) and (624.72,368.19) .. (624.72,393.7) .. controls (624.72,419.21) and (603.38,439.89) .. (577.05,439.89) .. controls (550.72,439.89) and (529.38,419.21) .. (529.38,393.7) -- cycle ;

\draw [color={rgb, 255:red, 144; green, 19; blue, 254 }  ,draw opacity=1 ][line width=1.5]    (120.99,397.26) -- (32.95,397.26) ;
\draw [color={rgb, 255:red, 0; green, 0; blue, 255 }  ,draw opacity=1 ][line width=1.5]    (96.82,353.93) .. controls (96.69,368.14) and (58.33,383.33) .. (58.33,397.25) .. controls (58.33,411.16) and (58.85,413.36) .. (43.79,423.3) ;
\draw [color={rgb, 255:red, 255; green, 255; blue, 255 }  ,draw opacity=1 ][line width=3]    (75.68,377.62) -- (82.64,371.71) ;
\draw [color={rgb, 255:red, 0; green, 0; blue, 255 }  ,draw opacity=1 ][line width=1.5]    (60.68,353.27) .. controls (60.81,367.47) and (98.37,382.67) .. (98.37,396.58) .. controls (98.37,410.49) and (94.85,413.36) .. (109.59,423.3) ;
\draw  [color={rgb, 255:red, 0; green, 0; blue, 0 }  ,draw opacity=1 ][dash pattern={on 0.84pt off 2.51pt}] (31.38,393.7) .. controls (31.38,367.64) and (52.18,346.51) .. (77.85,346.51) .. controls (103.51,346.51) and (124.31,367.64) .. (124.31,393.7) .. controls (124.31,419.76) and (103.51,440.89) .. (77.85,440.89) .. controls (52.18,440.89) and (31.38,419.76) .. (31.38,393.7) -- cycle ;

\draw [color={rgb, 255:red, 144; green, 19; blue, 254 }  ,draw opacity=1 ][line width=1.5]    (284.71,387.2) -- (200.84,387.2) ;
\draw [color={rgb, 255:red, 0; green, 0; blue, 255 }  ,draw opacity=1 ][line width=1.5]    (224.25,433.12) .. controls (224.38,418.91) and (261.58,403.71) .. (261.58,389.8) .. controls (261.58,375.89) and (261.07,373.69) .. (275.68,363.75) ;
\draw [color={rgb, 255:red, 255; green, 255; blue, 255 }  ,draw opacity=1 ][line width=3]    (238.11,415.67) -- (244.86,409.77) ;
\draw [color={rgb, 255:red, 0; green, 0; blue, 255 }  ,draw opacity=1 ][line width=1.5]    (259.3,433.78) .. controls (259.17,419.57) and (222.76,404.38) .. (222.76,390.47) .. controls (222.76,376.55) and (226.17,373.69) .. (211.87,363.75) ;
\draw  [color={rgb, 255:red, 0; green, 0; blue, 0 }  ,draw opacity=1 ][dash pattern={on 0.84pt off 2.51pt}] (196.38,393.7) .. controls (196.38,367.64) and (217.18,346.51) .. (242.85,346.51) .. controls (268.51,346.51) and (289.31,367.64) .. (289.31,393.7) .. controls (289.31,419.76) and (268.51,440.89) .. (242.85,440.89) .. controls (217.18,440.89) and (196.38,419.76) .. (196.38,393.7) -- cycle ;

\draw    (186.04,405.29) -- (142.81,405.52) ;
\draw [shift={(140.81,405.53)}, rotate = 359.69] [color={rgb, 255:red, 0; green, 0; blue, 0 }  ][line width=0.75]    (10.93,-3.29) .. controls (6.95,-1.4) and (3.31,-0.3) .. (0,0) .. controls (3.31,0.3) and (6.95,1.4) .. (10.93,3.29)   ;
\draw [shift={(188.04,405.28)}, rotate = 179.69] [color={rgb, 255:red, 0; green, 0; blue, 0 }  ][line width=0.75]    (10.93,-3.29) .. controls (6.95,-1.4) and (3.31,-0.3) .. (0,0) .. controls (3.31,0.3) and (6.95,1.4) .. (10.93,3.29)   ;

\draw    (514,405.29) -- (470.77,405.52) ;
\draw [shift={(468.77,405.53)}, rotate = 359.69] [color={rgb, 255:red, 0; green, 0; blue, 0 }  ][line width=0.75]    (10.93,-3.29) .. controls (6.95,-1.4) and (3.31,-0.3) .. (0,0) .. controls (3.31,0.3) and (6.95,1.4) .. (10.93,3.29)   ;
\draw [shift={(516,405.28)}, rotate = 179.69] [color={rgb, 255:red, 0; green, 0; blue, 0 }  ][line width=0.75]    (10.93,-3.29) .. controls (6.95,-1.4) and (3.31,-0.3) .. (0,0) .. controls (3.31,0.3) and (6.95,1.4) .. (10.93,3.29)   ;

\draw (488.41,89.23) node [anchor=north west][inner sep=0.75pt]  [font=\footnotesize]  {$II$};
\draw (160.04,89.04) node [anchor=north west][inner sep=0.75pt]  [font=\footnotesize]  {$I$};
\draw (154.57,235.15) node [anchor=north west][inner sep=0.75pt]  [font=\footnotesize]  {$IIIa$};
\draw (480.53,237.15) node [anchor=north west][inner sep=0.75pt]  [font=\footnotesize]  {$IIIb$};
\draw (626.13,386.99) node [anchor=north west][inner sep=0.75pt]  [color={rgb, 255:red, 144; green, 19; blue, 254 }  ,opacity=1 ]  {$\Gamma $};
\draw (454.91,395.67) node [anchor=north west][inner sep=0.75pt]  [color={rgb, 255:red, 144; green, 19; blue, 254 }  ,opacity=1 ]  {$\Gamma $};
\draw (290.74,387.63) node [anchor=north west][inner sep=0.75pt]  [color={rgb, 255:red, 144; green, 19; blue, 254 }  ,opacity=1 ]  {$\Gamma $};
\draw (125.32,395.88) node [anchor=north west][inner sep=0.75pt]  [color={rgb, 255:red, 144; green, 19; blue, 254 }  ,opacity=1 ]  {$\Gamma $};
\draw (291.78,102.43) node [anchor=north west][inner sep=0.75pt]  [color={rgb, 255:red, 144; green, 19; blue, 254 }  ,opacity=1 ]  {$\Gamma $};
\draw (126.32,99.71) node [anchor=north west][inner sep=0.75pt]  [color={rgb, 255:red, 144; green, 19; blue, 254 }  ,opacity=1 ]  {$\Gamma $};
\draw (153.43,384.29) node [anchor=north west][inner sep=0.75pt]  [font=\footnotesize]  {$IVa$};
\draw (482.53,384.29) node [anchor=north west][inner sep=0.75pt]  [font=\footnotesize]  {$IVb$};

\end{tikzpicture}

%% file: images/sign_of_a_crossing.tex
\begin{tikzpicture}[x=0.65pt,y=0.65pt,yscale=-1,xscale=1]

\draw [color={rgb, 255:red, 0; green, 0; blue, 255 }  ,draw opacity=1 ][line width=1.5]    (152.12,102.12) -- (250,200) ;
\draw [shift={(150,100)}, rotate = 45] [color={rgb, 255:red, 0; green, 0; blue, 255 }  ,draw opacity=1 ][line width=1.5]    (14.21,-4.28) .. controls (9.04,-1.82) and (4.3,-0.39) .. (0,0) .. controls (4.3,0.39) and (9.04,1.82) .. (14.21,4.28)   ;
\draw [color={rgb, 255:red, 0; green, 0; blue, 255 }  ,draw opacity=1 ][line width=1.5]    (497.88,102.12) -- (400,200) ;
\draw [shift={(500,100)}, rotate = 135] [color={rgb, 255:red, 0; green, 0; blue, 255 }  ,draw opacity=1 ][line width=1.5]    (14.21,-4.28) .. controls (9.04,-1.82) and (4.3,-0.39) .. (0,0) .. controls (4.3,0.39) and (9.04,1.82) .. (14.21,4.28)   ;
\draw [color={rgb, 255:red, 0; green, 200; blue, 200 }  ,draw opacity=1 ][line width=1.5]    (402.12,102.12) -- (440,140) ;
\draw [shift={(400,100)}, rotate = 45] [color={rgb, 255:red, 0; green, 200; blue, 200 }  ,draw opacity=1 ][line width=1.5]    (14.21,-4.28) .. controls (9.04,-1.82) and (4.3,-0.39) .. (0,0) .. controls (4.3,0.39) and (9.04,1.82) .. (14.21,4.28)   ;
\draw [color={rgb, 255:red, 0; green, 200; blue, 200 }  ,draw opacity=1 ][line width=1.5]    (460,160) -- (500,200) ;
\draw [color={rgb, 255:red, 0; green, 200; blue, 200 }  ,draw opacity=1 ][line width=1.5]    (247.88,102.12) -- (210,140) ;
\draw [shift={(250,100)}, rotate = 135] [color={rgb, 255:red, 0; green, 200; blue, 200 }  ,draw opacity=1 ][line width=1.5]    (14.21,-4.28) .. controls (9.04,-1.82) and (4.3,-0.39) .. (0,0) .. controls (4.3,0.39) and (9.04,1.82) .. (14.21,4.28)   ;
\draw [color={rgb, 255:red, 0; green, 200; blue, 200 }  ,draw opacity=1 ][line width=1.5]    (190,160) -- (150,200) ;

\draw (190,226.6) node [anchor=north west][inner sep=0.75pt]    {$-1$};
\draw (439.6,226.6) node [anchor=north west][inner sep=0.75pt]    {$+1$};
\draw (134,82.4) node [anchor=north west][inner sep=0.75pt]  [color={rgb, 255:red, 0; green, 0; blue, 255 }  ,opacity=1 ]  {$\Lambda $};
\draw (251,82.4) node [anchor=north west][inner sep=0.75pt]  [color={rgb, 255:red, 0; green, 200; blue, 200 }  ,opacity=1 ]  {$\Lambda '$};
\draw (501,81.4) node [anchor=north west][inner sep=0.75pt]  [color={rgb, 255:red, 0; green, 0; blue, 255 }  ,opacity=1 ]  {$\Lambda $};
\draw (379,81.4) node [anchor=north west][inner sep=0.75pt]  [color={rgb, 255:red, 0; green, 200; blue, 200 }  ,opacity=1 ]  {$\Lambda '$};

\end{tikzpicture}

%% file: images/tb_invariant.tex
\begin{tikzpicture}[x=0.65pt,y=0.65pt,yscale=-1,xscale=1]

\draw [color={rgb, 255:red, 0; green, 0; blue, 255 }  ,draw opacity=1 ][line width=1.5]    (144.45,81.45) -- (144.45,132) ;
\draw [shift={(144.45,78.45)}, rotate = 90] [color={rgb, 255:red, 0; green, 0; blue, 255 }  ,draw opacity=1 ][line width=1.5]    (8.53,-2.57) .. controls (5.42,-1.09) and (2.58,-0.23) .. (0,0) .. controls (2.58,0.23) and (5.42,1.09) .. (8.53,2.57)   ;
\draw [color={rgb, 255:red, 0; green, 200; blue, 200 }  ,draw opacity=1 ][line width=1.5]    (242,144.43) .. controls (232.7,155.21) and (220.03,157.15) .. (217.67,197.48) ;
\draw [shift={(217.54,200)}, rotate = 274.48] [color={rgb, 255:red, 0; green, 200; blue, 200 }  ,draw opacity=1 ][line width=1.5]    (8.53,-2.57) .. controls (5.42,-1.09) and (2.58,-0.23) .. (0,0) .. controls (2.58,0.23) and (5.42,1.09) .. (8.53,2.57)   ;
\draw [color={rgb, 255:red, 144; green, 19; blue, 254 }  ,draw opacity=1 ][line width=1.5]    (85.26,139.22) -- (310,139.22) ;
\draw [color={rgb, 255:red, 0; green, 200; blue, 200 }  ,draw opacity=1 ][line width=1.5]    (114.19,81.52) .. controls (115.66,121.3) and (175.72,158.29) .. (177.77,200) ;
\draw [shift={(114.2,78.45)}, rotate = 88.76] [color={rgb, 255:red, 0; green, 200; blue, 200 }  ,draw opacity=1 ][line width=1.5]    (8.53,-2.57) .. controls (5.42,-1.09) and (2.58,-0.23) .. (0,0) .. controls (2.58,0.23) and (5.42,1.09) .. (8.53,2.57)   ;
\draw [color={rgb, 255:red, 0; green, 0; blue, 255 }  ,draw opacity=1 ][line width=1.5]    (144.51,145.43) -- (144.45,200) ;
\draw [color={rgb, 255:red, 0; green, 0; blue, 255 }  ,draw opacity=1 ][line width=1.5]    (246.82,78.45) -- (246.82,197) ;
\draw [shift={(246.82,200)}, rotate = 270] [color={rgb, 255:red, 0; green, 0; blue, 255 }  ,draw opacity=1 ][line width=1.5]    (8.53,-2.57) .. controls (5.42,-1.09) and (2.58,-0.23) .. (0,0) .. controls (2.58,0.23) and (5.42,1.09) .. (8.53,2.57)   ;
\draw [color={rgb, 255:red, 0; green, 200; blue, 200 }  ,draw opacity=1 ][line width=1.5]    (277.3,78.45) .. controls (273.51,116.43) and (263.51,118.43) .. (250.82,135.22) ;
\draw [color={rgb, 255:red, 0; green, 0; blue, 255 }  ,draw opacity=1 ][line width=1.5]    (440,120) -- (530,210) ;
\draw [color={rgb, 255:red, 0; green, 0; blue, 255 }  ,draw opacity=1 ][line width=1.5]    (470,160) -- (440,190) ;
\draw [color={rgb, 255:red, 0; green, 0; blue, 255 }  ,draw opacity=1 ][line width=1.5]    (530,100) -- (511.67,118.33) -- (500,130) ;
\draw [color={rgb, 255:red, 0; green, 200; blue, 200 }  ,draw opacity=1 ][line width=1.5]    (460,100) -- (550,190) ;
\draw [color={rgb, 255:red, 0; green, 200; blue, 200 }  ,draw opacity=1 ][line width=1.5]    (490,180) -- (460,210) ;
\draw [color={rgb, 255:red, 0; green, 200; blue, 200 }  ,draw opacity=1 ][line width=1.5]    (550,120) -- (520,150) ;
\draw [color={rgb, 255:red, 0; green, 200; blue, 200 }  ,draw opacity=1 ][line width=1.5]    (510,160) -- (500,170) ;
\draw [color={rgb, 255:red, 0; green, 0; blue, 255 }  ,draw opacity=1 ][line width=1.5]    (490,140) -- (480,150) ;

\draw (143.26,56.26) node [anchor=north west][inner sep=0.75pt]  [color={rgb, 255:red, 0; green, 0; blue, 255 }  ,opacity=1 ]  {$\Lambda $};
\draw (102.26,56.26) node [anchor=north west][inner sep=0.75pt]  [color={rgb, 255:red, 0; green, 200; blue, 200 }  ,opacity=1 ]  {$\Lambda '$};
\draw (296.6,143.16) node [anchor=north west][inner sep=0.75pt]  [color={rgb, 255:red, 144; green, 19; blue, 254 }  ,opacity=1 ]  {$\Gamma $};
\draw (241.26,202.4) node [anchor=north west][inner sep=0.75pt]  [color={rgb, 255:red, 0; green, 0; blue, 255 }  ,opacity=1 ]  {$\Lambda $};
\draw (205.26,202.4) node [anchor=north west][inner sep=0.75pt]  [color={rgb, 255:red, 0; green, 200; blue, 200 }  ,opacity=1 ]  {$\Lambda '$};
\draw (170,242) node [anchor=north west][inner sep=0.75pt]   [align=left] {(a)};
\draw (480,242) node [anchor=north west][inner sep=0.75pt]   [align=left] {(b)};
\draw (421,102.4) node [anchor=north west][inner sep=0.75pt]  [color={rgb, 255:red, 0; green, 0; blue, 255 }  ,opacity=1 ]  {$\Lambda $};
\draw (441,82.4) node [anchor=north west][inner sep=0.75pt]  [color={rgb, 255:red, 0; green, 200; blue, 200 }  ,opacity=1 ]  {$\Lambda '$};

\end{tikzpicture}

%% file: images/corner_orientations.tex
\begin{tikzpicture}[x=0.75pt,y=0.75pt,yscale=-1,xscale=1]

\draw [color={rgb, 255:red, 0; green, 0; blue, 255 }  ,draw opacity=1 ][line width=1.5]    (250,101) -- (350,201) ;
\draw [color={rgb, 255:red, 0; green, 0; blue, 255 }  ,draw opacity=1 ][line width=1.5]    (350,101) -- (310,141) ;
\draw [color={rgb, 255:red, 0; green, 0; blue, 255 }  ,draw opacity=1 ][line width=1.5]    (290,161) -- (250,201) ;

\draw (329.84,142.48) node [anchor=north west][inner sep=0.75pt]    {$+$};
\draw (257.67,142.48) node [anchor=north west][inner sep=0.75pt]    {$+$};
\draw (295.36,175.4) node [anchor=north west][inner sep=0.75pt]    {$-$};
\draw (295.36,110.21) node [anchor=north west][inner sep=0.75pt]    {$-$};

\end{tikzpicture}

%% file: images/immersed_polygons.tex
\begin{tikzpicture}[x=0.65pt,y=0.65pt,yscale=-1,xscale=1]

\draw  [draw opacity=0][fill={rgb, 255:red, 216; green, 216; blue, 216 }  ,fill opacity=1 ] (465.67,80) .. controls (475.67,109.11) and (507.67,120.11) .. (497.84,198) .. controls (485.84,203) and (411.84,212) .. (382.84,196) .. controls (374.67,135.11) and (400.67,118.11) .. (413.67,80) .. controls (435.67,78.11) and (445.67,81.11) .. (465.67,80) -- cycle ;
\draw  [draw opacity=0][fill={rgb, 255:red, 216; green, 216; blue, 216 }  ,fill opacity=1 ] (159,84.62) .. controls (176.84,86.28) and (224.84,151.28) .. (216.84,186.28) .. controls (204.84,191.28) and (130.84,200.28) .. (101.84,184.28) .. controls (98.84,149.28) and (129.84,91.28) .. (159,84.62) -- cycle ;
\draw [color={rgb, 255:red, 0; green, 0; blue, 255 }  ,draw opacity=1 ][line width=1.5]    (189.84,69.61) .. controls (133.84,87.61) and (98.01,132.48) .. (102.01,199.48) ;
\draw [color={rgb, 255:red, 0; green, 0; blue, 255 }  ,draw opacity=1 ][line width=1.5]    (163.84,87.61) .. controls (178.84,96.61) and (212.84,132.61) .. (217.14,176.02) ;
\draw [color={rgb, 255:red, 0; green, 0; blue, 255 }  ,draw opacity=1 ][line width=1.5]    (108.86,186.11) .. controls (122,194.05) and (207.43,203.31) .. (240,174.02) ;
\draw [color={rgb, 255:red, 0; green, 0; blue, 255 }  ,draw opacity=1 ][line width=1.5]    (217.14,191.22) -- (217.14,200) ;
\draw [color={rgb, 255:red, 0; green, 0; blue, 255 }  ,draw opacity=1 ][line width=1.5]    (78,174.02) .. controls (86,178.65) and (90,180.82) .. (96.29,183.47) ;
\draw [color={rgb, 255:red, 0; green, 0; blue, 255 }  ,draw opacity=1 ][line width=1.5]    (152.58,82.31) .. controls (142.84,77.61) and (132.24,73.44) .. (125.74,73.11) ;
\draw [color={rgb, 255:red, 0; green, 0; blue, 255 }  ,draw opacity=1 ][line width=1.5]    (415,52) .. controls (416.84,126.28) and (367.67,106.8) .. (385.84,212) ;
\draw [color={rgb, 255:red, 0; green, 0; blue, 255 }  ,draw opacity=1 ][line width=1.5]    (465,52) .. controls (460.84,111.28) and (504.84,102.28) .. (499.14,188.02) ;
\draw [color={rgb, 255:red, 0; green, 0; blue, 255 }  ,draw opacity=1 ][line width=1.5]    (390.86,198.11) .. controls (404,206.05) and (489.43,212.31) .. (522,183.02) ;
\draw [color={rgb, 255:red, 0; green, 0; blue, 255 }  ,draw opacity=1 ][line width=1.5]    (499.14,203.22) -- (499.14,212) ;
\draw [color={rgb, 255:red, 0; green, 0; blue, 255 }  ,draw opacity=1 ][line width=1.5]    (360,186.02) .. controls (368,190.65) and (372,192.82) .. (378.29,195.47) ;
\draw [color={rgb, 255:red, 144; green, 19; blue, 254 }  ,draw opacity=1 ][line width=1.5]    (379.83,80) -- (499.84,80) ;

\draw (153,90.9) node [anchor=north west][inner sep=0.75pt]    {$c_\pm$};
\draw (195,166.4) node [anchor=north west][inner sep=0.75pt]    {$d_{1}$};
\draw (107,166.4) node [anchor=north west][inner sep=0.75pt]    {$d_{2}$};
\draw (478,175.4) node [anchor=north west][inner sep=0.75pt]    {$d_{1}$};
\draw (390,175.4) node [anchor=north west][inner sep=0.75pt]    {$d_{2}$};
\draw (434.84,84.9) node [anchor=north west][inner sep=0.75pt]    {$c_\Gamma$};
\draw (150.67,222) node [anchor=north west][inner sep=0.75pt]   [align=left] {(a)};
\draw (430.67,222) node [anchor=north west][inner sep=0.75pt]   [align=left] {(b)};
\draw (505,72.4) node [anchor=north west][inner sep=0.75pt]  [color={rgb, 255:red, 144; green, 19; blue, 254 }  ,opacity=1 ]  {$\Gamma $};

\end{tikzpicture}

%% file: images/Seifert_resolution_1.tex
\begin{tikzpicture}[x=0.55pt,y=0.55pt,yscale=-1,xscale=1]

\draw [color={rgb, 255:red, 0; green, 0; blue, 255 }  ,draw opacity=1 ][line width=1.5]    (94.91,118.14) -- (197.73,215.74) ;
\draw [shift={(92.73,116.07)}, rotate = 43.51] [color={rgb, 255:red, 0; green, 0; blue, 255 }  ,draw opacity=1 ][line width=1.5]    (14.21,-6.37) .. controls (9.04,-2.99) and (4.3,-0.87) .. (0,0) .. controls (4.3,0.87) and (9.04,2.99) .. (14.21,6.37)   ;
\draw [color={rgb, 255:red, 0; green, 0; blue, 255 }  ,draw opacity=1 ][line width=1.5]    (196.59,118.8) -- (149.91,162.66) ;
\draw [shift={(198.78,116.74)}, rotate = 136.78] [color={rgb, 255:red, 0; green, 0; blue, 255 }  ,draw opacity=1 ][line width=1.5]    (14.21,-6.37) .. controls (9.04,-2.99) and (4.3,-0.87) .. (0,0) .. controls (4.3,0.87) and (9.04,2.99) .. (14.21,6.37)   ;
\draw [color={rgb, 255:red, 0; green, 0; blue, 255 }  ,draw opacity=1 ][line width=1.5]    (140.47,170.66) -- (94.29,216.66) ;
\draw [color={rgb, 255:red, 0; green, 0; blue, 255 }  ,draw opacity=1 ][line width=1.5]    (537.68,117.16) -- (437.01,214.74) ;
\draw [shift={(539.84,115.07)}, rotate = 135.89] [color={rgb, 255:red, 0; green, 0; blue, 255 }  ,draw opacity=1 ][line width=1.5]    (14.21,-6.37) .. controls (9.04,-2.99) and (4.3,-0.87) .. (0,0) .. controls (4.3,0.87) and (9.04,2.99) .. (14.21,6.37)   ;
\draw [color={rgb, 255:red, 0; green, 0; blue, 255 }  ,draw opacity=1 ][line width=1.5]    (438.14,117.82) -- (483.84,161.66) ;
\draw [shift={(435.98,115.74)}, rotate = 43.81] [color={rgb, 255:red, 0; green, 0; blue, 255 }  ,draw opacity=1 ][line width=1.5]    (14.21,-6.37) .. controls (9.04,-2.99) and (4.3,-0.87) .. (0,0) .. controls (4.3,0.87) and (9.04,2.99) .. (14.21,6.37)   ;
\draw [color={rgb, 255:red, 0; green, 0; blue, 255 }  ,draw opacity=1 ][line width=1.5]    (493.09,169.66) -- (538.31,215.66) ;
\draw  [draw opacity=0][line width=1.5]  (351.12,219.05) .. controls (351.12,219.05) and (351.12,219.05) .. (351.12,219.05) .. controls (322.39,191.67) and (323.71,146.02) .. (354.08,117.08) -- (406.1,166.66) -- cycle ; \draw  [color={rgb, 255:red, 0; green, 0; blue, 255 }  ,draw opacity=1 ][line width=1.5]  (351.12,219.05) .. controls (351.12,219.05) and (351.12,219.05) .. (351.12,219.05) .. controls (322.39,191.67) and (323.71,146.02) .. (354.08,117.08) ;  
\draw  [draw opacity=0][line width=1.5]  (284.31,116.49) .. controls (313.35,144.17) and (312.25,190.09) .. (281.86,219.05) -- (229.27,168.94) -- cycle ; \draw  [color={rgb, 255:red, 0; green, 0; blue, 255 }  ,draw opacity=1 ][line width=1.5]  (284.31,116.49) .. controls (313.35,144.17) and (312.25,190.09) .. (281.86,219.05) ;  
\draw [line width=0.75]    (253.9,163.69) -- (206.34,163.69) ;
\draw [shift={(255.9,163.69)}, rotate = 180] [color={rgb, 255:red, 0; green, 0; blue, 0 }  ][line width=0.75]    (10.93,-3.29) .. controls (6.95,-1.4) and (3.31,-0.3) .. (0,0) .. controls (3.31,0.3) and (6.95,1.4) .. (10.93,3.29)   ;
\draw    (421.7,162.69) -- (374.14,162.75) ;
\draw [shift={(372.14,162.75)}, rotate = 359.93] [color={rgb, 255:red, 0; green, 0; blue, 0 }  ][line width=0.75]    (10.93,-3.29) .. controls (6.95,-1.4) and (3.31,-0.3) .. (0,0) .. controls (3.31,0.3) and (6.95,1.4) .. (10.93,3.29)   ;
\draw [color={rgb, 255:red, 0; green, 0; blue, 255 }  ,draw opacity=1 ][line width=1.5]    (357.51,113.34) -- (354.08,117.09) ;
\draw [shift={(359.54,111.13)}, rotate = 132.54] [color={rgb, 255:red, 0; green, 0; blue, 255 }  ,draw opacity=1 ][line width=1.5]    (14.21,-6.37) .. controls (9.04,-2.99) and (4.3,-0.87) .. (0,0) .. controls (4.3,0.87) and (9.04,2.99) .. (14.21,6.37)   ;
\draw [color={rgb, 255:red, 0; green, 0; blue, 255 }  ,draw opacity=1 ][line width=1.5]    (280.57,112.42) -- (284.31,116.49) ;
\draw [shift={(278.54,110.21)}, rotate = 47.48] [color={rgb, 255:red, 0; green, 0; blue, 255 }  ,draw opacity=1 ][line width=1.5]    (14.21,-6.37) .. controls (9.04,-2.99) and (4.3,-0.87) .. (0,0) .. controls (4.3,0.87) and (9.04,2.99) .. (14.21,6.37)   ;

\end{tikzpicture}

%% file: images/dividing_curves_along_Gamma.tex
\begin{tikzpicture}[x=0.45pt,y=0.45pt,yscale=-1,xscale=1]

\draw  [color={rgb, 255:red, 0; green, 0; blue, 255 }  ,draw opacity=1 ][line width=1.5]  (94.67,206.98) .. controls (122.57,222.6) and (133.57,222.6) .. (158.57,222.6) .. controls (183.57,222.6) and (230.39,215.42) .. (246.43,196.29) .. controls (262.48,177.17) and (273.66,117.1) .. (247.42,93.59) .. controls (221.18,70.09) and (190.67,96.59) .. (156.5,97.48) .. controls (122.33,98.36) and (79.29,80.29) .. (69.5,106.48) .. controls (59.71,132.66) and (66.77,191.37) .. (94.67,206.98) -- cycle ;
\draw  [color={rgb, 255:red, 0; green, 0; blue, 255 }  ,draw opacity=1 ][line width=1.5]  (487.2,222.6) .. controls (495.45,222.56) and (561.67,214.46) .. (577.72,195.33) .. controls (593.77,176.21) and (604.95,116.14) .. (578.71,92.63) .. controls (552.47,69.13) and (521.96,95.63) .. (487.79,96.52) .. controls (488.11,123.6) and (487.11,193.6) .. (487.2,222.6) -- cycle ;
\draw [color={rgb, 255:red, 0; green, 0; blue, 255 }  ,draw opacity=1 ][line width=1.5]    (404.67,207.41) .. controls (432.57,223.02) and (442.2,221.6) .. (467.2,221.6) .. controls (466.83,207.18) and (467.73,117.99) .. (467.79,95.52) .. controls (454.11,94.6) and (389.29,80.72) .. (379.5,106.9) .. controls (369.71,133.09) and (381.45,195.37) .. (404.67,207.41) -- cycle ;
\draw [color={rgb, 255:red, 144; green, 19; blue, 254 }  ,draw opacity=1 ][line width=1.5]    (157.19,80.16) -- (157.19,235.16) ;
\draw    (343.22,151.69) -- (297.99,151.75) ;
\draw [shift={(345.22,151.69)}, rotate = 179.93] [color={rgb, 255:red, 0; green, 0; blue, 0 }  ][line width=0.75]    (10.93,-3.29) .. controls (6.95,-1.4) and (3.31,-0.3) .. (0,0) .. controls (3.31,0.3) and (6.95,1.4) .. (10.93,3.29)   ;
\draw  [color={rgb, 255:red, 0; green, 0; blue, 255 }  ,draw opacity=1 ][line width=0.75]  (269.24,135.12) .. controls (265.45,141.52) and (263.18,147.91) .. (262.42,154.31) .. controls (261.66,147.91) and (259.39,141.52) .. (255.6,135.12) ;
\draw  [color={rgb, 255:red, 0; green, 0; blue, 255 }  ,draw opacity=1 ][line width=0.75]  (600,135.12) .. controls (596.21,141.51) and (593.94,147.91) .. (593.18,154.31) .. controls (592.42,147.91) and (590.15,141.51) .. (586.36,135.12) ;
\draw  [color={rgb, 255:red, 0; green, 0; blue, 255 }  ,draw opacity=1 ][line width=0.75]  (369.35,154.31) .. controls (373.14,147.91) and (375.41,141.51) .. (376.17,135.12) .. controls (376.93,141.51) and (379.2,147.91) .. (382.99,154.31) ;
\draw  [color={rgb, 255:red, 0; green, 0; blue, 255 }  ,draw opacity=1 ][line width=0.75]  (59.23,154.31) .. controls (63.02,147.91) and (65.29,141.51) .. (66.05,135.12) .. controls (66.81,141.51) and (69.08,147.91) .. (72.87,154.31) ;
\draw  [color={rgb, 255:red, 0; green, 0; blue, 255 }  ,draw opacity=1 ][line width=0.75]  (474.06,135.12) .. controls (470.27,141.51) and (468,147.91) .. (467.24,154.31) .. controls (466.48,147.91) and (464.21,141.51) .. (460.42,135.12) ;
\draw  [color={rgb, 255:red, 0; green, 0; blue, 255 }  ,draw opacity=1 ][line width=0.75]  (480.74,154.31) .. controls (484.53,147.91) and (486.8,141.51) .. (487.56,135.12) .. controls (488.32,141.51) and (490.59,147.91) .. (494.38,154.31) ;

\draw (46,82.4) node [anchor=north west][inner sep=0.75pt]  [color={rgb, 255:red, 0; green, 0; blue, 255 }  ,opacity=1 ]  {$\gamma _{i}$};
\draw (153,57.4) node [anchor=north west][inner sep=0.75pt]  [color={rgb, 255:red, 144; green, 19; blue, 254 }  ,opacity=1 ]  {$\Gamma $};
\draw (446,57.82) node [anchor=north west][inner sep=0.75pt]  [color={rgb, 255:red, 0; green, 0; blue, 255 }  ,opacity=1 ]  {$\widetilde{\gamma _{i}}$};
\draw (490,55.82) node [anchor=north west][inner sep=0.75pt]  [color={rgb, 255:red, 0; green, 0; blue, 255 }  ,opacity=1 ]  {$\widetilde{\gamma _{i}} '$};

\end{tikzpicture}

%% file: images/unknot_in_T2xI.tex
\begin{tikzpicture}[x=0.75pt,y=0.75pt,yscale=-1,xscale=1]

\draw  [fill={rgb, 255:red, 246; green, 246; blue, 246 }  ,fill opacity=1 ][line width=0.75]  (194.59,51) -- (390.79,51) -- (390.79,247.19) -- (194.59,247.19) -- cycle ;
\draw [color={rgb, 255:red, 144; green, 19; blue, 254 }  ,draw opacity=1 ][line width=1.5]    (194.59,149.1) -- (390.79,149.1) ;
\draw  [color={rgb, 255:red, 0; green, 0; blue, 255 }  ,draw opacity=1 ][line width=1.5]  (231.97,149.1) .. controls (231.97,115.56) and (259.15,88.37) .. (292.69,88.37) .. controls (326.23,88.37) and (353.41,115.56) .. (353.41,149.1) .. controls (353.41,182.63) and (326.23,209.82) .. (292.69,209.82) .. controls (259.15,209.82) and (231.97,182.63) .. (231.97,149.1) -- cycle ;
\draw  [line width=0.75]  (189,114.19) .. controls (192,109.46) and (193.8,104.73) .. (194.39,100) .. controls (195,104.73) and (196.79,109.46) .. (199.79,114.19) ;
\draw  [line width=0.75]  (385,114.19) .. controls (388,109.46) and (389.8,104.73) .. (390.39,100) .. controls (391,104.73) and (392.79,109.46) .. (395.79,114.19) ;

\draw  [color={rgb, 255:red, 144; green, 19; blue, 254 }  ,draw opacity=1 ][line width=0.75]  (277.3,46.2) .. controls (282.03,49.2) and (286.76,51) .. (291.49,51.6) .. controls (286.76,52.2) and (282.03,53.99) .. (277.3,56.99) ;
\draw  [color={rgb, 255:red, 144; green, 19; blue, 254 }  ,draw opacity=1 ][line width=0.75]  (277.3,242.2) .. controls (282.03,245.2) and (286.76,247) .. (291.49,247.6) .. controls (286.76,248.2) and (282.03,249.99) .. (277.3,252.99) ;

\draw  [color={rgb, 255:red, 144; green, 19; blue, 254 }  ,draw opacity=1 ][line width=0.75]  (289.3,46.2) .. controls (294.03,49.2) and (298.76,51) .. (303.49,51.6) .. controls (298.76,52.2) and (294.03,53.99) .. (289.3,56.99) ;
\draw  [color={rgb, 255:red, 144; green, 19; blue, 254 }  ,draw opacity=1 ][line width=0.75]  (289.3,242.2) .. controls (294.03,245.2) and (298.76,247) .. (303.49,247.6) .. controls (298.76,248.2) and (294.03,249.99) .. (289.3,252.99) ;

\draw [color={rgb, 255:red, 144; green, 19; blue, 254 }  ,draw opacity=1 ][line width=1.5]    (194.59,51) -- (390.79,51) ;
\draw [color={rgb, 255:red, 144; green, 19; blue, 254 }  ,draw opacity=1 ][line width=1.5]    (194.59,247.19) -- (390.79,247.19) ;

\draw (287,152.4) node [anchor=north west][inner sep=0.75pt]  [color={rgb, 255:red, 144; green, 19; blue, 254 }  ,opacity=1 ]  {$c$};
\draw (365,149.4) node [anchor=north west][inner sep=0.75pt]  [color={rgb, 255:red, 144; green, 19; blue, 254 }  ,opacity=1 ]  {$d$};
\draw (395.91,142.01) node [anchor=north west][inner sep=0.75pt]  [color={rgb, 255:red, 144; green, 19; blue, 254 }  ,opacity=1 ]  {$\Gamma $};
\draw (334.99,82.4) node [anchor=north west][inner sep=0.75pt]  [color={rgb, 255:red, 0; green, 0; blue, 255 }  ,opacity=1 ]  {$\Lambda $};
\draw (198.59,56.4) node [anchor=north west][inner sep=0.75pt]    {$\Sigma _{+}$};
\draw (197.59,226.4) node [anchor=north west][inner sep=0.75pt]    {$\Sigma _{-}$};

\end{tikzpicture}

%% file: images/torus_capping_surface.tex
\begin{tikzpicture}[x=0.75pt,y=0.75pt,yscale=-1,xscale=1]
\draw [fill={rgb, 255:red, 246; green, 246; blue, 246 }  ,fill opacity=1 ] [line width=0.75]  (194.59,51) -- (390.79,51) -- (390.79,247.19) -- (194.59,247.19) -- cycle ;
\draw  [draw opacity=0][fill={rgb, 255:red, 216; green, 216; blue, 216 }  ,fill opacity=1 ] (194.59,51) -- (390.79,51) -- (390.79,149.1) -- (194.59,149.1) -- cycle ;

\draw [color={rgb, 255:red, 144; green, 19; blue, 254 }  ,draw opacity=1 ][line width=1.5]    (194.59,149.1) -- (390.79,149.1) ;
\draw  [color={rgb, 255:red, 0; green, 0; blue, 255 }  ,draw opacity=1 ][line width=1.5]  (231.97,149.1) .. controls (231.97,115.56) and (259.15,88.37) .. (292.69,88.37) .. controls (326.23,88.37) and (353.41,115.56) .. (353.41,149.1) .. controls (353.41,182.63) and (326.23,209.82) .. (292.69,209.82) .. controls (259.15,209.82) and (231.97,182.63) .. (231.97,149.1) -- cycle ;
\draw  [line width=0.75]  (189,114.19) .. controls (192,109.46) and (193.8,104.73) .. (194.39,100) .. controls (195,104.73) and (196.79,109.46) .. (199.79,114.19) ;
\draw  [line width=0.75]  (385,114.19) .. controls (388,109.46) and (389.8,104.73) .. (390.39,100) .. controls (391,104.73) and (392.79,109.46) .. (395.79,114.19) ;
\draw  [color={rgb, 255:red, 144; green, 19; blue, 254 }  ,draw opacity=1 ][line width=0.75]  (277.3,46.2) .. controls (282.03,49.2) and (286.76,51) .. (291.49,51.6) .. controls (286.76,52.2) and (282.03,53.99) .. (277.3,56.99) ;
\draw  [color={rgb, 255:red, 144; green, 19; blue, 254 }  ,draw opacity=1 ][line width=0.75]  (277.3,242.2) .. controls (282.03,245.2) and (286.76,247) .. (291.49,247.6) .. controls (286.76,248.2) and (282.03,249.99) .. (277.3,252.99) ;

\draw  [color={rgb, 255:red, 144; green, 19; blue, 254 }  ,draw opacity=1 ][line width=0.75]  (289.3,46.2) .. controls (294.03,49.2) and (298.76,51) .. (303.49,51.6) .. controls (298.76,52.2) and (294.03,53.99) .. (289.3,56.99) ;
\draw  [color={rgb, 255:red, 144; green, 19; blue, 254 }  ,draw opacity=1 ][line width=0.75]  (289.3,242.2) .. controls (294.03,245.2) and (298.76,247) .. (303.49,247.6) .. controls (298.76,248.2) and (294.03,249.99) .. (289.3,252.99) ;

\draw [color={rgb, 255:red, 144; green, 19; blue, 254 }  ,draw opacity=1 ][line width=1.5]    (194.59,51) -- (390.79,51) ;
\draw [color={rgb, 255:red, 144; green, 19; blue, 254 }  ,draw opacity=1 ][line width=1.5]    (194.59,247.19) -- (390.79,247.19) ;
\draw  [draw opacity=0][fill={rgb, 255:red, 155; green, 155; blue, 155 }  ,fill opacity=1 ][line width=1.5]  (231.97,148.48) .. controls (231.97,148.48) and (231.97,148.48) .. (231.97,148.48) .. controls (231.97,148.48) and (231.97,148.48) .. (231.97,148.48) .. controls (231.97,115.28) and (259.15,88.37) .. (292.69,88.37) .. controls (326.23,88.37) and (353.41,115.28) .. (353.41,148.48) -- (292.69,148.48) -- cycle ; \draw  [color={rgb, 255:red, 255; green, 0; blue, 0 }  ,draw opacity=1 ][line width=1.5]  (231.97,148.48) .. controls (231.97,148.48) and (231.97,148.48) .. (231.97,148.48) .. controls (231.97,148.48) and (231.97,148.48) .. (231.97,148.48) .. controls (231.97,115.28) and (259.15,88.37) .. (292.69,88.37) .. controls (326.23,88.37) and (353.41,115.28) .. (353.41,148.48) ;  
\draw  [color={rgb, 255:red, 255; green, 0; blue, 0 }  ,draw opacity=1 ][line width=0.75]  (298.49,93.49) .. controls (293.76,90.49) and (289.03,88.69) .. (284.3,88.1) .. controls (289.03,87.49) and (293.76,85.7) .. (298.49,82.7) ;

\draw (287,149.4) node [anchor=north west][inner sep=0.75pt]  [color={rgb, 255:red, 144; green, 19; blue, 254 }  ,opacity=1 ]  {$c$};
\draw (365,149.4) node [anchor=north west][inner sep=0.75pt]  [color={rgb, 255:red, 144; green, 19; blue, 254 }  ,opacity=1 ]  {$d$};
\draw (395.91,142.01) node [anchor=north west][inner sep=0.75pt]  [color={rgb, 255:red, 144; green, 19; blue, 254 }  ,opacity=1 ]  {$\Gamma $};
\draw (346.99,94.4) node [anchor=north west][inner sep=0.75pt]  [color={rgb, 255:red, 255; green, 0; blue, 0 }  ,opacity=1 ]  {$\gamma _{c}$};
\draw (287.59,116.4) node [anchor=north west][inner sep=0.75pt]    {$S_{c}$};
\draw (222.59,70.4) node [anchor=north west][inner sep=0.75pt]    {$S_{d}$};

\draw  [line width=0.75]  (194.59,51) -- (390.79,51) -- (390.79,247.19) -- (194.59,247.19) -- cycle ;
\end{tikzpicture}

%% file: images/genus_2_octagon.tex
\begin{tikzpicture}[x=0.75pt,y=0.75pt,yscale=-1,xscale=1]
\draw [fill={rgb, 255:red, 246; green, 246; blue, 246 }  ,fill opacity=1 ] [line width=0.75]  (414.79,145.65) -- (385.38,216.64) -- (314.39,246.05) -- (243.4,216.64) -- (214,145.65) -- (243.4,74.67) -- (314.39,45.26) -- (385.38,74.67) -- cycle ;
\draw  [draw opacity=0][line width=1.5]  (338.28,55.41) .. controls (325.36,68.22) and (304.01,67.69) .. (290.52,54.21) -- (313.89,30.84) -- cycle ; \draw  [color={rgb, 255:red, 144; green, 19; blue, 254 }  ,draw opacity=1 ][line width=1.5]  (338.28,55.41) .. controls (325.36,68.22) and (304.01,67.69) .. (290.52,54.21) ;  
\draw  [draw opacity=0][line width=1.5]  (400.58,179.34) .. controls (356.31,161.59) and (335.58,109.44) .. (354.26,62.87) -- (434.41,95.02) -- cycle ; \draw  [color={rgb, 255:red, 144; green, 19; blue, 254 }  ,draw opacity=1 ][line width=1.5]  (400.58,179.34) .. controls (356.31,161.59) and (335.58,109.44) .. (354.26,62.87) ;  
\draw  [draw opacity=0][line width=1.5]  (239.32,207.81) .. controls (239.32,207.81) and (239.32,207.81) .. (239.32,207.81) .. controls (281,166.13) and (348.05,165.61) .. (389.08,206.64) -- (313.62,282.11) -- cycle ; \draw  [color={rgb, 255:red, 144; green, 19; blue, 254 }  ,draw opacity=1 ][line width=1.5]  (239.32,207.81) .. controls (239.32,207.81) and (239.32,207.81) .. (239.32,207.81) .. controls (281,166.13) and (348.05,165.61) .. (389.08,206.64) ;  
\draw [color={rgb, 255:red, 0; green, 0; blue, 255 }  ,draw opacity=1 ][line width=1.5]    (314.39,45.26) -- (314.39,246.05) ;
\draw  [line width=0.75]  (228.2,98.65) .. controls (228.97,104.04) and (228.72,109.01) .. (227.44,113.56) .. controls (229.75,109.44) and (233.08,105.74) .. (237.44,102.48) ;
\draw  [line width=0.75]  (393.39,184.69) .. controls (397.67,181.78) and (400.9,178.43) .. (403.08,174.65) .. controls (401.95,178.87) and (401.86,183.52) .. (402.83,188.6) ;
\draw  [line width=0.75]  (278.03,54.88) .. controls (274.86,59.2) and (271.27,62.5) .. (267.25,64.76) .. controls (271.69,63.52) and (276.56,63.31) .. (281.86,64.13) ;
\draw  [line width=0.75]  (342.4,228.66) .. controls (347.48,229.62) and (352.14,229.54) .. (356.35,228.41) .. controls (352.57,230.59) and (349.22,233.82) .. (346.31,238.1) ;
\draw  [line width=0.75]  (346.31,53.15) .. controls (349.22,57.43) and (352.57,60.65) .. (356.35,62.83) .. controls (352.14,61.71) and (347.49,61.62) .. (342.4,62.59) ;
\draw  [line width=0.75]  (282.85,227.88) .. controls (277.64,228.58) and (272.83,228.3) .. (268.43,227.05) .. controls (272.43,229.28) and (276.02,232.49) .. (279.21,236.67) ;
\draw  [line width=0.75]  (402.83,103.64) .. controls (401.87,108.72) and (401.95,113.38) .. (403.08,117.59) .. controls (400.9,113.81) and (397.67,110.46) .. (393.39,107.55) ;
\draw  [line width=0.75]  (235.39,183.07) .. controls (231.08,180.35) and (227.85,177.19) .. (225.72,173.57) .. controls (226.76,177.64) and (226.73,182.16) .. (225.59,187.13) ;
\draw  [draw opacity=0][line width=1.5]  (273.31,64.15) .. controls (291.61,108.19) and (270.65,158.77) .. (226.49,177.13) -- (193.34,97.38) -- cycle ; \draw  [color={rgb, 255:red, 144; green, 19; blue, 254 }  ,draw opacity=1 ][line width=1.5]  (273.31,64.15) .. controls (291.61,108.19) and (270.65,158.77) .. (226.49,177.13) ;

\draw (374,111.36) node [anchor=north west][inner sep=0.75pt]  [color={rgb, 255:red, 144; green, 19; blue, 254 }  ,opacity=1 ]  {$\Gamma $};
\draw (319,106.36) node [anchor=north west][inner sep=0.75pt]  [color={rgb, 255:red, 0; green, 0; blue, 255 }  ,opacity=1 ]  {$\Lambda $};
\draw (329.5,185.53) node [anchor=north west][inner sep=0.75pt]  [color={rgb, 255:red, 144; green, 19; blue, 254 }  ,opacity=1 ]  {$c$};
\draw (329.5,71.53) node [anchor=north west][inner sep=0.75pt]  [color={rgb, 255:red, 144; green, 19; blue, 254 }  ,opacity=1 ]  {$c$};
\draw (293,67.53) node [anchor=north west][inner sep=0.75pt]  [color={rgb, 255:red, 144; green, 19; blue, 254 }  ,opacity=1 ]  {$d$};
\draw (289,181.53) node [anchor=north west][inner sep=0.75pt]  [color={rgb, 255:red, 144; green, 19; blue, 254 }  ,opacity=1 ]  {$d$};
\draw (405,93.4) node [anchor=north west][inner sep=0.75pt]  [font=\footnotesize]  {$\alpha $};
\draw (350,238.4) node [anchor=north west][inner sep=0.75pt]  [font=\footnotesize]  {$\alpha $};
\draw (405,180.4) node [anchor=north west][inner sep=0.75pt]  [font=\footnotesize]  {$\beta $};
\draw (350,37.4) node [anchor=north west][inner sep=0.75pt]  [font=\footnotesize]  {$\beta $};
\draw (261,234.4) node [anchor=north west][inner sep=0.75pt]  [font=\footnotesize]  {$\delta $};
\draw (212,89.4) node [anchor=north west][inner sep=0.75pt]  [font=\footnotesize]  {$\delta $};
\draw (212,180.4) node [anchor=north west][inner sep=0.75pt]  [font=\footnotesize]  {$\gamma $};
\draw (261,37.4) node [anchor=north west][inner sep=0.75pt]  [font=\footnotesize]  {$\gamma $};
\draw (282.59,141.9) node [anchor=north west][inner sep=0.75pt]    {$\Sigma _{+}$};
\draw (283.09,205.9) node [anchor=north west][inner sep=0.75pt]    {$\Sigma _{-}$};

\end{tikzpicture}

%% file: images/octagon_capping_surface.tex
\begin{tikzpicture}[x=0.75pt,y=0.75pt,yscale=-1,xscale=1]
\draw  [fill={rgb, 255:red, 246; green, 246; blue, 246 }  ,fill opacity=1 ][line width=0.75]  (569.79,134.79) -- (540.38,205.78) -- (469.39,235.18) -- (398.4,205.78) -- (369,134.79) -- (398.4,63.8) -- (469.39,34.4) -- (540.38,63.8) -- cycle ;
\draw  [color={rgb, 255:red, 144; green, 19; blue, 254 }  ,draw opacity=1 ][line width=0.75]  (507.09,125.74) .. controls (507.59,120.6) and (507.09,115.97) .. (505.58,111.88) .. controls (508.09,115.44) and (511.61,118.49) .. (516.13,121) ;
\draw  [color={rgb, 255:red, 144; green, 19; blue, 254 }  ,draw opacity=1 ][line width=0.75]  (461.91,57.78) .. controls (458.84,53.61) and (455.37,50.52) .. (451.51,48.48) .. controls (455.76,49.45) and (460.42,49.36) .. (465.46,48.19) ;
\draw  [color={rgb, 255:red, 144; green, 19; blue, 254 }  ,draw opacity=1 ][line width=0.75]  (499.66,164.11) .. controls (503.09,167.98) and (506.83,170.74) .. (510.86,172.42) .. controls (506.54,171.84) and (501.92,172.36) .. (497,173.98) ;
\draw  [color={rgb, 255:red, 144; green, 19; blue, 254 }  ,draw opacity=1 ][line width=0.75]  (435.82,115.8) .. controls (431.53,118.69) and (428.28,122.02) .. (426.08,125.78) .. controls (427.23,121.57) and (427.35,116.92) .. (426.41,111.84) ;

\draw  [fill={rgb, 255:red, 246; green, 246; blue, 246 }  ,fill opacity=1 ] [line width=0.75]  (259.79,134.79) -- (230.38,205.78) -- (159.39,235.18) -- (88.4,205.78) -- (59,134.79) -- (88.4,63.8) -- (159.39,34.4) -- (230.38,63.8) -- cycle ;

\draw  [draw opacity=0][fill={rgb, 255:red, 155; green, 155; blue, 155 }  ,fill opacity=1 ] (447.38,44.55) .. controls (433.23,49.66) and (438.5,47.94) .. (428.31,52.29) .. controls (435.85,68.39) and (437.85,95.39) .. (431.85,110.39) .. controls (422.85,140.39) and (403.16,158.44) .. (382.49,166.27) .. controls (387.93,180.44) and (389.58,183.66) .. (395.81,196.78) .. controls (420.03,170.66) and (450.55,165.66) .. (471.04,164.4) .. controls (471.87,145.66) and (470.97,64.4) .. (470.97,54.4) .. controls (459.62,52.66) and (455.56,50.66) .. (447.38,44.55) -- cycle ;
\draw  [draw opacity=0][fill={rgb, 255:red, 155; green, 155; blue, 155 }  ,fill opacity=1 ] (494.28,44.55) .. controls (508.22,49.66) and (500.22,47.66) .. (510.26,52) .. controls (505.22,68.66) and (500.22,84.66) .. (506.04,109.4) .. controls (515.22,141.66) and (536.22,160.66) .. (556.58,168.48) .. controls (551.22,182.66) and (551.22,182.66) .. (545.08,195.78) .. controls (520.51,172.79) and (491.22,165.66) .. (471.04,164.4) .. controls (470.22,145.66) and (471.04,64.4) .. (471.04,54.4) .. controls (482.22,52.66) and (486.22,50.66) .. (494.28,44.55) -- cycle ;
\draw  [draw opacity=0][fill={rgb, 255:red, 155; green, 155; blue, 155 }  ,fill opacity=1 ] (136.38,44.55) .. controls (122.23,49.66) and (126.51,46.79) .. (118.31,52.29) .. controls (125.85,68.39) and (126.85,95.39) .. (120.85,110.39) .. controls (111.85,140.39) and (92.16,158.44) .. (71.49,166.27) .. controls (76.93,180.44) and (78.58,182.66) .. (84.81,195.78) .. controls (112.51,172.79) and (139.55,165.66) .. (160.04,164.4) .. controls (160.87,145.66) and (159.97,64.4) .. (159.97,54.4) .. controls (148.62,52.66) and (144.56,50.66) .. (136.38,44.55) -- cycle ;
\draw  [draw opacity=0][fill={rgb, 255:red, 216; green, 216; blue, 216 }  ,fill opacity=1 ] (183.28,44.55) .. controls (197.22,49.66) and (189.22,47.66) .. (199.26,52) .. controls (194.22,68.66) and (189.22,84.66) .. (195.04,109.4) .. controls (204.22,141.66) and (225.22,160.66) .. (245.58,168.48) .. controls (240.22,182.66) and (240.22,182.66) .. (234.08,195.78) .. controls (210.22,169.66) and (180.22,165.66) .. (160.04,164.4) .. controls (159.22,145.66) and (160.04,64.4) .. (160.04,54.4) .. controls (171.22,52.66) and (175.22,50.66) .. (183.28,44.55) -- cycle ;
\draw  [draw opacity=0][line width=1.5]  (245.58,168.48) .. controls (245.58,168.48) and (245.58,168.48) .. (245.58,168.48) .. controls (201.31,150.72) and (180.58,98.57) .. (199.26,52) -- (279.41,84.16) -- cycle ; \draw  [color={rgb, 255:red, 144; green, 19; blue, 254 }  ,draw opacity=1 ][line width=1.5]  (245.58,168.48) .. controls (245.58,168.48) and (245.58,168.48) .. (245.58,168.48) .. controls (201.31,150.72) and (180.58,98.57) .. (199.26,52) ;  
\draw [color={rgb, 255:red, 0; green, 0; blue, 255 }  ,draw opacity=1 ][line width=1.5]    (159.39,34.4) -- (159.39,235.18) ;
\draw  [line width=0.75]  (73.2,87.79) .. controls (73.98,93.18) and (73.72,98.15) .. (72.44,102.69) .. controls (74.74,98.57) and (78.08,94.88) .. (82.44,91.62) ;
\draw  [line width=0.75]  (238.39,173.83) .. controls (242.67,170.92) and (245.9,167.57) .. (248.08,163.79) .. controls (246.94,168) and (246.87,172.66) .. (247.83,177.74) ;
\draw  [line width=0.75]  (123.03,44.02) .. controls (119.86,48.34) and (116.27,51.64) .. (112.25,53.9) .. controls (116.69,52.66) and (121.57,52.45) .. (126.86,53.26) ;
\draw  [line width=0.75]  (187.4,217.79) .. controls (192.49,218.76) and (197.14,218.67) .. (201.35,217.55) .. controls (197.57,219.73) and (194.22,222.95) .. (191.31,227.23) ;
\draw  [line width=0.75]  (191.31,41.28) .. controls (194.22,45.57) and (197.57,48.79) .. (201.35,50.97) .. controls (197.14,49.85) and (192.49,49.76) .. (187.4,50.72) ;
\draw  [line width=0.75]  (127.85,217.01) .. controls (122.64,217.72) and (117.83,217.44) .. (113.43,216.19) .. controls (117.42,218.42) and (121.02,221.62) .. (124.21,225.8) ;
\draw  [line width=0.75]  (247.83,92.78) .. controls (246.87,97.86) and (246.95,102.51) .. (248.08,106.73) .. controls (245.9,102.95) and (242.67,99.6) .. (238.39,96.69) ;
\draw  [line width=0.75]  (80.39,172.21) .. controls (76.07,169.49) and (72.86,166.32) .. (70.72,162.71) .. controls (71.77,166.77) and (71.72,171.29) .. (70.59,176.27) ;
\draw  [draw opacity=0][line width=1.5]  (118.31,53.29) .. controls (136.61,97.33) and (115.65,147.91) .. (71.49,166.27) -- (38.34,86.52) -- cycle ; \draw  [color={rgb, 255:red, 144; green, 19; blue, 254 }  ,draw opacity=1 ][line width=1.5]  (118.31,53.29) .. controls (136.61,97.33) and (115.65,147.91) .. (71.49,166.27) ;  
\draw [color={rgb, 255:red, 255; green, 0; blue, 0 }  ,draw opacity=1 ][line width=1.5]    (159.5,52.13) -- (159.5,166.13) ;
\draw  [draw opacity=0][line width=1.5]  (84.32,196.95) .. controls (84.32,196.95) and (84.32,196.95) .. (84.32,196.95) .. controls (126,155.27) and (193.05,154.75) .. (234.08,195.78) -- (158.62,271.24) -- cycle ; \draw  [color={rgb, 255:red, 144; green, 19; blue, 254 }  ,draw opacity=1 ][line width=1.5]  (84.32,196.95) .. controls (84.32,196.95) and (84.32,196.95) .. (84.32,196.95) .. controls (126,155.27) and (193.05,154.75) .. (234.08,195.78) ;  
\draw  [draw opacity=0][line width=1.5]  (183.28,44.55) .. controls (170.36,57.36) and (149.01,56.83) .. (135.52,43.34) -- (158.89,19.97) -- cycle ; \draw  [color={rgb, 255:red, 144; green, 19; blue, 254 }  ,draw opacity=1 ][line width=1.5]  (183.28,44.55) .. controls (170.36,57.36) and (149.01,56.83) .. (135.52,43.34) ;  

\draw  [color={rgb, 255:red, 144; green, 19; blue, 254 }  ,draw opacity=1 ][line width=0.75]  (197.09,125.74) .. controls (197.59,120.6) and (197.09,115.97) .. (195.58,111.88) .. controls (198.09,115.44) and (201.61,118.49) .. (206.13,121) ;
\draw  [color={rgb, 255:red, 144; green, 19; blue, 254 }  ,draw opacity=1 ][line width=0.75]  (151.91,57.78) .. controls (148.84,53.61) and (145.37,50.52) .. (141.51,48.48) .. controls (145.76,49.45) and (150.42,49.36) .. (155.46,48.19) ;
\draw  [color={rgb, 255:red, 144; green, 19; blue, 254 }  ,draw opacity=1 ][line width=0.75]  (189.66,164.11) .. controls (193.09,167.98) and (196.83,170.74) .. (200.86,172.42) .. controls (196.54,171.84) and (191.92,172.36) .. (187,173.98) ;
\draw  [color={rgb, 255:red, 144; green, 19; blue, 254 }  ,draw opacity=1 ][line width=0.75]  (125.82,115.8) .. controls (121.53,118.69) and (118.28,122.02) .. (116.08,125.78) .. controls (117.23,121.57) and (117.35,116.92) .. (116.41,111.84) ;
\draw  [draw opacity=0][line width=1.5]  (555.58,168.48) .. controls (555.58,168.48) and (555.58,168.48) .. (555.58,168.48) .. controls (511.31,150.72) and (490.58,98.57) .. (509.26,52) -- (589.41,84.16) -- cycle ; \draw  [color={rgb, 255:red, 144; green, 19; blue, 254 }  ,draw opacity=1 ][line width=1.5]  (555.58,168.48) .. controls (555.58,168.48) and (555.58,168.48) .. (555.58,168.48) .. controls (511.31,150.72) and (490.58,98.57) .. (509.26,52) ;  
\draw [color={rgb, 255:red, 0; green, 0; blue, 255 }  ,draw opacity=1 ][line width=1.5]    (469.39,34.4) -- (469.39,235.18) ;
\draw  [line width=0.75]  (383.2,87.79) .. controls (383.98,93.18) and (383.72,98.15) .. (382.44,102.69) .. controls (384.74,98.57) and (388.08,94.88) .. (392.44,91.62) ;
\draw  [line width=0.75]  (548.39,173.83) .. controls (552.67,170.92) and (555.9,167.57) .. (558.08,163.79) .. controls (556.94,168) and (556.87,172.66) .. (557.83,177.74) ;
\draw  [line width=0.75]  (433.03,44.02) .. controls (429.86,48.34) and (426.27,51.64) .. (422.25,53.9) .. controls (426.69,52.66) and (431.57,52.45) .. (436.86,53.26) ;
\draw  [line width=0.75]  (497.4,217.79) .. controls (502.49,218.76) and (507.14,218.67) .. (511.35,217.55) .. controls (507.57,219.73) and (504.22,222.95) .. (501.31,227.23) ;
\draw  [line width=0.75]  (501.31,41.28) .. controls (504.22,45.57) and (507.57,48.79) .. (511.35,50.97) .. controls (507.14,49.85) and (502.49,49.76) .. (497.4,50.72) ;
\draw  [line width=0.75]  (437.85,217.01) .. controls (432.64,217.72) and (427.83,217.44) .. (423.43,216.19) .. controls (427.42,218.42) and (431.02,221.62) .. (434.21,225.8) ;
\draw  [line width=0.75]  (557.83,92.78) .. controls (556.87,97.86) and (556.95,102.51) .. (558.08,106.73) .. controls (555.9,102.95) and (552.67,99.6) .. (548.39,96.69) ;
\draw  [line width=0.75]  (390.39,172.21) .. controls (386.07,169.49) and (382.86,166.32) .. (380.72,162.71) .. controls (381.77,166.77) and (381.72,171.29) .. (380.59,176.27) ;
\draw  [draw opacity=0][line width=1.5]  (428.31,53.29) .. controls (446.61,97.33) and (425.65,147.91) .. (381.49,166.27) -- (348.34,86.52) -- cycle ; \draw  [color={rgb, 255:red, 144; green, 19; blue, 254 }  ,draw opacity=1 ][line width=1.5]  (428.31,53.29) .. controls (446.61,97.33) and (425.65,147.91) .. (381.49,166.27) ;  
\draw  [draw opacity=0][line width=1.5]  (394.32,196.95) .. controls (394.32,196.95) and (394.32,196.95) .. (394.32,196.95) .. controls (436,155.27) and (503.05,154.75) .. (544.08,195.78) -- (468.62,271.24) -- cycle ; \draw  [color={rgb, 255:red, 144; green, 19; blue, 254 }  ,draw opacity=1 ][line width=1.5]  (394.32,196.95) .. controls (394.32,196.95) and (394.32,196.95) .. (394.32,196.95) .. controls (436,155.27) and (503.05,154.75) .. (544.08,195.78) ;  
\draw  [draw opacity=0][line width=1.5]  (493.28,44.55) .. controls (480.36,57.36) and (459.01,56.83) .. (445.52,43.34) -- (468.89,19.97) -- cycle ; \draw  [color={rgb, 255:red, 144; green, 19; blue, 254 }  ,draw opacity=1 ][line width=1.5]  (493.28,44.55) .. controls (480.36,57.36) and (459.01,56.83) .. (445.52,43.34) ;

\draw (484.5,174.67) node [anchor=north west][inner sep=0.75pt]  [color={rgb, 255:red, 144; green, 19; blue, 254 }  ,opacity=1 ]  {$c$};
\draw (484.5,60.67) node [anchor=north west][inner sep=0.75pt]  [color={rgb, 255:red, 144; green, 19; blue, 254 }  ,opacity=1 ]  {$c$};
\draw (448,56.67) node [anchor=north west][inner sep=0.75pt]  [color={rgb, 255:red, 144; green, 19; blue, 254 }  ,opacity=1 ]  {$d$};
\draw (444,170.67) node [anchor=north west][inner sep=0.75pt]  [color={rgb, 255:red, 144; green, 19; blue, 254 }  ,opacity=1 ]  {$d$};
\draw (560,82.54) node [anchor=north west][inner sep=0.75pt]  [font=\footnotesize]  {$\alpha $};
\draw (505,227.54) node [anchor=north west][inner sep=0.75pt]  [font=\footnotesize]  {$\alpha $};
\draw (560,169.54) node [anchor=north west][inner sep=0.75pt]  [font=\footnotesize]  {$\beta $};
\draw (505,26.54) node [anchor=north west][inner sep=0.75pt]  [font=\footnotesize]  {$\beta $};
\draw (416,223.54) node [anchor=north west][inner sep=0.75pt]  [font=\footnotesize]  {$\delta $};
\draw (367,78.54) node [anchor=north west][inner sep=0.75pt]  [font=\footnotesize]  {$\delta $};
\draw (367,169.54) node [anchor=north west][inner sep=0.75pt]  [font=\footnotesize]  {$\gamma $};
\draw (416,26.54) node [anchor=north west][inner sep=0.75pt]  [font=\footnotesize]  {$\gamma $};
\draw (181,141.4) node [anchor=north west][inner sep=0.75pt]    {$S_{c}$};
\draw (124,141.4) node [anchor=north west][inner sep=0.75pt]    {$S_{d}$};
\draw (481,141.4) node [anchor=north west][inner sep=0.75pt]    {$S_{c*d}$};
\draw (150,256.74) node [anchor=north west][inner sep=0.75pt]   [align=left] {(a)};
\draw (460,256.74) node [anchor=north west][inner sep=0.75pt]   [align=left] {(b)};
\draw (163,109.49) node [anchor=north west][inner sep=0.75pt]  [color={rgb, 255:red, 255; green, 0; blue, 0 }  ,opacity=1 ]  {$\gamma _{c}$};
\draw (174.5,174.67) node [anchor=north west][inner sep=0.75pt]  [color={rgb, 255:red, 144; green, 19; blue, 254 }  ,opacity=1 ]  {$c$};
\draw (174.5,60.67) node [anchor=north west][inner sep=0.75pt]  [color={rgb, 255:red, 144; green, 19; blue, 254 }  ,opacity=1 ]  {$c$};
\draw (138,56.67) node [anchor=north west][inner sep=0.75pt]  [color={rgb, 255:red, 144; green, 19; blue, 254 }  ,opacity=1 ]  {$d$};
\draw (134,170.67) node [anchor=north west][inner sep=0.75pt]  [color={rgb, 255:red, 144; green, 19; blue, 254 }  ,opacity=1 ]  {$d$};
\draw (250,82.54) node [anchor=north west][inner sep=0.75pt]  [font=\footnotesize]  {$\alpha $};
\draw (195,227.54) node [anchor=north west][inner sep=0.75pt]  [font=\footnotesize]  {$\alpha $};
\draw (250,169.54) node [anchor=north west][inner sep=0.75pt]  [font=\footnotesize]  {$\beta $};
\draw (195,26.54) node [anchor=north west][inner sep=0.75pt]  [font=\footnotesize]  {$\beta $};
\draw (106,223.54) node [anchor=north west][inner sep=0.75pt]  [font=\footnotesize]  {$\delta $};
\draw (57,78.54) node [anchor=north west][inner sep=0.75pt]  [font=\footnotesize]  {$\delta $};
\draw (57,169.54) node [anchor=north west][inner sep=0.75pt]  [font=\footnotesize]  {$\gamma $};
\draw (106,26.54) node [anchor=north west][inner sep=0.75pt]  [font=\footnotesize]  {$\gamma $};
\draw [line width=0.75]  (569.79,134.79) -- (540.38,205.78) -- (469.39,235.18) -- (398.4,205.78) -- (369,134.79) -- (398.4,63.8) -- (469.39,34.4) -- (540.38,63.8) -- cycle ;

\draw [line width=0.75]  (259.79,134.79) -- (230.38,205.78) -- (159.39,235.18) -- (88.4,205.78) -- (59,134.79) -- (88.4,63.8) -- (159.39,34.4) -- (230.38,63.8) -- cycle ;

\end{tikzpicture}

%% file: images/trefoil_Seifert_resolution_1.tex
\begin{tikzpicture}[x=0.75pt,y=0.75pt,yscale=-1,xscale=1]

\draw  [fill={rgb, 255:red, 246; green, 246; blue, 246 }  ,fill opacity=1 ][line width=1.5]  (218.59,47) -- (414.79,47) -- (414.79,243.19) -- (218.59,243.19) -- cycle ;
\draw [color={rgb, 255:red, 144; green, 19; blue, 254 }  ,draw opacity=1 ][line width=1.5]    (218.59,145.1) -- (414.79,145.1) ;
\draw  [line width=0.75]  (213,110.19) .. controls (216,105.46) and (217.8,100.73) .. (218.39,96) .. controls (219,100.73) and (220.79,105.46) .. (223.79,110.19) ;
\draw  [line width=0.75]  (409,110.19) .. controls (412,105.46) and (413.8,100.73) .. (414.39,96) .. controls (415,100.73) and (416.79,105.46) .. (419.79,110.19) ;
\draw  [color={rgb, 255:red, 144; green, 19; blue, 254 }  ,draw opacity=1 ][line width=0.75]  (301.3,42.2) .. controls (306.03,45.2) and (310.76,47) .. (315.49,47.6) .. controls (310.76,48.2) and (306.03,49.99) .. (301.3,52.99) ;
\draw  [color={rgb, 255:red, 144; green, 19; blue, 254 }  ,draw opacity=1 ][line width=0.75]  (301.3,238.2) .. controls (306.03,241.2) and (310.76,243) .. (315.49,243.6) .. controls (310.76,244.2) and (306.03,245.99) .. (301.3,248.99) ;

\draw  [color={rgb, 255:red, 144; green, 19; blue, 254 }  ,draw opacity=1 ][line width=0.75]  (313.3,42.2) .. controls (318.03,45.2) and (322.76,47) .. (327.49,47.6) .. controls (322.76,48.2) and (318.03,49.99) .. (313.3,52.99) ;
\draw  [color={rgb, 255:red, 144; green, 19; blue, 254 }  ,draw opacity=1 ][line width=0.75]  (313.3,238.2) .. controls (318.03,241.2) and (322.76,243) .. (327.49,243.6) .. controls (322.76,244.2) and (318.03,245.99) .. (313.3,248.99) ;

\draw [color={rgb, 255:red, 144; green, 19; blue, 254 }  ,draw opacity=1 ][line width=1.5]    (218.59,47) -- (414.79,47) ;
\draw [color={rgb, 255:red, 144; green, 19; blue, 254 }  ,draw opacity=1 ][line width=1.5]    (218.59,243.19) -- (414.79,243.19) ;
\draw [color={rgb, 255:red, 0; green, 0; blue, 255 }  ,draw opacity=1 ][line width=1.5]    (271.21,162.52) -- (294.66,190.36) ;
\draw [color={rgb, 255:red, 0; green, 0; blue, 255 }  ,draw opacity=1 ][line width=1.5]    (280.48,179.69) -- (271.27,190.64) ;
\draw  [draw opacity=0][line width=1.5]  (293.74,163.69) .. controls (295.31,161.46) and (297.76,160.03) .. (300.54,160.03) .. controls (303.05,160.03) and (305.32,161.21) .. (306.91,163.09) -- (300.58,169.45) -- cycle ; \draw  [color={rgb, 255:red, 0; green, 0; blue, 255 }  ,draw opacity=1 ][line width=1.5]  (293.74,163.69) .. controls (295.31,161.46) and (297.76,160.03) .. (300.54,160.03) .. controls (303.05,160.03) and (305.32,161.21) .. (306.91,163.09) ;  
\draw [color={rgb, 255:red, 0; green, 0; blue, 255 }  ,draw opacity=1 ][line width=1.5]    (294.66,162.52) -- (285.45,173.47) ;
\draw  [draw opacity=0][line width=1.5]  (306.96,189.79) .. controls (305.39,191.67) and (303.13,192.85) .. (300.62,192.85) .. controls (298.02,192.85) and (295.68,191.59) .. (294.09,189.61) -- (300.57,183.44) -- cycle ; \draw  [color={rgb, 255:red, 0; green, 0; blue, 255 }  ,draw opacity=1 ][line width=1.5]  (306.96,189.79) .. controls (305.39,191.67) and (303.13,192.85) .. (300.62,192.85) .. controls (298.02,192.85) and (295.68,191.59) .. (294.09,189.61) ;  
\draw [color={rgb, 255:red, 0; green, 0; blue, 255 }  ,draw opacity=1 ][line width=1.5]    (306.38,162.52) -- (329.82,190.36) ;
\draw [color={rgb, 255:red, 0; green, 0; blue, 255 }  ,draw opacity=1 ][line width=1.5]    (315.64,179.42) -- (306.44,190.36) ;
\draw [color={rgb, 255:red, 0; green, 0; blue, 255 }  ,draw opacity=1 ][line width=1.5]    (329.82,162.52) -- (320.62,173.47) ;
\draw  [draw opacity=0][line width=1.5]  (328.91,163.69) .. controls (330.47,161.46) and (332.93,160.03) .. (335.7,160.03) .. controls (338.22,160.03) and (340.49,161.21) .. (342.07,163.09) -- (335.75,169.45) -- cycle ; \draw  [color={rgb, 255:red, 0; green, 0; blue, 255 }  ,draw opacity=1 ][line width=1.5]  (328.91,163.69) .. controls (330.47,161.46) and (332.93,160.03) .. (335.7,160.03) .. controls (338.22,160.03) and (340.49,161.21) .. (342.07,163.09) ;  
\draw  [draw opacity=0][line width=1.5]  (342.12,189.79) .. controls (340.55,191.67) and (338.3,192.85) .. (335.78,192.85) .. controls (333.18,192.85) and (330.85,191.59) .. (329.26,189.61) -- (335.73,183.44) -- cycle ; \draw  [color={rgb, 255:red, 0; green, 0; blue, 255 }  ,draw opacity=1 ][line width=1.5]  (342.12,189.79) .. controls (340.55,191.67) and (338.3,192.85) .. (335.78,192.85) .. controls (333.18,192.85) and (330.85,191.59) .. (329.26,189.61) ;  
\draw [color={rgb, 255:red, 0; green, 0; blue, 255 }  ,draw opacity=1 ][line width=1.5]    (341.54,162.52) -- (364.99,190.36) ;
\draw [color={rgb, 255:red, 0; green, 0; blue, 255 }  ,draw opacity=1 ][line width=1.5]    (350.8,179.42) -- (341.6,190.36) ;
\draw [color={rgb, 255:red, 0; green, 0; blue, 255 }  ,draw opacity=1 ][line width=1.5]    (364.99,162.52) -- (355.78,173.47) ;

\draw [color={rgb, 255:red, 0; green, 0; blue, 255 }  ,draw opacity=1 ][line width=1.5]    (271.21,162.52) .. controls (260.9,151.51) and (259.4,130.54) .. (271.21,119.77) ;
\draw [color={rgb, 255:red, 0; green, 0; blue, 255 }  ,draw opacity=1 ][line width=1.5]    (364.99,162.52) .. controls (375.87,147.32) and (375.12,130.54) .. (364.99,119.77) ;
\draw [color={rgb, 255:red, 0; green, 0; blue, 255 }  ,draw opacity=1 ][line width=1.5]    (364.99,190.36) .. controls (374.93,203.52) and (401.93,161.58) .. (383.74,119.77) ;
\draw [color={rgb, 255:red, 0; green, 0; blue, 255 }  ,draw opacity=1 ][line width=1.5]    (271.27,190.64) .. controls (261.28,206.87) and (233.52,161.16) .. (252.46,119.77) ;
\draw [color={rgb, 255:red, 0; green, 0; blue, 255 }  ,draw opacity=1 ][line width=1.5]    (271.21,119.77) .. controls (289.41,98.67) and (346.8,99.09) .. (364.99,119.77) ;
\draw [color={rgb, 255:red, 0; green, 0; blue, 255 }  ,draw opacity=1 ][line width=1.5]    (252.46,119.77) .. controls (270.65,78.11) and (365.18,78.11) .. (383.74,119.77) ;

\draw (313,126.4) node [anchor=north west][inner sep=0.75pt]  [color={rgb, 255:red, 144; green, 19; blue, 254 }  ,opacity=1 ]  {$c$};
\draw (419.91,138.01) node [anchor=north west][inner sep=0.75pt]  [color={rgb, 255:red, 144; green, 19; blue, 254 }  ,opacity=1 ]  {$\Gamma $};
\draw (368,76.4) node [anchor=north west][inner sep=0.75pt]  [color={rgb, 255:red, 0; green, 0; blue, 255 }  ,opacity=1 ]  {$\Lambda $};
\draw (222.59,52.4) node [anchor=north west][inner sep=0.75pt]    {$\Sigma _{+}$};
\draw (221.59,222.4) node [anchor=north west][inner sep=0.75pt]    {$\Sigma _{-}$};

\end{tikzpicture}

%% file: images/trefoil_Seifert_resolution_2.tex
\begin{tikzpicture}[x=0.75pt,y=0.75pt,yscale=-1,xscale=1]

\draw  [fill={rgb, 255:red, 246; green, 246; blue, 246 }  ,fill opacity=1 ][line width=0.75]  (371.69,51) -- (567.88,51) -- (567.88,247.19) -- (371.69,247.19) -- cycle ;
\draw  [draw opacity=0][fill={rgb, 255:red, 155; green, 155; blue, 155 }  ,fill opacity=1 ] (470.09,184.23) .. controls (467.09,188.23) and (459.01,195.32) .. (454.01,197.32) .. controls (448.01,195.32) and (440.09,186.23) .. (436.09,184.23) .. controls (432.09,183.23) and (423.82,202.18) .. (416.01,197.32) .. controls (402.01,185.32) and (397.49,170.47) .. (399.01,148.98) .. controls (443.01,147.32) and (501.01,147.22) .. (542.01,149.98) .. controls (541.99,173.48) and (537.88,188.63) .. (524.01,197.32) .. controls (517.01,194.32) and (507.09,184.23) .. (506.09,184.23) .. controls (505.09,184.23) and (492.55,197.49) .. (488.01,198.32) .. controls (482.01,194.32) and (475.09,185.23) .. (470.09,184.23) -- cycle ;
\draw  [draw opacity=0][fill={rgb, 255:red, 74; green, 74; blue, 74 }  ,fill opacity=1 ] (471.09,178.23) .. controls (466.09,176.23) and (457.78,162.45) .. (454.01,163.32) .. controls (449.01,163.32) and (442.09,173.23) .. (436.09,177.23) .. controls (429.09,169.23) and (415.87,159.12) .. (417.02,149.72) .. controls (450.25,149) and (494.05,148.96) .. (525.01,150.16) .. controls (525,160.44) and (513.09,172.23) .. (507.09,177.23) .. controls (501.09,176.23) and (497.01,167.32) .. (491.01,163.32) .. controls (483.01,163.32) and (477.09,172.23) .. (471.09,178.23) -- cycle ;
\draw  [color={rgb, 255:red, 211; green, 211; blue, 211 }  ,draw opacity=1 ][fill={rgb, 255:red, 216; green, 216; blue, 216 }  ,fill opacity=1 ] (467.01,91.98) .. controls (495.83,89.22) and (545.01,106.98) .. (542.01,149.98) .. controls (499.01,148.98) and (438.01,147.98) .. (399.01,148.98) .. controls (399.01,101.98) and (444.01,91.98) .. (467.01,91.98) -- cycle ;
\draw [color={rgb, 255:red, 144; green, 19; blue, 254 }  ,draw opacity=1 ][line width=1.5]    (371.69,149.1) -- (567.88,149.1) ;
\draw  [line width=0.75]  (366.09,114.19) .. controls (369.09,109.46) and (370.89,104.73) .. (371.49,100) .. controls (372.09,104.73) and (373.88,109.46) .. (376.88,114.19) ;
\draw  [line width=0.75]  (562.09,114.19) .. controls (565.09,109.46) and (566.89,104.73) .. (567.49,100) .. controls (568.09,104.73) and (569.88,109.46) .. (572.88,114.19) ;

\draw  [color={rgb, 255:red, 144; green, 19; blue, 254 }  ,draw opacity=1 ][line width=0.75]  (454.39,46.2) .. controls (459.12,49.2) and (463.85,51) .. (468.58,51.6) .. controls (463.85,52.2) and (459.12,53.99) .. (454.39,56.99) ;
\draw  [color={rgb, 255:red, 144; green, 19; blue, 254 }  ,draw opacity=1 ][line width=0.75]  (454.39,242.2) .. controls (459.12,245.2) and (463.85,247) .. (468.58,247.6) .. controls (463.85,248.2) and (459.12,249.99) .. (454.39,252.99) ;

\draw  [color={rgb, 255:red, 144; green, 19; blue, 254 }  ,draw opacity=1 ][line width=0.75]  (466.39,46.2) .. controls (471.12,49.2) and (475.85,51) .. (480.58,51.6) .. controls (475.85,52.2) and (471.12,53.99) .. (466.39,56.99) ;
\draw  [color={rgb, 255:red, 144; green, 19; blue, 254 }  ,draw opacity=1 ][line width=0.75]  (466.39,242.2) .. controls (471.12,245.2) and (475.85,247) .. (480.58,247.6) .. controls (475.85,248.2) and (471.12,249.99) .. (466.39,252.99) ;

\draw [color={rgb, 255:red, 144; green, 19; blue, 254 }  ,draw opacity=1 ][line width=1.5]    (371.69,51) -- (567.88,51) ;
\draw [color={rgb, 255:red, 144; green, 19; blue, 254 }  ,draw opacity=1 ][line width=1.5]    (371.69,247.19) -- (567.88,247.19) ;
\draw  [draw opacity=0][line width=1.5]  (446.84,167.69) .. controls (448.4,165.46) and (450.86,164.03) .. (453.63,164.03) .. controls (456.15,164.03) and (458.42,165.21) .. (460,167.09) -- (453.68,173.45) -- cycle ; \draw  [color={rgb, 255:red, 255; green, 0; blue, 0 }  ,draw opacity=1 ][line width=1.5]  (446.84,167.69) .. controls (448.4,165.46) and (450.86,164.03) .. (453.63,164.03) .. controls (456.15,164.03) and (458.42,165.21) .. (460,167.09) ;  
\draw  [draw opacity=0][line width=1.5]  (460.05,193.79) .. controls (458.48,195.67) and (456.23,196.85) .. (453.71,196.85) .. controls (451.11,196.85) and (448.78,195.59) .. (447.19,193.61) -- (453.66,187.44) -- cycle ; \draw  [color={rgb, 255:red, 255; green, 0; blue, 0 }  ,draw opacity=1 ][line width=1.5]  (460.05,193.79) .. controls (458.48,195.67) and (456.23,196.85) .. (453.71,196.85) .. controls (451.11,196.85) and (448.78,195.59) .. (447.19,193.61) ;  
\draw  [draw opacity=0][line width=1.5]  (482,167.69) .. controls (483.56,165.46) and (486.02,164.03) .. (488.8,164.03) .. controls (491.31,164.03) and (493.58,165.21) .. (495.17,167.09) -- (488.84,173.45) -- cycle ; \draw  [color={rgb, 255:red, 255; green, 0; blue, 0 }  ,draw opacity=1 ][line width=1.5]  (482,167.69) .. controls (483.56,165.46) and (486.02,164.03) .. (488.8,164.03) .. controls (491.31,164.03) and (493.58,165.21) .. (495.17,167.09) ;  
\draw  [draw opacity=0][line width=1.5]  (494.7,194.36) .. controls (493.13,196.24) and (490.87,197.42) .. (488.36,197.42) .. controls (485.76,197.42) and (483.42,196.16) .. (481.83,194.18) -- (488.31,188.01) -- cycle ; \draw  [color={rgb, 255:red, 255; green, 0; blue, 0 }  ,draw opacity=1 ][line width=1.5]  (494.7,194.36) .. controls (493.13,196.24) and (490.87,197.42) .. (488.36,197.42) .. controls (485.76,197.42) and (483.42,196.16) .. (481.83,194.18) ;  
\draw [color={rgb, 255:red, 255; green, 0; blue, 0 }  ,draw opacity=1 ][line width=1.5]    (424.31,166.52) .. controls (414,155.51) and (417.01,149.98) .. (417.01,149.98) ;
\draw [color={rgb, 255:red, 255; green, 0; blue, 0 }  ,draw opacity=1 ][line width=1.5]    (518.08,166.52) .. controls (528.96,151.32) and (524.01,148.98) .. (524.01,148.98) ;
\draw [color={rgb, 255:red, 255; green, 0; blue, 0 }  ,draw opacity=1 ][line width=1.5]    (518.08,194.36) .. controls (528.02,207.52) and (555.03,165.58) .. (536.84,123.77) ;
\draw [color={rgb, 255:red, 255; green, 0; blue, 0 }  ,draw opacity=1 ][line width=1.5]    (424.37,194.64) .. controls (414.37,210.87) and (386.61,165.16) .. (405.55,123.77) ;
\draw [color={rgb, 255:red, 255; green, 0; blue, 0 }  ,draw opacity=1 ][line width=1.5]    (405.55,123.77) .. controls (417.52,96.37) and (462.53,86.99) .. (497.21,95.64) .. controls (515.24,100.14) and (530.48,109.52) .. (536.84,123.77) ;
\draw  [color={rgb, 255:red, 255; green, 0; blue, 0 }  ,draw opacity=1 ][line width=0.75]  (421.93,171.89) .. controls (422.07,167.65) and (421.35,164.17) .. (419.75,161.43) .. controls (422.22,163.42) and (425.55,164.68) .. (429.75,165.2) ;
\draw  [color={rgb, 255:red, 255; green, 0; blue, 0 }  ,draw opacity=1 ][line width=0.75]  (397.31,176) .. controls (399.09,172.16) and (399.78,168.67) .. (399.38,165.52) .. controls (400.87,168.32) and (403.45,170.78) .. (407.12,172.89) ;
\draw  [color={rgb, 255:red, 255; green, 0; blue, 0 }  ,draw opacity=1 ][line width=0.75]  (467.12,87.84) .. controls (470.24,90.7) and (473.36,92.41) .. (476.48,92.98) .. controls (473.36,93.56) and (470.24,95.27) .. (467.12,98.13) ;
\draw  [fill={rgb, 255:red, 246; green, 246; blue, 246 }  ,fill opacity=1 ][line width=0.75]  (103.59,53) -- (299.79,53) -- (299.79,249.19) -- (103.59,249.19) -- cycle ;
\draw [color={rgb, 255:red, 144; green, 19; blue, 254 }  ,draw opacity=1 ][line width=1.5]    (103.59,151.1) -- (299.79,151.1) ;
\draw  [line width=0.75]  (98,116.19) .. controls (101,111.46) and (102.8,106.73) .. (103.39,102) .. controls (104,106.73) and (105.79,111.46) .. (108.79,116.19) ;
\draw  [line width=0.75]  (294,116.19) .. controls (297,111.46) and (298.8,106.73) .. (299.39,102) .. controls (300,106.73) and (301.79,111.46) .. (304.79,116.19) ;
\draw  [color={rgb, 255:red, 144; green, 19; blue, 254 }  ,draw opacity=1 ][line width=0.75]  (186.3,48.2) .. controls (191.03,51.2) and (195.76,53) .. (200.49,53.6) .. controls (195.76,54.2) and (191.03,55.99) .. (186.3,58.99) ;
\draw  [color={rgb, 255:red, 144; green, 19; blue, 254 }  ,draw opacity=1 ][line width=0.75]  (186.3,244.2) .. controls (191.03,247.2) and (195.76,249) .. (200.49,249.6) .. controls (195.76,250.2) and (191.03,251.99) .. (186.3,254.99) ;

\draw  [color={rgb, 255:red, 144; green, 19; blue, 254 }  ,draw opacity=1 ][line width=0.75]  (198.3,48.2) .. controls (203.03,51.2) and (207.76,53) .. (212.49,53.6) .. controls (207.76,54.2) and (203.03,55.99) .. (198.3,58.99) ;
\draw  [color={rgb, 255:red, 144; green, 19; blue, 254 }  ,draw opacity=1 ][line width=0.75]  (198.3,244.2) .. controls (203.03,247.2) and (207.76,249) .. (212.49,249.6) .. controls (207.76,250.2) and (203.03,251.99) .. (198.3,254.99) ;

\draw [color={rgb, 255:red, 144; green, 19; blue, 254 }  ,draw opacity=1 ][line width=1.5]    (103.59,249.19) -- (299.79,249.19) ;
\draw [color={rgb, 255:red, 255; green, 0; blue, 0 }  ,draw opacity=1 ][line width=1.5]    (156.21,168.52) -- (179.66,196.36) ;
\draw [color={rgb, 255:red, 255; green, 0; blue, 0 }  ,draw opacity=1 ][line width=1.5]    (165.48,185.69) -- (156.27,196.64) ;
\draw  [draw opacity=0][line width=1.5]  (178.74,169.69) .. controls (180.31,167.46) and (182.76,166.03) .. (185.54,166.03) .. controls (188.05,166.03) and (190.32,167.21) .. (191.91,169.09) -- (185.58,175.45) -- cycle ; \draw  [color={rgb, 255:red, 255; green, 0; blue, 0 }  ,draw opacity=1 ][line width=1.5]  (178.74,169.69) .. controls (180.31,167.46) and (182.76,166.03) .. (185.54,166.03) .. controls (188.05,166.03) and (190.32,167.21) .. (191.91,169.09) ;  
\draw [color={rgb, 255:red, 255; green, 0; blue, 0 }  ,draw opacity=1 ][line width=1.5]    (179.66,168.52) -- (170.45,179.47) ;
\draw  [draw opacity=0][line width=1.5]  (191.96,195.79) .. controls (190.39,197.67) and (188.13,198.85) .. (185.62,198.85) .. controls (183.02,198.85) and (180.68,197.59) .. (179.09,195.61) -- (185.57,189.44) -- cycle ; \draw  [color={rgb, 255:red, 255; green, 0; blue, 0 }  ,draw opacity=1 ][line width=1.5]  (191.96,195.79) .. controls (190.39,197.67) and (188.13,198.85) .. (185.62,198.85) .. controls (183.02,198.85) and (180.68,197.59) .. (179.09,195.61) ;  
\draw [color={rgb, 255:red, 255; green, 0; blue, 0 }  ,draw opacity=1 ][line width=1.5]    (191.38,168.52) -- (214.82,196.36) ;
\draw [color={rgb, 255:red, 255; green, 0; blue, 0 }  ,draw opacity=1 ][line width=1.5]    (200.64,185.42) -- (191.44,196.36) ;
\draw [color={rgb, 255:red, 255; green, 0; blue, 0 }  ,draw opacity=1 ][line width=1.5]    (214.82,168.52) -- (205.62,179.47) ;
\draw  [draw opacity=0][line width=1.5]  (213.91,169.69) .. controls (215.47,167.46) and (217.93,166.03) .. (220.7,166.03) .. controls (223.22,166.03) and (225.49,167.21) .. (227.07,169.09) -- (220.75,175.45) -- cycle ; \draw  [color={rgb, 255:red, 255; green, 0; blue, 0 }  ,draw opacity=1 ][line width=1.5]  (213.91,169.69) .. controls (215.47,167.46) and (217.93,166.03) .. (220.7,166.03) .. controls (223.22,166.03) and (225.49,167.21) .. (227.07,169.09) ;  
\draw  [draw opacity=0][line width=1.5]  (227.12,195.79) .. controls (225.55,197.67) and (223.3,198.85) .. (220.78,198.85) .. controls (218.18,198.85) and (215.85,197.59) .. (214.26,195.61) -- (220.73,189.44) -- cycle ; \draw  [color={rgb, 255:red, 255; green, 0; blue, 0 }  ,draw opacity=1 ][line width=1.5]  (227.12,195.79) .. controls (225.55,197.67) and (223.3,198.85) .. (220.78,198.85) .. controls (218.18,198.85) and (215.85,197.59) .. (214.26,195.61) ;  
\draw [color={rgb, 255:red, 255; green, 0; blue, 0 }  ,draw opacity=1 ][line width=1.5]    (226.54,168.52) -- (249.99,196.36) ;
\draw [color={rgb, 255:red, 255; green, 0; blue, 0 }  ,draw opacity=1 ][line width=1.5]    (235.8,185.42) -- (226.6,196.36) ;
\draw [color={rgb, 255:red, 255; green, 0; blue, 0 }  ,draw opacity=1 ][line width=1.5]    (249.99,168.52) -- (240.78,179.47) ;

\draw [color={rgb, 255:red, 255; green, 0; blue, 0 }  ,draw opacity=1 ][line width=1.5]    (156.21,168.52) .. controls (149.67,158.24) and (149.67,152.24) .. (149.67,151.24) ;
\draw [color={rgb, 255:red, 255; green, 0; blue, 0 }  ,draw opacity=1 ][line width=1.5]    (249.99,168.52) .. controls (256.67,156.24) and (255.67,151.24) .. (255.67,151.24) ;
\draw [color={rgb, 255:red, 255; green, 0; blue, 0 }  ,draw opacity=1 ][line width=1.5]    (249.99,196.36) .. controls (259.93,209.52) and (286.93,167.58) .. (268.74,125.77) ;
\draw [color={rgb, 255:red, 255; green, 0; blue, 0 }  ,draw opacity=1 ][line width=1.5]    (156.27,196.64) .. controls (146.28,212.87) and (118.52,167.16) .. (137.46,125.77) ;
\draw [color={rgb, 255:red, 0; green, 0; blue, 255 }  ,draw opacity=1 ][line width=1.5]    (149.67,151.24) .. controls (145.67,99.24) and (257.67,95.24) .. (255.67,151.24) ;
\draw [color={rgb, 255:red, 255; green, 0; blue, 0 }  ,draw opacity=1 ][line width=1.5]    (137.46,125.77) .. controls (155.65,84.11) and (250.18,84.11) .. (268.74,125.77) ;
\draw  [color={rgb, 255:red, 255; green, 0; blue, 0 }  ,draw opacity=1 ][line width=0.75]  (256.62,166.61) .. controls (252.56,167.82) and (249.49,169.62) .. (247.39,172) .. controls (248.5,169.03) and (248.64,165.47) .. (247.79,161.32) ;
\draw  [color={rgb, 255:red, 255; green, 0; blue, 0 }  ,draw opacity=1 ][line width=0.75]  (225.48,203.13) .. controls (222.36,200.27) and (219.24,198.56) .. (216.12,197.98) .. controls (219.24,197.41) and (222.36,195.7) .. (225.48,192.84) ;
\draw  [color={rgb, 255:red, 255; green, 0; blue, 0 }  ,draw opacity=1 ][line width=0.75]  (190.48,203.13) .. controls (187.36,200.27) and (184.24,198.56) .. (181.12,197.98) .. controls (184.24,197.41) and (187.36,195.7) .. (190.48,192.84) ;
\draw  [color={rgb, 255:red, 255; green, 0; blue, 0 }  ,draw opacity=1 ][line width=0.75]  (154.3,174.78) .. controls (154.83,170.57) and (154.43,167.04) .. (153.09,164.16) .. controls (155.36,166.38) and (158.56,167.93) .. (162.7,168.83) ;
\draw  [color={rgb, 255:red, 255; green, 0; blue, 0 }  ,draw opacity=1 ][line width=0.75]  (197.12,89.84) .. controls (200.24,92.7) and (203.36,94.41) .. (206.48,94.98) .. controls (203.36,95.56) and (200.24,97.27) .. (197.12,100.13) ;
\draw  [color={rgb, 255:red, 255; green, 0; blue, 0 }  ,draw opacity=1 ][line width=0.75]  (279.91,156.27) .. controls (277.07,159.41) and (275.38,162.54) .. (274.83,165.67) .. controls (274.23,162.55) and (272.5,159.44) .. (269.62,156.33) ;
\draw  [color={rgb, 255:red, 255; green, 0; blue, 0 }  ,draw opacity=1 ][line width=0.75]  (126.79,165.81) .. controls (129.56,162.61) and (131.18,159.44) .. (131.66,156.3) .. controls (132.33,159.41) and (134.13,162.48) .. (137.07,165.52) ;
\draw  [color={rgb, 255:red, 255; green, 0; blue, 0 }  ,draw opacity=1 ][line width=0.75]  (225.48,172.13) .. controls (222.36,169.27) and (219.24,167.56) .. (216.12,166.98) .. controls (219.24,166.41) and (222.36,164.7) .. (225.48,161.84) ;
\draw  [color={rgb, 255:red, 255; green, 0; blue, 0 }  ,draw opacity=1 ][line width=0.75]  (190.48,172.13) .. controls (187.36,169.27) and (184.24,167.56) .. (181.12,166.98) .. controls (184.24,166.41) and (187.36,164.7) .. (190.48,161.84) ;
\draw [color={rgb, 255:red, 144; green, 19; blue, 254 }  ,draw opacity=1 ][line width=1.5]    (103.59,53) -- (299.79,53) ;
\draw [color={rgb, 255:red, 255; green, 0; blue, 0 }  ,draw opacity=1 ][line width=1.5]    (424.31,166.52) .. controls (442.09,182.23) and (431.09,181.23) .. (446.84,167.69) ;
\draw [color={rgb, 255:red, 255; green, 0; blue, 0 }  ,draw opacity=1 ][line width=1.5]    (495.17,167.09) .. controls (512.95,182.8) and (502.34,180.06) .. (518.08,166.52) ;
\draw [color={rgb, 255:red, 255; green, 0; blue, 0 }  ,draw opacity=1 ][line width=1.5]    (460,167.09) .. controls (477.78,182.8) and (466.26,181.23) .. (482,167.69) ;
\draw [color={rgb, 255:red, 255; green, 0; blue, 0 }  ,draw opacity=1 ][line width=1.5]    (424.37,194.64) .. controls (441.8,178.44) and (430.09,179.23) .. (447.19,193.61) ;
\draw [color={rgb, 255:red, 255; green, 0; blue, 0 }  ,draw opacity=1 ][line width=1.5]    (460.05,193.79) .. controls (477.48,177.59) and (464.74,179.8) .. (481.83,194.18) ;
\draw [color={rgb, 255:red, 255; green, 0; blue, 0 }  ,draw opacity=1 ][line width=1.5]    (494.7,194.36) .. controls (512.13,178.16) and (500.99,179.99) .. (518.08,194.36) ;

\draw (188.95,276) node [anchor=north west][inner sep=0.75pt]   [align=left] {(a)};
\draw (466.05,276) node [anchor=north west][inner sep=0.75pt]   [align=left] {(b)};
\draw (198,132.4) node [anchor=north west][inner sep=0.75pt]  [color={rgb, 255:red, 144; green, 19; blue, 254 }  ,opacity=1 ]  {$c$};
\draw (304.91,144.01) node [anchor=north west][inner sep=0.75pt]  [color={rgb, 255:red, 144; green, 19; blue, 254 }  ,opacity=1 ]  {$\Gamma $};
\draw (107.59,58.4) node [anchor=north west][inner sep=0.75pt]    {$\Sigma _{+}$};
\draw (106.59,228.4) node [anchor=north west][inner sep=0.75pt]    {$\Sigma _{-}$};
\draw (251,83.4) node [anchor=north west][inner sep=0.75pt]  [color={rgb, 255:red, 255; green, 0; blue, 0 }  ,opacity=1 ]  {$\gamma _{c}$};
\draw (573,142.01) node [anchor=north west][inner sep=0.75pt]  [color={rgb, 255:red, 144; green, 19; blue, 254 }  ,opacity=1 ]  {$\Gamma $};
\draw (375.69,56.4) node [anchor=north west][inner sep=0.75pt]    {$\Sigma _{+}$};
\draw (374.69,226.4) node [anchor=north west][inner sep=0.75pt]    {$\Sigma _{-}$};
\draw (463,114.71) node [anchor=north west][inner sep=0.75pt]    {$A$};
\draw (464.5,150.71) node [anchor=north west][inner sep=0.75pt]  [color={rgb, 255:red, 255; green, 255; blue, 255 }  ,opacity=1 ]  {$B$};
\draw (516,173.09) node [anchor=north west][inner sep=0.75pt]    {$C$};
\draw (520,83.4) node [anchor=north west][inner sep=0.75pt]  [color={rgb, 255:red, 255; green, 0; blue, 0 }  ,opacity=1 ]  {$\gamma _{c}$};

\end{tikzpicture}

%% file: images/legendrian_trefoil_surface.tex
\begin{tikzpicture}[x=0.55pt,y=0.55pt,yscale=-1,xscale=1]

\draw  [draw opacity=0][dash pattern={on 5.63pt off 4.5pt}][line width=1.5]  (317.01,198.58) .. controls (317.01,198.58) and (317.01,198.58) .. (317.01,198.58) .. controls (317.01,198.58) and (317.01,198.58) .. (317.01,198.58) .. controls (305.05,198.58) and (295.36,172.62) .. (295.36,140.6) .. controls (295.36,108.58) and (305.05,82.62) .. (317.01,82.62) -- (317.01,140.6) -- cycle ; \draw  [color={rgb, 255:red, 144; green, 19; blue, 254 }  ,draw opacity=1 ][dash pattern={on 5.63pt off 4.5pt}][line width=1.5]  (317.01,198.58) .. controls (317.01,198.58) and (317.01,198.58) .. (317.01,198.58) .. controls (317.01,198.58) and (317.01,198.58) .. (317.01,198.58) .. controls (305.05,198.58) and (295.36,172.62) .. (295.36,140.6) .. controls (295.36,108.58) and (305.05,82.62) .. (317.01,82.62) ;  
\draw  [color={rgb, 255:red, 255; green, 255; blue, 255 }  ,draw opacity=1 ][line width=1.5] [line join = round][line cap = round] (235.99,138.65) .. controls (231.55,138.41) and (236.74,136.85) .. (234.69,136.29) .. controls (229.49,134.84) and (212.67,139.28) .. (226.35,140.98) .. controls (232.56,141.76) and (237.32,141.34) .. (242.28,140.14) .. controls (242.47,140.09) and (245.11,139.34) .. (245.01,139.15) .. controls (244.51,138.25) and (237.71,135.22) .. (231.64,136.69) .. controls (225.32,138.22) and (239.83,138.86) .. (238.25,138.77) .. controls (234.86,138.59) and (231.48,138.4) .. (228.1,138.22) ;
\draw [line width=0.75]    (216.83,139.24) .. controls (224.82,131.74) and (240.81,130.88) .. (248.83,139.19) ;
\draw [line width=0.75]    (211.99,134.94) .. controls (224.05,148.16) and (243.85,146.7) .. (253.22,134.06) ;
\draw  [color={rgb, 255:red, 255; green, 255; blue, 255 }  ,draw opacity=1 ][line width=1.5] [line join = round][line cap = round] (412.99,139.64) .. controls (408.55,139.4) and (413.74,137.84) .. (411.69,137.28) .. controls (406.49,135.83) and (389.67,140.27) .. (403.35,141.97) .. controls (409.56,142.75) and (414.32,142.33) .. (419.28,141.13) .. controls (419.47,141.08) and (422.11,140.33) .. (422.01,140.14) .. controls (421.51,139.24) and (414.71,136.21) .. (408.64,137.68) .. controls (402.32,139.21) and (416.83,139.85) .. (415.25,139.76) .. controls (411.86,139.58) and (408.48,139.39) .. (405.1,139.21) ;
\draw [line width=0.75]    (393.83,140.23) .. controls (401.82,132.73) and (417.81,131.87) .. (425.83,140.18) ;
\draw [line width=0.75]    (388.99,135.93) .. controls (401.05,149.15) and (420.85,147.69) .. (430.22,135.05) ;
\draw  [draw opacity=0][line width=1.5]  (318.22,82.79) .. controls (318.22,82.79) and (318.22,82.79) .. (318.22,82.79) .. controls (318.22,82.79) and (318.22,82.79) .. (318.22,82.79) .. controls (329.51,82.79) and (338.67,108.71) .. (338.67,140.68) .. controls (338.67,172.66) and (329.51,198.58) .. (318.22,198.58) -- (318.22,140.68) -- cycle ; \draw  [color={rgb, 255:red, 144; green, 19; blue, 254 }  ,draw opacity=1 ][line width=1.5]  (318.22,82.79) .. controls (318.22,82.79) and (318.22,82.79) .. (318.22,82.79) .. controls (318.22,82.79) and (318.22,82.79) .. (318.22,82.79) .. controls (329.51,82.79) and (338.67,108.71) .. (338.67,140.68) .. controls (338.67,172.66) and (329.51,198.58) .. (318.22,198.58) ;  
\draw  [line width=0.75]  (120.56,141.8) .. controls (120.56,180.39) and (162.56,220.97) .. (202.56,220.97) .. controls (242.56,220.97) and (286.77,199.31) .. (321.56,199.2) .. controls (356.36,199.08) and (402.56,220.97) .. (442.41,220.79) .. controls (482.25,220.61) and (521.41,180.21) .. (521.41,140.63) .. controls (521.41,101.04) and (492.97,62.6) .. (441.41,62.44) .. controls (389.84,62.28) and (360.56,82.42) .. (322.56,82.42) .. controls (284.56,82.42) and (242.56,60.64) .. (201.56,61.63) .. controls (160.56,62.62) and (120.56,103.2) .. (120.56,141.8) -- cycle ;

\draw (459.26,74.33) node [anchor=north west][inner sep=0.75pt]    {$\Sigma _{+}$};
\draw (175.76,76.48) node [anchor=north west][inner sep=0.75pt]    {$\Sigma _{-}$};
\draw (340.33,102.19) node [anchor=north west][inner sep=0.75pt]  [color={rgb, 255:red, 144; green, 19; blue, 254 }  ,opacity=1 ]  {$\Gamma $};

\end{tikzpicture}

%% file: images/legendrian_trefoil_1.tex
\begin{tikzpicture}[x=0.55pt,y=0.55pt,yscale=-1,xscale=1]

\draw [color={rgb, 255:red, 0; green, 0; blue, 255 }  ,draw opacity=1 ][line width=1.5]    (298.77,176.7) .. controls (288.13,183.46) and (271.31,177.01) .. (267.31,174.34) .. controls (263.31,171.68) and (247.32,154.98) .. (257.8,133.18) .. controls (268.28,111.38) and (289.95,94.73) .. (317.23,96.54) .. controls (344.52,98.34) and (362.06,129.01) .. (376.12,132) ;
\draw [color={rgb, 255:red, 0; green, 0; blue, 255 }  ,draw opacity=1 ][line width=1.5]    (297.33,138.35) .. controls (301.54,131.23) and (319.36,122.91) .. (329.98,137.01) .. controls (340.59,151.11) and (360.65,170.34) .. (377.53,177.36) ;
\draw [color={rgb, 255:red, 0; green, 0; blue, 255 }  ,draw opacity=1 ][line width=1.5]    (269.8,123.78) .. controls (283.1,128.93) and (301.8,150.71) .. (308.2,152.6) .. controls (314.6,154.49) and (323.32,152.93) .. (327.73,149.32) ;
\draw [color={rgb, 255:red, 0; green, 0; blue, 255 }  ,draw opacity=1 ][line width=1.5]    (258.91,119.68) .. controls (221.98,105.01) and (229.44,174.68) .. (251.38,198.68) .. controls (273.31,222.68) and (318.63,222.68) .. (335.31,206.68) .. controls (351.98,190.68) and (353.36,182.34) .. (359.31,174.34) ;
\draw  [draw opacity=0][line width=1.5]  (376.36,114.2) .. controls (378.95,113.23) and (382.01,112.7) .. (385.29,112.77) .. controls (394.51,112.97) and (401.94,117.85) .. (401.88,123.67) .. controls (401.83,129.5) and (394.32,134.05) .. (385.1,133.85) .. controls (381.82,133.78) and (378.77,133.12) .. (376.2,132.03) -- (385.19,123.31) -- cycle ; \draw  [color={rgb, 255:red, 0; green, 0; blue, 255 }  ,draw opacity=1 ][line width=1.5]  (376.36,114.2) .. controls (378.95,113.23) and (382.01,112.7) .. (385.29,112.77) .. controls (394.51,112.97) and (401.94,117.85) .. (401.88,123.67) .. controls (401.83,129.5) and (394.32,134.05) .. (385.1,133.85) .. controls (381.82,133.78) and (378.77,133.12) .. (376.2,132.03) ;  
\draw [color={rgb, 255:red, 0; green, 0; blue, 255 }  ,draw opacity=1 ][line width=1.5]    (346.36,142.85) .. controls (352.06,138.72) and (356.95,131.67) .. (359.99,127.97) ;
\draw [color={rgb, 255:red, 0; green, 0; blue, 255 }  ,draw opacity=1 ][line width=1.5]    (366.06,120.57) .. controls (373.2,115.75) and (369.45,117.73) .. (376.24,114.25) ;
\draw [color={rgb, 255:red, 0; green, 0; blue, 255 }  ,draw opacity=1 ][line width=1.5]    (293.46,146.34) .. controls (286.31,155.68) and (299.8,170.68) .. (304.56,176.01) .. controls (309.33,181.34) and (316.09,184.68) .. (322.05,186.01) .. controls (328.01,187.34) and (338.73,185.01) .. (338.73,175.68) .. controls (338.73,166.34) and (321.24,160.34) .. (309.33,169.01) ;
\draw  [draw opacity=0][line width=1.5]  (377.7,159.53) .. controls (380.28,158.56) and (383.35,158.03) .. (386.62,158.1) .. controls (395.84,158.3) and (403.27,163.18) .. (403.22,169.01) .. controls (403.16,174.83) and (395.65,179.39) .. (386.43,179.19) .. controls (383.15,179.11) and (380.1,178.45) .. (377.53,177.36) -- (386.53,168.64) -- cycle ; \draw  [color={rgb, 255:red, 0; green, 0; blue, 255 }  ,draw opacity=1 ][line width=1.5]  (377.7,159.53) .. controls (380.28,158.56) and (383.35,158.03) .. (386.62,158.1) .. controls (395.84,158.3) and (403.27,163.18) .. (403.22,169.01) .. controls (403.16,174.83) and (395.65,179.39) .. (386.43,179.19) .. controls (383.15,179.11) and (380.1,178.45) .. (377.53,177.36) ;  
\draw [color={rgb, 255:red, 0; green, 0; blue, 255 }  ,draw opacity=1 ][line width=1.5]    (367.51,165.86) .. controls (374.65,161.04) and (370.9,163.02) .. (377.7,159.53) ;

\draw (410.88,157.33) node [anchor=north west][inner sep=0.75pt]  [color={rgb, 255:red, 0; green, 0; blue, 255 }  ,opacity=1 ]  {$\Lambda '$};
\draw (364.25,178.08) node [anchor=north west][inner sep=0.75pt]    {$c_l$};

\end{tikzpicture}

%% file: images/legendrian_trefoil_2.tex
\begin{tikzpicture}[x=0.55pt,y=0.55pt,yscale=-1,xscale=1]

\draw [color={rgb, 255:red, 0; green, 0; blue, 255 }  ,draw opacity=1 ][line width=1.5]    (296.82,145.72) .. controls (298.4,137.33) and (301.01,132.09) .. (306.25,129.99) .. controls (311.49,127.89) and (318.82,126.84) .. (330.35,134.18) .. controls (341.87,141.52) and (371.2,152.01) .. (413.11,147.81) ;
\draw [color={rgb, 255:red, 0; green, 0; blue, 255 }  ,draw opacity=1 ][line width=1.5]    (262.78,144.29) .. controls (278.49,143.24) and (306.25,155.15) .. (313.59,154.1) .. controls (320.92,153.06) and (329.3,147.81) .. (332.44,142.57) ;
\draw [color={rgb, 255:red, 0; green, 0; blue, 255 }  ,draw opacity=1 ][line width=1.5]    (216.16,144.67) .. controls (229.78,144.67) and (242.35,141.52) .. (249.68,142.57) ;
\draw  [draw opacity=0][line width=1.5]  (354.11,95.31) .. controls (356.09,93.39) and (358.69,91.71) .. (361.73,90.5) .. controls (370.2,87.14) and (378.86,88.78) .. (381.08,94.16) .. controls (383.3,99.54) and (378.23,106.63) .. (369.77,109.99) .. controls (366.73,111.2) and (363.66,111.76) .. (360.88,111.74) -- (365.75,100.25) -- cycle ; \draw  [color={rgb, 255:red, 0; green, 0; blue, 255 }  ,draw opacity=1 ][line width=1.5]  (354.11,95.31) .. controls (356.09,93.39) and (358.69,91.71) .. (361.73,90.5) .. controls (370.2,87.14) and (378.86,88.78) .. (381.08,94.16) .. controls (383.3,99.54) and (378.23,106.63) .. (369.77,109.99) .. controls (366.73,111.2) and (363.66,111.76) .. (360.88,111.74) ;  
\draw [color={rgb, 255:red, 0; green, 0; blue, 255 }  ,draw opacity=1 ][line width=1.5]    (338.03,133.18) .. controls (341.61,127.19) and (343.31,118.81) .. (344.63,114.24) ;
\draw [color={rgb, 255:red, 0; green, 0; blue, 255 }  ,draw opacity=1 ][line width=1.5]    (347.27,105.09) .. controls (351.89,97.9) and (349.25,101.17) .. (354.07,95.34) ;
\draw [color={rgb, 255:red, 0; green, 0; blue, 255 }  ,draw opacity=1 ][line width=1.5]    (311.12,183.93) .. controls (304.55,194.69) and (286.61,198.31) .. (281.84,197.68) .. controls (277.07,197.04) and (255.36,189.1) .. (255.17,164.91) .. controls (254.98,140.72) and (267.1,116.24) .. (292.39,105.84) .. controls (317.68,95.44) and (346.94,115.25) .. (360.88,111.74) ;
\draw [color={rgb, 255:red, 0; green, 0; blue, 255 }  ,draw opacity=1 ][line width=1.5]    (296.02,158.71) .. controls (293.8,170.25) and (312.57,177.64) .. (319.22,180.28) .. controls (325.86,182.92) and (333.41,182.88) .. (339.33,181.4) .. controls (345.25,179.93) and (353.8,173.05) .. (349.63,164.7) .. controls (345.46,156.36) and (327.13,158.81) .. (320.35,171.89) ;

\draw (419.4,140.48) node [anchor=north west][inner sep=0.75pt]  [color={rgb, 255:red, 0; green, 0; blue, 255 }  ,opacity=1 ]  {$\Lambda ^{l}$};

\end{tikzpicture}

%% file: images/legendrian_trefoil_3.tex
\begin{tikzpicture}[x=0.65pt,y=0.65pt,yscale=-1,xscale=1]

\draw  [draw opacity=0][dash pattern={on 5.63pt off 4.5pt}][line width=1.5]  (403.95,370.7) .. controls (404.06,359.09) and (367.32,349.22) .. (321.88,348.66) .. controls (276.44,348.11) and (239.52,357.07) .. (239.4,368.69) -- (321.67,369.69) -- cycle ; \draw  [color={rgb, 255:red, 144; green, 19; blue, 254 }  ,draw opacity=1 ][dash pattern={on 5.63pt off 4.5pt}][line width=1.5]  (403.95,370.7) .. controls (404.06,359.09) and (367.32,349.22) .. (321.88,348.66) .. controls (276.44,348.11) and (239.52,357.07) .. (239.4,368.69) ;  
\draw  [draw opacity=0][fill={rgb, 255:red, 188; green, 188; blue, 188 }  ,fill opacity=1 ] (234.73,242.05) .. controls (239.81,221.98) and (283.81,215.31) .. (305.16,213.75) .. controls (307.31,238.76) and (304.65,265.76) .. (305.12,296.83) .. controls (274.48,303.39) and (247.81,306.39) .. (239.31,325.42) .. controls (240.65,286.76) and (245.15,268.64) .. (234.73,242.05) -- cycle ;
\draw  [draw opacity=0][fill={rgb, 255:red, 188; green, 188; blue, 188 }  ,fill opacity=1 ] (408.98,244.81) .. controls (400.65,221.09) and (353.98,215.76) .. (332.94,215.52) .. controls (330.61,240.52) and (333.49,267.52) .. (332.97,298.59) .. controls (360.65,301.09) and (396.76,304.52) .. (403.95,327.19) .. controls (404.65,289.42) and (399.81,287.5) .. (408.98,244.81) -- cycle ;
\draw  [draw opacity=0][fill={rgb, 255:red, 216; green, 216; blue, 216 }  ,fill opacity=1 ] (233.73,242.05) .. controls (259.66,274.41) and (298.8,270.09) .. (317.98,270.42) .. controls (319.31,294.42) and (318.39,326.77) .. (319.31,358.42) .. controls (280.65,353.09) and (255.81,349.98) .. (238.48,332.64) .. controls (234.98,293.2) and (237.15,259.31) .. (233.73,242.05) -- cycle ;
\draw  [draw opacity=0][fill={rgb, 255:red, 216; green, 216; blue, 216 }  ,fill opacity=1 ] (408.98,244.81) .. controls (392.15,275.59) and (339.48,270.09) .. (318.31,270.42) .. controls (318.39,292.11) and (319.36,327.35) .. (318.84,358.42) .. controls (363.72,353.44) and (406.33,353.08) .. (406.18,327.04) .. controls (407.51,288.93) and (407.81,264.64) .. (408.98,244.81) -- cycle ;
\draw  [color={rgb, 255:red, 255; green, 255; blue, 255 }  ,draw opacity=1 ][line width=1.5] [line join = round][line cap = round] (323.69,157.29) .. controls (317.38,156.99) and (324.71,154.08) .. (321.79,153.14) .. controls (314.37,150.73) and (290.6,159.09) .. (310.05,161.71) .. controls (318.89,162.9) and (325.63,162.02) .. (332.64,159.75) .. controls (332.92,159.66) and (336.66,158.25) .. (336.5,157.9) .. controls (335.77,156.32) and (326.06,151.16) .. (317.47,153.95) .. controls (308.54,156.84) and (329.15,157.55) .. (326.89,157.44) .. controls (322.09,157.21) and (317.29,156.98) .. (312.49,156.75) ;
\draw [line width=0.75]    (296.52,158.9) .. controls (307.67,145.36) and (330.35,143.37) .. (341.92,157.87) ;
\draw [line width=0.75]    (289.54,151.43) .. controls (306.97,174.49) and (335.03,171.32) .. (348.03,148.65) ;
\draw  [draw opacity=0][line width=1.5]  (239.99,367.86) .. controls (239.99,367.86) and (239.99,367.86) .. (239.99,367.86) .. controls (239.99,367.86) and (239.99,367.86) .. (239.99,367.86) .. controls (239.82,385.35) and (276.46,399.97) .. (321.84,400.52) .. controls (367.21,401.07) and (404.13,387.35) .. (404.31,369.87) -- (322.15,368.87) -- cycle ; \draw  [color={rgb, 255:red, 144; green, 19; blue, 254 }  ,draw opacity=1 ][line width=1.5]  (239.99,367.86) .. controls (239.99,367.86) and (239.99,367.86) .. (239.99,367.86) .. controls (239.99,367.86) and (239.99,367.86) .. (239.99,367.86) .. controls (239.82,385.35) and (276.46,399.97) .. (321.84,400.52) .. controls (367.21,401.07) and (404.13,387.35) .. (404.31,369.87) ;  
\draw  [draw opacity=0][line width=0.75]  (305.16,214.75) .. controls (265.65,216.88) and (235.77,228.1) .. (235.63,241.99) .. controls (235.47,257.81) and (273.89,271.1) .. (321.44,271.68) .. controls (368.99,272.26) and (407.66,259.91) .. (407.82,244.1) .. controls (407.96,229.49) and (375.17,217.03) .. (332.67,214.76) -- (321.73,243.05) -- cycle ; \draw  [color={rgb, 255:red, 128; green, 128; blue, 128 }  ,draw opacity=1 ][line width=0.75]  (305.16,214.75) .. controls (265.65,216.88) and (235.77,228.1) .. (235.63,241.99) .. controls (235.47,257.81) and (273.89,271.1) .. (321.44,271.68) .. controls (368.99,272.26) and (407.66,259.91) .. (407.82,244.1) .. controls (407.96,229.49) and (375.17,217.03) .. (332.67,214.76) ;  
\draw  [draw opacity=0][line width=0.75]  (238.05,327.78) .. controls (237.9,343.35) and (275.4,356.44) .. (321.82,357) .. controls (368.24,357.57) and (405.99,345.41) .. (406.15,329.83) -- (322.1,328.81) -- cycle ; \draw  [color={rgb, 255:red, 128; green, 128; blue, 128 }  ,draw opacity=1 ][line width=0.75]  (238.05,327.78) .. controls (237.9,343.35) and (275.4,356.44) .. (321.82,357) .. controls (368.24,357.57) and (405.99,345.41) .. (406.15,329.83) ;  
\draw [color={rgb, 255:red, 128; green, 128; blue, 128 }  ,draw opacity=1 ]   (305.16,214.75) -- (305.98,271.5) ;
\draw [color={rgb, 255:red, 128; green, 128; blue, 128 }  ,draw opacity=1 ][line width=0.75]    (332.59,215.15) -- (332.77,272.59) ;
\draw [color={rgb, 255:red, 0; green, 200; blue, 200 }  ,draw opacity=1 ][line width=1.5]    (304.3,256.95) -- (332.68,256.87) ;
\draw  [draw opacity=0][line width=1.5]  (236.88,284.75) .. controls (237.85,300.29) and (275.49,313.23) .. (321.9,313.79) .. controls (366.25,314.34) and (402.81,303.4) .. (407.1,288.89) -- (322.19,285.07) -- cycle ; \draw  [color={rgb, 255:red, 0; green, 0; blue, 255 }  ,draw opacity=1 ][line width=1.5]  (236.88,284.75) .. controls (237.85,300.29) and (275.49,313.23) .. (321.9,313.79) .. controls (366.25,314.34) and (402.81,303.4) .. (407.1,288.89) ;  
\draw  [draw opacity=0][dash pattern={on 5.63pt off 4.5pt}][line width=1.5]  (303.85,255.95) .. controls (265.44,258.33) and (236.57,269.66) .. (236.43,283.64) .. controls (236.27,299.75) and (274.34,313.28) .. (321.46,313.85) .. controls (368.59,314.43) and (406.92,301.84) .. (407.08,285.72) .. controls (407.22,270.82) and (374.66,258.13) .. (332.48,255.85) -- (321.75,284.68) -- cycle ; \draw  [color={rgb, 255:red, 0; green, 0; blue, 255 }  ,draw opacity=1 ][dash pattern={on 5.63pt off 4.5pt}][line width=1.5]  (303.85,255.95) .. controls (265.44,258.33) and (236.57,269.66) .. (236.43,283.64) .. controls (236.27,299.75) and (274.34,313.28) .. (321.46,313.85) .. controls (368.59,314.43) and (406.92,301.84) .. (407.08,285.72) .. controls (407.22,270.82) and (374.66,258.13) .. (332.48,255.85) ;  
\draw  [color={rgb, 255:red, 0; green, 0; blue, 255 }  ,draw opacity=1 ][fill={rgb, 255:red, 216; green, 216; blue, 216 }  ,fill opacity=1 ][dash pattern={on 5.63pt off 4.5pt}][line width=1.5]  (305.12,296.83) -- (337.99,296.83) -- (337.99,332.83) -- (305.12,332.83) -- cycle ;
\draw [line width=0.75]    (405.13,368.19) .. controls (410.48,177.98) and (405.15,213.98) .. (419.81,161.98) .. controls (434.48,109.98) and (355.81,91.31) .. (319.81,92.64) .. controls (283.81,93.98) and (212.7,112.57) .. (224.26,157.94) .. controls (235.81,203.31) and (233.15,180.64) .. (239.4,368.69) ;

\draw (390.95,135.76) node [anchor=north west][inner sep=0.75pt]    {$\Sigma _{+}$};
\draw (412.85,363.47) node [anchor=north west][inner sep=0.75pt]  [color={rgb, 255:red, 144; green, 19; blue, 254 }  ,opacity=1 ]  {$\Gamma $};
\draw (311.91,302.4) node [anchor=north west][inner sep=0.75pt]  [color={rgb, 255:red, 0; green, 0; blue, 255 }  ,opacity=1 ]  {$\Lambda ^{l}$};
\draw (313.25,237.07) node [anchor=north west][inner sep=0.75pt]  [color={rgb, 255:red, 0; green, 200; blue, 200 }  ,opacity=1 ]  {$\delta $};
\draw (386,318.07) node [anchor=north west][inner sep=0.75pt]    {$D$};

\end{tikzpicture}

%% file: images/legendrian_trefoil_4.tex
\begin{tikzpicture}[x=0.75pt,y=0.75pt,yscale=-1,xscale=1]

\draw  [line width=0.75]  (401.33,224.4) .. controls (404.47,219.44) and (406.35,214.48) .. (406.98,209.52) .. controls (407.61,214.48) and (409.49,219.44) .. (412.63,224.4) ;
\draw [color={rgb, 255:red, 0; green, 0; blue, 255 }  ,draw opacity=1 ][line width=1.5]    (286.16,159.16) .. controls (287.9,149.64) and (284.92,235.99) .. (277.88,246.32) .. controls (270.83,256.64) and (256.16,253.49) .. (251.97,246.15) .. controls (247.78,238.81) and (241.91,183.33) .. (242.96,159.21) .. controls (244,135.1) and (265.89,103.82) .. (282.76,102.59) .. controls (299.64,101.37) and (339.6,112.59) .. (350.16,112.59) ;
\draw [color={rgb, 255:red, 0; green, 0; blue, 255 }  ,draw opacity=1 ][line width=1.5]    (286.16,146.57) .. controls (287.73,138.19) and (290.35,132.94) .. (295.59,130.85) .. controls (300.82,128.75) and (308.16,127.7) .. (319.68,135.04) .. controls (331.2,142.38) and (360.54,152.87) .. (402.44,148.67) ;
\draw [color={rgb, 255:red, 0; green, 0; blue, 255 }  ,draw opacity=1 ][line width=1.5]    (252.11,145.15) .. controls (267.82,144.1) and (295.59,156.01) .. (302.92,154.96) .. controls (310.25,153.91) and (318.63,148.67) .. (321.78,143.43) ;
\draw [color={rgb, 255:red, 0; green, 0; blue, 255 }  ,draw opacity=1 ][line width=1.5]    (205.49,145.53) .. controls (219.11,145.53) and (231.68,142.38) .. (239.02,143.43) ;
\draw [color={rgb, 255:red, 144; green, 19; blue, 254 }  ,draw opacity=1 ][line width=1.5]    (201.2,181.93) -- (401.73,181.93) ;
\draw  [draw opacity=0][line width=1.5]  (343.44,96.17) .. controls (345.42,94.25) and (348.02,92.56) .. (351.06,91.36) .. controls (359.53,88) and (368.19,89.64) .. (370.41,95.02) .. controls (372.63,100.4) and (367.57,107.49) .. (359.1,110.85) .. controls (356.06,112.06) and (353,112.62) .. (350.21,112.59) -- (355.08,101.1) -- cycle ; \draw  [color={rgb, 255:red, 0; green, 0; blue, 255 }  ,draw opacity=1 ][line width=1.5]  (343.44,96.17) .. controls (345.42,94.25) and (348.02,92.56) .. (351.06,91.36) .. controls (359.53,88) and (368.19,89.64) .. (370.41,95.02) .. controls (372.63,100.4) and (367.57,107.49) .. (359.1,110.85) .. controls (356.06,112.06) and (353,112.62) .. (350.21,112.59) ;  
\draw [color={rgb, 255:red, 0; green, 0; blue, 255 }  ,draw opacity=1 ][line width=1.5]    (327.36,134.04) .. controls (330.94,128.04) and (332.64,119.67) .. (333.96,115.09) ;
\draw [color={rgb, 255:red, 0; green, 0; blue, 255 }  ,draw opacity=1 ][line width=1.5]    (336.6,105.95) .. controls (341.22,98.76) and (338.58,102.03) .. (343.41,96.2) ;

\draw (258.48,184.62) node [anchor=north west][inner sep=0.75pt]  [font=\footnotesize,color={rgb, 255:red, 144; green, 19; blue, 254 }  ,opacity=1 ]  {$a$};
\draw (337.05,184.62) node [anchor=north west][inner sep=0.75pt]  [font=\footnotesize,color={rgb, 255:red, 144; green, 19; blue, 254 }  ,opacity=1 ]  {$b$};
\draw (406.38,174.87) node [anchor=north west][inner sep=0.75pt]  [color={rgb, 255:red, 144; green, 19; blue, 254 }  ,opacity=1 ]  {$\Gamma $};
\draw (404.44,152.07) node [anchor=north west][inner sep=0.75pt]  [color={rgb, 255:red, 0; green, 0; blue, 255 }  ,opacity=1 ]  {$\Lambda $};
\draw (205.94,85.16) node [anchor=north west][inner sep=0.75pt]    {$\Sigma _{+}$};
\draw (204.86,263.4) node [anchor=north west][inner sep=0.75pt]    {$\Sigma _{-}$};
\draw (235.75,129.17) node [anchor=north west][inner sep=0.75pt]  [font=\footnotesize,color={rgb, 255:red, 0; green, 0; blue, 0 }  ,opacity=1 ]  {$c$};
\draw (274.08,135.72) node [anchor=north west][inner sep=0.75pt]  [font=\footnotesize,color={rgb, 255:red, 0; green, 0; blue, 0 }  ,opacity=1 ]  {$d$};
\draw (331.9,126.56) node [anchor=north west][inner sep=0.75pt]  [font=\footnotesize,color={rgb, 255:red, 0; green, 0; blue, 0 }  ,opacity=1 ]  {$e$};
\draw (324.62,91.2) node [anchor=north west][inner sep=0.75pt]  [font=\footnotesize,color={rgb, 255:red, 0; green, 0; blue, 0 }  ,opacity=1 ]  {$f$};

\end{tikzpicture}

%% file: images/R_moves_unknot.tex
\begin{tikzpicture}[x=0.65pt,y=0.65pt,yscale=-1,xscale=1]

\draw  [fill={rgb, 255:red, 246; green, 246; blue, 246 }  ,fill opacity=1 ][line width=1.5]  (20.31,81.65) -- (161.98,81.65) -- (161.98,222.95) -- (20.31,222.95) -- cycle ;
\draw [color={rgb, 255:red, 144; green, 19; blue, 254 }  ,draw opacity=1 ][line width=1.5]    (20.31,152.3) -- (161.98,152.3) ;
\draw  [line width=0.75]  (16.27,127.16) .. controls (18.43,123.76) and (19.73,120.35) .. (20.16,116.94) .. controls (20.6,120.35) and (21.89,123.76) .. (24.06,127.16) ;
\draw  [line width=0.75]  (157.8,127.16) .. controls (159.96,123.76) and (161.26,120.35) .. (161.69,116.94) .. controls (162.13,120.35) and (163.42,123.76) .. (165.59,127.16) ;

\draw  [color={rgb, 255:red, 144; green, 19; blue, 254 }  ,draw opacity=1 ][line width=0.75]  (80.03,78.2) .. controls (83.44,80.35) and (86.86,81.65) .. (90.28,82.08) .. controls (86.86,82.51) and (83.44,83.81) .. (80.03,85.96) ;
\draw  [color={rgb, 255:red, 144; green, 19; blue, 254 }  ,draw opacity=1 ][line width=0.75]  (80.03,219.36) .. controls (83.44,221.52) and (86.86,222.81) .. (90.28,223.25) .. controls (86.86,223.68) and (83.44,224.97) .. (80.03,227.13) ;

\draw  [color={rgb, 255:red, 144; green, 19; blue, 254 }  ,draw opacity=1 ][line width=0.75]  (88.69,78.2) .. controls (92.11,80.35) and (95.52,81.65) .. (98.94,82.08) .. controls (95.52,82.51) and (92.11,83.81) .. (88.69,85.96) ;
\draw  [color={rgb, 255:red, 144; green, 19; blue, 254 }  ,draw opacity=1 ][line width=0.75]  (88.69,219.36) .. controls (92.11,221.52) and (95.52,222.81) .. (98.94,223.25) .. controls (95.52,223.68) and (92.11,224.97) .. (88.69,227.13) ;

\draw [color={rgb, 255:red, 144; green, 19; blue, 254 }  ,draw opacity=1 ][line width=1.5]    (20.31,81.65) -- (161.98,81.65) ;
\draw [color={rgb, 255:red, 144; green, 19; blue, 254 }  ,draw opacity=1 ][line width=1.5]    (20.31,222.95) -- (161.98,222.95) ;
\draw  [draw opacity=0][line width=0.75]  (81.14,135.28) .. controls (78.38,138.16) and (74.53,139.94) .. (70.26,139.94) .. controls (61.81,139.94) and (54.92,132.97) .. (54.87,124.38) .. controls (54.82,115.78) and (61.62,108.81) .. (70.07,108.81) .. controls (74.34,108.81) and (78.22,110.59) .. (81.01,113.47) -- (70.17,124.38) -- cycle ; \draw  [color={rgb, 255:red, 0; green, 0; blue, 255 }  ,draw opacity=1 ][line width=0.75]  (81.14,135.28) .. controls (78.38,138.16) and (74.53,139.94) .. (70.26,139.94) .. controls (61.81,139.94) and (54.92,132.97) .. (54.87,124.38) .. controls (54.82,115.78) and (61.62,108.81) .. (70.07,108.81) .. controls (74.34,108.81) and (78.22,110.59) .. (81.01,113.47) ;  
\draw  [draw opacity=0][line width=0.75]  (102.38,113.51) .. controls (105.15,110.42) and (109.08,108.49) .. (113.44,108.49) .. controls (121.89,108.49) and (128.78,115.7) .. (128.84,124.6) .. controls (128.89,133.49) and (122.09,140.71) .. (113.64,140.71) .. controls (109.28,140.71) and (105.32,138.78) .. (102.52,135.69) -- (113.54,124.6) -- cycle ; \draw  [color={rgb, 255:red, 0; green, 0; blue, 255 }  ,draw opacity=1 ][line width=0.75]  (102.38,113.51) .. controls (105.15,110.42) and (109.08,108.49) .. (113.44,108.49) .. controls (121.89,108.49) and (128.78,115.7) .. (128.84,124.6) .. controls (128.89,133.49) and (122.09,140.71) .. (113.64,140.71) .. controls (109.28,140.71) and (105.32,138.78) .. (102.52,135.69) ;  
\draw [color={rgb, 255:red, 0; green, 0; blue, 255 }  ,draw opacity=1 ][line width=0.75]    (80.93,113.39) -- (102.6,135.77) ;
\draw [color={rgb, 255:red, 0; green, 0; blue, 255 }  ,draw opacity=1 ][line width=0.75]    (89.93,125.89) -- (81.06,135.36) ;
\draw [color={rgb, 255:red, 0; green, 0; blue, 255 }  ,draw opacity=1 ][line width=0.75]    (102.46,113.42) -- (94.09,122) ;

\draw  [fill={rgb, 255:red, 246; green, 246; blue, 246 }  ,fill opacity=1 ][line width=1.5]  (255.05,81.65) -- (396.72,81.65) -- (396.72,222.95) -- (255.05,222.95) -- cycle ;
\draw [color={rgb, 255:red, 144; green, 19; blue, 254 }  ,draw opacity=1 ][line width=1.5]    (255.05,152.3) -- (396.72,152.3) ;
\draw  [line width=0.75]  (251.02,127.16) .. controls (253.18,123.75) and (254.48,120.35) .. (254.91,116.94) .. controls (255.35,120.35) and (256.64,123.75) .. (258.8,127.16) ;
\draw  [line width=0.75]  (392.54,127.16) .. controls (394.7,123.76) and (396,120.35) .. (396.44,116.94) .. controls (396.87,120.35) and (398.16,123.76) .. (400.33,127.16) ;

\draw  [color={rgb, 255:red, 144; green, 19; blue, 254 }  ,draw opacity=1 ][line width=0.75]  (314.77,78.2) .. controls (318.19,80.35) and (321.6,81.65) .. (325.02,82.08) .. controls (321.6,82.51) and (318.19,83.81) .. (314.77,85.96) ;
\draw  [color={rgb, 255:red, 144; green, 19; blue, 254 }  ,draw opacity=1 ][line width=0.75]  (314.77,219.36) .. controls (318.19,221.52) and (321.6,222.81) .. (325.02,223.25) .. controls (321.6,223.68) and (318.19,224.97) .. (314.77,227.13) ;

\draw  [color={rgb, 255:red, 144; green, 19; blue, 254 }  ,draw opacity=1 ][line width=0.75]  (323.44,78.2) .. controls (326.85,80.35) and (330.27,81.65) .. (333.69,82.08) .. controls (330.27,82.51) and (326.85,83.81) .. (323.44,85.96) ;
\draw  [color={rgb, 255:red, 144; green, 19; blue, 254 }  ,draw opacity=1 ][line width=0.75]  (323.44,219.36) .. controls (326.85,221.52) and (330.27,222.81) .. (333.69,223.25) .. controls (330.27,223.68) and (326.85,224.97) .. (323.44,227.13) ;

\draw [color={rgb, 255:red, 144; green, 19; blue, 254 }  ,draw opacity=1 ][line width=1.5]    (255.05,81.65) -- (396.72,81.65) ;
\draw [color={rgb, 255:red, 144; green, 19; blue, 254 }  ,draw opacity=1 ][line width=1.5]    (255.05,222.95) -- (396.72,222.95) ;
\draw  [draw opacity=0][line width=0.75]  (342.92,99.6) .. controls (345.78,96.45) and (349.82,94.49) .. (354.31,94.49) .. controls (363.04,94.49) and (370.16,101.89) .. (370.21,111.02) .. controls (370.27,120.16) and (363.24,127.56) .. (354.52,127.56) .. controls (350.03,127.56) and (345.96,125.6) .. (343.06,122.45) -- (354.42,111.02) -- cycle ; \draw  [color={rgb, 255:red, 0; green, 0; blue, 255 }  ,draw opacity=1 ][line width=0.75]  (342.92,99.6) .. controls (345.78,96.45) and (349.82,94.49) .. (354.31,94.49) .. controls (363.04,94.49) and (370.16,101.89) .. (370.21,111.02) .. controls (370.27,120.16) and (363.24,127.56) .. (354.52,127.56) .. controls (350.03,127.56) and (345.96,125.6) .. (343.06,122.45) ;  
\draw [color={rgb, 255:red, 0; green, 0; blue, 255 }  ,draw opacity=1 ][line width=0.75]    (320.72,99.52) -- (343.11,122.5) ;
\draw [color={rgb, 255:red, 0; green, 0; blue, 255 }  ,draw opacity=1 ][line width=0.75]    (342.96,99.55) -- (334.32,108.35) ;
\draw  [draw opacity=0][line width=0.75]  (293.93,142.43) .. controls (290.34,145.2) and (288.04,149.29) .. (287.98,153.88) .. controls (287.87,162.43) and (295.54,169.5) .. (305.12,169.67) .. controls (314.7,169.84) and (322.56,163.05) .. (322.67,154.5) .. controls (322.73,149.9) and (320.55,145.74) .. (317.03,142.84) -- (305.33,154.19) -- cycle ; \draw  [color={rgb, 255:red, 0; green, 0; blue, 255 }  ,draw opacity=1 ][line width=0.75]  (293.93,142.43) .. controls (290.34,145.2) and (288.04,149.29) .. (287.98,153.88) .. controls (287.87,162.43) and (295.54,169.5) .. (305.12,169.67) .. controls (314.7,169.84) and (322.56,163.05) .. (322.67,154.5) .. controls (322.73,149.9) and (320.55,145.74) .. (317.03,142.84) ;  
\draw [color={rgb, 255:red, 0; green, 0; blue, 255 }  ,draw opacity=1 ][line width=0.75]    (293.7,121.02) -- (317.51,143.25) ;
\draw [color={rgb, 255:red, 0; green, 0; blue, 255 }  ,draw opacity=1 ][line width=0.75]    (308.74,130.29) -- (327.74,113.29) ;
\draw [color={rgb, 255:red, 0; green, 0; blue, 255 }  ,draw opacity=1 ][line width=0.75]    (293.44,142.82) -- (302.79,134.46) ;
\draw [color={rgb, 255:red, 0; green, 0; blue, 255 }  ,draw opacity=1 ][line width=0.75]    (293.7,121.02) .. controls (280.44,108.8) and (305.64,84.11) .. (320.72,99.52) ;
\draw  [fill={rgb, 255:red, 246; green, 246; blue, 246 }  ,fill opacity=1 ][line width=1.5]  (489.8,81.65) -- (631.47,81.65) -- (631.47,222.95) -- (489.8,222.95) -- cycle ;
\draw [color={rgb, 255:red, 144; green, 19; blue, 254 }  ,draw opacity=1 ][line width=1.5]    (489.8,152.3) -- (631.47,152.3) ;
\draw  [color={rgb, 255:red, 0; green, 0; blue, 255 }  ,draw opacity=1 ][line width=0.75]  (516.79,152.3) .. controls (516.79,128.15) and (536.42,108.57) .. (560.64,108.57) .. controls (584.85,108.57) and (604.48,128.15) .. (604.48,152.3) .. controls (604.48,176.46) and (584.85,196.04) .. (560.64,196.04) .. controls (536.42,196.04) and (516.79,176.46) .. (516.79,152.3) -- cycle ;
\draw  [line width=0.75]  (485.77,127.16) .. controls (487.93,123.75) and (489.23,120.35) .. (489.66,116.94) .. controls (490.1,120.35) and (491.39,123.75) .. (493.55,127.16) ;
\draw  [line width=0.75]  (627.29,127.16) .. controls (629.45,123.76) and (630.75,120.35) .. (631.19,116.94) .. controls (631.62,120.35) and (632.91,123.76) .. (635.08,127.16) ;

\draw  [color={rgb, 255:red, 144; green, 19; blue, 254 }  ,draw opacity=1 ][line width=0.75]  (549.52,78.2) .. controls (552.94,80.35) and (556.36,81.65) .. (559.77,82.08) .. controls (556.36,82.51) and (552.94,83.81) .. (549.52,85.96) ;
\draw  [color={rgb, 255:red, 144; green, 19; blue, 254 }  ,draw opacity=1 ][line width=0.75]  (549.52,219.36) .. controls (552.94,221.52) and (556.36,222.81) .. (559.77,223.25) .. controls (556.36,223.68) and (552.94,224.97) .. (549.52,227.13) ;

\draw  [color={rgb, 255:red, 144; green, 19; blue, 254 }  ,draw opacity=1 ][line width=0.75]  (558.19,78.2) .. controls (561.6,80.35) and (565.02,81.65) .. (568.44,82.08) .. controls (565.02,82.51) and (561.6,83.81) .. (558.19,85.96) ;
\draw  [color={rgb, 255:red, 144; green, 19; blue, 254 }  ,draw opacity=1 ][line width=0.75]  (558.19,219.36) .. controls (561.6,221.52) and (565.02,222.81) .. (568.44,223.25) .. controls (565.02,223.68) and (561.6,224.97) .. (558.19,227.13) ;

\draw [color={rgb, 255:red, 144; green, 19; blue, 254 }  ,draw opacity=1 ][line width=1.5]    (489.8,81.65) -- (631.47,81.65) ;
\draw [color={rgb, 255:red, 144; green, 19; blue, 254 }  ,draw opacity=1 ][line width=1.5]    (489.8,222.95) -- (631.47,222.95) ;
\draw    (228.49,152.69) -- (183.26,152.75) ;
\draw [shift={(230.49,152.69)}, rotate = 179.93] [color={rgb, 255:red, 0; green, 0; blue, 0 }  ][line width=0.75]    (10.93,-3.29) .. controls (6.95,-1.4) and (3.31,-0.3) .. (0,0) .. controls (3.31,0.3) and (6.95,1.4) .. (10.93,3.29)   ;
\draw    (468.49,152.69) -- (423.26,152.75) ;
\draw [shift={(470.49,152.69)}, rotate = 179.93] [color={rgb, 255:red, 0; green, 0; blue, 0 }  ][line width=0.75]    (10.93,-3.29) .. controls (6.95,-1.4) and (3.31,-0.3) .. (0,0) .. controls (3.31,0.3) and (6.95,1.4) .. (10.93,3.29)   ;

\draw (147.01,156.07) node [anchor=north west][inner sep=0.75pt]  [color={rgb, 255:red, 144; green, 19; blue, 254 }  ,opacity=1 ]  {$\Gamma $};
\draw (24,85.13) node [anchor=north west][inner sep=0.75pt]    {$\Sigma _{+}$};
\draw (24.42,203.57) node [anchor=north west][inner sep=0.75pt]    {$\Sigma _{-}$};
\draw (381.75,156.07) node [anchor=north west][inner sep=0.75pt]  [color={rgb, 255:red, 144; green, 19; blue, 254 }  ,opacity=1 ]  {$\Gamma $};
\draw (258.75,85.13) node [anchor=north west][inner sep=0.75pt]    {$\Sigma _{+}$};
\draw (259.16,203.57) node [anchor=north west][inner sep=0.75pt]    {$\Sigma _{-}$};
\draw (617.5,156.07) node [anchor=north west][inner sep=0.75pt]  [color={rgb, 255:red, 144; green, 19; blue, 254 }  ,opacity=1 ]  {$\Gamma $};
\draw (493.5,85.13) node [anchor=north west][inner sep=0.75pt]    {$\Sigma _{+}$};
\draw (493.91,203.57) node [anchor=north west][inner sep=0.75pt]    {$\Sigma _{-}$};
\draw (195.27,127.4) node [anchor=north west][inner sep=0.75pt]    {$RI$};
\draw (431.27,127.4) node [anchor=north west][inner sep=0.75pt]    {$RII$};

\end{tikzpicture}

%% file: images/genus_2_unknot.tex
\begin{tikzpicture}[x=0.55pt,y=0.55pt,yscale=-1,xscale=1]
\draw  [fill={rgb, 255:red, 246; green, 246; blue, 246 }  ,fill opacity=1 ][line width=0.75]  (121.9,186.31) .. controls (121.9,225.31) and (163.9,266.31) .. (203.9,266.31) .. controls (243.9,266.31) and (288.1,244.42) .. (322.9,244.31) .. controls (357.69,244.19) and (403.9,266.31) .. (443.74,266.12) .. controls (483.58,265.94) and (522.74,225.12) .. (522.74,185.12) .. controls (522.74,145.12) and (494.3,106.28) .. (442.74,106.12) .. controls (391.17,105.96) and (361.9,126.31) .. (323.9,126.31) .. controls (285.9,126.31) and (243.9,104.31) .. (202.9,105.31) .. controls (161.9,106.31) and (121.9,147.31) .. (121.9,186.31) -- cycle ;
\draw  [draw opacity=0][dash pattern={on 5.63pt off 4.5pt}][line width=1.5]  (318.34,243.68) .. controls (318.34,243.68) and (318.34,243.68) .. (318.34,243.68) .. controls (318.34,243.68) and (318.34,243.68) .. (318.34,243.68) .. controls (306.38,243.68) and (296.69,217.45) .. (296.69,185.1) .. controls (296.69,152.74) and (306.38,126.51) .. (318.34,126.51) -- (318.34,185.1) -- cycle ; \draw  [color={rgb, 255:red, 0; green, 0; blue, 255 }  ,draw opacity=1 ][dash pattern={on 5.63pt off 4.5pt}][line width=1.5]  (318.34,243.68) .. controls (318.34,243.68) and (318.34,243.68) .. (318.34,243.68) .. controls (318.34,243.68) and (318.34,243.68) .. (318.34,243.68) .. controls (306.38,243.68) and (296.69,217.45) .. (296.69,185.1) .. controls (296.69,152.74) and (306.38,126.51) .. (318.34,126.51) ;  
\draw  [draw opacity=0][line width=1.5]  (123,184.18) .. controls (123,184.18) and (123,184.18) .. (123,184.18) .. controls (123,171.75) and (144.3,161.68) .. (170.58,161.68) .. controls (196.86,161.68) and (218.17,171.75) .. (218.17,184.18) -- (170.58,184.18) -- cycle ; \draw  [color={rgb, 255:red, 144; green, 19; blue, 254 }  ,draw opacity=1 ][line width=1.5]  (123,184.18) .. controls (123,184.18) and (123,184.18) .. (123,184.18) .. controls (123,171.75) and (144.3,161.68) .. (170.58,161.68) .. controls (196.86,161.68) and (218.17,171.75) .. (218.17,184.18) ;  
\draw  [draw opacity=0][line width=1.5]  (427.17,185.16) .. controls (427.17,185.16) and (427.17,185.16) .. (427.17,185.16) .. controls (427.17,172.52) and (448.41,162.27) .. (474.62,162.27) .. controls (500.83,162.27) and (522.08,172.52) .. (522.08,185.16) .. controls (522.08,185.16) and (522.08,185.16) .. (522.08,185.16) -- (474.62,185.16) -- cycle ; \draw  [color={rgb, 255:red, 144; green, 19; blue, 254 }  ,draw opacity=1 ][line width=1.5]  (427.17,185.16) .. controls (427.17,185.16) and (427.17,185.16) .. (427.17,185.16) .. controls (427.17,172.52) and (448.41,162.27) .. (474.62,162.27) .. controls (500.83,162.27) and (522.08,172.52) .. (522.08,185.16) .. controls (522.08,185.16) and (522.08,185.16) .. (522.08,185.16) ;  
\draw  [draw opacity=0][line width=1.5]  (250.08,182.95) .. controls (250.08,171.53) and (282.56,162.27) .. (322.62,162.27) .. controls (362.69,162.27) and (395.17,171.53) .. (395.17,182.95) -- (322.62,182.95) -- cycle ; \draw  [color={rgb, 255:red, 144; green, 19; blue, 254 }  ,draw opacity=1 ][line width=1.5]  (250.08,182.95) .. controls (250.08,171.53) and (282.56,162.27) .. (322.62,162.27) .. controls (362.69,162.27) and (395.17,171.53) .. (395.17,182.95) ;  
\draw  [draw opacity=0][dash pattern={on 5.63pt off 4.5pt}][line width=1.5]  (522,184.26) .. controls (522,184.26) and (522,184.26) .. (522,184.26) .. controls (522,184.26) and (522,184.26) .. (522,184.26) .. controls (522,197.2) and (500.73,207.68) .. (474.49,207.68) .. controls (448.26,207.68) and (426.99,197.2) .. (426.99,184.26) -- (474.49,184.26) -- cycle ; \draw  [color={rgb, 255:red, 144; green, 19; blue, 254 }  ,draw opacity=1 ][dash pattern={on 5.63pt off 4.5pt}][line width=1.5]  (522,184.26) .. controls (522,184.26) and (522,184.26) .. (522,184.26) .. controls (522,184.26) and (522,184.26) .. (522,184.26) .. controls (522,197.2) and (500.73,207.68) .. (474.49,207.68) .. controls (448.26,207.68) and (426.99,197.2) .. (426.99,184.26) ;  
\draw  [draw opacity=0][dash pattern={on 5.63pt off 4.5pt}][line width=1.5]  (217.99,183.16) .. controls (217.99,183.16) and (217.99,183.16) .. (217.99,183.16) .. controls (217.99,195.8) and (196.74,206.05) .. (170.53,206.05) .. controls (144.32,206.05) and (123.08,195.8) .. (123.08,183.16) -- (170.53,183.16) -- cycle ; \draw  [color={rgb, 255:red, 144; green, 19; blue, 254 }  ,draw opacity=1 ][dash pattern={on 5.63pt off 4.5pt}][line width=1.5]  (217.99,183.16) .. controls (217.99,183.16) and (217.99,183.16) .. (217.99,183.16) .. controls (217.99,195.8) and (196.74,206.05) .. (170.53,206.05) .. controls (144.32,206.05) and (123.08,195.8) .. (123.08,183.16) ;  
\draw  [draw opacity=0][dash pattern={on 5.63pt off 4.5pt}][line width=1.5]  (395.08,185.36) .. controls (395.08,196.79) and (362.6,206.05) .. (322.53,206.05) .. controls (282.47,206.05) and (249.99,196.79) .. (249.99,185.36) -- (322.53,185.36) -- cycle ; \draw  [color={rgb, 255:red, 144; green, 19; blue, 254 }  ,draw opacity=1 ][dash pattern={on 5.63pt off 4.5pt}][line width=1.5]  (395.08,185.36) .. controls (395.08,196.79) and (362.6,206.05) .. (322.53,206.05) .. controls (282.47,206.05) and (249.99,196.79) .. (249.99,185.36) ;  
\draw  [color={rgb, 255:red, 255; green, 255; blue, 255 }  ,draw opacity=1 ][line width=1.5] [line join = round][line cap = round] (237.33,183.13) .. controls (232.88,182.88) and (238.07,181.31) .. (236.03,180.74) .. controls (230.82,179.28) and (214,183.76) .. (227.68,185.48) .. controls (233.89,186.27) and (238.65,185.84) .. (243.61,184.63) .. controls (243.8,184.58) and (246.45,183.82) .. (246.34,183.63) .. controls (245.85,182.72) and (239.05,179.66) .. (232.97,181.15) .. controls (226.66,182.69) and (241.17,183.34) .. (239.58,183.25) .. controls (236.2,183.06) and (232.82,182.87) .. (229.44,182.69) ;
\draw [line width=0.75]    (218.17,183.73) .. controls (226.15,176.14) and (242.15,175.27) .. (250.17,183.67) ;
\draw [line width=0.75]    (213.32,179.38) .. controls (225.39,192.73) and (245.18,191.25) .. (254.55,178.49) ;
\draw  [color={rgb, 255:red, 255; green, 255; blue, 255 }  ,draw opacity=1 ][line width=1.5] [line join = round][line cap = round] (414.33,184.13) .. controls (409.88,183.88) and (415.07,182.31) .. (413.03,181.74) .. controls (407.82,180.28) and (391,184.76) .. (404.68,186.48) .. controls (410.89,187.27) and (415.65,186.84) .. (420.61,185.63) .. controls (420.8,185.58) and (423.45,184.82) .. (423.34,184.63) .. controls (422.85,183.72) and (416.05,180.66) .. (409.97,182.15) .. controls (403.66,183.69) and (418.17,184.34) .. (416.58,184.25) .. controls (413.2,184.06) and (409.82,183.87) .. (406.44,183.69) ;
\draw [line width=0.75]    (395.17,184.73) .. controls (403.15,177.14) and (419.15,176.27) .. (427.17,184.67) ;
\draw [line width=0.75]    (390.32,180.38) .. controls (402.39,193.73) and (422.18,192.25) .. (431.55,179.49) ;
\draw  [draw opacity=0][line width=1.5]  (319.55,126.68) .. controls (319.55,126.68) and (319.55,126.68) .. (319.55,126.68) .. controls (319.55,126.68) and (319.55,126.68) .. (319.55,126.68) .. controls (330.84,126.68) and (340,152.87) .. (340,185.18) .. controls (340,217.49) and (330.84,243.68) .. (319.55,243.68) -- (319.55,185.18) -- cycle ; \draw  [color={rgb, 255:red, 0; green, 0; blue, 255 }  ,draw opacity=1 ][line width=1.5]  (319.55,126.68) .. controls (319.55,126.68) and (319.55,126.68) .. (319.55,126.68) .. controls (319.55,126.68) and (319.55,126.68) .. (319.55,126.68) .. controls (330.84,126.68) and (340,152.87) .. (340,185.18) .. controls (340,217.49) and (330.84,243.68) .. (319.55,243.68) ;  
\draw  [line width=0.75]  (121.9,186.31) .. controls (121.9,225.31) and (163.9,266.31) .. (203.9,266.31) .. controls (243.9,266.31) and (288.1,244.42) .. (322.9,244.31) .. controls (357.69,244.19) and (403.9,266.31) .. (443.74,266.12) .. controls (483.58,265.94) and (522.74,225.12) .. (522.74,185.12) .. controls (522.74,145.12) and (494.3,106.28) .. (442.74,106.12) .. controls (391.17,105.96) and (361.9,126.31) .. (323.9,126.31) .. controls (285.9,126.31) and (243.9,104.31) .. (202.9,105.31) .. controls (161.9,106.31) and (121.9,147.31) .. (121.9,186.31) -- cycle ;

\draw (176.59,231.4) node [anchor=north west][inner sep=0.75pt]    {$\Sigma _{+}$};
\draw (177.09,120.4) node [anchor=north west][inner sep=0.75pt]    {$\Sigma _{-}$};
\draw (467,142.33) node [anchor=north west][inner sep=0.75pt]  [color={rgb, 255:red, 144; green, 19; blue, 254 }  ,opacity=1 ]  {$\Gamma $};
\draw (342,142.33) node [anchor=north west][inner sep=0.75pt]  [color={rgb, 255:red, 0; green, 0; blue, 255 }  ,opacity=1 ]  {$\Lambda $};

\end{tikzpicture}

%% file: images/five_two_schematic.tex
\begin{tikzpicture}[x=0.65pt,y=0.65pt,yscale=-1,xscale=1]
\draw  [color={rgb, 255:red, 144; green, 19; blue, 254 }  ,draw opacity=1 ][line width=1.5]  [fill={rgb, 255:red, 246; green, 246; blue, 246 }  ,fill opacity=1 ] (368.48,142.39) .. controls (368.48,91.91) and (409.28,50.99) .. (459.61,50.99) .. controls (509.94,50.99) and (550.74,91.91) .. (550.74,142.39) .. controls (550.74,192.86) and (509.94,233.79) .. (459.61,233.79) .. controls (409.28,233.79) and (368.48,192.86) .. (368.48,142.39) -- cycle ;

\draw  [color={rgb, 255:red, 144; green, 19; blue, 254 }  ,draw opacity=1 ][line width=1.5] [fill={rgb, 255:red, 246; green, 246; blue, 246 }  ,fill opacity=1 ] (77.48,142.39) .. controls (77.48,91.91) and (118.28,50.99) .. (168.61,50.99) .. controls (218.94,50.99) and (259.74,91.91) .. (259.74,142.39) .. controls (259.74,192.86) and (218.94,233.79) .. (168.61,233.79) .. controls (118.28,233.79) and (77.48,192.86) .. (77.48,142.39) -- cycle ;

\draw  [color={rgb, 255:red, 144; green, 19; blue, 254 }  ,draw opacity=1 ][line width=1.5] [fill={rgb, 255:red, 255; green, 255; blue, 255 }  ,fill opacity=1 ] (97.12,142.39) .. controls (97.12,137.98) and (100.68,134.4) .. (105.08,134.4) .. controls (109.48,134.4) and (113.04,137.98) .. (113.04,142.39) .. controls (113.04,146.8) and (109.48,150.37) .. (105.08,150.37) .. controls (100.68,150.37) and (97.12,146.8) .. (97.12,142.39) -- cycle ;
\draw  [color={rgb, 255:red, 144; green, 19; blue, 254 }  ,draw opacity=1 ][line width=1.5] [fill={rgb, 255:red, 255; green, 255; blue, 255 }  ,fill opacity=1 ] (224.18,142.39) .. controls (224.18,137.98) and (227.75,134.4) .. (232.15,134.4) .. controls (236.54,134.4) and (240.11,137.98) .. (240.11,142.39) .. controls (240.11,146.8) and (236.54,150.37) .. (232.15,150.37) .. controls (227.75,150.37) and (224.18,146.8) .. (224.18,142.39) -- cycle ;

\draw [color={rgb, 255:red, 0; green, 0; blue, 255 }  ,draw opacity=1 ][line width=1.5]    (168.61,50.99) -- (168.61,100.68) ;
\draw [color={rgb, 255:red, 0; green, 0; blue, 255 }  ,draw opacity=1 ][line width=1.5]    (168.61,184.09) -- (168.61,233.79) ;
\draw  [color={rgb, 255:red, 128; green, 128; blue, 128 }  ,draw opacity=1 ][dash pattern={on 4.5pt off 4.5pt}][line width=0.75]  (126.81,100.46) -- (210.42,100.46) -- (210.42,184.31) -- (126.81,184.31) -- cycle ;
\draw  [color={rgb, 255:red, 144; green, 19; blue, 254 }  ,draw opacity=1 ][line width=1.5]   (77.48,142.39) .. controls (77.48,91.91) and (118.28,50.99) .. (168.61,50.99) .. controls (218.94,50.99) and (259.74,91.91) .. (259.74,142.39) .. controls (259.74,192.86) and (218.94,233.79) .. (168.61,233.79) .. controls (118.28,233.79) and (77.48,192.86) .. (77.48,142.39) -- cycle ;
\draw  [color={rgb, 255:red, 144; green, 19; blue, 254 }  ,draw opacity=1 ][line width=1.5]  [fill={rgb, 255:red, 255; green, 255; blue, 255 }  ,fill opacity=1 ]  (388.12,142.39) .. controls (388.12,137.98) and (391.68,134.4) .. (396.08,134.4) .. controls (400.48,134.4) and (404.04,137.98) .. (404.04,142.39) .. controls (404.04,146.8) and (400.48,150.37) .. (396.08,150.37) .. controls (391.68,150.37) and (388.12,146.8) .. (388.12,142.39) -- cycle ;
\draw  [color={rgb, 255:red, 144; green, 19; blue, 254 }  ,draw opacity=1 ][line width=1.5] [fill={rgb, 255:red, 255; green, 255; blue, 255 }  ,fill opacity=1 ] (515.18,142.39) .. controls (515.18,137.98) and (518.75,134.4) .. (523.15,134.4) .. controls (527.54,134.4) and (531.11,137.98) .. (531.11,142.39) .. controls (531.11,146.8) and (527.54,150.37) .. (523.15,150.37) .. controls (518.75,150.37) and (515.18,146.8) .. (515.18,142.39) -- cycle ;

\draw [color={rgb, 255:red, 0; green, 0; blue, 255 }  ,draw opacity=1 ][line width=1.5]    (451.13,52.26) -- (451.13,101.95) ;
\draw [color={rgb, 255:red, 0; green, 0; blue, 255 }  ,draw opacity=1 ][line width=1.5]    (451.13,182.09) -- (451.13,231.79) ;
\draw [color={rgb, 255:red, 0; green, 0; blue, 255 }  ,draw opacity=1 ][line width=1.5]    (453.72,135.61) .. controls (459.25,144.54) and (460.4,151.98) .. (472.35,151.54) .. controls (484.29,151.09) and (486.72,143.81) .. (486.49,139.92) .. controls (486.26,136.04) and (483.39,128.95) .. (471.01,129.38) .. controls (458.63,129.81) and (461.04,136.32) .. (453.1,150.31) ;
\draw [color={rgb, 255:red, 255; green, 255; blue, 255 }  ,draw opacity=1 ][line width=3]    (455.51,143.81) -- (458.47,138.19) ;
\draw [color={rgb, 255:red, 0; green, 0; blue, 255 }  ,draw opacity=1 ][line width=1.5]    (460.02,146.47) -- (453.72,135.61) ;
\draw  [color={rgb, 255:red, 144; green, 19; blue, 254 }  ,draw opacity=1 ][line width=1.5]  (368.48,142.39) .. controls (368.48,91.91) and (409.28,50.99) .. (459.61,50.99) .. controls (509.94,50.99) and (550.74,91.91) .. (550.74,142.39) .. controls (550.74,192.86) and (509.94,233.79) .. (459.61,233.79) .. controls (409.28,233.79) and (368.48,192.86) .. (368.48,142.39) -- cycle ;
\draw [color={rgb, 255:red, 0; green, 0; blue, 255 }  ,draw opacity=1 ][line width=1.5]    (451.09,101.95) .. controls (450.95,116.48) and (449.87,127.05) .. (453.72,135.61) ;
\draw [color={rgb, 255:red, 0; green, 0; blue, 255 }  ,draw opacity=1 ][line width=1.5]    (451.09,182.09) .. controls (451.25,171.68) and (449.81,155.66) .. (453.1,150.31) ;
\draw  [color={rgb, 255:red, 128; green, 128; blue, 128 }  ,draw opacity=1 ][dash pattern={on 4.5pt off 4.5pt}][line width=0.75]  (417.81,100.46) -- (501.42,100.46) -- (501.42,184.31) -- (417.81,184.31) -- cycle ;

\draw (58.62,131.79) node [anchor=north west][inner sep=0.75pt]  [color={rgb, 255:red, 144; green, 19; blue, 254 }  ,opacity=1 ]  {$c$};
\draw (267.74,131.79) node [anchor=north west][inner sep=0.75pt]  [color={rgb, 255:red, 144; green, 19; blue, 254 }  ,opacity=1 ]  {$d$};
\draw (161.66,136.07) node [anchor=north west][inner sep=0.75pt]    {$K$};
\draw (182.59,199.4) node [anchor=north west][inner sep=0.75pt]    {$\Sigma _{+}$};
\draw (477.09,199.4) node [anchor=north west][inner sep=0.75pt]    {$\Sigma _{-}$};
\draw (349.62,131.79) node [anchor=north west][inner sep=0.75pt]  [color={rgb, 255:red, 144; green, 19; blue, 254 }  ,opacity=1 ]  {$c$};
\draw (558.74,131.79) node [anchor=north west][inner sep=0.75pt]  [color={rgb, 255:red, 144; green, 19; blue, 254 }  ,opacity=1 ]  {$d$};

\end{tikzpicture}

%% file: images/five_two_knots.tex
\begin{tikzpicture}[x=0.65pt,y=0.65pt,yscale=-1,xscale=1]

\draw [color={rgb, 255:red, 0; green, 0; blue, 255 }  ,draw opacity=1 ][line width=1.5]    (238.03,99.33) .. controls (237.25,153.9) and (194.96,188.45) .. (188.57,195.19) ;
\draw [color={rgb, 255:red, 0; green, 0; blue, 255 }  ,draw opacity=1 ][line width=1.5]    (152.57,161.19) .. controls (143.31,168.52) and (122.31,192.52) .. (120.32,195.01) ;
\draw [color={rgb, 255:red, 0; green, 0; blue, 255 }  ,draw opacity=1 ][line width=1.5]    (223.26,68.16) .. controls (243,73.38) and (251.54,84.44) .. (269.54,71.95) .. controls (287.55,59.46) and (285.07,46.83) .. (281.39,41.63) .. controls (277.71,36.42) and (267.25,29.34) .. (248.56,42.25) .. controls (229.86,55.15) and (238.29,71.85) .. (238.03,99.33) ;
\draw [color={rgb, 255:red, 255; green, 255; blue, 255 }  ,draw opacity=1 ][line width=3]    (237.03,77.86) -- (236.78,67.04) ;
\draw [color={rgb, 255:red, 0; green, 0; blue, 255 }  ,draw opacity=1 ][line width=1.5]    (240.13,74.42) -- (233.16,71.21) ;
\draw [color={rgb, 255:red, 0; green, 0; blue, 255 }  ,draw opacity=1 ][line width=1.5]    (110.18,100.83) .. controls (108,72.38) and (115,54.38) .. (97.65,44.22) .. controls (80.3,34.07) and (70.13,38.36) .. (66.65,43.56) .. controls (63.17,48.77) and (60.21,60.93) .. (77.82,73.93) .. controls (95.43,86.92) and (107,73.38) .. (121.26,69.16) ;
\draw [color={rgb, 255:red, 255; green, 255; blue, 255 }  ,draw opacity=1 ][line width=3]    (113.46,72.48) -- (103.85,76.41) ;
\draw [color={rgb, 255:red, 0; green, 0; blue, 255 }  ,draw opacity=1 ][line width=1.5]    (110,71.38) -- (109.81,77.35) ;
\draw [color={rgb, 255:red, 0; green, 0; blue, 255 }  ,draw opacity=1 ][line width=1.5]    (229.75,201.02) .. controls (229.59,228.45) and (223.6,244.41) .. (240.03,257.07) .. controls (256.46,269.74) and (266.3,263.26) .. (269.48,258.12) .. controls (272.67,252.97) and (275.21,240.88) .. (258.21,227.71) .. controls (241.2,214.54) and (229.54,227.29) .. (206.34,237.27) ;
\draw [color={rgb, 255:red, 255; green, 255; blue, 255 }  ,draw opacity=1 ][line width=3]    (224.4,228.7) -- (233.44,224.9) ;
\draw [color={rgb, 255:red, 0; green, 0; blue, 255 }  ,draw opacity=1 ][line width=1.5]    (228.47,229.7) -- (228.71,223.91) ;
\draw [color={rgb, 255:red, 0; green, 0; blue, 255 }  ,draw opacity=1 ][line width=1.5]    (153.07,86.71) .. controls (153.19,97.77) and (188.96,110.1) .. (187.57,127.19) ;
\draw [color={rgb, 255:red, 255; green, 255; blue, 255 }  ,draw opacity=1 ][line width=3]    (172.67,105.16) -- (166.21,100.56) ;
\draw [color={rgb, 255:red, 0; green, 0; blue, 255 }  ,draw opacity=1 ][line width=1.5]    (186.57,86.19) .. controls (186.45,97.26) and (152.96,106.1) .. (154.07,127.71) ;
\draw [color={rgb, 255:red, 0; green, 0; blue, 255 }  ,draw opacity=1 ][line width=1.5]    (154.07,127.71) .. controls (154.19,138.77) and (180.31,154.52) .. (189.76,161.45) ;
\draw [color={rgb, 255:red, 255; green, 255; blue, 255 }  ,draw opacity=1 ][line width=3]    (173.67,149.16) -- (167.21,144.56) ;
\draw [color={rgb, 255:red, 0; green, 0; blue, 255 }  ,draw opacity=1 ][line width=1.5]    (187.57,127.19) .. controls (187.45,138.26) and (159.31,153.52) .. (152.57,161.19) ;
\draw [color={rgb, 255:red, 0; green, 0; blue, 255 }  ,draw opacity=1 ][line width=1.5]    (189.76,161.45) .. controls (195.31,166.52) and (229.83,178.21) .. (229.75,201.02) ;
\draw [color={rgb, 255:red, 255; green, 255; blue, 255 }  ,draw opacity=1 ][line width=3]    (213.67,171.16) -- (208.8,168.44) ;
\draw [color={rgb, 255:red, 255; green, 255; blue, 255 }  ,draw opacity=1 ][line width=3]    (136.31,176.52) -- (132.31,182.52) ;
\draw [color={rgb, 255:red, 0; green, 0; blue, 255 }  ,draw opacity=1 ][line width=1.5]    (154.96,201.45) .. controls (158.96,206.45) and (189.83,239.21) .. (206.34,237.27) ;
\draw [color={rgb, 255:red, 255; green, 255; blue, 255 }  ,draw opacity=1 ][line width=3]    (171.67,218.16) -- (165.21,213.56) ;
\draw [color={rgb, 255:red, 0; green, 0; blue, 255 }  ,draw opacity=1 ][line width=1.5]    (188.57,195.19) .. controls (180.96,202.45) and (159.96,221.45) .. (153.57,229.19) ;
\draw [color={rgb, 255:red, 0; green, 0; blue, 255 }  ,draw opacity=1 ][line width=1.5]    (110.18,100.83) .. controls (106.25,161.9) and (150.32,195.01) .. (154.96,201.45) ;
\draw [color={rgb, 255:red, 0; green, 0; blue, 255 }  ,draw opacity=1 ][line width=1.5]    (121.26,69.16) .. controls (147.26,63.16) and (152.02,80.45) .. (153.07,86.71) ;
\draw [color={rgb, 255:red, 0; green, 0; blue, 255 }  ,draw opacity=1 ][line width=1.5]    (223.26,68.16) .. controls (189.23,60.41) and (187.25,74.9) .. (186.57,86.19) ;
\draw [color={rgb, 255:red, 0; green, 0; blue, 255 }  ,draw opacity=1 ][line width=1.5]    (120.32,195.01) -- (97.31,219.52) ;
\draw [color={rgb, 255:red, 0; green, 0; blue, 255 }  ,draw opacity=1 ][line width=1.5]    (153.57,229.19) -- (127.64,256.57) ;
\draw [color={rgb, 255:red, 255; green, 255; blue, 255 }  ,draw opacity=1 ][line width=3]    (209.67,177.16) -- (203.21,173.56) ;
\draw [color={rgb, 255:red, 0; green, 0; blue, 255 }  ,draw opacity=1 ][line width=1.5]    (444.57,161.19) .. controls (437.25,168.13) and (417.28,190.27) .. (412.32,195.01) ;
\draw [color={rgb, 255:red, 0; green, 0; blue, 255 }  ,draw opacity=1 ][line width=1.5]    (497.33,192.8) .. controls (482.53,196.31) and (474.58,201.97) .. (465.89,193.47) .. controls (457.2,184.97) and (459.6,178.14) .. (462.02,175.54) .. controls (464.44,172.95) and (472.56,167.24) .. (479.72,178.67) .. controls (486.88,190.09) and (482.88,195.09) .. (480.38,211.67) ;
\draw [color={rgb, 255:red, 255; green, 255; blue, 255 }  ,draw opacity=1 ][line width=3.75]    (486.88,196.09) -- (477.88,197.09) ;
\draw [color={rgb, 255:red, 0; green, 0; blue, 255 }  ,draw opacity=1 ][line width=1.5]    (481.88,203.09) -- (483.88,193.09) ;
\draw [color={rgb, 255:red, 0; green, 0; blue, 255 }  ,draw opacity=1 ][line width=1.5]    (446.07,127.71) .. controls (446.19,138.77) and (485.26,151.29) .. (491.77,155.62) ;
\draw [color={rgb, 255:red, 255; green, 255; blue, 255 }  ,draw opacity=1 ][line width=3]    (469.67,146.16) -- (463.21,141.56) ;
\draw [color={rgb, 255:red, 0; green, 0; blue, 255 }  ,draw opacity=1 ][line width=1.5]    (479.57,127.19) .. controls (479.45,138.26) and (450.25,155.13) .. (444.57,161.19) ;
\draw [color={rgb, 255:red, 0; green, 0; blue, 255 }  ,draw opacity=1 ][line width=1.5]    (491.77,155.62) .. controls (496.77,158.62) and (544.77,178.62) .. (525.77,218.62) ;
\draw [color={rgb, 255:red, 255; green, 255; blue, 255 }  ,draw opacity=1 ][line width=3]    (503.67,176.16) -- (497.21,171.56) ;
\draw [color={rgb, 255:red, 255; green, 255; blue, 255 }  ,draw opacity=1 ][line width=3]    (429.25,176.13) -- (423.25,183.13) ;
\draw [color={rgb, 255:red, 0; green, 0; blue, 255 }  ,draw opacity=1 ][line width=1.5]    (446.96,202.45) .. controls (450.96,207.45) and (492.77,266.4) .. (525.77,218.62) ;
\draw [color={rgb, 255:red, 255; green, 255; blue, 255 }  ,draw opacity=1 ][line width=3]    (471.77,228.4) -- (465.77,222.97) ;
\draw [color={rgb, 255:red, 0; green, 0; blue, 255 }  ,draw opacity=1 ][line width=1.5]    (480.38,211.67) .. controls (475.52,233.86) and (446.52,221.86) .. (436.57,233.19) ;
\draw [color={rgb, 255:red, 0; green, 0; blue, 255 }  ,draw opacity=1 ][line width=1.5]    (412.32,195.01) -- (383.64,218.57) ;
\draw [color={rgb, 255:red, 0; green, 0; blue, 255 }  ,draw opacity=1 ][line width=1.5]    (436.57,233.19) -- (410.64,260.57) ;
\draw [color={rgb, 255:red, 0; green, 0; blue, 255 }  ,draw opacity=1 ][line width=1.5]    (530.03,100.33) .. controls (529.25,154.9) and (511.16,192.21) .. (497.33,192.8) ;
\draw [color={rgb, 255:red, 0; green, 0; blue, 255 }  ,draw opacity=1 ][line width=1.5]    (515.26,69.16) .. controls (535,74.38) and (543.54,85.44) .. (561.54,72.95) .. controls (579.55,60.46) and (577.07,47.83) .. (573.39,42.63) .. controls (569.71,37.42) and (559.25,30.34) .. (540.56,43.25) .. controls (521.86,56.15) and (530.29,72.85) .. (530.03,100.33) ;
\draw [color={rgb, 255:red, 255; green, 255; blue, 255 }  ,draw opacity=1 ][line width=3]    (529.03,78.86) -- (528.78,68.04) ;
\draw [color={rgb, 255:red, 0; green, 0; blue, 255 }  ,draw opacity=1 ][line width=1.5]    (532.13,75.42) -- (525.16,72.21) ;
\draw [color={rgb, 255:red, 0; green, 0; blue, 255 }  ,draw opacity=1 ][line width=1.5]    (402.18,101.83) .. controls (400,73.38) and (407,55.38) .. (389.65,45.22) .. controls (372.3,35.07) and (362.13,39.36) .. (358.65,44.56) .. controls (355.17,49.77) and (352.21,61.93) .. (369.82,74.93) .. controls (387.43,87.92) and (399,74.38) .. (413.26,70.16) ;
\draw [color={rgb, 255:red, 255; green, 255; blue, 255 }  ,draw opacity=1 ][line width=3]    (405.46,73.48) -- (395.85,77.41) ;
\draw [color={rgb, 255:red, 0; green, 0; blue, 255 }  ,draw opacity=1 ][line width=1.5]    (402,72.38) -- (401.81,78.35) ;
\draw [color={rgb, 255:red, 0; green, 0; blue, 255 }  ,draw opacity=1 ][line width=1.5]    (445.07,87.71) .. controls (445.19,98.77) and (480.96,111.1) .. (479.57,128.19) ;
\draw [color={rgb, 255:red, 255; green, 255; blue, 255 }  ,draw opacity=1 ][line width=3]    (464.67,106.16) -- (458.21,101.56) ;
\draw [color={rgb, 255:red, 0; green, 0; blue, 255 }  ,draw opacity=1 ][line width=1.5]    (478.57,87.19) .. controls (478.45,98.26) and (444.96,107.1) .. (446.07,128.71) ;
\draw [color={rgb, 255:red, 0; green, 0; blue, 255 }  ,draw opacity=1 ][line width=1.5]    (402.18,101.83) .. controls (398.25,162.9) and (442.32,196.01) .. (446.96,202.45) ;
\draw [color={rgb, 255:red, 0; green, 0; blue, 255 }  ,draw opacity=1 ][line width=1.5]    (413.26,70.16) .. controls (439.26,64.16) and (444.02,81.45) .. (445.07,87.71) ;
\draw [color={rgb, 255:red, 0; green, 0; blue, 255 }  ,draw opacity=1 ][line width=1.5]    (515.26,69.16) .. controls (481.23,61.41) and (479.25,75.9) .. (478.57,87.19) ;
\draw [color={rgb, 255:red, 255; green, 255; blue, 255 }  ,draw opacity=1 ][line width=3]    (520.16,171.21) -- (513.16,164.21) ;
\draw [color={rgb, 255:red, 255; green, 255; blue, 255 }  ,draw opacity=1 ][line width=3]    (516.16,177.21) -- (511.77,172.97) ;
\draw [color={rgb, 255:red, 0; green, 0; blue, 255 }  ,draw opacity=1 ][line width=1.5]    (97.51,278.45) .. controls (78.51,274.45) and (58.94,258.92) .. (41.01,271.52) .. controls (23.08,284.12) and (25.63,296.72) .. (29.34,301.91) .. controls (33.05,307.09) and (43.55,314.12) .. (62.17,301.1) .. controls (80.78,288.09) and (72.26,271.43) .. (72.37,243.96) ;
\draw [color={rgb, 255:red, 255; green, 255; blue, 255 }  ,draw opacity=1 ][line width=3]    (73.49,265.42) -- (73.8,276.23) ;
\draw [color={rgb, 255:red, 0; green, 0; blue, 255 }  ,draw opacity=1 ][line width=1.5]    (70.41,268.88) -- (77.39,272.05) ;
\draw [color={rgb, 255:red, 0; green, 0; blue, 255 }  ,draw opacity=1 ][line width=1.5]    (388.51,280.45) .. controls (369.51,276.45) and (349.94,260.92) .. (332.01,273.52) .. controls (314.08,286.12) and (316.63,298.72) .. (320.34,303.91) .. controls (324.05,309.09) and (334.55,316.12) .. (353.17,303.1) .. controls (371.78,290.09) and (363.26,273.43) .. (363.37,245.96) ;
\draw [color={rgb, 255:red, 255; green, 255; blue, 255 }  ,draw opacity=1 ][line width=3]    (364.49,267.42) -- (364.8,278.23) ;
\draw [color={rgb, 255:red, 0; green, 0; blue, 255 }  ,draw opacity=1 ][line width=1.5]    (361.41,270.88) -- (368.39,274.05) ;

\draw (154.54,325.9) node [anchor=north west][inner sep=0.75pt]    {$( a)$};
\draw (440.43,325.9) node [anchor=north west][inner sep=0.75pt]    {$( b)$};
\draw (481.57,130.59) node [anchor=north west][inner sep=0.75pt]    {$a_{6}$};
\draw (400.52,167.31) node [anchor=north west][inner sep=0.75pt]    {$a_{8}$};
\draw (486.01,196.91) node [anchor=north west][inner sep=0.75pt]    {$a_{4}$};
\draw (526.68,161.51) node [anchor=north west][inner sep=0.75pt]    {$a_{9}$};
\draw (379.16,250.32) node [anchor=north west][inner sep=0.75pt]  [color={rgb, 255:red, 0; green, 0; blue, 255 }  ,opacity=1 ,rotate=-319.97]  {$\dotsc $};
\draw (90,80.9) node [anchor=north west][inner sep=0.75pt]    {$a_{1}$};
\draw (242.13,77.82) node [anchor=north west][inner sep=0.75pt]    {$a_{2}$};
\draw (180.52,94.31) node [anchor=north west][inner sep=0.75pt]    {$a_{5}$};
\draw (185.52,137.31) node [anchor=north west][inner sep=0.75pt]    {$a_{6}$};
\draw (108.52,167.31) node [anchor=north west][inner sep=0.75pt]    {$a_{8}$};
\draw (231.71,226.31) node [anchor=north west][inner sep=0.75pt]    {$a_{4}$};
\draw (221.68,159.51) node [anchor=north west][inner sep=0.75pt]    {$a_{9}$};
\draw (89.16,250.54) node [anchor=north west][inner sep=0.75pt]  [color={rgb, 255:red, 0; green, 0; blue, 255 }  ,opacity=1 ,rotate=-319.97]  {$\dotsc $};
\draw (382,81.9) node [anchor=north west][inner sep=0.75pt]    {$a_{1}$};
\draw (534.13,78.82) node [anchor=north west][inner sep=0.75pt]    {$a_{2}$};
\draw (472.52,95.31) node [anchor=north west][inner sep=0.75pt]    {$a_{5}$};
\draw (52.34,272.34) node [anchor=north west][inner sep=0.75pt]    {$a_{3}$};
\draw (343.34,274.34) node [anchor=north west][inner sep=0.75pt]    {$a_{3}$};

\end{tikzpicture}

%% file: images/hatdsquare_=0/decomposition_1.tex
\begin{tikzpicture}[x=0.65pt,y=0.65pt,yscale=-1,xscale=1]

\draw  [draw opacity=0][fill={rgb, 255:red, 216; green, 216; blue, 216 }  ,fill opacity=1 ] (254.63,127.77) .. controls (274.63,112.77) and (307.63,108.77) .. (326.97,109.54) .. controls (324.79,140.67) and (324.79,173.67) .. (326.79,198.67) .. controls (303.63,200.77) and (273.63,199.77) .. (242.02,198.6) .. controls (241.63,159.77) and (237.63,140.77) .. (254.63,127.77) -- cycle ;
\draw  [draw opacity=0][fill={rgb, 255:red, 155; green, 155; blue, 155 }  ,fill opacity=1 ] (326.79,93.26) .. controls (326.79,70.89) and (344.92,52.76) .. (367.29,52.76) -- (367.29,52.76) .. controls (389.65,52.76) and (407.79,70.89) .. (407.79,93.26) -- (407.79,198.67) .. controls (407.79,198.67) and (407.79,198.67) .. (407.79,198.67) -- (326.79,198.67) .. controls (326.79,198.67) and (326.79,198.67) .. (326.79,198.67) -- cycle ;
\draw [color={rgb, 255:red, 144; green, 19; blue, 254 }  ,draw opacity=1 ][line width=1.5]    (198.79,199.61) -- (449.65,199.61) ;
\draw [color={rgb, 255:red, 0; green, 0; blue, 255 }  ,draw opacity=1 ][line width=1.5]    (326.22,198.61) .. controls (326.47,148.54) and (325.07,128.25) .. (325.63,99.54) .. controls (326.2,70.83) and (346.79,52.67) .. (366.18,52.76) ;
\draw [color={rgb, 255:red, 0; green, 0; blue, 255 }  ,draw opacity=1 ][line width=1.5]    (408.73,198.61) .. controls (408.98,148.54) and (408.15,120.54) .. (408.15,99.54) .. controls (408.15,78.54) and (394.79,53.67) .. (366.18,52.76) ;
\draw [color={rgb, 255:red, 0; green, 0; blue, 255 }  ,draw opacity=1 ][line width=1.5]    (242.02,198.6) .. controls (242.02,177.6) and (242.02,157.6) .. (242.02,150.6) .. controls (242.86,120.6) and (288.82,111.41) .. (322.16,109.41) ;
\draw [color={rgb, 255:red, 0; green, 0; blue, 255 }  ,draw opacity=1 ][line width=1.5]    (326.97,109.54) .. controls (330.16,109.41) and (335.16,109.41) .. (340.99,108.41) ;

\draw (290,234.4) node [anchor=north west][inner sep=0.75pt]    {$w=w'*w''$};
\draw (330,139.4) node [anchor=north west][inner sep=0.75pt]    {$\gamma $};
\draw (277.72,186.4) node [anchor=north west][inner sep=0.75pt]  [font=\scriptsize]  {$+$};
\draw (361.07,187.4) node [anchor=north west][inner sep=0.75pt]  [font=\scriptsize]  {$+$};
\draw (329.24,113.81) node [anchor=north west][inner sep=0.75pt]  [font=\scriptsize]  {$+$};
\draw (308.41,96.81) node [anchor=north west][inner sep=0.75pt]  [font=\scriptsize]  {$+$};
\draw (308.41,113.81) node [anchor=north west][inner sep=0.75pt]  [font=\scriptsize]  {$-$};
\draw (305.56,82.4) node [anchor=north west][inner sep=0.75pt]    {$a$};
\draw (271.39,204.4) node [anchor=north west][inner sep=0.75pt]    {$w'$};
\draw (355.49,206.4) node [anchor=north west][inner sep=0.75pt]    {$w''$};
\draw (451.91,193.01) node [anchor=north west][inner sep=0.75pt]  [color={rgb, 255:red, 144; green, 19; blue, 254 }  ,opacity=1 ]  {$\Gamma $};
\draw (416.99,126.4) node [anchor=north west][inner sep=0.75pt]  [color={rgb, 255:red, 0; green, 0; blue, 255 }  ,opacity=1 ]  {$\Lambda $};
\draw (330.07,96.81) node [anchor=north west][inner sep=0.75pt]  [font=\scriptsize]  {$-$};

\end{tikzpicture}

%% file: images/hatdsquare_=0/decomposition_2.tex
\begin{tikzpicture}[x=0.65pt,y=0.65pt,yscale=-1,xscale=1]

\draw  [draw opacity=0][fill={rgb, 255:red, 155; green, 155; blue, 155 }  ,fill opacity=1 ] (317.28,107.44) .. controls (317.28,85.07) and (335.41,66.94) .. (357.78,66.94) -- (357.78,66.94) .. controls (380.15,66.94) and (398.28,85.07) .. (398.28,107.44) -- (398.28,212.85) .. controls (398.28,212.85) and (398.28,212.85) .. (398.28,212.85) -- (317.28,212.85) .. controls (317.28,212.85) and (317.28,212.85) .. (317.28,212.85) -- cycle ;
\draw  [draw opacity=0][fill={rgb, 255:red, 216; green, 216; blue, 216 }  ,fill opacity=1 ] (245.63,142.01) .. controls (265.63,127.01) and (298.63,123.01) .. (317.97,123.78) .. controls (335.51,122.5) and (375.51,127.5) .. (398.28,135.87) .. controls (399.51,153.5) and (399.8,163.64) .. (399.79,171.28) .. controls (399.77,178.91) and (399.51,192.5) .. (399.73,212.85) .. controls (370.51,214.5) and (329.94,213.44) .. (315.22,213.85) .. controls (292.51,213.5) and (264.63,214.01) .. (233.02,212.84) .. controls (232.63,174.01) and (228.63,155.01) .. (245.63,142.01) -- cycle ;
\draw [color={rgb, 255:red, 144; green, 19; blue, 254 }  ,draw opacity=1 ][line width=1.5]    (189.79,213.85) -- (440.65,213.85) ;
\draw [color={rgb, 255:red, 0; green, 0; blue, 255 }  ,draw opacity=1 ][line width=1.5]    (316.28,131.87) .. controls (316.28,124.87) and (316.07,142.49) .. (316.63,113.78) .. controls (317.2,85.07) and (337.21,65.89) .. (357.18,67) ;
\draw [color={rgb, 255:red, 0; green, 0; blue, 255 }  ,draw opacity=1 ][line width=1.5]    (399.73,212.85) .. controls (399.98,162.78) and (399.15,134.78) .. (399.15,113.78) .. controls (399.15,92.78) and (386.21,65.89) .. (357.18,67) ;
\draw [color={rgb, 255:red, 0; green, 0; blue, 255 }  ,draw opacity=1 ][line width=1.5]    (233.02,212.84) .. controls (233.02,191.84) and (233.02,171.84) .. (233.02,164.84) .. controls (233.86,134.84) and (279.82,125.65) .. (313.16,123.65) ;
\draw [color={rgb, 255:red, 0; green, 0; blue, 255 }  ,draw opacity=1 ][line width=1.5]    (317.97,123.78) .. controls (343.28,121.87) and (384.28,130.87) .. (395.28,135.87) ;
\draw [color={rgb, 255:red, 0; green, 0; blue, 255 }  ,draw opacity=1 ][line width=1.5]    (403.14,138.78) .. controls (408.28,140.48) and (411.28,142.48) .. (418.28,145.48) ;

\draw (313,219.64) node [anchor=north west][inner sep=0.75pt]    {$w$};
\draw (314.72,199.64) node [anchor=north west][inner sep=0.75pt]  [font=\scriptsize]  {$+$};
\draw (319.97,127.18) node [anchor=north west][inner sep=0.75pt]  [font=\scriptsize]  {$+$};
\draw (299.41,111.05) node [anchor=north west][inner sep=0.75pt]  [font=\scriptsize]  {$+$};
\draw (299.41,128.05) node [anchor=north west][inner sep=0.75pt]  [font=\scriptsize]  {$-$};
\draw (294.56,98.64) node [anchor=north west][inner sep=0.75pt]    {$a$};
\draw (442.91,207.25) node [anchor=north west][inner sep=0.75pt]  [color={rgb, 255:red, 144; green, 19; blue, 254 }  ,opacity=1 ]  {$\Gamma $};
\draw (403.99,169.64) node [anchor=north west][inner sep=0.75pt]  [color={rgb, 255:red, 0; green, 0; blue, 255 }  ,opacity=1 ]  {$\Lambda $};
\draw (321.07,111.05) node [anchor=north west][inner sep=0.75pt]  [font=\scriptsize]  {$-$};
\draw (402.97,142.18) node [anchor=north west][inner sep=0.75pt]  [font=\scriptsize]  {$+$};
\draw (387.41,122.05) node [anchor=north west][inner sep=0.75pt]  [font=\scriptsize]  {$+$};
\draw (387.41,135.05) node [anchor=north west][inner sep=0.75pt]  [font=\scriptsize]  {$-$};
\draw (403.07,126.05) node [anchor=north west][inner sep=0.75pt]  [font=\scriptsize]  {$-$};

\end{tikzpicture}

%% file: images/hatdsquare_=0/decomposition_3.tex
\begin{tikzpicture}[x=0.65pt,y=0.65pt,yscale=-1,xscale=1]

\draw  [draw opacity=0][fill={rgb, 255:red, 155; green, 155; blue, 155 }  ,fill opacity=1 ] (526.91,95.25) .. controls (526.91,78.87) and (540.19,65.59) .. (556.57,65.59) -- (556.57,65.59) .. controls (572.95,65.59) and (586.23,78.87) .. (586.23,95.25) -- (586.23,175.68) .. controls (586.23,175.68) and (586.23,175.68) .. (586.23,175.68) -- (526.91,175.68) .. controls (526.91,175.68) and (526.91,175.68) .. (526.91,175.68) -- cycle ;
\draw  [draw opacity=0][fill={rgb, 255:red, 216; green, 216; blue, 216 }  ,fill opacity=1 ] (477.52,122.24) .. controls (491.63,110.92) and (514.91,107.9) .. (528.55,108.48) .. controls (542.2,106.54) and (547.15,133.03) .. (547.14,148.04) .. controls (549.25,180.48) and (553.48,176.71) .. (528.42,175.73) .. controls (512.09,177.32) and (490.92,176.56) .. (468.63,175.68) .. controls (468.35,146.38) and (465.52,132.04) .. (477.52,122.24) -- cycle ;
\draw [color={rgb, 255:red, 144; green, 19; blue, 254 }  ,draw opacity=1 ][line width=1.5]    (438.12,176.44) -- (615.1,176.44) ;
\draw [color={rgb, 255:red, 0; green, 0; blue, 255 }  ,draw opacity=1 ][line width=1.5]    (528.17,120.82) .. controls (527.46,113.28) and (526.76,109.51) .. (526.91,97.32) .. controls (526.76,80.08) and (538.67,65.17) .. (556.22,65.63) ;
\draw [color={rgb, 255:red, 0; green, 0; blue, 255 }  ,draw opacity=1 ][line width=1.5]    (586.23,175.68) .. controls (586.41,137.91) and (585.82,116.78) .. (585.82,100.93) .. controls (585.82,85.09) and (576.4,66.32) .. (556.22,65.63) ;
\draw [color={rgb, 255:red, 0; green, 0; blue, 255 }  ,draw opacity=1 ][line width=1.5]    (468.63,175.68) .. controls (468.63,159.83) and (468.63,144.74) .. (468.63,139.46) .. controls (469.21,116.82) and (501.64,109.89) .. (525.16,108.38) ;
\draw [color={rgb, 255:red, 0; green, 0; blue, 255 }  ,draw opacity=1 ][line width=1.5]    (528.55,108.48) .. controls (549.1,103.69) and (549.1,164.05) .. (549.1,173.86) ;
\draw [color={rgb, 255:red, 144; green, 19; blue, 254 }  ,draw opacity=1 ][line width=1.5]    (227.72,176.42) -- (404.7,176.42) ;
\draw [color={rgb, 255:red, 0; green, 0; blue, 255 }  ,draw opacity=1 ][line width=1.5]    (317.62,173.66) .. controls (317.8,135.88) and (316.81,122.57) .. (317.21,100.91) .. controls (317.61,79.25) and (332.13,65.55) .. (345.82,65.61) ;
\draw [color={rgb, 255:red, 0; green, 0; blue, 255 }  ,draw opacity=1 ][line width=1.5]    (375.83,175.66) .. controls (376.01,137.88) and (375.42,116.76) .. (375.42,100.91) .. controls (375.42,85.07) and (365.99,66.3) .. (345.82,65.61) ;
\draw [color={rgb, 255:red, 0; green, 0; blue, 255 }  ,draw opacity=1 ][line width=1.5]    (258.22,175.65) .. controls (258.22,159.81) and (258.22,144.72) .. (258.22,139.44) .. controls (258.81,116.8) and (291.24,109.87) .. (314.75,108.36) ;
\draw [color={rgb, 255:red, 0; green, 0; blue, 255 }  ,draw opacity=1 ][line width=1.5]    (318.15,108.46) .. controls (338.7,103.67) and (338.7,164.03) .. (338.7,173.84) ;
\draw  [draw opacity=0][fill={rgb, 255:red, 216; green, 216; blue, 216 }  ,fill opacity=1 ] (56.71,122.21) .. controls (70.82,110.89) and (94.1,107.88) .. (107.75,108.46) .. controls (106.21,131.95) and (106.21,156.84) .. (107.62,175.71) .. controls (91.28,177.29) and (70.12,176.54) .. (47.82,175.65) .. controls (47.54,146.36) and (44.72,132.02) .. (56.71,122.21) -- cycle ;
\draw  [draw opacity=0][fill={rgb, 255:red, 155; green, 155; blue, 155 }  ,fill opacity=1 ] (107.62,94.18) .. controls (107.62,78.4) and (120.41,65.61) .. (136.19,65.61) -- (136.19,65.61) .. controls (151.97,65.61) and (164.76,78.4) .. (164.76,94.18) -- (164.76,175.71) .. controls (164.76,175.71) and (164.76,175.71) .. (164.76,175.71) -- (107.62,175.71) .. controls (107.62,175.71) and (107.62,175.71) .. (107.62,175.71) -- cycle ;
\draw [color={rgb, 255:red, 144; green, 19; blue, 254 }  ,draw opacity=1 ][line width=1.5]    (17.32,176.42) -- (194.3,176.42) ;
\draw [color={rgb, 255:red, 0; green, 0; blue, 255 }  ,draw opacity=1 ][line width=1.5]    (107.22,175.66) .. controls (107.39,137.88) and (106.4,122.57) .. (106.81,100.91) .. controls (107.21,79.25) and (121.73,65.55) .. (135.41,65.61) ;
\draw [color={rgb, 255:red, 0; green, 0; blue, 255 }  ,draw opacity=1 ][line width=1.5]    (165.43,175.66) .. controls (165.6,137.88) and (165.01,116.76) .. (165.01,100.91) .. controls (165.01,85.07) and (155.59,66.3) .. (135.41,65.61) ;
\draw [color={rgb, 255:red, 0; green, 0; blue, 255 }  ,draw opacity=1 ][line width=1.5]    (47.82,175.65) .. controls (47.82,159.81) and (47.82,144.72) .. (47.82,139.44) .. controls (48.41,116.8) and (80.83,109.87) .. (104.35,108.36) ;
\draw [color={rgb, 255:red, 144; green, 19; blue, 254 }  ,draw opacity=1 ][line width=1.5]    (227.72,361.32) -- (404.7,361.32) ;
\draw [color={rgb, 255:red, 0; green, 0; blue, 255 }  ,draw opacity=1 ][line width=1.5]    (373.75,329.19) .. controls (339.18,329.19) and (318.6,318.05) .. (317.8,301.92) .. controls (317.23,294.14) and (317.58,294.63) .. (317.66,281.03) .. controls (317.74,267.42) and (331.72,249.67) .. (345.82,250.51) ;
\draw [color={rgb, 255:red, 0; green, 0; blue, 255 }  ,draw opacity=1 ][line width=1.5]    (375.83,360.56) .. controls (376.01,322.78) and (375.42,301.66) .. (375.42,285.81) .. controls (375.42,269.97) and (366.29,249.67) .. (345.82,250.51) ;
\draw [color={rgb, 255:red, 0; green, 0; blue, 255 }  ,draw opacity=1 ][line width=1.5]    (258.22,360.55) .. controls (258.22,344.71) and (258.22,329.62) .. (258.22,324.33) .. controls (258.81,301.7) and (291.24,294.77) .. (314.75,293.26) ;
\draw [color={rgb, 255:red, 0; green, 0; blue, 255 }  ,draw opacity=1 ][line width=1.5]    (319.56,293.36) .. controls (337.42,291.91) and (364.93,298.7) .. (372.69,302.48) ;
\draw [color={rgb, 255:red, 0; green, 0; blue, 255 }  ,draw opacity=1 ][line width=1.5]    (378.24,304.67) .. controls (381.86,305.95) and (383.98,307.46) .. (388.92,309.73) ;
\draw [color={rgb, 255:red, 0; green, 0; blue, 255 }  ,draw opacity=1 ][line width=1.5]    (378.62,329.02) .. controls (383.56,329.02) and (386.38,328.27) .. (389.9,326) ;
\draw  [draw opacity=0][fill={rgb, 255:red, 155; green, 155; blue, 155 }  ,fill opacity=1 ] (528.07,279.05) .. controls (528.07,263.27) and (540.86,250.47) .. (556.64,250.47) -- (556.64,250.47) .. controls (572.42,250.47) and (585.21,263.27) .. (585.21,279.05) -- (585.21,360.57) .. controls (585.21,360.57) and (585.21,360.57) .. (585.21,360.57) -- (528.07,360.57) .. controls (528.07,360.57) and (528.07,360.57) .. (528.07,360.57) -- cycle ;
\draw  [draw opacity=0][fill={rgb, 255:red, 216; green, 216; blue, 216 }  ,fill opacity=1 ] (477.52,307.12) .. controls (491.63,295.8) and (514.91,292.78) .. (528.55,293.36) .. controls (540.93,292.4) and (569.15,296.17) .. (585.21,302.48) .. controls (586.08,315.79) and (586.28,323.44) .. (586.27,329.2) .. controls (586.26,334.96) and (586.08,345.21) .. (586.23,360.57) .. controls (565.62,361.81) and (537,361.01) .. (526.61,361.32) .. controls (510.59,361.06) and (490.92,361.45) .. (468.63,360.56) .. controls (468.35,331.27) and (465.52,316.93) .. (477.52,307.12) -- cycle ;
\draw [color={rgb, 255:red, 144; green, 19; blue, 254 }  ,draw opacity=1 ][line width=1.5]    (438.12,361.32) -- (615.1,361.32) ;
\draw [color={rgb, 255:red, 0; green, 0; blue, 255 }  ,draw opacity=1 ][line width=1.5]    (528.2,301.93) .. controls (527.63,294.15) and (527.98,294.64) .. (528.07,281.03) .. controls (528.15,267.43) and (542.12,249.68) .. (556.22,250.52) ;
\draw [color={rgb, 255:red, 0; green, 0; blue, 255 }  ,draw opacity=1 ][line width=1.5]    (586.23,360.57) .. controls (586.41,322.79) and (585.82,301.66) .. (585.82,285.82) .. controls (585.82,269.97) and (576.69,249.68) .. (556.22,250.52) ;
\draw [color={rgb, 255:red, 0; green, 0; blue, 255 }  ,draw opacity=1 ][line width=1.5]    (468.63,360.56) .. controls (468.63,344.72) and (468.63,329.62) .. (468.63,324.34) .. controls (469.21,301.71) and (501.64,294.77) .. (525.16,293.27) ;
\draw [color={rgb, 255:red, 0; green, 0; blue, 255 }  ,draw opacity=1 ][line width=1.5]    (529.96,293.36) .. controls (547.82,291.92) and (575.33,298.71) .. (583.09,302.48) ;
\draw [color={rgb, 255:red, 0; green, 0; blue, 255 }  ,draw opacity=1 ][line width=1.5]    (588.64,304.68) .. controls (592.27,305.96) and (594.38,307.47) .. (599.32,309.73) ;
\draw  [draw opacity=0][fill={rgb, 255:red, 155; green, 155; blue, 155 }  ,fill opacity=1 ] (107.26,279.17) .. controls (107.26,263.32) and (120.11,250.47) .. (135.96,250.47) -- (135.96,250.47) .. controls (151.8,250.47) and (164.65,263.32) .. (164.65,279.17) -- (164.65,360.57) .. controls (164.65,360.57) and (164.65,360.57) .. (164.65,360.57) -- (107.26,360.57) .. controls (107.26,360.57) and (107.26,360.57) .. (107.26,360.57) -- cycle ;
\draw  [draw opacity=0][fill={rgb, 255:red, 216; green, 216; blue, 216 }  ,fill opacity=1 ] (56.71,307.12) .. controls (70.82,295.8) and (94.1,292.78) .. (107.75,293.36) .. controls (105.39,308.85) and (113.86,320.93) .. (130.79,325.45) .. controls (147.72,329.23) and (157.6,329.98) .. (165.47,329.2) .. controls (165.46,334.96) and (165.27,345.21) .. (165.43,360.57) .. controls (144.81,361.81) and (116.2,361.01) .. (105.81,361.32) .. controls (89.79,361.06) and (70.12,361.45) .. (47.82,360.56) .. controls (47.54,331.27) and (44.72,316.93) .. (56.71,307.12) -- cycle ;
\draw [color={rgb, 255:red, 144; green, 19; blue, 254 }  ,draw opacity=1 ][line width=1.5]    (17.32,361.32) -- (194.3,361.32) ;
\draw [color={rgb, 255:red, 0; green, 0; blue, 255 }  ,draw opacity=1 ][line width=1.5]    (163.35,329.2) .. controls (128.78,329.2) and (108.2,318.06) .. (107.4,301.93) .. controls (106.83,294.15) and (107.18,294.64) .. (107.26,281.03) .. controls (107.34,267.43) and (121.32,249.68) .. (135.41,250.52) ;
\draw [color={rgb, 255:red, 0; green, 0; blue, 255 }  ,draw opacity=1 ][line width=1.5]    (165.43,360.57) .. controls (165.6,322.79) and (165.01,301.66) .. (165.01,285.82) .. controls (165.01,269.97) and (155.89,249.68) .. (135.41,250.52) ;
\draw [color={rgb, 255:red, 0; green, 0; blue, 255 }  ,draw opacity=1 ][line width=1.5]    (47.82,360.56) .. controls (47.82,344.72) and (47.82,329.62) .. (47.82,324.34) .. controls (48.41,301.71) and (80.83,294.77) .. (104.35,293.27) ;
\draw [color={rgb, 255:red, 0; green, 0; blue, 255 }  ,draw opacity=1 ][line width=1.5]    (168.21,329.03) .. controls (173.15,329.03) and (175.97,328.27) .. (179.5,326.01) ;
\draw  [draw opacity=0][fill={rgb, 255:red, 128; green, 128; blue, 128 }  ,fill opacity=1 ] (48.53,508.32) .. controls (49.22,490.34) and (68.95,487.15) .. (72.46,484.3) .. controls (75.98,481.45) and (89.43,479.02) .. (107.04,478.85) .. controls (109.89,497.88) and (100.82,496.58) .. (88.39,502.24) .. controls (74.38,507.73) and (74.99,510.54) .. (48.53,508.32) -- cycle ;
\draw  [draw opacity=0][fill={rgb, 255:red, 216; green, 216; blue, 216 }  ,fill opacity=1 ] (64.77,506.34) .. controls (97.56,500.73) and (107.44,502.24) .. (107.75,478.85) .. controls (102.09,451.1) and (124.8,434.4) .. (135.41,436.01) .. controls (169.82,439.03) and (166.99,480.53) .. (164.41,487.97) .. controls (165.27,501.27) and (165.47,508.93) .. (165.47,514.69) .. controls (165.46,520.45) and (165.27,530.7) .. (165.43,546.06) .. controls (144.81,547.3) and (116.2,546.5) .. (105.81,546.81) .. controls (89.79,546.55) and (70.12,546.93) .. (47.82,546.05) .. controls (47.14,501.06) and (42.9,508.6) .. (64.77,506.34) -- cycle ;
\draw [color={rgb, 255:red, 144; green, 19; blue, 254 }  ,draw opacity=1 ][line width=1.5]    (17.32,546.81) -- (194.3,546.81) ;
\draw [color={rgb, 255:red, 0; green, 0; blue, 255 }  ,draw opacity=1 ][line width=1.5]    (45.89,508.19) .. controls (65.2,506.64) and (108.2,503.55) .. (107.4,487.41) .. controls (106.83,479.64) and (107.18,480.13) .. (107.26,466.52) .. controls (107.34,452.92) and (121.32,435.17) .. (135.41,436.01) ;
\draw [color={rgb, 255:red, 0; green, 0; blue, 255 }  ,draw opacity=1 ][line width=1.5]    (165.43,546.06) .. controls (165.6,508.28) and (165.01,487.15) .. (165.01,471.31) .. controls (165.01,455.46) and (155.89,435.17) .. (135.41,436.01) ;
\draw [color={rgb, 255:red, 0; green, 0; blue, 255 }  ,draw opacity=1 ][line width=1.5]    (48.98,504.37) .. controls (50.84,486.85) and (80.83,480.26) .. (104.35,478.75) ;
\draw [color={rgb, 255:red, 0; green, 0; blue, 255 }  ,draw opacity=1 ][line width=1.5]    (34.97,507.27) .. controls (39.91,508.78) and (39.91,508.02) .. (45.89,508.19) ;
\draw [color={rgb, 255:red, 0; green, 0; blue, 255 }  ,draw opacity=1 ][line width=1.5]    (47.82,546.05) .. controls (48.27,528.52) and (46.15,523.99) .. (48.53,509.83) ;
\draw [color={rgb, 255:red, 144; green, 19; blue, 254 }  ,draw opacity=1 ][line width=1.5]    (227.72,546.77) -- (404.7,546.77) ;
\draw [color={rgb, 255:red, 0; green, 0; blue, 255 }  ,draw opacity=1 ][line width=1.5]    (375.83,546.02) .. controls (376.01,508.24) and (375.42,487.11) .. (375.42,471.27) .. controls (375.42,455.42) and (366.29,435.13) .. (345.82,435.97) ;
\draw [color={rgb, 255:red, 0; green, 0; blue, 255 }  ,draw opacity=1 ][line width=1.5]    (319.56,478.81) .. controls (337.42,477.37) and (364.93,484.16) .. (372.69,487.93) ;
\draw [color={rgb, 255:red, 0; green, 0; blue, 255 }  ,draw opacity=1 ][line width=1.5]    (378.24,490.13) .. controls (381.86,491.41) and (383.98,492.92) .. (388.92,495.18) ;
\draw [color={rgb, 255:red, 0; green, 0; blue, 255 }  ,draw opacity=1 ][line width=1.5]    (257,508.15) .. controls (276.31,506.6) and (319.31,503.51) .. (318.51,487.38) .. controls (317.94,479.6) and (318.29,480.09) .. (318.37,466.48) .. controls (318.45,452.88) and (332.43,435.13) .. (346.52,435.97) ;
\draw [color={rgb, 255:red, 0; green, 0; blue, 255 }  ,draw opacity=1 ][line width=1.5]    (260.09,504.34) .. controls (261.95,486.81) and (291.94,480.22) .. (315.46,478.71) ;
\draw [color={rgb, 255:red, 0; green, 0; blue, 255 }  ,draw opacity=1 ][line width=1.5]    (246.08,507.23) .. controls (251.02,508.74) and (251.02,507.98) .. (257,508.15) ;
\draw [color={rgb, 255:red, 0; green, 0; blue, 255 }  ,draw opacity=1 ][line width=1.5]    (258.93,546.01) .. controls (259.38,528.48) and (257.26,523.95) .. (259.64,509.79) ;
\draw  [draw opacity=0][fill={rgb, 255:red, 155; green, 155; blue, 155 }  ,fill opacity=1 ] (528.07,464.5) .. controls (528.07,448.72) and (540.86,435.93) .. (556.64,435.93) -- (556.64,435.93) .. controls (572.42,435.93) and (585.21,448.72) .. (585.21,464.5) -- (585.21,546.03) .. controls (585.21,546.03) and (585.21,546.03) .. (585.21,546.03) -- (528.07,546.03) .. controls (528.07,546.03) and (528.07,546.03) .. (528.07,546.03) -- cycle ;
\draw  [draw opacity=0][fill={rgb, 255:red, 216; green, 216; blue, 216 }  ,fill opacity=1 ] (477.52,492.58) .. controls (491.63,481.26) and (514.91,478.24) .. (528.55,478.82) .. controls (540.93,477.85) and (569.15,481.63) .. (585.21,487.94) .. controls (586.08,501.24) and (586.28,508.9) .. (586.27,514.66) .. controls (586.26,520.42) and (586.08,530.67) .. (586.23,546.03) .. controls (565.62,547.27) and (537,546.47) .. (526.61,546.78) .. controls (510.59,546.52) and (490.92,546.9) .. (468.63,546.02) .. controls (468.35,516.72) and (465.52,502.39) .. (477.52,492.58) -- cycle ;
\draw [color={rgb, 255:red, 144; green, 19; blue, 254 }  ,draw opacity=1 ][line width=1.5]    (438.12,546.78) -- (615.1,546.78) ;
\draw [color={rgb, 255:red, 0; green, 0; blue, 255 }  ,draw opacity=1 ][line width=1.5]    (528.2,487.38) .. controls (527.63,479.6) and (527.98,480.1) .. (528.07,466.49) .. controls (528.15,452.89) and (542.12,435.14) .. (556.22,435.98) ;
\draw [color={rgb, 255:red, 0; green, 0; blue, 255 }  ,draw opacity=1 ][line width=1.5]    (586.23,546.03) .. controls (586.41,508.25) and (585.82,487.12) .. (585.82,471.27) .. controls (585.82,455.43) and (576.69,435.14) .. (556.22,435.98) ;
\draw [color={rgb, 255:red, 0; green, 0; blue, 255 }  ,draw opacity=1 ][line width=1.5]    (468.63,546.02) .. controls (468.63,530.17) and (468.63,515.08) .. (468.63,509.8) .. controls (469.21,487.16) and (501.64,480.23) .. (525.16,478.72) ;
\draw [color={rgb, 255:red, 0; green, 0; blue, 255 }  ,draw opacity=1 ][line width=1.5]    (529.96,478.82) .. controls (547.82,477.38) and (575.33,484.17) .. (583.09,487.94) ;
\draw [color={rgb, 255:red, 0; green, 0; blue, 255 }  ,draw opacity=1 ][line width=1.5]    (588.64,490.14) .. controls (592.27,491.42) and (594.38,492.93) .. (599.32,495.19) ;

\draw (261,22.5) node [anchor=north west][inner sep=0.75pt]   [align=left] {Strand Configuration};
\draw (61,22.5) node [anchor=north west][inner sep=0.75pt]   [align=left] {Degeneration 1};
\draw (495,22.5) node [anchor=north west][inner sep=0.75pt]   [align=left] {Degeneration 2};
\draw (281.5,165.71) node [anchor=north west][inner sep=0.75pt]  [font=\scriptsize]  {$+$};
\draw (345.94,165.71) node [anchor=north west][inner sep=0.75pt]  [font=\scriptsize]  {$+$};
\draw (317.84,110.55) node [anchor=north west][inner sep=0.75pt]  [font=\scriptsize]  {$+$};
\draw (303.14,97.72) node [anchor=north west][inner sep=0.75pt]  [font=\scriptsize]  {$+$};
\draw (303.14,110.55) node [anchor=north west][inner sep=0.75pt]  [font=\scriptsize]  {$-$};
\draw (299.13,84.11) node [anchor=north west][inner sep=0.75pt]    {$a$};
\draw (404.37,169.57) node [anchor=north west][inner sep=0.75pt]  [color={rgb, 255:red, 144; green, 19; blue, 254 }  ,opacity=1 ]  {$\Gamma $};
\draw (318.43,97.72) node [anchor=north west][inner sep=0.75pt]  [font=\scriptsize]  {$-$};
\draw (321.25,165.71) node [anchor=north west][inner sep=0.75pt]  [font=\scriptsize]  {$+$};
\draw (71.09,165.71) node [anchor=north west][inner sep=0.75pt]  [font=\scriptsize]  {$+$};
\draw (129.89,165.71) node [anchor=north west][inner sep=0.75pt]  [font=\scriptsize]  {$+$};
\draw (107.43,110.55) node [anchor=north west][inner sep=0.75pt]  [font=\scriptsize]  {$+$};
\draw (92.74,97.72) node [anchor=north west][inner sep=0.75pt]  [font=\scriptsize]  {$+$};
\draw (92.74,110.55) node [anchor=north west][inner sep=0.75pt]  [font=\scriptsize]  {$-$};
\draw (88.73,84.11) node [anchor=north west][inner sep=0.75pt]    {$a$};
\draw (193.97,169.57) node [anchor=north west][inner sep=0.75pt]  [color={rgb, 255:red, 144; green, 19; blue, 254 }  ,opacity=1 ]  {$\Gamma $};
\draw (108.02,97.72) node [anchor=north west][inner sep=0.75pt]  [font=\scriptsize]  {$-$};
\draw (503.9,165.71) node [anchor=north west][inner sep=0.75pt]  [font=\scriptsize]  {$+$};
\draw (557.75,165.71) node [anchor=north west][inner sep=0.75pt]  [font=\scriptsize]  {$+$};
\draw (528.24,110.57) node [anchor=north west][inner sep=0.75pt]  [font=\scriptsize]  {$+$};
\draw (513.54,97.74) node [anchor=north west][inner sep=0.75pt]  [font=\scriptsize]  {$+$};
\draw (513.54,110.57) node [anchor=north west][inner sep=0.75pt]  [font=\scriptsize]  {$-$};
\draw (509.54,84.14) node [anchor=north west][inner sep=0.75pt]    {$a$};
\draw (614.78,169.59) node [anchor=north west][inner sep=0.75pt]  [color={rgb, 255:red, 144; green, 19; blue, 254 }  ,opacity=1 ]  {$\Gamma $};
\draw (528.83,97.74) node [anchor=north west][inner sep=0.75pt]  [font=\scriptsize]  {$-$};
\draw (101.39,349.98) node [anchor=north west][inner sep=0.75pt]  [font=\scriptsize]  {$+$};
\draw (107.24,294.8) node [anchor=north west][inner sep=0.75pt]  [font=\scriptsize]  {$+$};
\draw (92.74,282.63) node [anchor=north west][inner sep=0.75pt]  [font=\scriptsize]  {$+$};
\draw (92.74,295.46) node [anchor=north west][inner sep=0.75pt]  [font=\scriptsize]  {$-$};
\draw (87.32,270.53) node [anchor=north west][inner sep=0.75pt]    {$a$};
\draw (193.97,354.48) node [anchor=north west][inner sep=0.75pt]  [color={rgb, 255:red, 144; green, 19; blue, 254 }  ,opacity=1 ]  {$\Gamma $};
\draw (108.02,282.63) node [anchor=north west][inner sep=0.75pt]  [font=\scriptsize]  {$-$};
\draw (165.8,331.02) node [anchor=north west][inner sep=0.75pt]  [font=\scriptsize]  {$+$};
\draw (152.7,316.58) node [anchor=north west][inner sep=0.75pt]  [font=\scriptsize]  {$+$};
\draw (154.11,328.66) node [anchor=north west][inner sep=0.75pt]  [font=\scriptsize]  {$-$};
\draw (165.17,318.85) node [anchor=north west][inner sep=0.75pt]  [font=\scriptsize]  {$-$};
\draw (312.29,349.98) node [anchor=north west][inner sep=0.75pt]  [font=\scriptsize]  {$+$};
\draw (317.65,294.79) node [anchor=north west][inner sep=0.75pt]  [font=\scriptsize]  {$+$};
\draw (303.14,282.62) node [anchor=north west][inner sep=0.75pt]  [font=\scriptsize]  {$+$};
\draw (303.14,295.45) node [anchor=north west][inner sep=0.75pt]  [font=\scriptsize]  {$-$};
\draw (297.72,270.52) node [anchor=north west][inner sep=0.75pt]    {$a$};
\draw (404.37,354.47) node [anchor=north west][inner sep=0.75pt]  [color={rgb, 255:red, 144; green, 19; blue, 254 }  ,opacity=1 ]  {$\Gamma $};
\draw (318.43,282.62) node [anchor=north west][inner sep=0.75pt]  [font=\scriptsize]  {$-$};
\draw (376.2,306.11) node [anchor=north west][inner sep=0.75pt]  [font=\scriptsize]  {$+$};
\draw (365.22,290.92) node [anchor=north west][inner sep=0.75pt]  [font=\scriptsize]  {$+$};
\draw (365.22,300.73) node [anchor=north west][inner sep=0.75pt]  [font=\scriptsize]  {$-$};
\draw (376.27,293.94) node [anchor=north west][inner sep=0.75pt]  [font=\scriptsize]  {$-$};
\draw (376.2,330.25) node [anchor=north west][inner sep=0.75pt]  [font=\scriptsize]  {$+$};
\draw (365.22,317.33) node [anchor=north west][inner sep=0.75pt]  [font=\scriptsize]  {$+$};
\draw (365.22,327.14) node [anchor=north west][inner sep=0.75pt]  [font=\scriptsize]  {$-$};
\draw (376.27,320.35) node [anchor=north west][inner sep=0.75pt]  [font=\scriptsize]  {$-$};
\draw (523.7,349.98) node [anchor=north west][inner sep=0.75pt]  [font=\scriptsize]  {$+$};
\draw (528.05,294.8) node [anchor=north west][inner sep=0.75pt]  [font=\scriptsize]  {$+$};
\draw (513.54,282.63) node [anchor=north west][inner sep=0.75pt]  [font=\scriptsize]  {$+$};
\draw (513.54,295.46) node [anchor=north west][inner sep=0.75pt]  [font=\scriptsize]  {$-$};
\draw (508.13,270.53) node [anchor=north west][inner sep=0.75pt]    {$a$};
\draw (614.78,354.48) node [anchor=north west][inner sep=0.75pt]  [color={rgb, 255:red, 144; green, 19; blue, 254 }  ,opacity=1 ]  {$\Gamma $};
\draw (528.83,282.63) node [anchor=north west][inner sep=0.75pt]  [font=\scriptsize]  {$-$};
\draw (586.6,306.12) node [anchor=north west][inner sep=0.75pt]  [font=\scriptsize]  {$+$};
\draw (575.62,290.93) node [anchor=north west][inner sep=0.75pt]  [font=\scriptsize]  {$+$};
\draw (575.62,300.74) node [anchor=north west][inner sep=0.75pt]  [font=\scriptsize]  {$-$};
\draw (586.68,293.95) node [anchor=north west][inner sep=0.75pt]  [font=\scriptsize]  {$-$};
\draw (101.39,534.95) node [anchor=north west][inner sep=0.75pt]  [font=\scriptsize]  {$+$};
\draw (107.24,480.29) node [anchor=north west][inner sep=0.75pt]  [font=\scriptsize]  {$+$};
\draw (92.74,468.12) node [anchor=north west][inner sep=0.75pt]  [font=\scriptsize]  {$+$};
\draw (92.74,480.94) node [anchor=north west][inner sep=0.75pt]  [font=\scriptsize]  {$-$};
\draw (87.32,456.02) node [anchor=north west][inner sep=0.75pt]    {$a$};
\draw (193.97,539.97) node [anchor=north west][inner sep=0.75pt]  [color={rgb, 255:red, 144; green, 19; blue, 254 }  ,opacity=1 ]  {$\Gamma $};
\draw (108.02,468.12) node [anchor=north west][inner sep=0.75pt]  [font=\scriptsize]  {$-$};
\draw (50.14,496.93) node [anchor=north west][inner sep=0.75pt]  [font=\scriptsize]  {$+$};
\draw (37.71,508.54) node [anchor=north west][inner sep=0.75pt]  [font=\scriptsize]  {$+$};
\draw (38.42,497.23) node [anchor=north west][inner sep=0.75pt]  [font=\scriptsize]  {$-$};
\draw (48.02,509.76) node [anchor=north west][inner sep=0.75pt]  [font=\scriptsize]  {$-$};
\draw (312.29,534.95) node [anchor=north west][inner sep=0.75pt]  [font=\scriptsize]  {$+$};
\draw (317.65,480.25) node [anchor=north west][inner sep=0.75pt]  [font=\scriptsize]  {$+$};
\draw (303.14,468.08) node [anchor=north west][inner sep=0.75pt]  [font=\scriptsize]  {$+$};
\draw (303.14,480.9) node [anchor=north west][inner sep=0.75pt]  [font=\scriptsize]  {$-$};
\draw (297.72,455.98) node [anchor=north west][inner sep=0.75pt]    {$a$};
\draw (404.37,539.93) node [anchor=north west][inner sep=0.75pt]  [color={rgb, 255:red, 144; green, 19; blue, 254 }  ,opacity=1 ]  {$\Gamma $};
\draw (318.43,468.08) node [anchor=north west][inner sep=0.75pt]  [font=\scriptsize]  {$-$};
\draw (376.2,491.57) node [anchor=north west][inner sep=0.75pt]  [font=\scriptsize]  {$+$};
\draw (365.22,476.38) node [anchor=north west][inner sep=0.75pt]  [font=\scriptsize]  {$+$};
\draw (365.22,486.19) node [anchor=north west][inner sep=0.75pt]  [font=\scriptsize]  {$-$};
\draw (376.27,479.39) node [anchor=north west][inner sep=0.75pt]  [font=\scriptsize]  {$-$};
\draw (261.25,496.14) node [anchor=north west][inner sep=0.75pt]  [font=\scriptsize]  {$+$};
\draw (248.82,507.75) node [anchor=north west][inner sep=0.75pt]  [font=\scriptsize]  {$+$};
\draw (249.52,496.43) node [anchor=north west][inner sep=0.75pt]  [font=\scriptsize]  {$-$};
\draw (259.13,508.96) node [anchor=north west][inner sep=0.75pt]  [font=\scriptsize]  {$-$};
\draw (523.7,534.95) node [anchor=north west][inner sep=0.75pt]  [font=\scriptsize]  {$+$};
\draw (528.05,480.26) node [anchor=north west][inner sep=0.75pt]  [font=\scriptsize]  {$+$};
\draw (513.54,468.09) node [anchor=north west][inner sep=0.75pt]  [font=\scriptsize]  {$+$};
\draw (513.54,480.91) node [anchor=north west][inner sep=0.75pt]  [font=\scriptsize]  {$-$};
\draw (508.13,455.99) node [anchor=north west][inner sep=0.75pt]    {$a$};
\draw (614.78,539.94) node [anchor=north west][inner sep=0.75pt]  [color={rgb, 255:red, 144; green, 19; blue, 254 }  ,opacity=1 ]  {$\Gamma $};
\draw (528.83,468.09) node [anchor=north west][inner sep=0.75pt]  [font=\scriptsize]  {$-$};
\draw (586.6,491.57) node [anchor=north west][inner sep=0.75pt]  [font=\scriptsize]  {$+$};
\draw (575.62,476.39) node [anchor=north west][inner sep=0.75pt]  [font=\scriptsize]  {$+$};
\draw (575.62,486.19) node [anchor=north west][inner sep=0.75pt]  [font=\scriptsize]  {$-$};
\draw (586.68,479.4) node [anchor=north west][inner sep=0.75pt]  [font=\scriptsize]  {$-$};

\end{tikzpicture}

%% file: images/hypertight_1.tex
\begin{tikzpicture}[x=0.55pt,y=0.55pt,yscale=-1,xscale=1]

\draw  [draw opacity=0][fill={rgb, 255:red, 216; green, 216; blue, 216 }  ,fill opacity=1 ] (437.79,103.57) .. controls (414.04,113.85) and (362.79,112.57) .. (341.97,108.54) .. controls (341.76,139.16) and (343.69,173.61) .. (341.73,198.61) .. controls (364.4,200.71) and (410.85,199.74) .. (441.79,198.57) .. controls (442.17,159.74) and (442.5,124.92) .. (437.79,103.57) -- cycle ;
\draw  [draw opacity=0][fill={rgb, 255:red, 216; green, 216; blue, 216 }  ,fill opacity=1 ] (162.79,101.57) .. controls (191.79,111.57) and (240.63,108.77) .. (259.97,109.54) .. controls (257.79,140.67) and (257.79,173.67) .. (259.79,198.67) .. controls (236.63,200.77) and (188.39,201.74) .. (156.79,200.57) .. controls (156.39,161.74) and (157.79,120.57) .. (162.79,101.57) -- cycle ;
\draw  [draw opacity=0][fill={rgb, 255:red, 155; green, 155; blue, 155 }  ,fill opacity=1 ] (259.06,93.62) .. controls (259.06,71.05) and (277.36,52.76) .. (299.92,52.76) -- (299.92,52.76) .. controls (322.49,52.76) and (340.79,71.05) .. (340.79,93.62) -- (340.79,198.67) .. controls (340.79,198.67) and (340.79,198.67) .. (340.79,198.67) -- (259.06,198.67) .. controls (259.06,198.67) and (259.06,198.67) .. (259.06,198.67) -- cycle ;
\draw [color={rgb, 255:red, 0; green, 0; blue, 255 }  ,draw opacity=1 ][line width=1.5]    (259.22,198.61) .. controls (259.47,148.54) and (258.07,128.25) .. (258.63,99.54) .. controls (259.2,70.83) and (279.79,52.67) .. (299.18,52.76) ;
\draw [color={rgb, 255:red, 0; green, 0; blue, 255 }  ,draw opacity=1 ][line width=1.5]    (341.73,198.61) .. controls (341.98,148.54) and (341.15,120.54) .. (341.15,99.54) .. controls (341.15,78.54) and (327.79,53.67) .. (299.18,52.76) ;
\draw [color={rgb, 255:red, 0; green, 0; blue, 255 }  ,draw opacity=1 ][line width=1.5]    (162.79,101.57) .. controls (200.79,112.57) and (225.63,111.54) .. (258.97,109.54) ;
\draw [color={rgb, 255:red, 0; green, 0; blue, 255 }  ,draw opacity=1 ][line width=1.5]    (323.06,107.23) .. controls (326.06,106.23) and (331.06,107.23) .. (338.06,108.23) ;
\draw [color={rgb, 255:red, 0; green, 0; blue, 255 }  ,draw opacity=1 ][line width=1.5]    (437.79,103.57) .. controls (411.79,115.57) and (377.79,113.09) .. (343.97,108.54) ;
\draw [color={rgb, 255:red, 144; green, 19; blue, 254 }  ,draw opacity=1 ][line width=1.5]    (147.79,199.57) -- (449.65,199.61) ;

\draw (264,241.4) node [anchor=north west][inner sep=0.75pt]    {$w=w'*w''$};
\draw (343,144.4) node [anchor=north west][inner sep=0.75pt]    {$\gamma '$};
\draw (292.72,188.4) node [anchor=north west][inner sep=0.75pt]  [font=\scriptsize]  {$+$};
\draw (389.07,188.4) node [anchor=north west][inner sep=0.75pt]  [font=\scriptsize]  {$+$};
\draw (325.06,110.63) node [anchor=north west][inner sep=0.75pt]  [font=\scriptsize]  {$+$};
\draw (342.79,96.66) node [anchor=north west][inner sep=0.75pt]  [font=\scriptsize]  {$+$};
\draw (343.97,111.94) node [anchor=north west][inner sep=0.75pt]  [font=\scriptsize]  {$-$};
\draw (346.56,81.4) node [anchor=north west][inner sep=0.75pt]    {$a$};
\draw (290.39,204.4) node [anchor=north west][inner sep=0.75pt]    {$w'$};
\draw (388.49,202.4) node [anchor=north west][inner sep=0.75pt]    {$w''$};
\draw (451.91,193.01) node [anchor=north west][inner sep=0.75pt]  [color={rgb, 255:red, 144; green, 19; blue, 254 }  ,opacity=1 ]  {$\Gamma $};
\draw (446.99,91.4) node [anchor=north west][inner sep=0.75pt]  [color={rgb, 255:red, 0; green, 0; blue, 255 }  ,opacity=1 ]  {$\Lambda $};
\draw (328.07,95.81) node [anchor=north west][inner sep=0.75pt]  [font=\scriptsize]  {$-$};
\draw (199.49,202.4) node [anchor=north west][inner sep=0.75pt]    {$w''$};
\draw (247.07,110.81) node [anchor=north west][inner sep=0.75pt]  [font=\scriptsize]  {$-$};
\draw (240,144.4) node [anchor=north west][inner sep=0.75pt]    {$\gamma $};

\end{tikzpicture}

%% file: images/hypertight_3.tex
\begin{tikzpicture}[x=0.55pt,y=0.55pt,yscale=-1,xscale=1]

\draw  [draw opacity=0][fill={rgb, 255:red, 216; green, 216; blue, 216 }  ,fill opacity=1 ] (147.79,66.7) .. controls (147.79,48.36) and (162.66,33.48) .. (181,33.48) -- (415.57,33.48) .. controls (433.91,33.48) and (448.79,48.36) .. (448.79,66.7) -- (448.79,199.57) .. controls (448.79,199.57) and (448.79,199.57) .. (448.79,199.57) -- (147.79,199.57) .. controls (147.79,199.57) and (147.79,199.57) .. (147.79,199.57) -- cycle ;
\draw  [draw opacity=0][fill={rgb, 255:red, 155; green, 155; blue, 155 }  ,fill opacity=1 ] (259.06,93.62) .. controls (259.06,71.05) and (277.36,52.76) .. (299.92,52.76) -- (299.92,52.76) .. controls (322.49,52.76) and (340.79,71.05) .. (340.79,93.62) -- (340.79,198.67) .. controls (340.79,198.67) and (340.79,198.67) .. (340.79,198.67) -- (259.06,198.67) .. controls (259.06,198.67) and (259.06,198.67) .. (259.06,198.67) -- cycle ;
\draw [color={rgb, 255:red, 0; green, 0; blue, 255 }  ,draw opacity=1 ][line width=1.5]    (259.22,198.61) .. controls (259.47,148.54) and (258.07,128.25) .. (258.63,99.54) .. controls (259.2,70.83) and (279.79,52.67) .. (299.18,52.76) ;
\draw [color={rgb, 255:red, 0; green, 0; blue, 255 }  ,draw opacity=1 ][line width=1.5]    (341.73,198.61) .. controls (341.98,148.54) and (341.15,120.54) .. (341.15,99.54) .. controls (341.15,78.54) and (327.79,53.67) .. (299.18,52.76) ;
\draw [color={rgb, 255:red, 144; green, 19; blue, 254 }  ,draw opacity=1 ][line width=1.5]    (147.79,199.57) -- (449.65,199.61) ;

\draw (264,241.4) node [anchor=north west][inner sep=0.75pt]    {$w=w'*w''$};
\draw (291.72,186.4) node [anchor=north west][inner sep=0.75pt]  [font=\scriptsize]  {$+$};
\draw (389.07,186.4) node [anchor=north west][inner sep=0.75pt]  [font=\scriptsize]  {$+$};
\draw (290.39,204.4) node [anchor=north west][inner sep=0.75pt]    {$w'$};
\draw (388.49,202.4) node [anchor=north west][inner sep=0.75pt]    {$w''$};
\draw (451.91,193.01) node [anchor=north west][inner sep=0.75pt]  [color={rgb, 255:red, 144; green, 19; blue, 254 }  ,opacity=1 ]  {$\Gamma $};
\draw (323.99,40.4) node [anchor=north west][inner sep=0.75pt]  [color={rgb, 255:red, 0; green, 0; blue, 255 }  ,opacity=1 ]  {$\Lambda $};
\draw (199.49,202.4) node [anchor=north west][inner sep=0.75pt]    {$w''$};
\draw (346,136.4) node [anchor=north west][inner sep=0.75pt]    {$\gamma \ =\ \gamma '$};

\end{tikzpicture}

%% file: images/S2xI.tex
\begin{tikzpicture}[x=0.75pt,y=0.75pt,yscale=-1,xscale=1]

\draw  [fill={rgb, 255:red, 246; green, 246; blue, 246 }  ,fill opacity=1 ] [line width=0.75]  (194.59,51) -- (390.79,51) -- (390.79,247.19) -- (194.59,247.19) -- cycle ;
\draw [color={rgb, 255:red, 144; green, 19; blue, 254 }  ,draw opacity=1 ][line width=1.5]    (194.59,247.19) -- (390.79,51) ;
\draw  [color={rgb, 255:red, 0; green, 0; blue, 255 }  ,draw opacity=1 ][line width=1.5]  (231.97,149.1) .. controls (231.97,115.56) and (259.15,88.37) .. (292.69,88.37) .. controls (326.23,88.37) and (353.41,115.56) .. (353.41,149.1) .. controls (353.41,182.63) and (326.23,209.82) .. (292.69,209.82) .. controls (259.15,209.82) and (231.97,182.63) .. (231.97,149.1) -- cycle ;
\draw  [line width=0.75]  (297.49,252.49) .. controls (292.76,249.49) and (288.03,247.69) .. (283.3,247.1) .. controls (288.03,246.49) and (292.76,244.7) .. (297.49,241.7) ;
\draw  [line width=0.75]  (385,159.94) .. controls (388,155.21) and (389.8,150.48) .. (390.39,145.75) .. controls (391,150.48) and (392.79,155.21) .. (395.79,159.94) ;
\draw  [color={rgb, 255:red, 0; green, 0; blue, 0 }  ,draw opacity=1 ][line width=0.75]  (277.3,46.2) .. controls (282.03,49.2) and (286.76,51) .. (291.49,51.6) .. controls (286.76,52.2) and (282.03,53.99) .. (277.3,56.99) ;
\draw  [color={rgb, 255:red, 0; green, 0; blue, 0 }  ,draw opacity=1 ][line width=0.75]  (289.3,46.2) .. controls (294.03,49.2) and (298.76,51) .. (303.49,51.6) .. controls (298.76,52.2) and (294.03,53.99) .. (289.3,56.99) ;

\draw  [color={rgb, 255:red, 0; green, 0; blue, 0 }  ,draw opacity=1 ][line width=0.75]  (199.79,139.75) .. controls (196.79,144.48) and (194.99,149.21) .. (194.39,153.94) .. controls (193.79,149.21) and (192,144.48) .. (189,139.75) ;
\draw  [color={rgb, 255:red, 0; green, 0; blue, 0 }  ,draw opacity=1 ][line width=0.75]  (199.79,151.75) .. controls (196.79,156.48) and (194.99,161.21) .. (194.39,165.94) .. controls (193.79,161.21) and (192,156.48) .. (189,151.75) ;

\draw (287,156.4) node [anchor=north west][inner sep=0.75pt]  [color={rgb, 255:red, 144; green, 19; blue, 254 }  ,opacity=1 ]  {$a$};
\draw (353,88.4) node [anchor=north west][inner sep=0.75pt]  [color={rgb, 255:red, 144; green, 19; blue, 254 }  ,opacity=1 ]  {$b$};
\draw (222.91,220.01) node [anchor=north west][inner sep=0.75pt]  [color={rgb, 255:red, 144; green, 19; blue, 254 }  ,opacity=1 ]  {$\Gamma $};
\draw (248.99,76.4) node [anchor=north west][inner sep=0.75pt]  [color={rgb, 255:red, 0; green, 0; blue, 255 }  ,opacity=1 ]  {$\Lambda $};
\draw (198.59,56.4) node [anchor=north west][inner sep=0.75pt]    {$\Sigma _{+}$};
\draw (364.59,225.4) node [anchor=north west][inner sep=0.75pt]    {$\Sigma _{-}$};

\end{tikzpicture}

%% file: images/RIV_images.tex
\begin{tikzpicture}[x=0.75pt,y=0.75pt,yscale=-1,xscale=1]

\draw [color={rgb, 255:red, 144; green, 19; blue, 254 }  ,draw opacity=1 ][line width=1.5]    (364.45,141.03) -- (459.99,141.03) ;
\draw [color={rgb, 255:red, 0; green, 0; blue, 255 }  ,draw opacity=1 ][line width=1.5]    (392.42,96.94) .. controls (392.55,111.4) and (430.47,126.86) .. (430.47,141.01) .. controls (430.47,155.17) and (429.96,157.41) .. (444.85,167.52) ;
\draw [color={rgb, 255:red, 255; green, 255; blue, 255 }  ,draw opacity=1 ][line width=3]    (413.32,121.04) -- (406.44,115.03) ;
\draw [color={rgb, 255:red, 0; green, 0; blue, 255 }  ,draw opacity=1 ][line width=1.5]    (428.15,96.27) .. controls (428.02,110.72) and (390.9,126.18) .. (390.9,140.34) .. controls (390.9,154.5) and (394.38,157.41) .. (379.8,167.52) ;
\draw  [color={rgb, 255:red, 0; green, 0; blue, 0 }  ,draw opacity=1 ][dash pattern={on 0.84pt off 2.51pt}] (360.52,136.62) .. controls (360.52,109.41) and (383.19,87.35) .. (411.15,87.35) .. controls (439.11,87.35) and (461.78,109.41) .. (461.78,136.62) .. controls (461.78,163.83) and (439.11,185.89) .. (411.15,185.89) .. controls (383.19,185.89) and (360.52,163.83) .. (360.52,136.62) -- cycle ;

\draw [color={rgb, 255:red, 144; green, 19; blue, 254 }  ,draw opacity=1 ][line width=1.5]    (529.01,130.15) -- (622.65,130.15) ;
\draw [color={rgb, 255:red, 0; green, 0; blue, 255 }  ,draw opacity=1 ][line width=1.5]    (593.82,176.86) .. controls (593.69,162.41) and (555.77,146.95) .. (555.77,132.79) .. controls (555.77,118.64) and (556.28,116.39) .. (541.39,106.28) ;
\draw [color={rgb, 255:red, 255; green, 255; blue, 255 }  ,draw opacity=1 ][line width=3]    (579.7,159.11) -- (572.81,153.1) ;
\draw [color={rgb, 255:red, 0; green, 0; blue, 255 }  ,draw opacity=1 ][line width=1.5]    (558.09,177.54) .. controls (558.22,163.08) and (595.34,147.62) .. (595.34,133.46) .. controls (595.34,119.31) and (591.86,116.39) .. (606.44,106.28) ;
\draw  [color={rgb, 255:red, 0; green, 0; blue, 0 }  ,draw opacity=1 ][dash pattern={on 0.84pt off 2.51pt}] (524.97,136.62) .. controls (524.97,109.41) and (547.64,87.35) .. (575.6,87.35) .. controls (603.56,87.35) and (626.23,109.41) .. (626.23,136.62) .. controls (626.23,163.83) and (603.56,185.89) .. (575.6,185.89) .. controls (547.64,185.89) and (524.97,163.83) .. (524.97,136.62) -- cycle ;

\draw    (518.88,136.54) -- (480.64,136.77) ;
\draw [shift={(478.64,136.78)}, rotate = 359.66] [color={rgb, 255:red, 0; green, 0; blue, 0 }  ][line width=0.75]    (10.93,-3.29) .. controls (6.95,-1.4) and (3.31,-0.3) .. (0,0) .. controls (3.31,0.3) and (6.95,1.4) .. (10.93,3.29)   ;
\draw [shift={(520.88,136.53)}, rotate = 179.66] [color={rgb, 255:red, 0; green, 0; blue, 0 }  ][line width=0.75]    (10.93,-3.29) .. controls (6.95,-1.4) and (3.31,-0.3) .. (0,0) .. controls (3.31,0.3) and (6.95,1.4) .. (10.93,3.29)   ;
\draw [color={rgb, 255:red, 144; green, 19; blue, 254 }  ,draw opacity=1 ][line width=1.5]    (114.63,140.34) -- (18.72,140.34) ;
\draw [color={rgb, 255:red, 0; green, 0; blue, 255 }  ,draw opacity=1 ][line width=1.5]    (88.31,95.1) .. controls (88.16,109.93) and (46.36,125.8) .. (46.36,140.32) .. controls (46.36,154.85) and (46.94,157.15) .. (30.52,167.52) ;
\draw [color={rgb, 255:red, 255; green, 255; blue, 255 }  ,draw opacity=1 ][line width=3]    (65.27,119.83) -- (72.86,113.66) ;
\draw [color={rgb, 255:red, 0; green, 0; blue, 255 }  ,draw opacity=1 ][line width=1.5]    (48.93,94.41) .. controls (49.07,109.24) and (89.99,125.11) .. (89.99,139.63) .. controls (89.99,154.16) and (86.15,157.15) .. (102.22,167.52) ;
\draw  [color={rgb, 255:red, 0; green, 0; blue, 0 }  ,draw opacity=1 ][dash pattern={on 0.84pt off 2.51pt}] (17,136.62) .. controls (17,109.41) and (39.67,87.35) .. (67.63,87.35) .. controls (95.59,87.35) and (118.26,109.41) .. (118.26,136.62) .. controls (118.26,163.83) and (95.59,185.89) .. (67.63,185.89) .. controls (39.67,185.89) and (17,163.83) .. (17,136.62) -- cycle ;

\draw [color={rgb, 255:red, 144; green, 19; blue, 254 }  ,draw opacity=1 ][line width=1.5]    (291.05,129.84) -- (199.66,129.84) ;
\draw [color={rgb, 255:red, 0; green, 0; blue, 255 }  ,draw opacity=1 ][line width=1.5]    (225.18,177.78) .. controls (225.32,162.94) and (265.85,147.08) .. (265.85,132.55) .. controls (265.85,118.03) and (265.29,115.73) .. (281.2,105.35) ;
\draw [color={rgb, 255:red, 255; green, 255; blue, 255 }  ,draw opacity=1 ][line width=3]    (240.27,159.56) -- (247.62,153.4) ;
\draw [color={rgb, 255:red, 0; green, 0; blue, 255 }  ,draw opacity=1 ][line width=1.5]    (263.36,178.47) .. controls (263.22,163.63) and (223.55,147.77) .. (223.55,133.24) .. controls (223.55,118.72) and (227.27,115.73) .. (211.69,105.35) ;
\draw  [color={rgb, 255:red, 0; green, 0; blue, 0 }  ,draw opacity=1 ][dash pattern={on 0.84pt off 2.51pt}] (194.81,136.62) .. controls (194.81,109.41) and (217.47,87.35) .. (245.43,87.35) .. controls (273.4,87.35) and (296.06,109.41) .. (296.06,136.62) .. controls (296.06,163.83) and (273.4,185.89) .. (245.43,185.89) .. controls (217.47,185.89) and (194.81,163.83) .. (194.81,136.62) -- cycle ;

\draw    (184.39,136.53) -- (144.7,136.76) ;
\draw [shift={(142.7,136.78)}, rotate = 359.67] [color={rgb, 255:red, 0; green, 0; blue, 0 }  ][line width=0.75]    (10.93,-3.29) .. controls (6.95,-1.4) and (3.31,-0.3) .. (0,0) .. controls (3.31,0.3) and (6.95,1.4) .. (10.93,3.29)   ;
\draw [shift={(186.39,136.52)}, rotate = 179.67] [color={rgb, 255:red, 0; green, 0; blue, 0 }  ][line width=0.75]    (10.93,-3.29) .. controls (6.95,-1.4) and (3.31,-0.3) .. (0,0) .. controls (3.31,0.3) and (6.95,1.4) .. (10.93,3.29)   ;

\draw (370.55,144.02) node [anchor=north west][inner sep=0.75pt]  [font=\footnotesize]  {$d$};
\draw (405.84,147.02) node [anchor=north west][inner sep=0.75pt]  [font=\footnotesize]  {$a$};
\draw (406.8,100.39) node [anchor=north west][inner sep=0.75pt]  [font=\footnotesize]  {$b$};
\draw (440.18,147.02) node [anchor=north west][inner sep=0.75pt]  [font=\footnotesize]  {$c$};
\draw (463.41,139.23) node [anchor=north west][inner sep=0.75pt]  [color={rgb, 255:red, 144; green, 19; blue, 254 }  ,opacity=1 ]  {$\Gamma $};
\draw (572.07,162.32) node [anchor=north west][inner sep=0.75pt]  [font=\footnotesize]  {$a$};
\draw (536.53,113.9) node [anchor=north west][inner sep=0.75pt]  [font=\footnotesize]  {$d$};
\draw (571.83,113.9) node [anchor=north west][inner sep=0.75pt]  [font=\footnotesize]  {$b$};
\draw (606.17,116.9) node [anchor=north west][inner sep=0.75pt]  [font=\footnotesize]  {$c$};
\draw (628.09,129.97) node [anchor=north west][inner sep=0.75pt]  [color={rgb, 255:red, 144; green, 19; blue, 254 }  ,opacity=1 ]  {$\Gamma $};
\draw (27.98,143.09) node [anchor=north west][inner sep=0.75pt]  [font=\footnotesize]  {$d$};
\draw (63.27,146.09) node [anchor=north west][inner sep=0.75pt]  [font=\footnotesize]  {$a$};
\draw (64.23,99.46) node [anchor=north west][inner sep=0.75pt]  [font=\footnotesize]  {$b$};
\draw (97.62,146.09) node [anchor=north west][inner sep=0.75pt]  [font=\footnotesize]  {$c$};
\draw (119.89,139.23) node [anchor=north west][inner sep=0.75pt]  [color={rgb, 255:red, 144; green, 19; blue, 254 }  ,opacity=1 ]  {$\Gamma $};
\draw (241.9,162.32) node [anchor=north west][inner sep=0.75pt]  [font=\footnotesize]  {$b$};
\draw (206.36,113.9) node [anchor=north west][inner sep=0.75pt]  [font=\footnotesize]  {$d$};
\draw (241.66,116.9) node [anchor=north west][inner sep=0.75pt]  [font=\footnotesize]  {$a$};
\draw (276,116.9) node [anchor=north west][inner sep=0.75pt]  [font=\footnotesize]  {$c$};
\draw (298.16,130.62) node [anchor=north west][inner sep=0.75pt]  [color={rgb, 255:red, 144; green, 19; blue, 254 }  ,opacity=1 ]  {$\Gamma $};
\draw (152.85,116.26) node [anchor=north west][inner sep=0.75pt]  [font=\small]  {$IVa$};
\draw (487.78,116.26) node [anchor=north west][inner sep=0.75pt]  [font=\small]  {$IVb$};
\draw (154.95,206) node [anchor=north west][inner sep=0.75pt]   [align=left] {(a)};
\draw (491.05,206) node [anchor=north west][inner sep=0.75pt]   [align=left] {(b)};

\end{tikzpicture}

%% file: images/polygon_correspondence.tex
\begin{tikzpicture}[x=0.75pt,y=0.75pt,yscale=-1,xscale=1]

\draw  [draw opacity=0][fill={rgb, 255:red, 216; green, 216; blue, 216 }  ,fill opacity=1 ] (463.35,92.13) .. controls (468.8,92.08) and (475.2,110.79) .. (468.67,130.69) .. controls (452.17,131.37) and (470.42,129.21) .. (445.84,129.58) .. controls (446.4,104.95) and (458.4,96.76) .. (463.35,92.13) -- cycle ;
\draw  [draw opacity=0][fill={rgb, 255:red, 216; green, 216; blue, 216 }  ,fill opacity=1 ] (389.9,92.13) .. controls (382.4,93.25) and (375.68,110.79) .. (380.87,130.69) .. controls (389.6,130.68) and (391.2,129.51) .. (404,129.8) .. controls (404,101.44) and (398.2,99.26) .. (389.9,92.13) -- cycle ;
\draw  [draw opacity=0][fill={rgb, 255:red, 183; green, 183; blue, 183 }  ,fill opacity=1 ] (463.35,92.13) .. controls (450.4,100.27) and (445,114.3) .. (446.84,129.58) .. controls (439.4,130.68) and (410.4,131.85) .. (404,129.8) .. controls (406.4,107.28) and (396,93.25) .. (390.48,92.13) .. controls (399.43,92.08) and (440.62,86.31) .. (463.35,92.13) -- cycle ;
\draw [color={rgb, 255:red, 144; green, 19; blue, 254 }  ,draw opacity=1 ][line width=1.5]    (468.67,130.69) -- (380.87,130.69) ;
\draw [color={rgb, 255:red, 0; green, 0; blue, 255 }  ,draw opacity=1 ][line width=1.5]    (404.62,187.03) .. controls (404.77,167.59) and (447.25,146.8) .. (447.25,127.77) .. controls (447.25,108.74) and (446.67,105.72) .. (463.35,92.13) ;
\draw [color={rgb, 255:red, 255; green, 255; blue, 255 }  ,draw opacity=1 ][line width=3]    (420.44,163.16) -- (428.15,155.08) ;
\draw [color={rgb, 255:red, 0; green, 0; blue, 255 }  ,draw opacity=1 ][line width=1.5]    (444.64,187.93) .. controls (444.5,168.5) and (402.91,147.71) .. (402.91,128.67) .. controls (402.91,109.64) and (406.81,105.72) .. (390.48,92.13) ;
\draw  [draw opacity=0][fill={rgb, 255:red, 216; green, 216; blue, 216 }  ,fill opacity=1 ] (261.14,88.18) .. controls (252,118.61) and (249.6,111.6) .. (239.2,119.78) .. controls (227.2,139.66) and (218.4,139.66) .. (217.17,159.37) .. controls (207.2,154.86) and (204.8,160.71) .. (195.19,159.4) .. controls (190.4,92.89) and (195.2,78.86) .. (261.14,88.18) -- cycle ;
\draw  [draw opacity=0][fill={rgb, 255:red, 216; green, 216; blue, 216 }  ,fill opacity=1 ] (239.2,119.78) .. controls (228.8,106.92) and (220.8,102.24) .. (219.86,87.09) .. controls (280.8,88.21) and (286.4,71.84) .. (285.74,159.4) .. controls (269.24,160.08) and (287.49,157.92) .. (262.9,158.29) .. controls (258.4,137.32) and (250.4,134.98) .. (239.2,119.78) -- cycle ;
\draw  [draw opacity=0][fill={rgb, 255:red, 183; green, 183; blue, 183 }  ,fill opacity=1 ] (261.14,88.18) .. controls (255.2,116.27) and (246.4,110.43) .. (241.99,121.26) .. controls (228.8,113.93) and (220.8,97.56) .. (219.86,87.09) .. controls (228.8,87.04) and (238.4,82.36) .. (261.14,88.18) -- cycle ;
\draw [color={rgb, 255:red, 144; green, 19; blue, 254 }  ,draw opacity=1 ][line width=1.5]    (285.74,159.4) -- (195.19,159.4) ;
\draw [color={rgb, 255:red, 0; green, 0; blue, 255 }  ,draw opacity=1 ][line width=1.5]    (261.14,88.18) .. controls (260.98,111.53) and (217.17,136.51) .. (217.17,159.37) .. controls (217.17,182.24) and (217.77,175.87) .. (200.56,192.2) ;
\draw [color={rgb, 255:red, 255; green, 255; blue, 255 }  ,draw opacity=1 ][line width=3]    (235.99,128.57) -- (240.94,121.78) ;
\draw [color={rgb, 255:red, 0; green, 0; blue, 255 }  ,draw opacity=1 ][line width=1.5]    (219.86,87.09) .. controls (220,110.44) and (262.9,135.42) .. (262.9,158.29) .. controls (262.9,181.15) and (258.88,175.87) .. (275.72,192.2) ;
\draw [color={rgb, 255:red, 216; green, 216; blue, 216 }  ,draw opacity=1 ][line width=3]    (240.99,121.26) -- (246.94,115.48) ;
\draw    (361.95,138.64) -- (320.16,138.87) ;
\draw [shift={(318.16,138.88)}, rotate = 359.68] [color={rgb, 255:red, 0; green, 0; blue, 0 }  ][line width=0.75]    (10.93,-3.29) .. controls (6.95,-1.4) and (3.31,-0.3) .. (0,0) .. controls (3.31,0.3) and (6.95,1.4) .. (10.93,3.29)   ;
\draw [shift={(363.95,138.62)}, rotate = 179.68] [color={rgb, 255:red, 0; green, 0; blue, 0 }  ][line width=0.75]    (10.93,-3.29) .. controls (6.95,-1.4) and (3.31,-0.3) .. (0,0) .. controls (3.31,0.3) and (6.95,1.4) .. (10.93,3.29)   ;

\draw (198.19,166.95) node [anchor=north west][inner sep=0.75pt]  [font=\footnotesize]  {$c$};
\draw (235.19,166.95) node [anchor=north west][inner sep=0.75pt]  [font=\footnotesize]  {$a$};
\draw (236.19,98.25) node [anchor=north west][inner sep=0.75pt]  [font=\footnotesize]  {$b$};
\draw (271.19,166.95) node [anchor=north west][inner sep=0.75pt]  [font=\footnotesize]  {$c$};
\draw (287.74,167.87) node [anchor=north west][inner sep=0.75pt]  [color={rgb, 255:red, 144; green, 19; blue, 254 }  ,opacity=1 ]  {$\Gamma $};
\draw (422.44,168.51) node [anchor=north west][inner sep=0.75pt]  [font=\footnotesize]  {$b$};
\draw (385.19,113.53) node [anchor=north west][inner sep=0.75pt]  [font=\footnotesize]  {$c$};
\draw (422.19,113.53) node [anchor=north west][inner sep=0.75pt]  [font=\footnotesize]  {$a$};
\draw (458.19,113.53) node [anchor=north west][inner sep=0.75pt]  [font=\footnotesize]  {$c$};
\draw (470.41,137.75) node [anchor=north west][inner sep=0.75pt]  [color={rgb, 255:red, 144; green, 19; blue, 254 }  ,opacity=1 ]  {$\Gamma $};

\end{tikzpicture}

%% file: images/RI_invariance.tex
\begin{tikzpicture}[x=0.55pt,y=0.55pt,yscale=-1,xscale=1]

\draw [color={rgb, 255:red, 144; green, 19; blue, 254 }  ,draw opacity=1 ][line width=1.5]    (273.19,136.78) -- (53.32,136.78) ;
\draw [color={rgb, 255:red, 0; green, 0; blue, 255 }  ,draw opacity=1 ][line width=1.5]    (122.41,69.88) .. controls (165.51,95.07) and (197.68,100.88) .. (197.68,137.22) .. controls (197.68,173.55) and (175.7,182.27) .. (163.81,182.27) .. controls (151.92,182.27) and (129.93,174.88) .. (129.84,137.22) .. controls (129.74,99.56) and (165.51,95.07) .. (207.34,68.43) ;
\draw [color={rgb, 255:red, 255; green, 255; blue, 255 }  ,draw opacity=1 ][line width=3]    (172.16,87.53) -- (155.32,97.54) ;
\draw [color={rgb, 255:red, 74; green, 74; blue, 255 }  ,draw opacity=1 ][line width=1.5]    (168.82,94.54) -- (159.47,89.54) ;
\draw  [line width=0.75]  (57.92,183.94) .. controls (57.9,188.61) and (60.22,190.95) .. (64.89,190.97) -- (119.78,191.17) .. controls (126.45,191.2) and (129.77,193.54) .. (129.75,198.21) .. controls (129.77,193.54) and (133.11,191.22) .. (139.78,191.24)(136.78,191.23) -- (194.66,191.44) .. controls (199.33,191.46) and (201.67,189.14) .. (201.68,184.47) ;
\draw [color={rgb, 255:red, 144; green, 19; blue, 254 }  ,draw opacity=1 ][line width=1.5]    (576.19,136.78) -- (356.32,136.78) ;
\draw   (130.92,233.94) .. controls (130.9,238.61) and (133.22,240.95) .. (137.89,240.97) -- (192.78,241.17) .. controls (199.45,241.2) and (202.77,243.54) .. (202.75,248.21) .. controls (202.77,243.54) and (206.11,241.22) .. (212.78,241.24)(209.78,241.23) -- (267.66,241.44) .. controls (272.33,241.46) and (274.67,239.14) .. (274.68,234.47) ;
\draw [color={rgb, 255:red, 0; green, 0; blue, 255 }  ,draw opacity=1 ][line width=1.5]    (559.81,79.36) .. controls (523.81,69.36) and (515.81,103.36) .. (461.81,104.36) .. controls (407.81,105.36) and (396.87,71.47) .. (362.87,79.47) ;
\draw    (328.39,120.53) -- (288.7,120.76) ;
\draw [shift={(286.7,120.78)}, rotate = 359.67] [color={rgb, 255:red, 0; green, 0; blue, 0 }  ][line width=0.75]    (10.93,-3.29) .. controls (6.95,-1.4) and (3.31,-0.3) .. (0,0) .. controls (3.31,0.3) and (6.95,1.4) .. (10.93,3.29)   ;
\draw [shift={(330.39,120.52)}, rotate = 179.67] [color={rgb, 255:red, 0; green, 0; blue, 0 }  ][line width=0.75]    (10.93,-3.29) .. controls (6.95,-1.4) and (3.31,-0.3) .. (0,0) .. controls (3.31,0.3) and (6.95,1.4) .. (10.93,3.29)   ;

\draw (582.19,136.97) node [anchor=north west][inner sep=0.75pt]  [color={rgb, 255:red, 144; green, 19; blue, 254 }  ,opacity=1 ]  {$\Gamma $};
\draw (224.67,143.01) node [anchor=north west][inner sep=0.75pt]    {$d_{i} '$};
\draw (174.17,260.07) node [anchor=north west][inner sep=0.75pt]    {$d_{i} =d_{i} '*a$};
\draw (276.19,137.97) node [anchor=north west][inner sep=0.75pt]  [color={rgb, 255:red, 144; green, 19; blue, 254 }  ,opacity=1 ]  {$\Gamma $};
\draw (158.17,145.21) node [anchor=north west][inner sep=0.75pt]    {$a$};
\draw (186.92,84.13) node [anchor=north west][inner sep=0.75pt]    {$b$};
\draw (77.67,143.01) node [anchor=north west][inner sep=0.75pt]    {$c_{i} '$};
\draw (101.17,210.07) node [anchor=north west][inner sep=0.75pt]    {$c_{i} =c_{i} '*a$};
\draw (303.85,100.26) node [anchor=north west][inner sep=0.75pt]  [font=\small]  {$I$};

\end{tikzpicture}

%% file: images/RI_invariance_2.tex
\begin{tikzpicture}[x=0.75pt,y=0.75pt,yscale=-1,xscale=1]

\draw  [draw opacity=0][fill={rgb, 255:red, 216; green, 216; blue, 216 }  ,fill opacity=1 ] (527.87,65.47) .. controls (549.14,64.87) and (554.14,32.87) .. (587.87,44.47) .. controls (591.31,48.83) and (590.1,62.14) .. (588.6,83.67) .. controls (572.1,84.14) and (492.73,83.42) .. (468.14,83.67) .. controls (469.11,72.64) and (465.28,48.01) .. (468.87,44.47) .. controls (500.14,36.87) and (507.14,63.87) .. (527.87,65.47) -- cycle ;
\draw  [draw opacity=0][fill={rgb, 255:red, 216; green, 216; blue, 216 }  ,fill opacity=1 ] (527.14,182.28) .. controls (548.42,181.68) and (552.14,144.87) .. (602.14,165.87) .. controls (605.58,170.23) and (604.14,181.87) .. (604.14,202.87) .. controls (587.65,203.33) and (492.73,203.23) .. (468.14,203.48) .. controls (469.11,192.46) and (464.55,164.83) .. (468.14,161.28) .. controls (499.42,153.68) and (506.42,180.68) .. (527.14,182.28) -- cycle ;
\draw  [draw opacity=0][fill={rgb, 255:red, 216; green, 216; blue, 216 }  ,fill opacity=1 ] (280.15,174.68) .. controls (302.84,159.52) and (327.83,151.76) .. (341.39,160.4) .. controls (342.43,164.87) and (341.83,182.51) .. (340.33,204.05) .. controls (323.83,204.51) and (285.83,203.51) .. (261.24,203.77) .. controls (263.17,181.71) and (272.98,182.43) .. (280.15,174.68) -- cycle ;
\draw  [draw opacity=0][fill={rgb, 255:red, 216; green, 216; blue, 216 }  ,fill opacity=1 ] (279.6,55.11) .. controls (302.29,39.95) and (315.5,37.95) .. (329.06,46.59) .. controls (332.5,50.95) and (341.28,62.95) .. (339.78,84.48) .. controls (323.28,84.95) and (285.28,83.95) .. (260.69,84.2) .. controls (262.62,62.15) and (272.43,62.87) .. (279.6,55.11) -- cycle ;
\draw  [draw opacity=0][fill={rgb, 255:red, 216; green, 216; blue, 216 }  ,fill opacity=1 ] (53.91,164.23) .. controls (75.91,153.23) and (83.91,163.23) .. (97.47,171.87) .. controls (100.91,176.23) and (119.91,181.23) .. (118.41,202.77) .. controls (101.91,203.23) and (63.91,202.23) .. (39.32,202.48) .. controls (39.91,174.23) and (42.91,170.23) .. (53.91,164.23) -- cycle ;
\draw  [draw opacity=0][fill={rgb, 255:red, 216; green, 216; blue, 216 }  ,fill opacity=1 ] (54.91,46.23) .. controls (76.91,35.23) and (84.91,45.23) .. (98.47,53.87) .. controls (101.91,58.23) and (120.91,63.23) .. (119.41,84.77) .. controls (102.91,85.23) and (64.91,84.23) .. (40.32,84.48) .. controls (40.91,56.23) and (43.91,52.23) .. (54.91,46.23) -- cycle ;
\draw [color={rgb, 255:red, 144; green, 19; blue, 254 }  ,draw opacity=1 ][line width=1.5]    (160.78,84.48) -- (40.32,84.48) ;
\draw [color={rgb, 255:red, 0; green, 0; blue, 255 }  ,draw opacity=1 ][line width=1.5]    (78.17,41.14) .. controls (101.78,57.46) and (119.41,61.22) .. (119.41,84.77) .. controls (119.41,108.31) and (107.37,113.96) .. (100.85,113.96) .. controls (94.34,113.96) and (82.29,109.17) .. (82.24,84.77) .. controls (82.19,60.36) and (101.78,57.46) .. (124.7,40.2) ;
\draw [color={rgb, 255:red, 216; green, 216; blue, 216 }  ,draw opacity=1 ][line width=3]    (101.43,56.57) -- (95.2,60.06) ;
\draw [color={rgb, 255:red, 0; green, 0; blue, 255 }  ,draw opacity=1 ][line width=1.5]    (103.6,57.11) -- (98.47,53.87) ;
\draw [color={rgb, 255:red, 144; green, 19; blue, 254 }  ,draw opacity=1 ][line width=1.5]    (339.78,84.48) -- (219.32,84.48) ;
\draw [color={rgb, 255:red, 0; green, 0; blue, 255 }  ,draw opacity=1 ][line width=1.5]    (257.17,41.14) .. controls (280.78,57.46) and (298.41,61.22) .. (298.41,84.77) .. controls (298.41,108.31) and (286.37,113.96) .. (279.85,113.96) .. controls (273.34,113.96) and (261.3,109.17) .. (261.24,84.77) .. controls (261.19,60.36) and (280.78,57.46) .. (303.7,40.2) ;
\draw [color={rgb, 255:red, 255; green, 255; blue, 255 }  ,draw opacity=1 ][line width=3]    (284.43,52.57) -- (275.2,59.06) ;
\draw [color={rgb, 255:red, 0; green, 0; blue, 255 }  ,draw opacity=1 ][line width=1.5]    (282.6,57.11) -- (277.47,53.87) ;
\draw   (295.38,234.04) .. controls (295.36,238.71) and (297.68,241.05) .. (302.35,241.07) -- (324.73,241.17) .. controls (331.4,241.2) and (334.72,243.54) .. (334.71,248.21) .. controls (334.72,243.54) and (338.06,241.22) .. (344.73,241.25)(341.73,241.24) -- (367.12,241.34) .. controls (371.79,241.36) and (374.13,239.04) .. (374.15,234.37) ;
\draw [color={rgb, 255:red, 144; green, 19; blue, 254 }  ,draw opacity=1 ][line width=1.5]    (369.78,203.48) -- (219.32,203.48) ;
\draw [color={rgb, 255:red, 0; green, 0; blue, 255 }  ,draw opacity=1 ][line width=1.5]    (257.17,160.14) .. controls (280.78,176.46) and (298.41,180.22) .. (298.41,203.77) .. controls (298.41,227.31) and (286.37,232.96) .. (279.85,232.96) .. controls (273.34,232.96) and (261.3,228.17) .. (261.24,203.77) .. controls (261.19,179.36) and (280.78,176.46) .. (303.7,159.2) ;
\draw [color={rgb, 255:red, 255; green, 255; blue, 255 }  ,draw opacity=1 ][line width=3]    (284.43,171.57) -- (275.2,178.06) ;
\draw [color={rgb, 255:red, 74; green, 74; blue, 255 }  ,draw opacity=1 ][line width=1.5]    (282.6,176.11) -- (277.47,172.87) ;
\draw [color={rgb, 255:red, 0; green, 0; blue, 255 }  ,draw opacity=1 ][line width=1.5]    (341.39,160.4) -- (341.39,221.18) ;

\draw [color={rgb, 255:red, 144; green, 19; blue, 254 }  ,draw opacity=1 ][line width=1.5]    (160.78,203.48) -- (40.32,203.48) ;
\draw [color={rgb, 255:red, 0; green, 0; blue, 255 }  ,draw opacity=1 ][line width=1.5]    (78.17,160.14) .. controls (101.78,176.46) and (119.41,180.22) .. (119.41,203.77) .. controls (119.41,227.31) and (107.37,232.96) .. (100.85,232.96) .. controls (94.34,232.96) and (82.29,228.17) .. (82.24,203.77) .. controls (82.19,179.36) and (101.78,176.46) .. (124.7,159.2) ;
\draw [color={rgb, 255:red, 0; green, 0; blue, 255 }  ,draw opacity=1 ][line width=1.5]    (103.6,176.11) -- (98.47,172.87) ;
\draw [color={rgb, 255:red, 144; green, 19; blue, 254 }  ,draw opacity=1 ][line width=1.5]    (588.6,83.67) -- (468.14,83.67) ;
\draw [color={rgb, 255:red, 0; green, 0; blue, 255 }  ,draw opacity=1 ][line width=1.5]    (587.87,44.47) .. controls (551.87,34.47) and (547.87,65.47) .. (527.87,65.47) .. controls (507.87,65.47) and (502.87,36.47) .. (468.87,44.47) ;

\draw [color={rgb, 255:red, 0; green, 0; blue, 255 }  ,draw opacity=1 ][line width=1.5]    (587.14,161.28) .. controls (551.14,151.28) and (547.14,182.28) .. (527.14,182.28) .. controls (507.14,182.28) and (502.14,153.28) .. (468.14,161.28) ;
\draw [color={rgb, 255:red, 0; green, 0; blue, 255 }  ,draw opacity=1 ][line width=1.5]    (603.57,156.4) -- (603.57,217.18) ;
\draw [color={rgb, 255:red, 144; green, 19; blue, 254 }  ,draw opacity=1 ][line width=1.5]    (618.6,203.48) -- (468.14,203.48) ;

\draw    (445.22,84.69) -- (399.99,84.75) ;
\draw [shift={(447.22,84.69)}, rotate = 179.93] [color={rgb, 255:red, 0; green, 0; blue, 0 }  ][line width=0.75]    (10.93,-3.29) .. controls (6.95,-1.4) and (3.31,-0.3) .. (0,0) .. controls (3.31,0.3) and (6.95,1.4) .. (10.93,3.29)   ;
\draw    (445.22,203.69) -- (399.99,203.75) ;
\draw [shift={(447.22,203.69)}, rotate = 179.93] [color={rgb, 255:red, 0; green, 0; blue, 0 }  ][line width=0.75]    (10.93,-3.29) .. controls (6.95,-1.4) and (3.31,-0.3) .. (0,0) .. controls (3.31,0.3) and (6.95,1.4) .. (10.93,3.29)   ;
\draw [color={rgb, 255:red, 255; green, 255; blue, 255 }  ,draw opacity=1 ][line width=3.75]    (104.43,51.57) -- (102.2,55.06) ;
\draw [color={rgb, 255:red, 216; green, 216; blue, 216 }  ,draw opacity=1 ][line width=3]    (94.2,180.06) ;
\draw [color={rgb, 255:red, 216; green, 216; blue, 216 }  ,draw opacity=1 ][line width=3]    (95.2,174.06) -- (102.51,179.71) ;
\draw [color={rgb, 255:red, 255; green, 255; blue, 255 }  ,draw opacity=1 ][line width=3.75]    (109.01,176.04) -- (99.47,168.87) ;

\draw (299.48,252.94) node [anchor=north west][inner sep=0.75pt]    {$d_{i}^{'} =d_{j} '*e_{j}$};
\draw (377.35,199.93) node [anchor=north west][inner sep=0.75pt]  [color={rgb, 255:red, 144; green, 19; blue, 254 }  ,opacity=1 ]  {$\Gamma $};
\draw (274.05,206.83) node [anchor=north west][inner sep=0.75pt]    {$a$};
\draw (289.8,166.69) node [anchor=north west][inner sep=0.75pt]    {$b$};
\draw (307.03,205.83) node [anchor=north west][inner sep=0.75pt]    {$d_{j} '$};
\draw (233.59,205.83) node [anchor=north west][inner sep=0.75pt]    {$c_{i} '$};
\draw (353.03,206.83) node [anchor=north west][inner sep=0.75pt]    {$e_{j}$};
\draw (193,75.4) node [anchor=north west][inner sep=0.75pt]    {$+$};
\draw (193,194.4) node [anchor=north west][inner sep=0.75pt]    {$+$};
\draw (159.71,82.58) node [anchor=north west][inner sep=0.75pt]  [color={rgb, 255:red, 144; green, 19; blue, 254 }  ,opacity=1 ]  {$\Gamma $};
\draw (95.05,87.83) node [anchor=north west][inner sep=0.75pt]    {$a$};
\draw (110.8,47.69) node [anchor=north west][inner sep=0.75pt]    {$b$};
\draw (49.59,86.83) node [anchor=north west][inner sep=0.75pt]    {$c_{i} '$};
\draw (126.41,86.83) node [anchor=north west][inner sep=0.75pt]    {$d_{i} '$};
\draw (159.71,201.58) node [anchor=north west][inner sep=0.75pt]  [color={rgb, 255:red, 144; green, 19; blue, 254 }  ,opacity=1 ]  {$\Gamma $};
\draw (95.05,206.83) node [anchor=north west][inner sep=0.75pt]    {$a$};
\draw (110.8,166.69) node [anchor=north west][inner sep=0.75pt]    {$b$};
\draw (49.59,205.83) node [anchor=north west][inner sep=0.75pt]    {$c_{i} '$};
\draw (126.41,205.83) node [anchor=north west][inner sep=0.75pt]    {$d_{i} '$};
\draw (340.35,81.93) node [anchor=north west][inner sep=0.75pt]  [color={rgb, 255:red, 144; green, 19; blue, 254 }  ,opacity=1 ]  {$\Gamma $};
\draw (274.05,87.83) node [anchor=north west][inner sep=0.75pt]    {$a$};
\draw (289.8,47.69) node [anchor=north west][inner sep=0.75pt]    {$b$};
\draw (307.03,86.83) node [anchor=north west][inner sep=0.75pt]    {$d_{i} '$};
\draw (233.59,86.83) node [anchor=north west][inner sep=0.75pt]    {$c_{i} '$};

\end{tikzpicture}

%% file: images/RI_invariance_3.tex
\begin{tikzpicture}[x=0.75pt,y=0.75pt,yscale=-1,xscale=1]

\draw  [draw opacity=0][fill={rgb, 255:red, 216; green, 216; blue, 216 }  ,fill opacity=1 ] (587.14,157.28) .. controls (560.39,149.38) and (542.39,175.38) .. (527.14,178.28) .. controls (514,175.64) and (493,158.64) .. (468.14,157.28) .. controls (454,157.64) and (425.39,160.38) .. (412.39,167.4) .. controls (412.39,180.38) and (412.39,193.13) .. (412.39,198.38) .. controls (395.89,198.84) and (396.73,199.23) .. (372.14,199.48) .. controls (371.39,189.38) and (371.39,172.38) .. (371.39,153.4) .. controls (405.39,107.38) and (511.39,95.38) .. (587.14,157.28) -- cycle ;
\draw  [draw opacity=0][fill={rgb, 255:red, 216; green, 216; blue, 216 }  ,fill opacity=1 ] (59.39,160.4) .. controls (81,121.38) and (160,93.38) .. (224.7,155.2) .. controls (215,162.38) and (210,164.38) .. (203.6,172.11) .. controls (184.8,159.75) and (141,130.38) .. (100.39,160.4) .. controls (100,179.38) and (100,173.38) .. (100,200.38) .. controls (89,195.38) and (66.39,200.38) .. (61,200.38) .. controls (59.4,194.91) and (59,168.64) .. (59.39,160.4) -- cycle ;
\draw [color={rgb, 255:red, 144; green, 19; blue, 254 }  ,draw opacity=1 ][line width=1.5]    (260.78,199.48) -- (40.32,199.48) ;
\draw [color={rgb, 255:red, 0; green, 0; blue, 255 }  ,draw opacity=1 ][line width=1.5]    (178.17,156.14) .. controls (201.78,172.46) and (219.41,176.22) .. (219.41,199.77) .. controls (219.41,223.31) and (207.37,228.96) .. (200.85,228.96) .. controls (194.34,228.96) and (182.29,224.17) .. (182.24,199.77) .. controls (182.19,175.36) and (199,175.38) .. (203.6,172.11) ;
\draw [color={rgb, 255:red, 0; green, 0; blue, 255 }  ,draw opacity=1 ][line width=1.5]    (203.6,172.11) -- (198.47,168.87) ;
\draw [color={rgb, 255:red, 0; green, 0; blue, 255 }  ,draw opacity=1 ][line width=1.5]    (587.14,157.28) .. controls (551.14,147.28) and (547.14,178.28) .. (527.14,178.28) .. controls (507.14,178.28) and (502,153.64) .. (468.14,157.28) ;
\draw [color={rgb, 255:red, 144; green, 19; blue, 254 }  ,draw opacity=1 ][line width=1.5]    (598.6,199.48) -- (348.14,199.48) ;
\draw [color={rgb, 255:red, 216; green, 216; blue, 216 }  ,draw opacity=1 ][line width=3]    (194.2,176.06) ;
\draw [color={rgb, 255:red, 0; green, 0; blue, 255 }  ,draw opacity=1 ][line width=1.5]    (59.39,160.4) -- (59.39,221.18) ;
\draw [color={rgb, 255:red, 0; green, 0; blue, 255 }  ,draw opacity=1 ][line width=1.5]    (100.39,160.4) -- (100.39,221.18) ;
\draw [color={rgb, 255:red, 0; green, 0; blue, 255 }  ,draw opacity=1 ][line width=1.5]  [dash pattern={on 5.63pt off 4.5pt}]  (59.39,160.4) .. controls (102.5,102.21) and (175.5,108.21) .. (224.7,155.2) ;
\draw [color={rgb, 255:red, 0; green, 0; blue, 255 }  ,draw opacity=1 ][line width=1.5]  [dash pattern={on 5.63pt off 4.5pt}]  (100.39,160.4) .. controls (125.5,144.21) and (145.5,142.21) .. (178.17,156.14) ;
\draw [color={rgb, 255:red, 255; green, 255; blue, 255 }  ,draw opacity=1 ][line width=3.75]    (203.6,176.11) -- (194.07,169.95) ;
\draw [color={rgb, 255:red, 0; green, 0; blue, 255 }  ,draw opacity=1 ][line width=1.5]    (207,169.38) .. controls (214,164.38) and (217,160.38) .. (224.7,155.2) ;
\draw [color={rgb, 255:red, 0; green, 0; blue, 255 }  ,draw opacity=1 ][line width=1.5]    (371.39,153.4) -- (371.39,214.18) ;
\draw [color={rgb, 255:red, 0; green, 0; blue, 255 }  ,draw opacity=1 ][line width=1.5]    (412.39,167.4) -- (412.39,214.18) ;
\draw [color={rgb, 255:red, 0; green, 0; blue, 255 }  ,draw opacity=1 ][line width=1.5]  [dash pattern={on 5.63pt off 4.5pt}]  (371.39,153.4) .. controls (414.5,95.21) and (537.95,110.29) .. (587.14,157.28) ;
\draw [color={rgb, 255:red, 0; green, 0; blue, 255 }  ,draw opacity=1 ][line width=1.5]  [dash pattern={on 5.63pt off 4.5pt}]  (412.39,167.4) .. controls (422,159.64) and (449,158.64) .. (468.14,157.28) ;
\draw    (334.22,177.69) -- (288.99,177.75) ;
\draw [shift={(336.22,177.69)}, rotate = 179.93] [color={rgb, 255:red, 0; green, 0; blue, 0 }  ][line width=0.75]    (10.93,-3.29) .. controls (6.95,-1.4) and (3.31,-0.3) .. (0,0) .. controls (3.31,0.3) and (6.95,1.4) .. (10.93,3.29)   ;

\draw (262.78,202.88) node [anchor=north west][inner sep=0.75pt]  [color={rgb, 255:red, 144; green, 19; blue, 254 }  ,opacity=1 ]  {$\Gamma $};
\draw (195.05,204.83) node [anchor=north west][inner sep=0.75pt]    {$a$};
\draw (216.8,163.69) node [anchor=north west][inner sep=0.75pt]    {$b$};
\draw (149.59,202.83) node [anchor=north west][inner sep=0.75pt]    {$c_{i} '$};
\draw (226.41,202.83) node [anchor=north west][inner sep=0.75pt]    {$d_{i} '$};
\draw (76,204.83) node [anchor=north west][inner sep=0.75pt]    {$y$};
\draw (386,202.4) node [anchor=north west][inner sep=0.75pt]    {$y$};
\draw (496.59,202.83) node [anchor=north west][inner sep=0.75pt]    {$c_{i} '*a*d_{i} '$};
\draw (600.6,202.88) node [anchor=north west][inner sep=0.75pt]  [color={rgb, 255:red, 144; green, 19; blue, 254 }  ,opacity=1 ]  {$\Gamma $};

\end{tikzpicture}